 \newtheorem{thrm}{Theorem}[section]
 \newtheorem{corollary}[thrm]{Corollary}
 \newtheorem{lemma}[thrm]{Lemma}
 \newtheorem{proposition}[thrm]{Proposition}
 \newtheorem{definition}[thrm]{Definition}
 \newtheorem{remark}[thrm]{Remark}
\def\d{\delta}
\def\rt{\rtimes}
\def\ot{\otimes}
\def\s{\sigma}
\def\ti{\times}
\newcommand{\Jord}{\mathop{\text{Jord}}}
\newcommand{\Irr}{\mathop{\text{Irr}}}
\title{Discrete series of odd general spin groups}
\author{Yeansu Kim, Ivan Mati\'{c}}
\date{\today}
\begin{document}

\maketitle

\begin{abstract}
We obtain a M{\oe}glin-Tadi\'{c} type classification of the non-cuspidal discrete series of odd general spin groups over non-archimedean local fields of characteristic zero. Our approach presents a simplified, uniform, and slightly different construction of a bijective correspondence between the set of isomorphism classes of non-cuspidal discrete series representations and the set of so called admissible triples, a notion obtained by readily extending the analogous notion due to M{\oe}glin-Tadi\'{c} from the case of classical groups to that of odd general spin groups. We use almost exclusively algebraic methods, one of which replaces subtle theory on intertwining operators for odd general spin groups by calculation of Jacquet modules. In this way we provide a slightly different proof of the classification of discrete series representations for classical groups, which contains some more concrete information on the admissible triples. We expect that our classification has an advantage of being rather directly applicable to several other reductive $p$-adic groups, including even general spin groups and similitude classical groups.
\end{abstract}

{\renewcommand{\thefootnote}{} \footnotetext[1]{\textit{MSC2000:}
primary 22E35; secondary 22E50, 11F70}
\footnotetext[2]{\textit{Keywords:} discrete series, odd GSpin groups, non-archimedean local field}}

\section{Introduction}

Irreducible square-integrable representations present a prominent part of the unitary dual of reductive groups over non-archimedean local fields, with numerous applications in harmonic analysis and in the theory of automorphic forms. Such representations, also called the discrete series, have been classified by M{\oe}glin and Tadi\'{c} in the case of classical groups defined over non-archimedean local fields of characteristic zero in their seminal work \cite{Moe1, MT1}. Their work completely describes that prominent class of irreducible representations, modulo cuspidal ones, in terms of the so-called admissible triples which consist of a Jordan block, the partial cuspidal support and the $\epsilon$-function. We note that the Jordan block encodes the Langlands parameter in Langlands parametrization, while the latter two components together constitute a datum that, modulo the extended Langlands parameterization for cuspidal representations, is equivalent to a character of the component group of the centralizer of the Langlands parameter. Their work relies on the Basic Assumption, which now follows from \cite{Arthur}, \cite[Th\'{e}or\`{e}me~3.1.1]{Moe2}, and \cite[Theorem~7.8]{GanLom}, and is also known to hold for the general spin groups in the characteristic zero case \cite[Th\'{e}oreme~3.1.1]{Moe2}. We discuss the Basic Assumption in more detail in Section $3$.


Although the M{\oe}glin-Tadi\'{c} classification is intrinsically combinatorial and can be elegantly used for various computations, the proofs which appear in \cite{Moe1} and \cite{MT1} are rather long and sometimes happen to be highly involved.

The main purpose of this paper is to extend the M{\oe}glin-Tadi\'{c} classification to the case of the split odd general spin groups over non-archimedean local fields in a  simplified and more uniform way. We note that the rank $n$ split odd general spin group is a split reductive linear algebraic group of type $B_n$, whose derived subgroup is a double covering of a split special orthogonal group. Our classification is also given in terms of admissible triples, i.e., we construct a natural bijection between the set of all isomorphism classes of discrete series of the odd general spin group and the set of all admissible triples, but we have substantially shortened several proofs given in the original work of M{\oe}glin and Tadi\'{c}. 

There are certain differences which appear in our approach, although we intend to follow the classification for the classical groups. Firstly, we use mostly algebraic methods, e.g. the Jacquet module method, except for the fact that we employ $L$-function techniques in the $GL$ case to calculate the Jordan blocks of discrete series appearing in embeddings of a particular type (Proposition \ref{propjord}). 

Secondly, 
when we define the $\epsilon$-function on certain single elements of the Jordan blocks, we use a definition similar to the one suggested in \cite{Tad7}, which seems to be more appropriate in the GSpin situation than the original one (Definition \ref{deftwo}). In this way a definition of the $\epsilon$-function is entirely provided in terms of the Jacquet modules, which helps us to provide a restriction type results (Theorem \ref{propprva}) without the usage of the intertwining operators method, as was the case in \cite[Section~6]{Moe1}. We note that in this case we need to fix a labeling of irreducible tempered subrepresentations appearing in the induced representation of a particular type. On the other hand, in \cite{Moe1} the definition of the $\epsilon$-function in the considered case ($\tau_1$ and $\tau_{-1}$ in the above Theorem \ref{firstthm}), and such a labeling, was provided using much more subtle theory involving standard intertwining operators, their analytic continuation, and coherent normalization using $L$-functions and $\epsilon$-factors. Up to our knowledge, such results are not yet known for the GSpin groups in their full extent.

Thirdly, we use a slightly different approach when we construct and define the admissible triples. We define the $\epsilon$-function only on 'consecutive' pairs appearing in the Jordan blocks, i.e., only on ordered pairs of the form $((a\_, \rho), (a, \rho))$ (Definition \ref{defone}). This approach provides as many properties as the original one, and also helps us to avoid many technical difficulties arising when studying the restrictions of the $\epsilon$-function, which happens to be crucial for the proof of the injectivity part (Theorem \ref{tminj}). Moreover, we obtain some more concrete information on the properties of the $\epsilon$-function which can be read off from Jacquet modules (characterizations $(i)$, $(ii)$, and $(iii)$ from Theorem \ref{firstthm}$(3)$).

Fourthly, we prove the surjectivity part of the classification (Theorem \ref{tmsurj}) in a completely different manner than in \cite{MT1}, using an inductive procedure based on an approach introduced in \cite{Mu3} and further developed in \cite{Mu11} and in \cite{Matic11}, which enables us to significantly shorten the proofs of results analogous to ones appearing in \cite[Sections~9,~10,~11]{MT1}. 

Note that our main ideas of the proof of the classification results in the current paper are completely independent of the M{\oe}glin-Tadi\'{c} one, and all our methods and proofs can be used in the classical group case. Note that we only use some results appearing in \cite[Section~4]{MT1}, to avoid repeating some calculations of the Jacquet modules, which happen to be completely analogous in both classical and GSpin group cases. In sum, our classification relies only on the theory of intertwining operators in the general linear groups case, the square-integrability criterion, the uniqueness of the partial cuspidal support, and the structural formula in the GSpin case. Written in this way, the classification has an advantage of being rather directly applicable to many other reductive $p$-adic groups which are of particular interest, such as the metaplectic group or the even general spin group and, partially, the generalized unitary group and the generalized symplectic group.

Let us now briefly explain the main idea of the proof of our main results. The classification of discrete series is done in two stages, the first being the classification of the so-called strongly positive discrete series (Definition \ref{defsp}). We note that the classification of the strongly positive discrete series for the general spin groups is given by the first author in \cite[Theorem~A]{Kim1}, closely following the methods introduced in \cite{Matic3}. 
Using a description of the Jacquet modules of the strongly positive representations, we can show that they correspond to admissible triples of 'alternated type' (see Definition \ref{deftripl} and Proposition \ref{propsp}).

In the second stage, to provide the inductive construction of discrete series, we obtain several properties of the attached $\epsilon$-functions (see Theorem \ref{prepairemain} and Theorem \ref{propprva}). Initially, such properties rely on certain embeddings of discrete series, while the other crucial role in the classification is played by the behavior of restrictions of the $\epsilon$-functions, which we completely describe using the Jacquet modules method and methods of intertwining operators.

Note that one of the main steps in such a description involves identifying certain prominent irreducible constituents of the Jacquet modules of certain tempered representations (see Theorem \ref{firstthm}$(3)$ $(i)$, $(ii)$, and $(iii)$). This approach was first introduced in \cite[Section~6]{Tad6}, and further enhanced in \cite{Matic7, Matic11}. In other words, this approach enables one to extract certain kinds of information about representation from 
'general linear' contributions to irreducible subquotients of its Jacquet modules.

For the convenience of the reader, we cite our main results here. For the notation and definitions we refer the reader to the following section and Definitions \ref{defjord}, \ref{defone}, \ref{deftwo}, \ref{deftripl}. We note that for an irreducible self-dual cuspidal representation $\rho$ of the general linear group and an irreducible cuspidal representation $\sigma_{cusp}$ of the GSpin group such that $\rho \rtimes \sigma_{cusp}$ reduces we fix a choice of labeling irreducible tempered representations $\tau_1$ and $\tau_{-1}$ such that $\rho \rtimes \sigma_{cusp} = \tau_1 + \tau_{-1}$.

\begin{thrm} \label{firstthm}
There exists a bijective correspondence between the set of all discrete series $\sigma$ of the odd GSpin group and the set of all admissible triples $(\Jord, \sigma', \epsilon)$, denoted by $\sigma = \sigma_{(\Jord, \sigma', \epsilon)}$
such that the following holds:
\begin{enumerate}[(1)]
    \item $\Jord(\sigma) = \Jord$ and $\sigma'$ is isomorphic to the partial cuspidal support of $\sigma$.
    \item If $(\Jord, \sigma', \epsilon)$ is an admissible triple of alternated type, then $\sigma$ is a strongly positive discrete series.
    \item Let $(a, \rho) \in \Jord$ such that $a\_$ is defined and $\epsilon((a\_, \rho), (a, \rho)) = 1$. Let $\epsilon : D \subseteq \Jord \cup \Jord \times \Jord \rightarrow \{ 1, -1 \}$. We put $\Jord' = \Jord \setminus \{ (a\_, \rho), (a, \rho) \}$. If $(a, \rho) \in D$, put $D_1 = \{ (a\_, \rho), (a, \rho) \}$, otherwise put $D_1 = \emptyset$, and let $D' = D \setminus D_1$. Let $\epsilon': D' \rightarrow \{ 1, -1 \}$ be defined in the following way:
    \begin{itemize}
        \item for $(b, \rho') \in D'$ such that $\epsilon(b, \rho')$ is defined let $\epsilon'(b, \rho') = \epsilon(b, \rho')$,
        \item for $(b, \rho') \in D'$ such that $b\_$ is defined in $\Jord'_{\rho}$ and $(b\_, \rho') \neq ((a\_)\_, \rho)$, let $\epsilon'((b\_, \rho'), (b, \rho')) = \epsilon((b\_, \rho'), (b, \rho'))$,
        \item if in $\Jord_{\rho}$ we have $a = b\_$ and $c = (a\_)\_$, let
        \begin{equation*}
        \epsilon'((c, \rho), (b, \rho)) = \epsilon((c, \rho), (a\_, \rho)) \cdot \epsilon((a, \rho), (b, \rho)).
        \end{equation*}
    \end{itemize}
    Then $(\Jord', \sigma', \epsilon')$ is an admissible triple and $\sigma$ is a subrepresentation of
    \begin{equation*}
        \delta([\nu^{-\frac{a\_ - 1}{2}} \rho, \nu^{\frac{a-1}{2}} \rho]) \rtimes \sigma_{(\Jord', \sigma', \epsilon')}.
    \end{equation*}
    Moreover, there is a unique irreducible tempered subrepresentation $\tau$ of 
    \begin{equation} \label{indtemp}
        \delta([\nu^{-\frac{a\_ - 1}{2}} \rho, \nu^{\frac{a\_-1}{2}} \rho]) \rtimes \sigma_{(\Jord', \sigma', \epsilon')}
    \end{equation}
    such that $\sigma$ is a unique irreducible subrepresentation of $\delta([\nu^{\frac{a\_ + 1}{2}} \rho, \nu^{\frac{a-1}{2}} \rho]) \rtimes \tau$. Furthermore, we have
    \begin{enumerate}[(i)]
        \item If there is $b \in \Jord_{\rho}$ such that $b\_ = a$, then $\epsilon((a, \rho), (b, \rho)) = 1$ if and only if $\tau$ is a unique irreducible subrepresentation of (\ref{indtemp}) which contains an irreducible constituent of the form 
        \begin{equation*}
        \delta([\nu^{\frac{a\_ + 1}{2}} \rho, \nu^{\frac{b-1}{2}} \rho]) \otimes \pi
       \end{equation*}
        in the Jacquet module with respect to an appropriate parabolic subgroup. 
        \item If there is $b \in \Jord_{\rho}$ such that $(a\_)\_ = b$, then $\epsilon((b, \rho), (a\_, \rho)) = 1$ if and only if $\tau$ is a unique irreducible subrepresentation of (\ref{indtemp}) which contains an irreducible constituent of the form 
        \begin{equation*}
        \delta([\nu^{\frac{b + 1}{2}} \rho, \nu^{\frac{a\_ - 1}{2}} \rho]) \times \delta([\nu^{\frac{b + 1}{2}} \rho, \nu^{\frac{a\_ - 1}{2}} \rho]) \otimes \pi
       \end{equation*}
        in the Jacquet module with respect to an appropriate parabolic subgroup.
        \item If $a$ is even and $a\_ = \min(\Jord_{\rho})$, then
        $\epsilon(a\_, \rho) = 1$ if and only if $\tau$ is a unique irreducible subrepresentation of (\ref{indtemp}) which contains an irreducible constituent of the form 
        \begin{equation*}
        \delta([\nu^{\frac{1}{2}} \rho, \nu^{\frac{a\_ - 1}{2}} \rho]) \times \delta([\nu^{\frac{1}{2}} \rho, \nu^{\frac{a\_ - 1}{2}} \rho]) \otimes \pi
       \end{equation*}
        in the Jacquet module with respect to an appropriate parabolic subgroup.
    \end{enumerate}
\end{enumerate}
\end{thrm}
We note that characterizations $(i), (ii)$ and $(iii)$ from the previous theorem do not appear in \cite{Moe1, MT1}. Since all our results also hold in the classical group case, in this way we obtain a useful tool for identification of discrete series subquotients of induced representations of both classical and odd GSpin groups. Simpler versions of these characterizations have recently played an important role in \cite{KLM}, \cite{Matic12} and \cite{Matic13}.  

It is not our aim to discuss the relation between the obtained bijective correspondence and the Langlands parametrization, since the extended Langlands parametrization for the GSpin groups is still conjectural. However, we expect the consequences analogous to the ones observed in \cite{MT1}, i.e., elements appearing in the Jordan blocks should correspond to the conjectural discrete Langlands parameter, i.e., the equivalence classes of semi-simple morphisms $\varphi$ from $W_F \times SL(2, \mathbb{C})$ to the $L$-group $^{L}GSpin_{2n+1} = GSp_{2n}(\mathbb{C}) \rtimes W_F'$ satisfying several conditions where $W_F'$ is the Weil-Deligne group of $F$. Note that the Langlands parameter is called discrete if it does not factor through a proper Levi subgroup. Also, the $\epsilon$-function should correspond to morphisms from Cent$_{^{L}G}($Im$(\varphi)$) to $\{ \pm 1 \}$ (see \cite{CFK, HK, Kim3, Moe2} for recent works on the local Langlands correspondence for odd GSpin groups). In our article, we are not able to discuss the dimension relation for $GSpin_{2n+1}$; it is conjecturally $\displaystyle\sum\limits_{(a, \rho) \in Jord(\sigma)} a \cdot $dim$\rho = 2n$ for a discrete series $\sigma$ of $GSpin_{2n+1}$.

A classification of discrete series for symplectic and odd-orthogonal groups over a non-archimedean local field of characteristic zero based on the LLC approach is also given in \cite{Xu1}, but at the moment these methods do not seem to be applicable to the GSpin case.

We take a moment to describe the contents of the paper in more detail. In the second section we recall the required notation and preliminaries. The third and the fourth sections are the technical heart of the paper. In those two sections we introduce several invariants of discrete series and prove many of their basic properties. In the fifth section our main results are stated and proved.

First author has been supported by the National Research Foundation
of Korea (NRF) grant funded by the Korea government (MSIP) \\ (No.\ $2017R1C1B2010081$).



\section{Preliminaries}

Let $F$ be a non-archimedean local field of characteristic zero. Let $\textbf{G}_n$ denote a split general spin group $\textbf{GSpin}_{2n+1}$ of semisimple rank $n$ defined over $F$, i.e., the $F$-split connected reductive algebraic group having based root datum dual to that of $GSp_{2n}$. Here $GSp_{2n}$ stands for the split reductive linear algebraic group of type $B_n$ whose derived subgroup is a double covering of a split special orthogonal group. Let $G_n$ denote the group of $F$-points of $\textbf{G}_n$. Similarly, let $\textbf{GL}_n$ denote a general linear group of rank $n$ defined over $F$ and let $GL_n$ denote the group of its $F$-points. All the representations of $p$-adic groups which we consider will be smooth.


Let $\Irr(GL_n)$ denote the set of all irreducible admissible representations of $GL_n$, and let $\Irr(G_n)$ denote the set of all irreducible admissible representations of $G_n$. Let $R(GL_n)$ stand for the Grothendieck group of the abelian category of finite length admissible representations of $GL_n$ and define $R(GL) = \oplus_{n \geq 0} R(GL_n)$. Similarly, let $R(G_n)$ stand for the Grothendieck group of the abelian category of finite length admissible representations of $G_{n}$ and define $R(G) = \oplus_{n \geq 0} R(G_n)$.

We fix a choice of a Borel subgroup as in \cite{BanGold}. Let $s = (n_1, n_2, \ldots, n_k)$ denote an ordered partition of some $n' \leq n$ and let $P_s = M_s N_s$ denote the standard parabolic subgroup of $G_n$ corresponding to the partition $s$. It follows from \cite[Theorem~2.7]{Asgari1} that the Levi factor $M_s$ is isomorphic to $GL_{n_1} \times GL_{n_2} \times \cdots \times GL_{n_k} \times G_{n-n'}$. If $\delta_{i}$ is a representation of $GL_{n_{i}}$, for $i = 1, 2, \ldots, k$, and $\tau$ a representation of $G_{n-n'}$, we denote by $\delta_{1} \times \d_2 \times \cdots \times \delta_{k} \rtimes \tau$ the representation Ind$^{G_n}_{M_s}(\delta_{1} \otimes \delta_2 \otimes \cdots \otimes \delta_{k} \otimes \tau)$ of $G_n$ induced from the representation $\delta_{1} \otimes \delta_2 \otimes \cdots \otimes \delta_{k} \otimes \tau$ of $M_s$ using normalized parabolic induction. We use a similar notation to denote a parabolically induced representation of $GL_{m}$.

For any irreducible admissible representation $\pi$ of $GL_n$, and for $0 \leq k \leq n$, let $r_{(k)}(\pi)$ denote the normalized Jacquet module of $\pi$ with respect to the standard parabolic subgroup having Levi subgroup isomorphic to $GL_{k} \times GL_{n-k}$, and we abuse notation to identify $r_{(k)}(\pi)$ with its semisimplification in $R(GL_{k}) \otimes R(GL_{n-k})$. We define $m^{\ast}(\pi) = \sum_{k=0}^{n} (r_{(k)} (\pi)) \in R(GL) \otimes R(GL)$, for an irreducible representation $\pi$ of $GL_n$, and then extend $m^{\ast}$ linearly to the whole of $R(GL)$.

Let us denote by $\nu$ the composition of the determinant mapping with the normalized absolute value on $F$. Let $\rho \in \Irr(GL_{k})$ denote a cuspidal
representation. By a segment of cuspidal representations of $GL_k$ we mean a set of the form $\{ \rho, \nu \rho, \ldots, \nu^{m} \rho \} \subset R(GL_k)$, which we denote by $[\rho, \nu^{m} \rho]$.

Let $\rho$ denote an irreducible unitary cuspidal representation of $GL_{k}$ and let $a, b \in \mathbb{R}$ are such that $b - a$ is a non-negative integer.
The induced representation $\nu^{b} \rho \times \nu^{b-1} \rho \times \cdots \times \nu^{a} \rho$ has a unique irreducible subrepresentation, which we denote by $\delta ([\nu^{a} \rho, \nu^{b} \rho ])$. By the results of \cite{Zel}, assigning $\delta ([\nu^{a} \rho, \nu^{b} \rho ])$ to segment $[\nu^{a} \rho, \nu^{b} \rho ]$ gives a bijection between appropriately long segments and isomorphism classes of irreducible essentially square-integrable representations in $R(GL)$. 

We frequently use the following equation \cite[Proposition~3.4]{Zel}:
\begin{equation*}
m^{\ast}(\delta([\nu^{a} \rho, \nu^{b} \rho])) = \sum_{i=a-1}^{b} \delta([\nu^{i+1} \rho, \nu^{b} \rho]) \otimes \delta([\nu^{a} \rho, \nu^{i} \rho]).
\end{equation*}

For a representation $\sigma \in R(G_{n})$ and $1 \leq k \leq n$, we denote by $r_{(k)}(\sigma)$ the normalized Jacquet module of $\sigma$ with respect to the parabolic subgroup $P_{(k)}$ having the Levi subgroup equal to $GL_{k} \times G_{n-k}$. We abuse notation to identify $r_{(k)}(\sigma)$ with its semisimplification in $R(GL_{k}) \otimes R(G_{n-k})$ and consider
\begin{equation*}
\mu^{\ast}(\sigma) = 1 \otimes \sigma + \sum_{k=1}^{n} r_{(k)}(\sigma) \in R(GL) \otimes R(G).
\end{equation*}

There is a natural partial order on the Grothendieck groups which we consider: $\pi_1 \leq \pi_2$ if $m(\rho, \pi_1) \leq m(\rho, \pi_2)$ for all irreducible smooth $\rho$, where $m(\rho, \pi_i)$ denotes the multiplicity of $\rho$ in $\pi_i$ for $i=1, 2$.

Note that if a twist of some irreducible unitarizable representation $\rho \in R(GL)$ 'appears in' the cuspidal support of a discrete series $\s \in R(G)$, then $\rho$ is an essentially self-dual representation \cite[Proposition~2.5,~Remark~2.3]{Kim1}, i.e., if $\rho \cong \nu^{-e(\rho)} \rho^{u}$, where $\rho^{u}$ is unitarizable, then $\rho^{u} \cong \widetilde{\rho^{u}}\otimes (\omega_{\sigma} \circ {\textrm det})$, where $\omega_{\sigma}$ is the central character of $\sigma$ restricted to the identity component of the center of $G$ and consider it as the character of $F^{\times}$.


We take a moment to state a result, derived in \cite[Theorem~3.4]{Kim1}, which presents a crucial structural formula for our calculations of Jacquet modules of induced representations.

\begin{thrm}[Structural formula] \label{osn}
Let $\rho$ denote an irreducible essentially self-dual unitarizable cuspidal representation of $GL_n$ and let $k, l \in \mathbb{R}$ be such that $k + l$ is a non-negative integer. Let $\sigma \in R(G)$ be an admissible representation of finite length. Write $\mu^{\ast}(\sigma) = \sum_{\tau, \sigma'} \tau \otimes \sigma'$. Then the following holds:
\begin{align*}
\mu^{\ast}(\delta([\nu^{-k} \rho, \nu^{l} \rho]) \rtimes \sigma) & = \sum_{i = -k-1}^{l} \sum_{j=i}^{l} \sum_{\tau, \sigma'} \delta([ \nu^{-i} \rho , \nu^{k} \rho]) \times \\ & \qquad \times  \delta([\nu^{j+1} \rho, \nu^{l} \rho]) \times \tau \nonumber \otimes \delta([\nu^{i+1} \rho, \nu^{j} \rho]) \rtimes \sigma'.
\end{align*}
We omit $\delta([\nu^{x} \rho, \nu^{y} \rho])$ if $x > y$.
\end{thrm}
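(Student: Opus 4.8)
The plan is to deduce the formula from Tadi\'c's multiplicativity principle for $\mus$, which reduces everything to an explicit manipulation of $\ms$ on a single segment. The structural engine I would invoke is the identity
\[
\mus(\d([\nu^{-k}\rho,\nu^l\rho]) \rt \s) = M^{\ast}(\d([\nu^{-k}\rho,\nu^l\rho])) \rt \mus(\s),
\]
where $M^{\ast} = (m \ot \mathrm{id}) \circ (\,\widetilde{\phantom{x}}\, \ot \ms) \circ s \circ \ms$, with $s$ the transposition on $R(GL)\ot R(GL)$, $\widetilde{\phantom{x}}$ the (suitably twisted) contragredient, and $m$ the multiplication $\t$; the external product $\rt$ on $R(GL)\ot R(G)$ is defined by $(x\ot y)\rt(x'\ot\s') = (x\t x')\ot(y\rt\s')$. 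This multiplicativity formula is precisely the incarnation of the Bernstein--Zelevinsky geometric lemma for the Jacquet modules of the odd general spin groups; its proof requires adapting the Mackey-theoretic bookkeeping to the Weyl group and Levi structure of $\textbf{G}_n$, and this is the step I expect to absorb most of the real work. Granting it, the theorem becomes a mechanical evaluation of $M^{\ast}$ on the segment.

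Concretely, I would first compute $\ms(\d([\nu^{-k}\rho,\nu^l\rho]))$ from the displayed segment formula, obtaining $\sum_{i=-k-1}^{l} \d([\nu^{i+1}\rho,\nu^l\rho]) \ot \d([\nu^{-k}\rho,\nu^i\rho])$. Applying $s$ and then $\ms$ to the second tensor factor introduces a second index $j$ ranging from $i$ to $l$ and yields the triple tensor
\[
\sum_{i=-k-1}^{l}\sum_{j=i}^{l} \widetilde{\d([\nu^{-k}\rho,\nu^i\rho])} \ot \d([\nu^{j+1}\rho,\nu^l\rho]) \ot \d([\nu^{i+1}\rho,\nu^j\rho]).
\]
Multiplying the first two factors via $m$ then produces $M^{\ast}(\d([\nu^{-k}\rho,\nu^l\rho]))$, and substituting $\mus(\s) = \sum_{\tau,\s'}\tau\ot\s'$ together with the definition of $\rt$ gives exactly the claimed triple sum. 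Finally I would verify the boundary conventions: when $i=-k-1$ or $j=l$ the outer segments degenerate to the trivial representation $1$, while out-of-range situations force the corresponding $\d$ to vanish, so the stated index ranges select precisely the nonzero contributions.

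The one genuinely delicate point is the evaluation of the contragredient in the first factor. Since $\rho$ is essentially self-dual and unitarizable, one expects $\widetilde{\d([\nu^{-k}\rho,\nu^i\rho])} = \d([\nu^{-i}\rho,\nu^k\rho])$, which is exactly the factor $\d([\nu^{-i}\rho,\nu^k\rho])$ appearing in the statement. The main obstacle, and the place where the odd $GSpin$ case departs from the classical groups, is that the contragredient entering $M^{\ast}$ carries a twist by the central character $\omega_\s$; I would therefore have to check that the essential self-duality of $\rho$ (as recorded in the remark following $\mus$) makes this twist cancel, leaving no residual character in the final formula. Once that cancellation is confirmed, the computation closes and reproduces the stated identity.
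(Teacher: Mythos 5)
Your outline is correct and is essentially the argument behind this result: the paper itself gives no proof but quotes the formula from \cite{Kim1}, where it is derived exactly as you describe, by adapting Tadi\'{c}'s multiplicativity $\mu^{\ast}(\delta \rtimes \sigma) = M^{\ast}(\delta) \rtimes \mu^{\ast}(\sigma)$ (via the Bernstein--Zelevinsky geometric lemma for the Levi and Weyl group structure of $\textbf{G}_n$) and then evaluating $M^{\ast}$ on the segment, with the $\omega_{\sigma}$-twisted contragredient absorbed by the essential self-duality $\rho^{u} \cong \widetilde{\rho^{u}} \otimes \omega_{\sigma}$. You correctly isolate the two points that carry the real content, namely the Mackey-theoretic proof of the multiplicativity identity and the cancellation of the similitude-character twist, and your index bookkeeping reproduces the stated ranges and boundary conventions.
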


We recall the definition of strongly positive representations of GSpin groups. Note that the classification of strongly positive representation is typically the first step towards the classification of discrete series. As with the M{\oe}glin-Tadi\'{c} classification, it will turn out that strongly positive discrete series representations correspond to admissible triples of alternated type (Proposition \ref{propsp} and Definition \ref{deftripl}). 

\begin{definition}[Strongly positive]\label{defsp}
An irreducible representation $\sigma \in Irr(G)$ is called strongly positive if for every embedding
\begin{equation*}
    \sigma \hookrightarrow \nu^{s_1}\rho_1 \times \nu^{s_2}\rho_2 \times \cdots \times \nu^{s_k}\rho_k \rtimes \sigma_{cusp}
\end{equation*}
\begin{sloppypar}
\noindent where $\rho_i \in R(GL)$ is an irreducible unitary cuspidal representation for $i=1, 2, \ldots, k$, $\sigma_{cusp}$ is an irreducible cuspidal representation of $G_{n'}$ and $s_i \in \mathbb{R}, i=1, 2, \ldots, k$, we have $s_i >0$ for each $i$.
\end{sloppypar}
\end{definition}

We also recall the square-integrability criterion for GSpin groups \cite[Proposition 4.2]{Asgari1}.

\begin{proposition}[Square-integrability criterion]\label{propsi}
Let $\beta_i = (1, \ldots, 1, 0, \ldots$, $0) \in \mathbb{R}^n$, where $1$ appears $i$ times. Let $\sigma \hookrightarrow \nu^{e(\rho_1)}\rho_1 \times \nu^{e(\rho_2)}\rho_2 \times \cdots \times \nu^{e(\rho_k)}\rho_k \rtimes \sigma_{cusp}$ be an irreducible representation of $G_n$, where $\rho_i$ is an irreducible cuspidal unitary representation of $GL_{n_i}$ and $\sigma_{cusp}$ is an irreducible cuspidal representation of $G_{n'}$. We set
\begin{equation*}
e_\star(\sigma)=(e(\rho_1), \ldots, e(\rho_1), \ldots, e(\rho_k), \ldots, e(\rho_k), 0, \ldots, 0) \in \mathbb{R}^n
\end{equation*}
(Here $e(\rho_i)$ appears $n_i$ times for $i=1, 2, \ldots, k$).
If $\sigma$ is square integrable (resp.\ tempered), then
\begin{equation*}
(e_\star(\sigma), \beta_{n_1}) >0, (e_\star(\sigma), \beta_{n_1+n_2}) >0,  \ldots, (e_\star(\sigma), \beta_{n_1+ \cdots + n_k}) >0
\end{equation*}
\begin{equation*}
(resp.\ (e_\star(\sigma), \beta_{n_1}) \geq 0, (e_\star(\sigma), \beta_{n_1+n_2}) \geq 0, \ldots, (e_\star(\sigma), \beta_{n_1+ \cdots + n_k}) \geq 0).
\end{equation*}
Conversely, if the above inequalities hold for all embeddings $\sigma \hookrightarrow \nu^{e(\rho_1)}\rho_1 \times \nu^{e(\rho_2)}\rho_2 \times \cdots \times \nu^{e(\rho_k)}\rho_k \rtimes \sigma_{cusp}$ where $\rho_i$ is an irreducible cuspidal unitary representation of $GL_{n_i}$ and $\sigma_{cusp}$ is an irreducible cuspidal representation of $G_{n'}$, then 
$\sigma$ is square integrable (resp.\ tempered).
\end{proposition}

Throughout the paper, for simplicity of the notation we abbreviate a `discrete series representation' or a `representation belonging to the discrete series' to simply `discrete series'.

Following the same lines as in proofs of \cite[Lemma~3.4, Theorem~3.5]{Matic5}, which completely rely on the representation theory of the general linear group and the square-integrability criterion, so can be applied to our situation, we obtain:

\begin{corollary} \label{notds}
Suppose that $\pi \in R(G)$ is not square-integrable (resp. not tempered). Then there exist $a, b$ such that $b - a \in \mathbb{Z}$ and $a + b \leq 0$ (resp. $a+b < 0$), an irreducible cuspidal representation $\rho \in R(GL)$, and an irreducible representation $\pi' \in R(G)$, such that $\pi$ is a subrepresentation of $\delta([\nu^{a} \rho, \nu^{b} \rho]) \rtimes \pi'$.
\end{corollary}

We briefly recall the subrepresentation version of the Langlands classification for general linear groups.

\begin{sloppypar}
For an irreducible essentially square-integrable representation $\delta \in R(GL)$, there is a unique $e(\delta) \in \mathbb{R}$ such that $\nu^{-e(\delta)} \delta$ is unitarizable. Note that $e(\delta([\nu^{a} \rho, \nu^{b} \rho ])) = (a + b) / 2$. Suppose that $\delta_{1}, \delta_{2}, \ldots, \delta_{k}$ are irreducible essentially square-integrable representations such that $e(\delta_{1}) \leq e(\delta_{2}) \leq \cdots \leq e(\delta_{k})$. Then the induced representation $\delta_{1} \times \delta_{2} \times \cdots \times \delta_{k}$ has a unique irreducible subrepresentation, which we denote by $L(\delta_{1}, \delta_{2}, \ldots, \delta_{k})$. This irreducible subrepresentation is called the Langlands subrepresentation, and it appears with multiplicity one in the composition series of $\delta_{1} \times \delta_{2} \times \cdots \times \delta_{k}$. Every irreducible representation $\pi \in R(GL)$ is isomorphic to some $L(\delta_{1}, \delta_{2}, \ldots, \delta_{k})$ and, for a given $\pi$, the representations $\delta_{1}, \delta_{2}, \ldots, \delta_{k}$ are unique up to a permutation.
\end{sloppypar}

Similarly, throughout the paper we use the subrepresentation version of the Langlands classification for $G_n$, since it is more appropriate for our Jacquet module considerations. So, we realize a non-tempered irreducible representation $\pi$ of $G_{n}$ as a unique irreducible subrepresentation of an induced representation of the form $\delta_{1} \times \delta_{2} \times \cdots \times \delta_{k} \rtimes \tau$, where $\tau$ is a tempered representation of some $G_{t}$, and $\d_1, \d_2, \ldots, \d_k \in R(GL)$ are irreducible essentially square-integrable representations such that $e(\d_1) \leq e(\d_2) \leq \cdots \leq e(\d_k) < 0$. In this case, we write $\pi = L(\delta_{1}, \delta_{2}, \ldots, \delta_{k}, \tau)$ and, for a given $\pi$, the representations $\delta_{1}, \delta_{2}, \ldots, \delta_{k}$ are unique up to a permutation.

The following result \cite[Lemma~5.5]{Jan3}, whose proof is valid in the GSpin case, is used several times.
\begin{lemma} \label{lemajantz}
Suppose that $\pi \in R(G_n)$ is an irreducible representation, $\lambda$ an irreducible representation of the Levi subgroup $M$ of $G_n$, and $\pi$ is a subrepresentation of Ind$_{M}^{G_{n}}(\lambda)$. If $L > M$ is a Levi subgroup of $G_n$, then there is an irreducible subquotient $\rho$ of Ind$_{M}^{L}(\lambda)$ such that $\pi$ is a subrepresentation of Ind$_{L}^{G_{n}}(\rho)$.
\end{lemma}

\section{Invariants of discrete series I: the $\epsilon$-function on pairs}

In this section we introduce several invariants of discrete series and obtain their basic properties.

A partial cuspidal support of a discrete series $\sigma \in \Irr(G_{n})$ is an irreducible cuspidal representation $\sigma_{cusp}$ of some $G_{m}$ such that there exists a representation $\pi \in R(GL_{n - m})$ such that $\sigma$ is a subrepresentation of $\pi \rtimes \sigma_{cusp}$. We note that it follows directly from \cite[Proposition~2.5]{Kim1} that such a representation $\sigma_{cusp}$ is unique. From now on, for any irreducible admissible representation $\sigma$, $\sigma_{cusp}$ will denote its partial cuspidal support.

\begin{definition} \label{defjord}
The Jordan block of a discrete series $\sigma \in R(G)$, which we denote by $\Jord(\sigma)$, is the set of all pairs $(a, \rho)$ where $\rho$ is an irreducible cuspidal unitarizable essentially self-dual representation of some $GL_{n_{\rho}}$ and $a$ is a positive integer such that the following two conditions are satisfied:
\begin{enumerate}[(1)]
  \item The positive integer $a$ is even if and only if $L(s, \rho, r)$ has a pole at $s=0$. Here, the local $L$-function $L(s, \rho, r)$ is the one defined by Shahidi (see for instance \cite{Sh1}, \cite{Sh2}), and $r =$ Sym$^2 \mathbb{C}^{n_{\rho}} \otimes \mu^{-1}$, where Sym$^2 \mathbb{C}^{n_{\rho}}$ is the symmetric-square representation of the standard representation on $\mathbb{C}^{n_{\rho}}$ of $\textbf{GL}_{n_{\rho}}(\mathbb{C})$ and $\mu$ is the similitude character of $\hat{G}_n$, as in \cite[Proposition~5.6]{Asgari1}.
  \item The induced representation $\delta([\nu^{-\frac{a-1}{2}} \rho, \nu^{\frac{a-1}{2}} \rho]) \rtimes \sigma$ is irreducible.
\end{enumerate}
\end{definition}

Note that $\Jord(\sigma)$ is not a multiset.
For a given irreducible cuspidal unitarizable essentially self-dual representation $\rho$ of $GL$, we let $\Jord_{\rho}(\sigma):=\{a :  (a, \rho) \in \Jord(\sigma) \}$.

For $z \in (1/2)\mathbb{Z}$ let $\mu(z, \s)(s)$ be the Plancherel measure which is the composite of two standard intertwining operators:
$$
\nu^s \delta([\nu^{-\frac{z-1}{2}}\rho,\nu^{\frac{z-1}{2}}\rho])\rtimes \s \rightarrow \nu^{-s} \delta([\nu^{-\frac{z-1}{2}}\rho,\nu^{\frac{z-1}{2}}\rho])\rtimes \s \rightarrow \nu^s \delta([\nu^{-\frac{z-1}{2}}\rho,\nu^{\frac{z-1}{2}}\rho])\rtimes \s.$$

\begin{lemma}\label{poleofplancherel}
$z \in \Jord(\s)$ if and only if $\mu(z, \s)(s)$ has a pole at $s=0$
\end{lemma}
\begin{proof}
This follows from the condition (2) from the definition of $\Jord(\s)$ and the result of Harish-Chandra (\cite[Section~2]{BanGold}, see also \cite{Wald3}).
\end{proof}
First we prove an analogue of \cite[Proposition~2.1]{MT1}.

\begin{proposition} \label{propjord}
Let $\s \in \Irr(G_n)$ denote a discrete series and let $\rho \in \Irr(GL_{n_{\rho}})$ denote an essentially self-dual cuspidal unitarizable representation. Suppose that $x, y$ are half-integers such that $x - y$ is a non-negative integer, and assume that $x, y \in \mathbb{Z}$ if and only if $L(s, \rho, r)$ has no pole at $s=0$. If there is an embedding
\begin{equation*}
\s \hookrightarrow \nu^{x} \rho \times \nu^{x-1} \rho \times \cdots \times \nu^{y} \rho \rt \s',
\end{equation*}
where $\s' \in \Irr(G_m)$ is a discrete series, then the following holds:
\begin{enumerate}[(1)]
\item If $y > 0$, then $2y- 1 \in \Jord_{\rho}(\s')$ and $\Jord_{\rho}(\s) = \Jord_{\rho}(\s') \cup \{ 2x + 1 \} \setminus \{ 2y-1 \}$.
\item If $y < 0$, then $\Jord_{\rho}(\s) = \Jord_{\rho}(\s') \cup \{ 2x + 1, 1-2y \}$. Also, $2x + 1, 1-2y \notin \Jord_{\rho}(\s')$.
\end{enumerate}
\end{proposition}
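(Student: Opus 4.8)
The plan is to read off $\Jord_\rho(\sigma)$ directly from the two defining conditions and to compare it term by term with $\Jord_\rho(\sigma_{ds})$. The first condition, which fixes the parity of $a$ through the pole of $L(s,\rho,r)$ at $s=0$, depends only on $\rho$ and so holds simultaneously for $\sigma$ and $\sigma_{ds}$; since the cuspidal support of $\sigma$ exceeds that of $\sigma_{ds}$ exactly by the self-dual string $\nu^{x}\rho,\nu^{x-1}\rho,\dots,\nu^{y}\rho$, the reducibility point of $\nu^{s}\rho\rtimes\sigma_{cusp}$ pins down the common parity, so that $2x+1$, $2y-1$ and $1-2y$ are admissible candidate values of that parity. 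It therefore suffices, for each candidate $a$ of the correct parity, to decide whether the second condition holds, i.e. whether $\delta_{a}\rtimes\sigma$ is irreducible, where $\delta_{a}=\delta([\nu^{-(a-1)/2}\rho,\nu^{(a-1)/2}\rho])$, and to match the answer against the corresponding one for $\sigma_{ds}$.

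I would first reduce to a single factor by induction on the length $x-y$ of the string. Writing the embedding as $\sigma\hookrightarrow\nu^{x}\rho\rtimes(\nu^{x-1}\rho\times\cdots\times\nu^{y}\rho\rtimes\sigma_{ds})$, I would isolate an intermediate discrete series $\sigma'$ with $\sigma\hookrightarrow\nu^{x}\rho\rtimes\sigma'$ and $\sigma'\hookrightarrow\nu^{x-1}\rho\times\cdots\times\nu^{y}\rho\rtimes\sigma_{ds}$, so that the inductive hypothesis applied to $\sigma'$ governs the lower part of the string while a single-factor statement disposes of the top exponent $x$. Producing this $\sigma'$ and verifying that it is genuinely square-integrable, rather than merely tempered, is where Casselman's square-integrability criterion becomes indispensable, and I expect it to be the principal obstacle.

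In the base case $\sigma\hookrightarrow\nu^{x}\rho\rtimes\sigma_{ds}$, with $y=x>0$, I would compute the pertinent Jacquet modules through the structural formula of Theorem~\ref{osn}, extracting the leading term $\nu^{x}\rho\otimes(\cdots)$ from the embedding by Frobenius reciprocity. The decisive point is that $\nu^{x}\rho$ is linked to the symmetric segment underlying $\delta_{a}$ only for $a\in\{2x-1,2x+1\}$; for every other admissible $a$ it commutes past $\delta_{a}$, so $\delta_{a}\rtimes\sigma$ and $\delta_{a}\rtimes\sigma_{ds}$ have the same reducibility and $a$ lies in $\Jord_\rho(\sigma)$ if and only if it lies in $\Jord_\rho(\sigma_{ds})$. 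At the two exceptional values a direct computation shows that $\delta([\nu^{-x}\rho,\nu^{x}\rho])\rtimes\sigma$ is irreducible, inserting $2x+1$, whereas the link between the extra $\nu^{x}\rho$ and the top of $\delta([\nu^{-(x-1)}\rho,\nu^{x-1}\rho])$ forces $\delta([\nu^{-(x-1)}\rho,\nu^{x-1}\rho])\rtimes\sigma$ to be reducible, deleting $2x-1$. The membership $2x-1\in\Jord_\rho(\sigma_{ds})$ itself comes from the square-integrability criterion: were that representation already reducible for $\sigma_{ds}$, the string could be prolonged to strictly smaller and eventually non-positive exponents, contradicting the square-integrability of $\sigma$; the reducibility of $\nu^{s}\rho\rtimes\sigma_{cusp}$ anchoring these linkages is exactly the input furnished by the $L$-function computation.

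For $y<0$ the same machinery applies, but the string now crosses $0$, so peeling it off produces two independent edge contributions, one at the top exponent $\nu^{x}\rho$ and one at the bottom exponent $\nu^{y}\rho$, yielding the two new values $2x+1$ and $1-2y$ and removing none; unlike the case $y>0$, the bottom exponent lies on the far side of $0$ and is not linked to any incoming Jordan value of $\sigma_{ds}$, which is why it creates a new value rather than substituting for one. Here one additionally uses the self-duality $\rho\cong\widetilde{\rho}$, up to a central twist, to treat the sub-string of non-positive exponents, since no intermediate square-integrable representation with a non-positive leading exponent can arise. Throughout, the genuinely delicate task is to control all subquotients delivered by the structural formula tightly enough for Casselman's criterion to isolate exactly the asserted irreducibilities, while simultaneously securing the existence and square-integrability of the intermediate discrete series employed in the induction.
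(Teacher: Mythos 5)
Your route is genuinely different from the paper's. You propose to work directly with the defining irreducibility condition for $\Jord_{\rho}$, peeling the string $\nu^{x}\rho,\dots,\nu^{y}\rho$ off one exponent at a time and deciding for each candidate $a$ whether $\delta_a\rtimes\sigma$ is irreducible by linkage, Jacquet-module and square-integrability arguments. The paper instead computes the Plancherel measure: using the M{\oe}glin--Waldspurger formula for normalizing factors it obtains $\mu(z,\sigma)(s)=\mu(z,\sigma_{ds})(s)$ times an explicit product of four ratios of $L$-functions $L(\cdot,\rho\times\widetilde{\rho})$, and then reads off $\Jord_{\rho}(\sigma)$ from the criterion that $z\in\Jord_{\rho}(\sigma)$ if and only if $\mu(z,\sigma)(s)$ has a pole at $s=0$, using that $L(s,\rho\times\widetilde{\rho})$ has a simple pole exactly at $s=0$ and that the Plancherel measure has order zero or two there. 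All three assertions of the proposition, including $2y-1\in\Jord_{\rho}(\sigma_{ds})$, drop out of this single order-counting computation.

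The gap in your version is that its decisive steps are \emph{irreducibility} statements, and the tools you invoke cannot establish them. Concretely: (i) the insertion of $2x+1$ requires proving that $\delta([\nu^{-x}\rho,\nu^{x}\rho])\rtimes\sigma$ is irreducible; (ii) part (1) requires proving that $\delta([\nu^{-(y-1)}\rho,\nu^{y-1}\rho])\rtimes\sigma_{ds}$ is irreducible; (iii) for unlinked $a$ you claim $\delta_a\rtimes\sigma$ and $\delta_a\rtimes\sigma_{ds}$ have the same reducibility. For (iii), unlinkedness only gives $\nu^{x}\rho\times\delta_a\cong\delta_a\times\nu^{x}\rho$ on the $GL$ side, whence $\delta_a\rtimes\sigma\hookrightarrow\nu^{x}\rho\times\delta_a\rtimes\sigma_{ds}$; but this embedding transfers irreducibility in neither direction, since the ambient induced representation is reducible for trivial reasons. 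For (i) and (ii), Frobenius reciprocity and the structural formula can detect reducibility (by exhibiting a proper subrepresentation or by a multiplicity count) but cannot prove irreducibility; Casselman's criterion is silent here because these induced representations are tempered rather than square-integrable, and your suggestion that reducibility of $\delta([\nu^{-(y-1)}\rho,\nu^{y-1}\rho])\rtimes\sigma_{ds}$ would let one ``prolong the string to non-positive exponents'' does not actually produce an embedding of $\sigma$ violating the criterion. Deciding these irreducibilities is exactly where an analytic input (Plancherel measure and the simple pole of $L(s,\rho\times\widetilde{\rho})$ at $s=0$, equivalently $R$-group data) is unavoidable, so the inductive scheme as described does not close.
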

\begin{proof}

First, we note that for cuspidal unitarizable representations $\rho_1$ and $\rho_2 \in \Irr(GL)$ and non-negative integers $z_1$ and $z_2$, the normalizing factor, modulo a holomorphic invertible function of $s$, corresponding to an intertwining operator between two essentially square integrable representations
\begin{multline*}
\nu^{s}\delta([ \nu^{-\frac{z_1-1}{2}} \rho_1, \nu^{\frac{z_1-1}{2}} \rho_1]) \times \nu^{s'}\delta([ \nu^{-\frac{z_2-1}{2}} \rho_2, \nu^{\frac{z_2-1}{2}} \rho_2]) \rightarrow
\\
\nu^{s'}\delta([ \nu^{-\frac{z_2-1}{2}} \rho_2, \nu^{\frac{z_2-1}{2}} \rho_2]) \times \nu^{s}\delta([ \nu^{-\frac{z_1-1}{2}} \rho_1, \nu^{\frac{z_1-1}{2}} \rho_1])
\end{multline*}
is, by \cite{Sh0} (see also \cite[I.4.~the~formula~(1)]{MW}), equal to
\begin{multline} \label{formulaone}
L(s-s', \delta([ \nu^{-\frac{z_1-1}{2}} \rho_1, \nu^{\frac{z_1-1}{2}} \rho_1]) \times \widetilde{\delta([ \nu^{-\frac{z_2-1}{2}} \rho_2, \nu^{\frac{z_2-1}{2}} \rho_2])})\cdot \\
\cdot \left(L(1+s-s', \delta([ \nu^{-\frac{z_1-1}{2}} \rho_1, \nu^{\frac{z_1-1}{2}} \rho_1]) \times \widetilde{\delta([ \nu^{-\frac{z_2-1}{2}} \rho_2, \nu^{\frac{z_2-1}{2}} \rho_2])}) \right)^{-1} = \\
=L(s-s'+ |(z_1-z_2)/2|, \rho_1 \times \widetilde{\rho_2})\left(L(s-s'+(z_1+z_2)/2, \rho_1 \times \widetilde{\rho_2}) \right)^{-1}.
\end{multline}

As in \cite[Proposition~2.1]{MT1}, to calculate the Plancherel measure $\mu(z, \s)(s)$ modulo a holomorphic invertible function of $s$, we apply the formula (\ref{formulaone}). Using the factorization of intertwining operators \cite{Sh0} or \cite[Theorem 4.2.2]{Sh3}, $\mu(z, \s)(s)$ consists of two parts: $\mu(z, \s')(s)$ and the standard intertwining operators between representations of $GL$. Since the computation of the standard intertwining operators between representations of $GL$ modulo a holomorphic invertible function of $s$ in terms of $L$-functions is well known (\ref{formulaone}), we have the following equality modulo a holomorphic invertible function of $s$:

\begin{multline}\label{Plancherel}
\mu(z, \s)(s)= \mu(z, \s')(s) \cdot \frac{L(s-x+(z-1)/2, \rho \times \widetilde{\rho})}{L(s-y+(z-1)/2+1, \rho \times \widetilde{\rho})} \cdot \\
\cdot \frac{L(s+y+(z-1)/2, \rho \ti \widetilde{\rho})}{L(s+x+(z-1)/2+1, \rho \ti \widetilde{\rho})} \cdot \frac{L(-s-x+(z-1)/2, \rho \ti \widetilde{\rho})}{L(-s-y+(z-1)/2+1, \rho \ti \widetilde{\rho})} \cdot \\
\cdot \frac{L(-s+y+(z-1)/2, \rho \ti \widetilde{\rho})}{L(-s+x+(z-1)/2+1, \rho \ti \widetilde{\rho})}.
\end{multline}

We now use Lemma \ref{poleofplancherel} to prove the proposition.
First, it is known that $L(s, \rho \times \widetilde{\rho})$ has a pole only at $s=0$ and it is a simple pole. Furthermore, it is non-zero. Therefore, the product of the $L$-functions in (\ref{Plancherel}) has a (double) pole at $s=0$ if and only if either $x=(z-1)/2$ or $y=-(z-1)/2$ and has zero if and only if either $y=(z-1)/2+1$ or $x=-(z-1)/2-1$. Furthermore, it is also known that the Plancherel measure (both $\mu(z, \s)(s)$ and $\mu(z, \s')(s)$) has order zero or two at $s=0$. Therefore, $\mu(z, \s)(s)$ has a pole at $s=0$ if and only if either one of the following cases holds:
\begin{itemize}
\item $\mu(z, \s')(s)$ has a pole at $s=0$ and $y \neq (z-1)/2+1$,
\item $x=(z-1)/2$,
\item $y=-(z-1)/2$.
\end{itemize}
Note that the case $x \neq -(z-1)/2-1$ always holds since $-(z-1)/2-1 \leq -1$.

If $y >0$, the case $y=-(z-1)/2$ cannot happen and therefore, $\Jord_{\rho}(\s) = \Jord_{\rho}(\s') \cup \{ 2x+1 \} \setminus \{ 2y-1 \}$.    

If $y \leq 0$, the case $y \neq (z-1)/2+1$ always holds. Therefore, we have
$\Jord_{\rho}(\s) = \Jord_{\rho}(\s') \cup \{ 2x+1, -2y+1 \}$.
\end{proof}

Let $\sigma_{cusp} \in R(G)$ denote an irreducible cuspidal representation, and let $\rho \in R(GL)$ stand for an irreducible self-dual cuspidal representation. By  \cite[Th\'{e}oreme~3.1.1]{Moe2}, there is a unique positive integer $\alpha$ such that $\nu^{\frac{\alpha-1}{2}} \rho \rtimes \sigma_{cusp}$ reduces. 

Following the same lines as in the proofs of \cite[Theorems~2.3,~2.5]{Tad8} and in the proofs of \cite[Propositions~4.1,~4.2]{Tad2}, all of which completely rely on the structural formula, so can be directly applied to the odd GSpin situation, we deduce that 
$\delta([\nu^{-\frac{a-1}{2}} \rho, \nu^{\frac{a-1}{2}} \rho]) \rtimes \sigma_{cusp}$ reduces if and only if $a \geq \alpha$. This implies the so-called Basic Assumption for the odd GSpin groups: $\Jord_{\rho}(\sigma_{cusp})$ is finite and
\begin{itemize}
    \item if $\nu^{\frac{\alpha-1}{2}} \rho \rtimes \sigma_{cusp}$ reduces for $\alpha \geq 3$ then $\alpha = \max(\Jord_{\rho}(\sigma_{cusp}))$, and for $a \in \Jord_{\rho}(\sigma_{cusp})$ such that $a \geq 3$ we have $a-2 \in \Jord_{\rho}(\sigma_{cusp})$,
    \item if $\nu^{\frac{\alpha-1}{2}} \rho \rtimes \sigma_{cusp}$ reduces for $\alpha \in \{ 1, 2 \}$, then $\Jord_{\rho}(\sigma_{cusp}) = \emptyset$.
\end{itemize}
The previous proposition will enable us to connect $\Jord(\sigma)$ and $\Jord(\sigma_{cusp})$.

For any ordered pair $(\rho, \sigma_{cusp})$ consisting of an irreducible self-dual cuspidal representation $\rho \in R(GL)$ and an irreducible cuspidal representation $\sigma_{cusp} \in R(G)$ such that $\rho \rtimes \sigma_{cusp}$ reduces, we fix once and for all a choice of labeling of mutually non-isomorphic irreducible tempered representations $\tau_1$ and $\tau_{-1}$ such that in $R(G)$ we have $\rho \rtimes \sigma_{cusp} = \tau_1 + \tau_{-1}$. We note that such equality in $R(G)$ follows from \cite[Theorem~2.6]{BanGold}.

\begin{lemma} \label{remarkprva}
Let $\s \in \Irr(G_n)$ be a discrete series, let $\rho \in \Irr(GL_{k})$ be a cuspidal unitarizable representation, and let $x \in \mathbb{R}$ be such that there exists an irreducible representation $\sigma'$ of $G_{n-k}$ such that $\s$ is a subrepresentation of the induced representation
\begin{equation*}
\nu^{x} \rho \rt \sigma'.
\end{equation*}
Then $x$ is a half-integer, and $(2x+1, \rho) \in \Jord(\s)$.
\end{lemma}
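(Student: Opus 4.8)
The plan is to upgrade the single-factor embedding into an embedding of the shape treated in Proposition~\ref{propjord} and then simply read off the conclusion. Note first that, since $\nu^{x}\rho$ lies in the cuspidal support of the discrete series $\s$, the representation $\rho$ is essentially self-dual (as recorded in the remark following the definition of $\mus$), so the pair $(\rho,2x+1)$ is a legitimate candidate for membership in $\Jord(\s)$, and the parity condition in the definition of the Jordan block is meaningful.

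Next I would pin down $x$. By Frobenius reciprocity the embedding $\s\h\nu^{x}\rho\rt\sigma'$ exhibits $\nu^{x}\rho\ot\sigma'$ as a subquotient of $r_{(k)}(\s)$, and refining $\sigma'$ to its cuspidal support produces an irreducible subquotient of the full Jacquet module of $\s$ whose leading term is $\nu^{x}\rho$. Casselman's square-integrability criterion then forces $e(\nu^{x}\rho)=x>0$, which rules out the degenerate possibilities $x\le 0$. Moreover, since $\rho$ is essentially self-dual and unitarizable, the cuspidal reducibility point of $\rho\rt\s_{cusp}$ lies in $\{0,\tfrac12,1\}$, and the entire $\rho$-part of the cuspidal support of $\s$ lies in a single coset of $\Z$; hence $x\in\tfrac12\Z_{>0}$ and $2x+1$ is a positive integer.

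The heart of the argument is to produce a discrete series on the right. Let $y\le x$ be the smallest real number for which there is an irreducible representation $\tau$ with
\begin{equation*}
\s\h\nu^{x}\rho\t\nu^{x-1}\rho\t\cdots\t\nu^{y}\rho\rt\tau .
\end{equation*}
Such a $y$ exists (the hypothesis gives $y=x$, $\tau=\sigma'$), it is bounded below by the cuspidal support of $\s$, and by construction $x-y\in\Z_{\ge0}$. I claim that $\tau$ is a discrete series; this is the main obstacle. The idea is to run Casselman's criterion on $\tau$, using the structural formula (Theorem~\ref{osn}) to compute the relevant Jacquet modules of $\s$ from those of $\tau$. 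Any exponent of $\tau$ violating square-integrability should either propagate to an exponent of $\s$ contradicting the square-integrability of $\s$, or allow one to move a further copy of $\nu^{y-1}\rho$ to the front, contradicting the minimality of $y$. Carrying this out carefully --- in particular controlling the boundary behaviour at the bottom $\nu^{y}\rho$ of the segment and the contribution of cuspidal representations that are not twists of $\rho$ --- is where the genuine work lies.

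Once $\tau=\s_{ds}$ is known to be a discrete series, Proposition~\ref{propjord} applies to the displayed embedding. If $y>0$ it gives $\Jord_{\rho}(\s)=\Jord_{\rho}(\s_{ds})\cup\{2x+1\}\setminus\{2y-1\}$; since $x\ge y$ we have $2x+1\ge 2y+1>2y-1$, so $2x+1$ is not the removed element and $2x+1\in\Jord_{\rho}(\s)$. If $y\le 0$ it gives $\Jord_{\rho}(\s)=\Jord_{\rho}(\s_{ds})\cup\{2x+1,1-2y\}$, and again $2x+1\in\Jord_{\rho}(\s)$ because $2x+1>0$. As membership of $2x+1$ in $\Jord_{\rho}(\s)$ is by definition the assertion $(\rho,2x+1)\in\Jord(\s)$, this proves the lemma. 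Here I use, exactly as in the proof of Proposition~\ref{propjord}, that $z\in\Jord_{\rho}(\s)$ precisely when $\mu(z,\s)(s)$ has a pole at $s=0$, an equivalence which already incorporates the parity condition built into the definition of the Jordan block, so no separate verification of that condition is needed.
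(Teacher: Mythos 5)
Your overall architecture (reduce to an embedding of the shape in Proposition~\ref{propjord} and read off the Jordan block) is the right one, and your final bookkeeping with the two cases $y>0$ and $y\le 0$ is fine. But the proof has a genuine gap exactly where you say ``the genuine work lies'': you never establish that the representation $\tau$ sitting at the minimal $y$ is square-integrable, and the sketch you give for it does not obviously close. If $\tau$ fails Casselman's criterion, the standard argument produces an embedding $\tau\hookrightarrow\delta([\nu^{y_1}\rho_1,\nu^{x_1}\rho_1])\rtimes\pi_1$ with $x_1+y_1\le 0$, and square-integrability of $\s$ forces this segment to be linked with one of the factors $\nu^{z}\rho$ of your product. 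However, the linking can occur at an interior factor $z>y$ (one only gets $\rho_1\cong\rho$ and $x_1=z-1$ for \emph{some} $z\in\{y,\dots,x\}$), in which case what you obtain is an embedding into a product containing a longer segment $\delta([\nu^{y_1}\rho,\nu^{z}\rho])$ in the middle, not a product of single twists extended by $\nu^{y-1}\rho$ at the bottom. So neither of your two escape routes (``propagate to a bad exponent of $\s$'' or ``contradict minimality of $y$'') applies directly, and the claim that the minimal $\tau$ is a discrete series remains unproved. This claim is essentially the entire content of the lemma beyond formalities, so the proposal as written does not constitute a proof.

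For comparison, the paper avoids the minimization over $y$ altogether and analyzes the given $\sigma'$ directly. First it shows $\sigma'$ must be tempered: a non-tempered Langlands datum would give $\sigma'\hookrightarrow\delta([\nu^{x_1}\rho_1,\nu^{y_1}\rho_1])\rtimes\pi_1$ with $x_1+y_1<0$, and the kernel condition on the intertwining operator swapping $\nu^{x}\rho$ past this segment forces $\rho_1\cong\rho$, $y_1=x-1$, hence $\s\hookrightarrow\delta([\nu^{x_1}\rho,\nu^{x}\rho])\rtimes\pi_1$ with $x_1+x\le 0$, contradicting square-integrability. Then, if $\sigma'$ is tempered but not discrete, the same swapping argument applied to its decomposition into segments $\delta([\nu^{-x_i}\rho_i,\nu^{x_i}\rho_i])$ over a discrete series pins down $\rho_1\cong\rho$, $x_1=x-1$, and square-integrability forces $k=1$, yielding $\s\hookrightarrow\delta([\nu^{-x+1}\rho,\nu^{x}\rho])\rtimes\s_{ds}$ with $\s_{ds}$ a discrete series; Proposition~\ref{propjord} then applies in either case. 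You would need to supply an argument of comparable substance to justify your claim about $\tau$; as it stands, the step is missing.
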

\begin{proof}
First, \cite[Remark~2.3]{Kim1} and \cite[Th\'{e}oreme~3.1.1]{Moe2} imply that $x$ is a half-integer.
Let us now prove that $\s'$ has to be a tempered representation. Otherwise, by Corollary \ref{notds}, there are $x_1, y_1$ such that $x_1 - y_1 \in \mathbb{Z}$ and $x_1 + y_1 < 0$ and representations $\rho_1 \in \Irr(GL_{k_1}), \pi_1 \in \Irr(G_{n_1})$, such that $\s' \hookrightarrow \delta([\nu^{x_1} \rho_1, \nu^{y_1} \rho_1]) \rtimes \pi_1$. Thus, $\s$ is a subrepresentation of $\nu^{x} \rho \times \delta([\nu^{x_1} \rho_1, \nu^{y_1} \rho_1]) \rtimes \pi_1$. 

If $\nu^{x} \rho \times \delta([\nu^{x_1} \rho_1, \nu^{y_1} \rho_1])$ is irreducible, we have 
\begin{equation*}
\nu^{x} \rho \times \delta([\nu^{x_1} \rho_1, \nu^{y_1} \rho_1]) \cong \delta([\nu^{x_1} \rho_1, \nu^{y_1} \rho_1]) \times \nu^{x} \rho,  
\end{equation*}
which leads to embeddings
\begin{equation*}
\sigma \hookrightarrow \delta([\nu^{x_1} \rho_1, \nu^{y_1} \rho_1]) \times \nu^{x} \rho \rtimes \pi_1  
\end{equation*}
and
\begin{equation*}
\sigma \hookrightarrow \nu^{y_1} \rho_1 \times \nu^{y_1 - 1} \rho_1 \times \cdots \times \nu^{x_1} \rho_1 \times \nu^{z_1} \rho'_1 \times \cdots \times \nu^{z_k} \rho'_k \rtimes \pi_2, 
\end{equation*}
\begin{sloppypar}
\noindent for cuspidal representations $\rho'_1, \ldots, \rho'_k \in \Irr(GL), \pi_2 \in \Irr(G_{n_2})$. Since $x_1 + y_1 < 0$ and $\sigma$ is square-integrable, this contradicts Proposition \ref{propsi}. Thus, $\nu^{x} \rho \times \delta([\nu^{x_1} \rho_1, \nu^{y_1} \rho_1])$ reduces, so  since $x > 0$ by \cite[Theorem~4.2]{Zel} we have $\rho_1 \cong \rho$ and $y_1 = x - 1$. 
\end{sloppypar}
By Lemma \ref{lemajantz}, there is an irreducible subquotient $\pi$ of $\nu^{x} \rho \times \delta([\nu^{x_1} \rho_1, \nu^{y_1} \rho_1])$ such that $\sigma$ is a subrepresentation of $\pi \rtimes \pi_1$, and it follows that $\pi \cong \delta([\nu^{x_1} \rho_1, \nu^{x} \rho_1])$, which is impossible since $x_1 + x \leq 0$. Thus, $\s'$ is tempered, and if it is a discrete series the claim of the lemma follows from Proposition \ref{propjord}.

Let us now suppose that $\s'$ is not a discrete series. Then, by Corollary \ref{notds}, $\s'$ is a subrepresentation of an induced representation of the form
\begin{equation*}
\delta([\nu^{-x_1} \rho_1, \nu^{x_1} \rho_1]) \times \delta([\nu^{-x_2} \rho_2, \nu^{x_2} \rho_2]) \times  \cdots \times \delta([\nu^{-x_k} \rho_k, \nu^{x_k} \rho_k]) \rtimes \s'',
\end{equation*}
for a discrete series $\s'' \in \Irr(G_{n''})$ and $k \geq 1$. In the same way as in the first part of the proof, we deduce that $\rho_1 \cong \rho$, $x_1 = x-1$, and $\s$ is a subrepresentation of
\begin{equation*}
\delta([\nu^{-x + 1} \rho, \nu^{x} \rho]) \times \delta([\nu^{-x_2} \rho_2, \nu^{x_2} \rho_2]) \times  \cdots \times \delta([\nu^{-x_k} \rho_k, \nu^{x_k} \rho_k]) \rtimes \s''.
\end{equation*}
The square-integrability of $\s$ implies that $k=1$, since otherwise we would have $\s \hookrightarrow \delta([\nu^{-x_2} \rho_2, \nu^{x_2} \rho_2]) \rtimes \pi_2$, for some irreducible representation $\pi_2$. Consequently, $\s \hookrightarrow \delta([\nu^{-x + 1} \rho, \nu^{x} \rho]) \rtimes \s''$, and an application of Proposition \ref{propjord} finishes the proof.
\end{proof}

The following technical result will be used several times in the paper:

\begin{lemma} \label{lemaprva}
Let $\rho \in \Irr(GL_{n_{\rho}})$ denote a cuspidal unitarizable representation and let $a, b \in \mathbb{R}$ be such that $b - a$ is a nonnegative integer. Let $\s \in \Irr(G_n)$ be such that $\mu^{\ast}(\s)$ contains an irreducible constituent of the form $\d([\nu^{a} \rho, \nu^{b} \rho]) \ot \pi$ and such that $\mu^{\ast}(\s)$ does not contain an irreducible constituent of the form $\nu^{x} \rho \ot \pi_1$, for $a \leq x < b$. Then there is an irreducible representation $\pi_2$ such that $\s$ is a subrepresentation of $\d([\nu^{a} \rho, \nu^{b} \rho]) \rt \pi_2$.
\end{lemma}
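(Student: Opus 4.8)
The plan is to convert the desired embedding into a statement about a Jacquet module and then to extract the segment as the $GL$-part of a \emph{quotient} of that Jacquet module. Set $m = (b-a+1)n_{\rho}$. By the subrepresentation version of Frobenius reciprocity, an embedding $\s \hookrightarrow \d([\nu^{a} \rho, \nu^{b} \rho]) \rtimes \pi_2$ exists for some irreducible $\pi_2$ precisely when $\d([\nu^{a} \rho, \nu^{b} \rho]) \otimes \pi_2$ occurs as an irreducible quotient of $r_{(m)}(\s)$. So it suffices to produce such a quotient. The hypothesis only tells us that $\d([\nu^{a} \rho, \nu^{b} \rho]) \otimes \pi$ occurs as a subquotient of $\mu^{\ast}(\s)$, so the whole point is to upgrade ``subquotient'' to ``quotient''.

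First I would decompose $r_{(m)}(\s)$ according to the exact cuspidal support of the $GL_{m}$-factor. Since irreducible representations with distinct cuspidal supports admit no non-split self-extensions, the part $X$ of $r_{(m)}(\s)$ on which the $GL_{m}$-factor has cuspidal support exactly $\{\nu^{a}\rho, \nu^{a+1}\rho, \ldots, \nu^{b}\rho\}$ is a direct summand of $r_{(m)}(\s)$, hence is simultaneously a subrepresentation and a quotient of $r_{(m)}(\s)$; by assumption $X \neq 0$, as $\d([\nu^{a} \rho, \nu^{b} \rho])\otimes \pi$ lies in it. I would then show that \emph{every} composition factor of $X$ has $GL_{m}$-part equal to $\d([\nu^{a} \rho, \nu^{b} \rho])$. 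Each such factor has $GL_{m}$-part some irreducible $\Theta$ with the prescribed cuspidal support, and, by transitivity of Jacquet modules, any cuspidal $\nu^{x}\rho$ occurring as a first factor of the $GL$-Jacquet module of $\Theta$ forces an irreducible constituent of the form $\nu^{x}\rho \otimes (\cdots)$ to occur in $\mu^{\ast}(\s)$. The non-appearance hypothesis excludes $a \leq x < b$, while the cuspidal support forces $x \in \{a, \ldots, b\}$, so necessarily $x = b$. By Zelevinsky's theory the only irreducible representation with this (multiplicity-free, consecutive) cuspidal support whose sole first Jacquet factor is the top $\nu^{b}\rho$ --- equivalently, whose minimal-parabolic Jacquet module is the single chain $\nu^{b}\rho \otimes \nu^{b-1}\rho \otimes \cdots \otimes \nu^{a}\rho$ --- is the essentially square-integrable $\d([\nu^{a} \rho, \nu^{b} \rho])$.

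To finish I would pick any irreducible quotient of the finite-length nonzero module $X$: all of its composition factors have $GL_{m}$-part $\d([\nu^{a} \rho, \nu^{b} \rho])$, so this quotient is of the form $\d([\nu^{a} \rho, \nu^{b} \rho]) \otimes \pi_2$ for some irreducible $\pi_2$; since $X$ is a direct summand, and hence a quotient, of $r_{(m)}(\s)$, Frobenius reciprocity yields $\s \hookrightarrow \d([\nu^{a} \rho, \nu^{b} \rho]) \rtimes \pi_2$.

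The hard part will be the characterization identifying $\d([\nu^{a} \rho, \nu^{b} \rho])$ as the unique surviving $GL_{m}$-part: this is precisely where the hypothesis does its work, since blocking every lower and middle cuspidal $\nu^{x}\rho$ ($a \leq x < b$) from being a first Jacquet factor forces, through Zelevinsky's classification of irreducibles with a fixed multiplicity-free consecutive cuspidal support, the $GL$-part to be the full segment rather than any $L$ of a partition into shorter segments (the smallest case $b = a+1$, where the competitor is the Langlands/Zelevinsky constituent of $\nu^{b}\rho \times \nu^{a}\rho$ with first Jacquet factor $\nu^{a}\rho$, already illustrates the mechanism). I would also take care to justify cleanly the exact-cuspidal-support direct summand and the transitivity bookkeeping relating first factors of $r_{(m)}$ to constituents of $r_{(n_{\rho})} \leq \mu^{\ast}(\s)$.
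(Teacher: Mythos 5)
Your argument is correct, but it follows a genuinely different route from the one in the paper. The paper first descends all the way to the cuspidal level: from $\d([\nu^{a}\rho,\nu^{b}\rho])\otimes\pi$ in $\mu^{\ast}(\s)$ it extracts a constituent $\nu^{b}\rho\otimes\cdots\otimes\nu^{a}\rho\otimes\pi'$ of the cuspidal Jacquet module, invokes \cite[Lemma~3.1]{MT1} to obtain an embedding $\s\hookrightarrow\nu^{b}\rho\times\cdots\times\nu^{a}\rho\rtimes\pi''$, and then reassembles the segment by the inductive kernel-of-intertwining-operator argument of \cite[Lemma~3.4]{Matic10}, where the non-appearance hypothesis rules out, at each step, the Langlands-type constituent of $\nu^{x+1}\rho\times\nu^{x}\rho$ whose first Jacquet factor is $\nu^{x}\rho$. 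You instead stay at the level of $r_{(m)}(\s)$: you split off the block with prescribed $GL_m$-cuspidal support as a direct summand (this is legitimate, via the Bernstein centre --- note that what you need is the vanishing of extensions between irreducibles with \emph{distinct} cuspidal supports, not ``self-extensions''), use the non-appearance hypothesis together with transitivity of Jacquet modules to show every composition factor of that block has $GL_m$-part with sole cuspidal first factor $\nu^{b}\rho$, identify that part as $\d([\nu^{a}\rho,\nu^{b}\rho])$ via Zelevinsky's classification (the left-to-right ordering of the blocks of a multiplicity-free consecutive support coincides with the ordering by $e$-values, so any partition into $r\geq 2$ segments produces a first factor $\nu^{c}\rho$ with $c<b$), and conclude by taking an irreducible quotient and applying Frobenius reciprocity. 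Your route avoids intertwining operators and the two cited lemmas entirely, at the price of invoking the Bernstein decomposition and a slightly more careful combinatorial classification step; it also yields the marginally stronger conclusion that \emph{every} composition factor of the relevant block of $r_{(m)}(\s)$ has $GL$-part equal to the full segment.
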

\begin{proof}
It follows at once that there is an irreducible cuspidal representation $\pi_1 \in \Irr(G_{n_1})$ such that the Jacquet module of $\s$ with respect to the appropriate parabolic subgroup contains $\nu^{b} \rho \ot \nu^{b-1} \rho \ot \cdots \ot \nu^{a} \rho \ot \pi_1$. Using cuspidality of $\nu^{b} \rho \ot \nu^{b-1} \rho \ot \cdots \ot \nu^{a} \rho \otimes \pi_1$ and \cite[Corollary~6.2(3)]{Tad6}, which holds for reductive $p$-adic groups, we get that there is a representation $\pi_2 \in \Irr(G_{n_2})$ such that $\s$ is a subrepresentation of the induced representation $\nu^{b} \rho \times \nu^{b-1} \rho \times \cdots \times \nu^{a} \rho \rt \pi_2$. By Lemma \ref{lemajantz} there is an irreducible subquotient $\pi'$ of $\nu^{b} \rho \times \nu^{b-1} \rho \times \cdots \times \nu^{a} \rho$ such that $\sigma$ is a subrepresentation of $\pi' \rtimes \pi_2$. Since $\mu^{\ast}(\s)$ does not contain an irreducible constituent of the form $\nu^{x} \rho \otimes \pi_3$, for $a \leq x < b$, we directly obtain $\pi' \cong \d([\nu^{a} \rho, \nu^{b} \rho])$. This ends the proof.
\end{proof}

\begin{definition} \label{defone}
Let $\s \in \Irr(G_n)$ denote a discrete series. For an irreducible essentially self-dual cuspidal unitarizable representation $\rho$ of $GL_{n_{\rho}}$, we write $\Jord_{\rho}(\s) = \{ a : (a, \rho) \in \Jord(\s) \}$. Let $\sigma_{cusp}$ stand for the partial cuspidal support of $\sigma$. If $\Jord_{\rho}(\s) \neq \emptyset$ and $a \in \Jord_{\rho}(\s)$, we put $a\_ = \max \{ b \in \Jord_{\rho}(\s): b < a \}$, if it exists. The $\epsilon$-function is defined on a subset $D$ of $\Jord(\sigma) \cup \Jord(\sigma) \times \Jord(\sigma)$ such that
\begin{itemize}
    \item $D \cap \Jord(\sigma) = \{ (x, \rho) \in \Jord(\sigma) : x$ is even or $\Jord_{\rho}(\sigma_{cusp}) = \emptyset \}$,
    \item $D \cap (\Jord(\sigma) \times \Jord(\sigma)) = \{ ((x, \rho_1), (y, \rho_2)) : \rho_1 \cong \rho_2$ and in $\Jord_{\rho_1}(\s)$ we have $x = y\_\}$.
\end{itemize}
For $a \in \Jord_{\rho}(\s)$ such that $a\_$ is defined, we set
\begin{equation*}
\epsilon_{\s}((a\_, \rho), (a, \rho)) = 1
\end{equation*}
if there exists an irreducible representation $\pi$ of some $G_{n'}$ such that
\begin{equation}\label{ds:em1}
\s \hookrightarrow \d([\nu^{\frac{a\_ + 1}{2}} \rho, \nu^{\frac{a-1}{2}} \rho]) \rt \pi.
\end{equation}
Otherwise, let
\begin{equation*}
\epsilon_{\s}((a\_, \rho), (a, \rho)) = -1.
\end{equation*}
\end{definition}

In Lemmas \ref{lemaequiv}, \ref{priprema1}, \ref{epsilonequality} and Corollary \ref{corembedjord}, we let $\s \in \Irr(G_n)$ denote a discrete series.

Using Lemmas \ref{remarkprva} and \ref{lemaprva}, we obtain:
\begin{lemma} \label{lemaequiv}
\begin{sloppypar}
Let $(a, \rho) \in \Jord(\s)$ be such that $a\_$ is defined. Then $\epsilon_{\s}((a\_, \rho), (a, \rho))$ $= 1$ if and only if there exists an irreducible representation $\s'$ such that
\end{sloppypar}
\begin{equation*}
\mu^{\ast}(\s) \geq \d([\nu^{\frac{a\_ + 1}{2}} \rho, \nu^{\frac{a-1}{2}} \rho]) \otimes \s'.
\end{equation*}
\end{lemma}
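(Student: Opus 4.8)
The plan is to prove the two directions separately, with the forward direction being nearly immediate and the reverse direction requiring Lemma \ref{lemaprva}. Let me write $\d = \d([\nu^{\frac{a\_+1}{2}}\rho, \nu^{\frac{a-1}{2}}\rho])$ for brevity in this sketch, and note $b := \frac{a-1}{2}$ is the top exponent.

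For the forward implication, suppose $\epsilon_{\s}((a\_,\rho),(a,\rho)) = 1$. By Definition \ref{defone} there is an irreducible $\pi$ with an embedding $\s \hookrightarrow \d \rtimes \pi$. I would then apply the structural formula (Theorem \ref{osn}), or more simply Frobenius reciprocity together with the fact that $\mu^{\ast}$ computes the relevant Jacquet modules: the embedding $\s \hookrightarrow \d \rtimes \pi$ forces $r_{(n_\rho \cdot \ell)}(\s)$ to contain a constituent whose general linear part is $\d$, where $\ell = b - \frac{a\_+1}{2} + 1$ is the length of the segment. Concretely, taking the full Jacquet module along the parabolic with Levi $GL_{n_\rho \ell} \times G_{n'}$ and reading off the leading term of $\mu^{\ast}$, one obtains $\mu^{\ast}(\s) \geq \d \otimes \s'$ for $\s' = \pi$. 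The only point to verify is that $\d$ itself (rather than a proper subquotient of $\d \times \cdots$) appears; this follows because $\d$ is the unique irreducible subrepresentation of $\nu^{b}\rho \times \cdots \times \nu^{\frac{a\_+1}{2}}\rho$ and the induction in stages respects the subrepresentation structure.

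For the reverse implication, suppose $\mu^{\ast}(\s) \geq \d \otimes \s'$ for some irreducible $\s'$. The goal is to produce the embedding (\ref{ds:em1}), and the natural tool is Lemma \ref{lemaprva}, which converts a segment-type constituent of $\mu^{\ast}(\s)$ into a genuine embedding $\s \hookrightarrow \d \rtimes \pi_2$ — provided the hypothesis that $\mu^{\ast}(\s)$ contains no constituent of the form $\nu^{x}\rho \otimes \pi_1$ for $\frac{a\_+1}{2} \leq x < b$ holds. Hence the crux is to establish this no-intermediate-exponent condition, and this is where Lemma \ref{remarkprva} enters: if such a $\nu^{x}\rho \otimes \pi_1$ occurred, then (after passing to an irreducible subrepresentation realizing it) one would obtain $\s \hookrightarrow \nu^{x}\rho \rtimes \sigma''$, whence by Lemma \ref{remarkprva} we would have $(2x+1, \rho) \in \Jord(\s)$ with $a\_ < 2x+1 < a$, contradicting the very definition of $a\_$ as the largest element of $\Jord_\rho(\s)$ strictly below $a$.

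I expect the main obstacle to be the bookkeeping in this last contradiction argument, namely carefully extracting from ``$\mu^{\ast}(\s)$ contains $\nu^{x}\rho \otimes \pi_1$'' an actual embedding of $\s$ into $\nu^{x}\rho \rtimes \sigma''$ so that Lemma \ref{remarkprva} applies and yields a Jordan block element strictly between $a\_$ and $a$. Once the exponent $2x+1$ is seen to lie in the open interval $(a\_, a) \cap (2\Z+1)$, the contradiction with the definition of $a\_$ is immediate, and Lemma \ref{lemaprva} then delivers the embedding $\s \hookrightarrow \d \rtimes \pi_2$, giving $\epsilon_{\s}((a\_,\rho),(a,\rho)) = 1$ by Definition \ref{defone}.
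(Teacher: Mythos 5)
Your proposal is correct and follows exactly the route the paper intends: the paper derives this lemma solely from Lemma \ref{remarkprva} and Lemma \ref{lemaprva}, with the forward direction being Frobenius reciprocity applied to the embedding of Definition \ref{defone}, and the reverse direction being Lemma \ref{lemaprva} whose no-intermediate-exponent hypothesis is verified by observing that a constituent $\nu^{x}\rho\otimes\pi_{1}$ with $\frac{a\_+1}{2}\leq x<\frac{a-1}{2}$ would, via Lemma \ref{remarkprva}, produce $2x+1\in\Jord_{\rho}(\s)$ strictly between $a\_$ and $a$, contradicting the definition of $a\_$. Nothing is missing.
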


Let us prove another useful technical result:

\begin{lemma} \label{priprema1}
Let $(2x+1, \rho) \in Jord(\s)$. Suppose that there is some $y$ such that $\s$ is a subrepresentation of an induced representation of the form $\delta([\nu^{y} \rho, \nu^{x} \rho]) \rtimes \pi$, for some irreducible representation $\pi$, and let $y_{\min}$ be the smallest such number. If $\pi$ is an irreducible representation such that $\s$ is a subrepresentation of $\delta([\nu^{y_{\min}} \rho, \nu^{x} \rho]) \rtimes \pi$, then $\pi$ is a discrete series.
\end{lemma}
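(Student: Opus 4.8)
The plan is to argue by contradiction: assume $\pi$ is not a discrete series and derive a contradiction either with the square-integrability of $\s$ or with the minimality of $y_{\min}$. The one standing input is an elementary consequence of the square-integrability criterion, which I will call (SI): if a discrete series $\s$ embeds as $\s \h \d([\nu^c \rho_1, \nu^d \rho_1]) \rt \theta$ for some irreducible $\theta$, then $c+d>0$. Indeed the $\rho_1$-exponents $d, d-1, \ldots, c$ then occur in this order at the start of some term of the minimal Jacquet module of $\s$, so the partial sum $\sum_{i=c}^{d} i = (c+d)(d-c+1)/2$ must be positive. Applying (SI) to the given embedding $\s \h \d([\nu^{y_{\min}} \rho, \nu^x \rho]) \rt \pi$ already yields $y_{\min}+x>0$, a fact I will use repeatedly.

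First I would record, using the subrepresentation version of the Langlands classification for $G_n$ together with the structure of tempered representations, that if $\pi$ is not a discrete series then there is an irreducible essentially square-integrable $\d([\nu^c \rho_1, \nu^d \rho_1])$ with $c+d \le 0$ (strictly negative if $\pi$ is non-tempered, and $c=-d$ if $\pi$ is tempered non-discrete), and an irreducible $\pi'$, such that $\pi \h \d([\nu^c \rho_1, \nu^d \rho_1]) \rt \pi'$; pulling out the leftmost factor is the standard Frobenius reciprocity step. Combined with the hypothesis this gives
$$\s \h \d([\nu^{y_{\min}} \rho, \nu^x \rho]) \t \d([\nu^c \rho_1, \nu^d \rho_1]) \rt \pi'.$$
If $\rho_1 \not\cong \rho$, or if $\rho_1 \cong \rho$ but the segments $[\nu^{y_{\min}}\rho, \nu^x \rho]$ and $[\nu^c \rho, \nu^d \rho]$ are unlinked, the two $GL$-factors commute, so $\s \h \d([\nu^c \rho_1, \nu^d \rho_1]) \rt \theta$ for some irreducible $\theta$, and (SI) forces $c+d>0$, contradicting $c+d \le 0$.

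The heart of the matter is the linked case $\rho_1 \cong \rho$. Here I would first observe that $e(\d([\nu^{y_{\min}}\rho,\nu^x\rho]))>0 \ge e(\d([\nu^c\rho,\nu^d\rho]))$ together with $y_{\min}+x>0$ forces $c<y_{\min}$: otherwise linking would require $d>x$, whence $c+d>y_{\min}+x>0$, contradicting $c+d\le 0$. Consequently $\d([\nu^{y_{\min}}\rho,\nu^x\rho]) \t \d([\nu^c\rho,\nu^d\rho])$ has length two, with Langlands quotient $Q=L(\d([\nu^c\rho,\nu^d\rho]),\d([\nu^{y_{\min}}\rho,\nu^x\rho]))$ and irreducible socle $S=\d([\nu^c\rho,\nu^x\rho]) \t \d([\nu^{y_{\min}}\rho,\nu^d\rho])$ (union times intersection, irreducible since one of these segments contains the other); that $S$, and not $Q$, is the socle follows from uniqueness of Langlands data. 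Inducing the exact sequence $0 \to S \to \d([\nu^{y_{\min}}\rho,\nu^x\rho]) \t \d([\nu^c\rho,\nu^d\rho]) \to Q \to 0$ by $\rt \pi'$ and using that $\s$ is an irreducible subrepresentation of the middle term, I obtain a dichotomy. If $\s \h S \rt \pi' = \d([\nu^c\rho,\nu^x\rho]) \t \d([\nu^{y_{\min}}\rho,\nu^d\rho]) \rt \pi'$, then $\s \h \d([\nu^c\rho,\nu^x\rho]) \rt \theta$ for some irreducible $\theta$, and since $c<y_{\min}$ this contradicts the minimality of $y_{\min}$. If instead $\s \h Q \rt \pi'$, realizing $Q$ as the Langlands subrepresentation $Q \h \d([\nu^c\rho,\nu^d\rho]) \t \d([\nu^{y_{\min}}\rho,\nu^x\rho])$ (factors in increasing order of $e$) gives $\s \h \d([\nu^c\rho,\nu^d\rho]) \rt \theta'$ for some irreducible $\theta'$, and (SI) again yields $c+d>0$, a contradiction. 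Either branch is impossible, so $\pi$ must be a discrete series.

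I expect the main obstacle to be precisely this linked case: correctly identifying the socle and the quotient of the product of two linked $\rho$-segments, and justifying the passage from $\s \h S \rt \pi'$ (respectively $\s \h Q \rt \pi'$) to an embedding of $\s$ into a single segment induced from an irreducible representation. Both hinge on the standard segment calculus and on the routine argument, in the spirit of \cite[Lemma~3.4]{Matic10}, extracting an irreducible $\theta$ from the complementary induced piece; I would cite these rather than reproduce the bookkeeping. By contrast, the role of the hypotheses is transparent: square-integrability enters only through (SI), while the minimality of $y_{\min}$ is what rules out the linked socle branch.
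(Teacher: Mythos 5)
Your proof is correct and follows essentially the same route as the paper's: both reduce to an embedding $\pi\hookrightarrow\delta([\nu^{y_1}\rho_1,\nu^{x_1}\rho_1])\rtimes\pi_1$ with non-positive exponent sum, use the square-integrability criterion to force $\rho_1\cong\rho$ and linkedness of the two segments, pass to the union segment $\delta([\nu^{y_1}\rho,\nu^{x}\rho])$, and contradict the minimality of $y_{\min}$. The only difference is presentational — you phrase the dichotomy via the short exact sequence and Langlands data where the paper speaks of the kernel of the intertwining operator swapping the two $GL$-factors — and your version spells out the unlinked case and the quotient branch that the paper leaves implicit.
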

\begin{proof}
Suppose, on the contrary, that $\pi$ is not a discrete series. By Corollary \ref{notds}, there are $x_1, y_1$ such that $x_1 - y_1 \in \mathbb{Z}$ and $x_1 + y_1 \leq 0$ and irreducible representations $\rho_1, \pi_1$, such that $\pi$ is a subrepresentation of $\delta([\nu^{y_1} \rho_1, \nu^{x_1} \rho_1]) \rtimes \pi_1$. By Lemma \ref{lemajantz}, there is an irreducible subquotient $\pi_2$ of $\delta([\nu^{y_{\min}} \rho, \nu^{x} \rho]) \times \delta([\nu^{y_1} \rho_1, \nu^{x_1} \rho_1])$ such that $\sigma$ is a subrepresentation of $\pi_2 \rtimes \pi_1$. The square-integrability criterion implies $\rho \cong \rho_1$ and $x_1 \geq y_{\min}-1$. It can now be easily seen that $\pi_2 \cong \delta([\nu^{y_1} \rho, \nu^{x} \rho]) \times \delta([\nu^{y_{\min}} \rho, \nu^{x_1} \rho])$.

Using Lemma \ref{lemajantz} again, we deduce that there is an irreducible representation $\pi_3 \in R(G)$ such that $\s$ is a subrepresentation of
$\delta([\nu^{y_1} \rho, \nu^{x} \rho]) \rtimes \pi_3$
and $y_1 < y_{\min}$, a contradiction.
\end{proof}

We note the following consequence of Proposition \ref{propjord} and Lemma \ref{priprema1}:

\begin{corollary} \label{corembedjord}
Let $(2x+1, \rho) \in Jord(\s)$. Suppose that there is some $y$, $y \leq 0$, such that $\s$ is a subrepresentation of an induced representation of the form $\delta([\nu^{y} \rho, \nu^{x} \rho]) \rtimes \s'$, for some irreducible representation $\s'$, and let $y_{\min}$ denote the smallest such number. Then $(-2y_{\min}+1, \rho) \in Jord(\s)$.
\end{corollary}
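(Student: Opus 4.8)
The plan is to deduce the statement from Proposition \ref{propjord} once the hypotheses of that proposition have been arranged. By assumption there is an embedding
\[
\s \hookrightarrow \delta([\nu^{y_{\min}} \rho, \nu^{x} \rho]) \rtimes \s',
\]
and because $y_{\min}$ is chosen minimal, Lemma \ref{priprema1} applies directly and tells us that $\s'$ is a discrete series representation; this is exactly the extra information that Proposition \ref{propjord} requires of the representation sitting to the right.

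Next I would pass from this embedding into a single segment to the ``unwound'' embedding used in Proposition \ref{propjord}. Since $\delta([\nu^{y_{\min}} \rho, \nu^{x} \rho])$ is the unique irreducible subrepresentation of $\nu^{x} \rho \times \nu^{x-1} \rho \times \cdots \times \nu^{y_{\min}} \rho$, exactness of the functor $\cdot \rtimes \s'$ together with transitivity of parabolic induction yields
\[
\s \hookrightarrow \nu^{x} \rho \times \nu^{x-1} \rho \times \cdots \times \nu^{y_{\min}} \rho \rtimes \s'.
\]
The required half-integrality conditions are automatic: $2x+1$ is a positive integer and $\delta([\nu^{y_{\min}}\rho, \nu^x\rho])$ is a genuine segment, so $x$ and $y_{\min}$ are half-integers with $x - y_{\min}$ a non-negative integer. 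We are therefore in the setting of Proposition \ref{propjord}, with $y_{\min}$ in the role of $y$ and $\s'$ in the role of $\s_{ds}$. When $y_{\min} < 0$, its second case applies and gives $\Jord_\rho(\s) = \Jord_\rho(\s') \cup \{2x+1, -2y_{\min}+1\}$, whence $(-2y_{\min}+1, \rho) \in \Jord(\s)$, which is the assertion.

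The one point needing care, and the step I expect to be the genuine obstacle, is the boundary value $y_{\min} = 0$, which is not literally among the cases $y > 0$ and $y < 0$ in the \emph{statement} of Proposition \ref{propjord}. Here I would appeal to the Plancherel-measure computation carried out in the \emph{proof} of that proposition rather than to its statement: formula (\ref{Plancherel}) is valid for every $y \leq 0$, and the proof establishes its second case precisely for the full range $y \leq 0$. Concretely, for $z = -2y_{\min}+1$ one has $(z-1)/2 = -y_{\min}$, so the factor $L(s+y_{\min}+(z-1)/2, \rho \times \widetilde{\rho}) = L(s, \rho \times \widetilde{\rho})$ contributes a pole at $s = 0$ which is not cancelled by any zero of the denominators, these being evaluated at strictly positive arguments. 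Hence $\mu(z, \s)(s)$ has a pole at $s = 0$ and $-2y_{\min}+1 \in \Jord_\rho(\s)$, uniformly for $y_{\min} \leq 0$, which covers the remaining case and completes the argument.
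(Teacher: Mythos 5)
Your proof is correct and follows exactly the route the paper intends: the corollary is stated there without proof, as a direct consequence of Lemma \ref{priprema1} (which, for the minimal $y$, forces $\s'$ to be a discrete series) and of case (2) of Proposition \ref{propjord} after unwinding $\delta([\nu^{y_{\min}}\rho,\nu^{x}\rho])\rtimes\s'$ into $\nu^{x}\rho\times\cdots\times\nu^{y_{\min}}\rho\rtimes\s'$. Your treatment of the boundary value $y_{\min}=0$, which the statement of Proposition \ref{propjord} omits but whose proof does cover (the case analysis there is carried out for $y\leq 0$, yielding $\Jord_{\rho}(\s)=\Jord_{\rho}(\s_{ds})\cup\{2x+1,-2y+1\}$), correctly fills the only detail the paper leaves implicit.
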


Now we prove:

\begin{lemma} \label{epsilonequality}
If $\epsilon_{\s}((a\_, \rho),(a, \rho)) = 1$, $\pi$ in $(\ref{ds:em1})$ is an irreducible tempered subrepresentation of an induced representation of the form
\begin{equation*}
\d([\nu^{-\frac{a\_ - 1}{2}} \rho, \nu^{\frac{a\_ - 1}{2}} \rho]) \rt \s_1,
\end{equation*}
\begin{sloppypar}
\noindent where $\s_1 \in \Irr(G_{n_1})$ is a discrete series such that $Jord(\s_1) = Jord(\s) \setminus \{(a, \rho), (a\_, \rho) \}$. Therefore,
\end{sloppypar}
\begin{equation*}
\s \hookrightarrow \d([\nu^{-\frac{a\_ - 1}{2}} \rho, \nu^{\frac{a - 1}{2}} \rho]) \rt \s_1.
\end{equation*}
Furthermore, $\epsilon_{\s}((a\_, \rho),(a, \rho)) = 1$ if and only if there is an irreducible representation $\pi_1 \in \Irr(G_{n_2})$ such that $\s$ is a subrepresentation of $\d([\nu^{-\frac{a\_ - 1}{2}} \rho, \nu^{\frac{a - 1}{2}} \rho]) \rt \pi_1$.
\end{lemma}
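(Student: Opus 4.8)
The plan is to unwind the hypothesis $\epsilon_{\s}((a\_,\rho),(a,\rho))=1$ into the embedding $(\ref{ds:em1})$, namely $\s\h\d([\nu^{\frac{a\_+1}{2}}\rho,\nu^{\frac{a-1}{2}}\rho])\rt\pi$, and then to push the leading segment down to its balanced completion. Set $x=\frac{a-1}{2}$, so that $(a,\rho)=(2x+1,\rho)\in\Jord(\s)$, and let $y_{\min}$ be the smallest $y$ for which $\s\h\d([\nu^{y}\rho,\nu^{x}\rho])\rt\pi'$ holds for some irreducible $\pi'$; then $y_{\min}\le\frac{a\_+1}{2}$, and by Lemma \ref{priprema1} the representation $\s_1:=\pi'$ attached to $y_{\min}$ is a discrete series. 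The whole statement follows once $y_{\min}=-\frac{a\_-1}{2}$ is established: Proposition \ref{propjord} then gives $\Jord_\rho(\s)=\Jord_\rho(\s_1)\sqcup\{a,a\_\}$, while running the same Plancherel computation for $\rho'\ncong\rho$ (whose twisted $L$-functions contribute no poles) gives $\Jord_{\rho'}(\s)=\Jord_{\rho'}(\s_1)$, so $\Jord(\s_1)=\Jord(\s)\setminus\{(\rho,a),(\rho,a\_)\}$, together with the displayed inclusion $\s\h\d([\nu^{-\frac{a\_-1}{2}}\rho,\nu^{x}\rho])\rt\s_1$.

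I would first record the auxiliary fact $(\ast)$: there is no embedding $\s\h\d([\nu^{z}\rho,\nu^{x}\rho])\rt(\text{irred.})$ with $z\le-\frac{a\_+1}{2}$. Indeed such a $z$ is $\le0$, so letting $z_m\le z$ be minimal among non-positive bottoms, Corollary \ref{corembedjord} gives $(1-2z_m,\rho)\in\Jord(\s)$ with $1-2z_m\ge a\_+2$; since no member of $\Jord_\rho(\s)$ lies strictly between $a\_$ and $a$, this forces $1-2z_m\ge a$, i.e. $z_m\le-x$. But $z_m=-x$ makes $\d([\nu^{z_m}\rho,\nu^{x}\rho])$ balanced (central exponent $0$) and $z_m<-x$ makes its central exponent negative, each contradicting the square-integrability of $\s$. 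This one fact gives both the temperedness of $\pi$ and the lower bound $y_{\min}\ge-\frac{a\_-1}{2}$. If $\pi$ were non-tempered it would embed in $\d([\nu^{x_1}\rho_1,\nu^{y_1}\rho_1])\rt\pi_1$ with $x_1+y_1<0$; feeding this into $(\ref{ds:em1})$ and using square-integrability to place $\s$ in the kernel of the segment-swapping intertwining operator forces (exactly as in Lemmas \ref{remarkprva} and \ref{priprema1}) $\rho_1\cong\rho$ and $y_1=\frac{a\_-1}{2}$, whence $\s\h\d([\nu^{x_1}\rho,\nu^{x}\rho])\rt\pi_1$ with $x_1\le-\frac{a\_+1}{2}$, contradicting $(\ast)$. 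Likewise $y_{\min}<-\frac{a\_-1}{2}$ would itself be an embedding violating $(\ast)$.

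The remaining upper bound $y_{\min}\le-\frac{a\_-1}{2}$ is the main obstacle. Here I would use that $\pi$ is now tempered, writing $\pi\h\D\rt\s_1^{t}$ with $\D$ a product of balanced segments $\d([\nu^{-\frac{c-1}{2}}\rho',\nu^{\frac{c-1}{2}}\rho'])$ and $\s_1^{t}$ a discrete series, so that $\s\h\d([\nu^{\frac{a\_+1}{2}}\rho,\nu^{x}\rho])\t\D\rt\s_1^{t}$. The crux is that square-integrability, analysed through the cuspidal exponents of the iterated Jacquet module, forces one factor of $\D$ to be exactly $\d([\nu^{-\frac{a\_-1}{2}}\rho,\nu^{\frac{a\_-1}{2}}\rho])$: this is the unique balanced $\rho$-segment whose top $\frac{a\_-1}{2}$ links with the bottom $\frac{a\_+1}{2}$ of the leading segment, and such a linking is needed to prevent the strictly positive minimal exponent $\frac{a\_+1}{2}$ from forcing a non-positive partial exponent sum. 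A short Jacquet-module argument then places this factor immediately below the leading segment, and the standard inclusion $\d([\nu^{-\frac{a\_-1}{2}}\rho,\nu^{x}\rho])\h\d([\nu^{\frac{a\_+1}{2}}\rho,\nu^{x}\rho])\t\d([\nu^{-\frac{a\_-1}{2}}\rho,\nu^{\frac{a\_-1}{2}}\rho])$ merges them, yielding $y_{\min}\le-\frac{a\_-1}{2}$. For the structure of $\pi$ I would extract from this embedding an irreducible $\pi^{\sharp}\h\D'\rt\s_1^{t}$, with $\D'$ the remaining balanced factors, such that $\s\h\d([\nu^{-\frac{a\_-1}{2}}\rho,\nu^{x}\rho])\rt\pi^{\sharp}$; being a minimal embedding, Lemma \ref{priprema1} forces $\pi^{\sharp}$ to be a discrete series, so $\D'$ is trivial and $\pi^{\sharp}=\s_1^{t}=\s_1$. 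Hence $\D=\d([\nu^{-\frac{a\_-1}{2}}\rho,\nu^{\frac{a\_-1}{2}}\rho])$ and $\pi\h\d([\nu^{-\frac{a\_-1}{2}}\rho,\nu^{\frac{a\_-1}{2}}\rho])\rt\s_1$, the first assertion, while $\s\h\d([\nu^{-\frac{a\_-1}{2}}\rho,\nu^{\frac{a-1}{2}}\rho])\rt\s_1$ is the case $y_{\min}=-\frac{a\_-1}{2}$ just proved.

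For the final equivalence one direction is immediate, taking $\pi_1=\s_1$ in the embedding just produced. Conversely, given $\s\h\d([\nu^{-\frac{a\_-1}{2}}\rho,\nu^{\frac{a-1}{2}}\rho])\rt\pi_1$, the inclusion $\d([\nu^{-\frac{a\_-1}{2}}\rho,\nu^{x}\rho])\h\d([\nu^{\frac{a\_+1}{2}}\rho,\nu^{x}\rho])\t\d([\nu^{-\frac{a\_-1}{2}}\rho,\nu^{\frac{a\_-1}{2}}\rho])$ gives $\s\h\d([\nu^{\frac{a\_+1}{2}}\rho,\nu^{x}\rho])\t\d([\nu^{-\frac{a\_-1}{2}}\rho,\nu^{\frac{a\_-1}{2}}\rho])\rt\pi_1$, and choosing an irreducible subrepresentation $\pi'$ of $\d([\nu^{-\frac{a\_-1}{2}}\rho,\nu^{\frac{a\_-1}{2}}\rho])\rt\pi_1$ through which $\s$ factors recovers an inclusion of the form $(\ref{ds:em1})$, so $\epsilon_{\s}((a\_,\rho),(a,\rho))=1$. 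I expect the exponent bookkeeping that singles out the balanced $\rho$-segment inside $\D$ to be the only genuinely delicate step; everything else reduces to $(\ast)$, Proposition \ref{propjord}, and the segment combinatorics above.
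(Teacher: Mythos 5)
Your proposal follows essentially the same route as the paper: rule out small bottoms via \cite[Corollary~\ref{corembedjord}]{} and square-integrability, deduce temperedness of $\pi$ through the kernel of the segment-swapping intertwining operator, pin down the balanced $\rho$-segment, and merge. The reorganization around $y_{\min}$ is sound and your use of Lemma \ref{priprema1} to force $\pi^{\sharp}$ (hence $\s_1$) to be a discrete series is a clean substitute for the paper's direct argument; likewise your remark that the Plancherel computation for $\rho'\not\cong\rho$ contributes no poles is a point the paper leaves implicit. Two comments on the step you yourself flag as the crux. First, the mechanism is not really ``exponent bookkeeping'': the paper's argument is that if the top balanced segment $\d([\nu^{-b}\rho',\nu^{b}\rho'])$ of the tempered support of $\pi$ did \emph{not} link with $\d([\nu^{\frac{a\_+1}{2}}\rho,\nu^{\frac{a-1}{2}}\rho])$, then $\s$ would fail to lie in the kernel of the swapping operator and would embed into $\d([\nu^{-b}\rho',\nu^{b}\rho'])\rtimes(\cdots)$, violating Casselman; being in the kernel forces $\rho'\cong\rho$ and produces the segments $[\nu^{-b}\rho,\nu^{\frac{a-1}{2}}\rho]$ and $[\nu^{\frac{a\_+1}{2}}\rho,\nu^{b}\rho]$, and Lemma \ref{remarkprva} together with the gap in $\Jord_{\rho}(\s)$ between $a\_$ and $a$ forces the second segment to be empty, i.e.\ $b=\frac{a\_-1}{2}$. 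Your heuristic about ``the strictly positive minimal exponent $\frac{a\_+1}{2}$ forcing a non-positive partial sum'' points at the right obstruction but is not yet an argument; you should replace it with the linking computation just described. Second, your crux presupposes $\D\neq\emptyset$; you need the paper's opening observation that Proposition \ref{propjord} (case $y>0$, which would delete $a\_$ from $\Jord_{\rho}(\s)$) rules out $\pi$ being square-integrable. With these two points supplied, your plan closes.
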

\begin{proof}
If $\pi$ were a discrete series, Proposition \ref{propjord} would give $(a\_, \rho) \notin \Jord(\sigma)$, a contradiction. Let us show that $\pi$ is tempered. Otherwise, by Corollary \ref{notds}, there are $x_1, y_1$ such that $x_1 - y_1 \in \mathbb{Z}$ and $x_1 + y_1 < 0$, and irreducible representations $\rho_1, \pi_1$, such that $\pi$ is a subrepresentation of
$\delta([\nu^{x_1} \rho_1, \nu^{y_1} \rho_1]) \rtimes \pi_1$. By Lemma \ref{lemajantz}, there is an irreducible subquotient $\pi'_1$ of $\d([\nu^{\frac{a\_ + 1}{2}} \rho, \nu^{\frac{a-1}{2}} \rho]) \times \delta([\nu^{x_1} \rho_1, \nu^{y_1} \rho_1])$ such that $\sigma$ is a subrepresentation of $\pi'_1 \rtimes \pi_1$.

The square-integrability criterion for $\s$ implies that $\d([\nu^{\frac{a\_ + 1}{2}} \rho, \nu^{\frac{a-1}{2}} \rho]) \times \delta([\nu^{x_1} \rho_1, \nu^{y_1} \rho_1])$ is reducible. Therefore Lemma \ref{remarkprva} implies that $\rho \cong \rho_1$, $y_1= \frac{a\_ - 1}{2}$, and $\pi'_1 \cong \delta([\nu^{x_1} \rho, \nu^{\frac{a-1}{2}} \rho]) \rtimes \pi_1$, for $x_1< - \frac{a\_ - 1}{2}$, which is impossible by Proposition \ref{propjord} and Lemma \ref{priprema1}.

So, $\pi$ is tempered and one readily sees that it is a subrepresentation of an induced representation of the form
\begin{equation} \label{indcud}
\delta([\nu^{- \frac{a\_ - 1}{2}} \rho, \nu^{\frac{a\_ - 1}{2}} \rho]) \times \cdots \times \delta([\nu^{- \frac{a\_ - 1}{2}} \rho, \nu^{\frac{a\_ - 1}{2}} \rho]) \rtimes \s_1,
\end{equation}
with $\s_1 \in \Irr(G_{n_1})$ discrete series.

Using Lemma \ref{lemajantz} and the square-integrability criterion, we get that $\sigma$ is a subrepresentation of
\begin{equation*}
\d([\nu^{-\frac{a\_ - 1}{2}} \rho, \nu^{\frac{a - 1}{2}} \rho]) \times \delta([\nu^{- \frac{a\_ - 1}{2}} \rho, \nu^{\frac{a\_ - 1}{2}} \rho]) \times \cdots \times \delta([\nu^{- \frac{a\_ - 1}{2}} \rho, \nu^{\frac{a\_ - 1}{2}} \rho]) \rt \s_1.
\end{equation*}
If $\delta([\nu^{- \frac{a\_ - 1}{2}} \rho, \nu^{\frac{a\_ - 1}{2}} \rho])$ appears in (\ref{indcud}) more than once, it follows that there is an irreducible representation $\pi'_1$ such that $\s$ is a subrepresentation of  $\delta([\nu^{- \frac{a\_ - 1}{2}} \rho,$ $\nu^{\frac{a\_ - 1}{2}} \rho]) \rtimes \pi'_1$, contradicting the square-integrability criterion. Thus, $\pi$ is a subrepresentation of
$\delta([\nu^{- \frac{a\_ - 1}{2}} \rho, \nu^{\frac{a\_ - 1}{2}} \rho]) \rtimes \s_1$. 

This implies that $\Jord(\s_1) = \Jord(\s) \setminus \{ (a\_, \rho), (a, \rho)\}$ and lemma is proved.
\end{proof}

Let us recall the characterization of strongly positive discrete series (see Definition \ref{defsp} for the definition), which can be deduced directly from \cite[Theorem~4.6]{Matic4} or from \cite[Section~7]{MatTad}. We emphasize that, although representations of the GSpin groups have not been studied in  \cite{Matic4} and \cite{MatTad}, all the proofs given in \cite{Matic4} and in \cite[Section~7]{MatTad} can be carried to the odd GSpin situation without any change, since they completely rely on the structural formula, classification of the strongly positive discrete series, which is analogous to the one for the odd GSpin groups, and well-known facts on the representation theory of general linear groups.

\begin{proposition} \label{propsp}
Let $\s \in \Irr(G_{n})$ denote a discrete series. Then $\s$ is strongly positive if and only if for all $(a, \rho) \in \Jord(\s)$ such that $a\_$ is defined we have
\begin{equation*}
\epsilon_{\s}((a\_, \rho), (a, \rho)) = -1.
\end{equation*}
\end{proposition}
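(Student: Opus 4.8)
The plan is to prove the two implications of the equivalence separately. The forward implication (strong positivity forces every value of the $\epsilon$-function to be $-1$) is immediate from the machinery already in place, while the converse carries the real content and is where I expect the difficulty.

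For the forward direction I would argue by contraposition. Suppose $\epsilon_{\sigma}((a\_, \rho),(a,\rho)) = 1$ for some $(a,\rho) \in \Jord(\sigma)$ with $a\_$ defined. By Lemma \ref{epsilonequality} this produces an embedding $\sigma \hookrightarrow \delta([\nu^{-\frac{a\_-1}{2}} \rho, \nu^{\frac{a-1}{2}} \rho]) \rtimes \sigma_1$. Since $a\_ \geq 1$, the leftmost cuspidal exponent of this segment, namely $-\frac{a\_-1}{2}$, is non-positive. Realizing $\delta([\nu^{-\frac{a\_-1}{2}}\rho, \nu^{\frac{a-1}{2}}\rho])$ inside $\nu^{\frac{a-1}{2}}\rho \times \cdots \times \nu^{-\frac{a\_-1}{2}}\rho$ and passing to the cuspidal support of $\sigma_1$, I obtain a cuspidal-chain embedding of $\sigma$ in which the factor $\nu^{-\frac{a\_-1}{2}}\rho$ has non-positive exponent. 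Hence $\sigma$ is not strongly positive, which proves the contrapositive.

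For the converse I would again argue by contraposition, assuming $\sigma$ is not strongly positive and extracting a pair on which $\epsilon_{\sigma} = 1$. First, Lemma \ref{remarkprva} shows that any embedding $\sigma \hookrightarrow \nu^{t}\rho \rtimes \sigma'$ forces $(2t+1,\rho) \in \Jord(\sigma)$, hence $t \geq 0$; so the failure of strong positivity can never be witnessed by a single cuspidal factor and must occur inside a segment. Thus there is a $\rho$ and an embedding $\sigma \hookrightarrow \delta([\nu^{y}\rho, \nu^{x}\rho]) \rtimes \sigma'$ with $y \leq 0$, where Lemma \ref{lemaprva} is used to arrange such a segment as the leftmost factor; I would then fix $\rho$ and take $y_{\min}$ minimal among all such embeddings. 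By Lemma \ref{priprema1} the accompanying representation is a discrete series, and by Corollary \ref{corembedjord} both $a := 2x+1$ and $a\_ := 1-2y_{\min}$ lie in $\Jord_\rho(\sigma)$. The square-integrability criterion forces $e(\delta([\nu^{y_{\min}}\rho,\nu^x\rho])) = \frac{y_{\min}+x}{2} > 0$, so $y_{\min} > -x$ and therefore $a\_ < a$; moreover a direct substitution shows $-\frac{a\_-1}{2} = y_{\min}$ and $\frac{a-1}{2} = x$, so the chosen embedding is exactly the one appearing in Lemma \ref{epsilonequality} for the pair $(a\_,a)$. Provided $a\_$ is the immediate predecessor of $a$ in $\Jord_\rho(\sigma)$, the ``furthermore'' part of Lemma \ref{epsilonequality} (equivalently Lemma \ref{lemaequiv}) then gives $\epsilon_{\sigma}((a\_,\rho),(a,\rho)) = 1$, completing the contrapositive.

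The main obstacle is precisely the last clause of the converse: ruling out an intermediate $C \in \Jord_\rho(\sigma)$ with $a\_ < C < a$, which would place the cuspidal exponent $\frac{C-1}{2}$ strictly inside the segment $[\nu^{y_{\min}}\rho, \nu^x\rho]$ and prevent $a\_$ from being the immediate predecessor of $a$. To settle this I would either invoke the explicit structural classification of strongly positive discrete series in \cite[Theorem~4.6]{Matic4} and \cite[Section~7]{MatTad}, which characterizes strong positivity exactly by the absence of any segment embedding with non-positive left endpoint and thereby supplies the needed equivalence directly, or run an extremal/inductive argument on $\Jord_\rho(\sigma)$ in which the presence of such a $C$ lets one replace $(x,y_{\min})$ by a strictly shorter segment and repeat until a genuinely consecutive pair is reached. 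I expect the bookkeeping around this reduction, rather than any single inequality, to be the delicate part of the proof.
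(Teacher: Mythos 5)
The paper does not actually prove this proposition: it is stated as something that ``can be deduced directly'' from \cite[Theorem~4.6]{Matic4} or \cite[Section~7]{MatTad}, so your first fallback (invoking those references) is literally the paper's route, and your attempt to derive the statement from the paper's own lemmas is a genuinely different, more self-contained approach. Your forward direction is correct: Definition \ref{defone} alone only gives the embedding (\ref{ds:em1}), whose exponents are all positive, so you are right that one must pass through Lemma \ref{epsilonequality} to reach the segment $[\nu^{-\frac{a\_-1}{2}}\rho,\nu^{\frac{a-1}{2}}\rho]$ whose left exponent $-\frac{a\_-1}{2}\le 0$ kills strong positivity. In the converse, the extraction of an embedding $\s\hookrightarrow\d([\nu^{y}\rho,\nu^{x}\rho])\rt\pi$ with $y\le 0$ from the failure of strong positivity is the standard manipulation used in the proofs of Lemma \ref{remarkprva} and Lemma \ref{priprema1} (via \cite[Theorem~3.5]{Matic5}); Lemma \ref{lemaprva} is not really the right citation for that step, but this is cosmetic.

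The gap you flag is real, but it closes more easily than either of your two proposed fixes. You do not need to rule out an intermediate $C\in\Jord_\rho(\s)$ with $1-2y_{\min}<C<a$, nor do you need to iterate: simply let $a\_$ be the true immediate predecessor $\max\{b\in\Jord_\rho(\s):b<a\}$, which exists and satisfies $a\_\ge 1-2y_{\min}$ because $1-2y_{\min}\in\Jord_\rho(\s)$ by Corollary \ref{corembedjord}. Then $\frac{a\_+1}{2}\ge 1-y_{\min}>y_{\min}$, so Frobenius reciprocity applied to $\s\hookrightarrow\d([\nu^{y_{\min}}\rho,\nu^{x}\rho])\rt\pi$ together with transitivity of Jacquet modules (truncating the segment on the right, exactly as in the displayed formula for $\ms(\d([\nu^{a}\rho,\nu^{b}\rho]))$) shows that $\mus(\s)$ contains an irreducible constituent of the form $\d([\nu^{\frac{a\_+1}{2}}\rho,\nu^{\frac{a-1}{2}}\rho])\otimes\pi''$. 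Lemma \ref{lemaequiv} then yields $\epsilon_{\s}((a\_,\rho),(a,\rho))=1$ for this genuinely consecutive pair, which is all the contrapositive requires. With that one adjustment your argument is complete and independent of the external references the paper relies on.
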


We also note

\begin{thrm}  \label{embed}
Let $\s \in \Irr(G_{n})$ denote a non-strongly positive discrete series. Then there exist 
\begin{itemize}
    \item $(a, \rho) \in \Jord(\s)$ such that $a\_$ is defined and $\epsilon_{\s}((a\_, \rho), (a, \rho)) = 1$,
    \item a discrete series $\s'$ such that $\s$ is a subrepresentation of $\delta([\nu^{-\frac{a\_ - 1}{2}} \rho$, $\nu^{\frac{a-1}{2}} \rho])$ $\rtimes \s'$,
\end{itemize}
and one of the following holds:
\begin{enumerate}[(1)]
    \item if there is a $b \in \Jord_{\rho}(\s')$ such that $b\_ \in \Jord_{\rho}(\s')$ is defined and satisfies $\epsilon_{\s'}((b\_, \rho), (b, \rho)) = 1$, then $a\_ > b\_$,
\item if there is a $b \in \Jord_{\rho}(\s')$ such that $b\_ \in \Jord_{\rho}(\s')$ is defined and satisfies $\epsilon_{\s'}((b\_, \rho), (b, \rho)) = 1$, then $a < b$.
\end{enumerate}
In particular, for a discrete series $\s$ there exists an ordered $n$-tuple of discrete series $(\s_1, \s_2, \ldots, \s_n)$, $\s_i \in \Irr(G_{m_i})$, such that 
\begin{itemize}
    \item $\s_1$ is strongly positive,
    \item $\s_n \cong \s$,
    \item for every $i = 2, 3, \ldots, n$, there is $(a_i, \rho_i) \in \Jord(\s)$ such that $(a_i)\_$ is defined, $\sigma_i$ is a subrepresentation of $\d([\nu^{-\frac{(a_i)\_ -1}{2}} \rho_i, \nu^{\frac{a_i -1}{2}} \rho_i]) \rt \s_{i-1}$,
\end{itemize}
 and one of the following holds:
\begin{enumerate}[(1)]
    \item $a_i > a_j$ for all $j > i$ such that $\rho_i \cong \rho_j,$
    \item $a_i < a_j$ for all $j > i$ such that $\rho_i \cong \rho_j$.
\end{enumerate}
\end{thrm}
\begin{proof}
\begin{sloppypar}
For a discrete series $\s$ which is not strongly positive, there is an ordered pair $(a, \rho) \in \Jord(\s)$ such that $a\_$ is defined and $\epsilon_{\s}((a\_, \rho), (a, \rho)) = 1$. By Lemma \ref{epsilonequality}, there is a discrete series $\s''$ such that $\s$ is a subrepresentation of $\d([\nu^{-\frac{a\_-1}{2}} \rho, \nu^{\frac{a-1}{2}} \rho]) \rt \s''$.
\end{sloppypar}

For $(b, \rho') \in \Jord(\s)$, $\rho' \not\cong \rho$, such that $b\_$ is defined, it follows directly from Lemma \ref{lemaequiv} that $\epsilon_{\s}((b\_, \rho'), (b, \rho')) = 1$ if and only if $\epsilon_{\s''}((b\_, \rho'), (b, \rho')) = 1$. In the same way one can see that, for $(b, \rho) \in \Jord(\s)$ such that $b\_$ is defined and either $b < a\_$ or $b\_ > a$, $\epsilon_{\s}((b\_, \rho), (b, \rho)) = 1$ if and only if $\epsilon_{\s''}((b\_, \rho), (b, \rho)) = 1$.

Now let $\rho$ denote an irreducible cuspidal essentially self-dual unitarizable representation such that $\Jord_{\rho}(\s) \neq \emptyset$ and there is an $a \in \Jord_{\rho}(\s)$ such that $a\_$ is defined and $\epsilon_{\s}((a\_, \rho),(a, \rho)) = 1$. Let $S_1$ denote the set of all $b \in \Jord_{\rho}(\s)$ such that $b\_$ is defined and $\epsilon_{\s}((b\_, \rho),(b, \rho)) = 1$. Taking either $a = \min(S_1)$ or $a=\max(S_1)$, the rest of the proof follows from an inductive application of this procedure, together with Propositions \ref{propjord} and \ref{propsp}.
\end{proof}

Suppose that $\sigma \in R(G)$ is a strongly positive discrete series, with the partial cuspidal support $\sigma_{cusp}$, and let $\rho \in R(GL)$ denote an irreducible cuspidal unitary representation such that some twists of $\rho$ appear in the cuspidal support of $\sigma$. By the classification of strongly positive discrete series \cite[Theorem~A]{Kim1}, $\rho$ is self-dual, there exist unique positive half-integers $a$ and $b$, and the unique strongly positive discrete series representation $\sigma'$ without $\nu^{a} \rho$ in the cuspidal support, such that $\sigma$ is the unique irreducible subrepresentation of $\delta([\nu^{a} \rho, \nu^{b} \rho]) \rtimes \sigma'$. Furthermore, there is a
non-negative integer $k$ such that $a + k = s$, for $s > 0$
such that $\nu^{s} \rho \rtimes \sigma_{cusp}$ reduces. If $k = 0$,
there are no twists of $\rho$ appearing in the cuspidal support of
$\sigma'$ and if $k > 0$ there exist unique $b' > b$ and the
unique strongly positive discrete series $\sigma''$, which
contains neither $\nu^{a} \rho$ nor $\nu^{a+1} \rho$ in its
cuspidal support, such that $\sigma'$ can be written as the unique
irreducible subrepresentation of $\delta([\nu^{a+1} \rho, \nu^{b'}
\rho]) \rtimes \sigma''$.

Thus, we obtain that for a strongly positive discrete series $\sigma \in R(G)$ there is an ordered $n$-tuple of discrete series $(\sigma_1, \sigma_2, \ldots, \sigma_n)$, $\sigma_i \in R(G)$ for $i=1,2, \ldots, n$, such that 
\begin{itemize}
    \item $\sigma \cong \sigma_n$,
    \item $\sigma_1$ is cuspidal,
    \item for every $i = 2, 3, \ldots, n$, there is an irreducible self-dual cuspidal $\rho_i \in R(GL)$ and positive $a_i, b_i$, such that $\sigma_i \hookrightarrow \delta([\nu^{a_i} \rho_i, \nu^{b_i} \rho_i]) \rtimes \sigma_{i-1}$.
\end{itemize}
Note that $\sigma_1$ is the partial cuspidal support of $\sigma$. 

\begin{remark} \label{remarkone}
Using the previous inductive description of the strongly positive discrete series, Proposition \ref{propjord} and Theorem \ref{embed} enable us to relate the Jordan block of a discrete series $\sigma$ with the Jordan block of its partial cuspidal support. Now the Basic Assumption implies that $\Jord_{\rho}(\sigma)$ is finite.
\end{remark}

We recall a result which can be obtained following the same lines as in \cite[Section~4]{MT1}. We note that the results obtained in \cite[Section~4]{MT1} completely rely on the structural formula and the definition of the Jordan block of a discrete series, so can be directly applied to our situation.

\begin{lemma} \label{comser}
Let $\s \in \Irr(G_n)$ denote a discrete series and let $\rho$ be an irreducible essentially self-dual cuspidal unitarizable representation of some $GL_{n_{\rho}}$. Also, let $a, b$ denote positive integers, $a < b$, such that for $x \in \Jord_{\rho}(\s)$ we have $\frac{x - a}{2}, \frac{x- b}{2} \in \mathbb{Z}$ and $x \not\in \{ a, a+1, \ldots, b \}$. Then the induced representation
\begin{equation*}
\delta([\nu^{- \frac{a - 1}{2}} \rho, \nu^{\frac{b - 1}{2}} \rho]) \rtimes \s
\end{equation*}
contains two irreducible subrepresentations, which are mutually non-isomorphic.
\end{lemma}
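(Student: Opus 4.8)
The plan is to reduce everything to the reducibility of the ``central'' part of the induced representation and then to separate two pieces in the socle by their tempered support. Write $\alpha = \frac{a-1}{2}$ and $\beta = \frac{b-1}{2}$, so that $0 \leq \alpha < \beta$ and the inducing segment is $\Delta = [\nu^{-\alpha}\rho, \nu^{\beta}\rho]$; set $\Delta_0 = [\nu^{-\alpha}\rho, \nu^{\alpha}\rho]$ and $\Delta_+ = [\nu^{\alpha+1}\rho, \nu^{\beta}\rho]$, so that $\Delta = \Delta_0 \cup \Delta_+$ as contiguous, linked segments. The first step is the basic reducibility: the parity hypothesis $\frac{x-a}{2}\in\mathbb{Z}$ for $x\in\Jord_\rho(\s)$ forces $a$ to have the Jordan-block parity, hence to satisfy condition~(1) in the definition of $\Jord$; then by condition~(2) the assumption $a\notin\Jord_\rho(\s)$ is \emph{equivalent} to the reducibility of $\delta(\Delta_0)\rtimes\s$. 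Since $e(\delta(\Delta_0))=0$, the representation $\delta(\Delta_0)\otimes\s$ is a discrete series of the Levi, so $\delta(\Delta_0)\rtimes\s$ is tempered and induced from a maximal parabolic; being reducible, the theory of $R$-groups (equivalently, of intertwining operators) yields $\delta(\Delta_0)\rtimes\s = \tau_1\oplus\tau_2$ with $\tau_1,\tau_2$ mutually non-isomorphic irreducible tempered and $\Jord(\tau_i)=\Jord(\s)\cup\{(a,\rho)\}$.

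Next I would set up the separation. As $\Delta_0$ and $\Delta_+$ are linked with $\Delta_+$ carrying the larger exponents, $\delta(\Delta)$ is the unique irreducible subrepresentation of $\delta(\Delta_+)\times\delta(\Delta_0)$, hence
\[
\delta(\Delta)\rtimes\s \hookrightarrow \delta(\Delta_+)\times\delta(\Delta_0)\rtimes\s = \bigl(\delta(\Delta_+)\rtimes\tau_1\bigr)\oplus\bigl(\delta(\Delta_+)\rtimes\tau_2\bigr).
\]
Any irreducible subrepresentation $\pi$ of $\delta(\Delta)\rtimes\s$ thus embeds in the right-hand side, and by peeling off $\delta(\Delta_+)$ from a suitable Jacquet module one reads off a well-defined tempered support $\tau_\pi\in\{\tau_1,\tau_2\}$, uniquely determined by the Langlands data of $\pi$. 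Consequently it suffices to produce, for each $i\in\{1,2\}$, an irreducible subrepresentation $\pi_i\hookrightarrow\delta(\Delta)\rtimes\s$ with tempered support $\tau_i$; the two are then automatically non-isomorphic.

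To produce $\pi_i$ I would argue on Jacquet modules. Using the structural formula (Theorem~\ref{osn}) together with $\mu^{\ast}(\tau_i)\geq\delta(\Delta_0)\otimes\s$, one locates the constituent $\delta(\Delta)\otimes\s$, and more refined the constituent $\delta(\Delta_+)\otimes\tau_i$, inside $\mu^{\ast}(\delta(\Delta)\rtimes\s)$. The gap hypothesis $\{a,a+1,\dots,b\}\cap\Jord_\rho(\s)=\emptyset$ is exactly what guarantees that no intermediate term $\nu^{x}\rho\otimes(\cdots)$ with $-\alpha\leq x<\beta$ obstructs, so Lemma~\ref{lemaprva} yields an honest irreducible subrepresentation of $\delta(\Delta)\rtimes\s$ realizing the segment $\delta(\Delta)$ at the top of its Jacquet module while retaining $\tau_i$ deeper, and hence having tempered support $\tau_i$.

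The main obstacle is precisely this last step: showing that \emph{both} $\tau_1$ and $\tau_2$ are reached by the socle of $\delta(\Delta)\rtimes\s$, rather than only one of them. Because $\delta(\Delta_+)$ has positive exponent, $\delta(\Delta)\rtimes\s$ has a unique irreducible quotient but an a priori complicated socle, and the two summands $\delta(\Delta_+)\rtimes\tau_i$ (each itself reducible, since $2(\alpha+1)-1=a\in\Jord_\rho(\tau_i)$) may share constituents; the delicate point is to track the tempered support through the iterated Jacquet modules and to rule out that the two candidate embeddings collapse to a single subrepresentation. This is where the gap and parity hypotheses, the distinction of $\tau_1$ and $\tau_2$ by their $R$-group character (equivalently, by an $\epsilon$-value at $a$), and the structural formula must be combined carefully, following the pattern of \cite[Section~4]{MT1}.
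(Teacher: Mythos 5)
Your framework is the right one and is precisely the route of \cite[Section~4]{MT1} that the paper invokes without writing out a proof: decompose $\delta(\Delta_0)\rtimes\sigma=\tau_1\oplus\tau_2$ (your step~1 is fine, granting that $a$ has the parity of $\Jord_\rho(\sigma)$, which requires $\Jord_\rho(\sigma)\neq\emptyset$ or some other source of parity), embed $\delta(\Delta)\rtimes\sigma$ into $\bigl(\delta(\Delta_+)\rtimes\tau_1\bigr)\oplus\bigl(\delta(\Delta_+)\rtimes\tau_2\bigr)$, and separate subrepresentations by which $\tau_i$ they remember. But the argument is not complete: the decisive point --- that \emph{both} $\tau_1$ and $\tau_2$ are realized by the socle of $\delta(\Delta)\rtimes\sigma$ rather than only one --- is exactly the step you defer to ``combining the hypotheses carefully,'' and the tool you invoke for it, Lemma~\ref{lemaprva}, cannot do the job: that lemma takes a fixed irreducible representation with a prescribed constituent in its Jacquet module and embeds it into $\delta([\nu^{a}\rho,\nu^{b}\rho])\rtimes\pi_2$ for \emph{some} $\pi_2$; it does not produce subrepresentations of the specific induced representation $\delta(\Delta)\rtimes\sigma$.

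The missing step is a multiplicity count that your setup already makes available. Using the structural formula together with Lemma~\ref{remarkprva}, the gap hypothesis $\Jord_\rho(\sigma)\cap\{a,\dots,b\}=\emptyset$ kills every term in which a nontrivial $GL$-constituent coming from $\mu^{\ast}(\sigma)$ (or from $\mu^{\ast}(\tau_m)$) would have to be supported on $\{\nu^{x}\rho:\ \tfrac{a+1}{2}\le x\le\tfrac{b-1}{2}\}$; one then checks that $\delta(\Delta_+)\otimes\tau_m$ occurs in $\mu^{\ast}(\delta(\Delta_+)\rtimes\tau_m)$ with multiplicity one, does not occur in $\mu^{\ast}(\delta(\Delta_+)\rtimes\tau_{m'})$ for $m'\neq m$, and occurs in $\mu^{\ast}(\delta(\Delta)\rtimes\sigma)$ with multiplicity one for each $m$ (the only surviving term being $i=-\tfrac{a+1}{2}$, $j=\tfrac{a-1}{2}$, $\tau\otimes\sigma'=1\otimes\sigma$, whose $G$-part is $\delta(\Delta_0)\rtimes\sigma=\tau_1+\tau_2$). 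The first two facts plus Frobenius reciprocity show that $\delta(\Delta_+)\rtimes\tau_m$ has a unique irreducible subrepresentation $\pi_m$, that $\mu^{\ast}(\pi_m)\ge\delta(\Delta_+)\otimes\tau_m$, and that $\pi_1\not\cong\pi_2$. The third fact, compared with the total multiplicity $1+0=1$ coming from the two summands, shows that the quotient of $\delta(\Delta_+)\times\delta(\Delta_0)\rtimes\sigma$ by $\delta(\Delta)\rtimes\sigma$ (whose semisimplification is that of $L(\delta(\Delta_0),\delta(\Delta_+))\rtimes\sigma$) has no occurrence of $\delta(\Delta_+)\otimes\tau_m$ in its Jacquet module; hence $\pi_m$ cannot inject into that quotient, so $\pi_m\cap\bigl(\delta(\Delta)\rtimes\sigma\bigr)\neq 0$ and $\pi_m$ lies inside $\delta(\Delta)\rtimes\sigma$. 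This produces the two mutually non-isomorphic irreducible subrepresentations and closes the gap.
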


\begin{lemma} \label{lemapripremazadnja}
Let $\s \in \Irr(G_n)$ denote a discrete series. Let $(a, \rho) \in \Jord(\s)$ be such that $a\_$ is defined and $a\_ \leq a - 4$. Then for every $x$ such that $\frac{a - x}{2}$ is an integer and $a\_ + 4 \leq x \leq a$, there exists a discrete series $\pi$ such that $\s$ is a subrepresentation of $\d([\nu^{\frac{x-1}{2}} \rho, \nu^{\frac{a-1}{2}} \rho]) \rt \pi$. Furthermore, if an irreducible constituent of the form $\d([\nu^{\frac{x-1}{2}} \rho, \nu^{\frac{a-1}{2}} \rho]) \ot \pi'$ appears in $\mu^{\ast}(\s)$, then $\pi' \cong \pi$ and $\d([\nu^{\frac{x-1}{2}} \rho, \nu^{\frac{a-1}{2}} \rho]) \ot \pi'$ appears in $\mu^{\ast}(\s)$ with multiplicity one.
\end{lemma}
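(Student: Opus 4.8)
Throughout fix $x$ with $a\_ + 4 \leq x \leq a$ and $\frac{a-x}{2} \in \mathbb{Z}$, and abbreviate the segment in question as $\Delta = [\nu^{\frac{x-1}{2}} \rho, \nu^{\frac{a-1}{2}} \rho]$. The hypothesis $a\_ \leq a-4$ together with $x \geq a\_ + 4$ guarantees that every value $2z+1$ with $\frac{x-1}{2} \leq z \leq \frac{a-3}{2}$ lies strictly between $a\_$ and $a$, and hence does not belong to $\Jord_\rho(\s)$; this gap is what the whole argument exploits. The plan is to produce the asserted embedding from Lemma \ref{lemaprva}, whose two hypotheses I must verify: (i) $\mus(\s)$ contains a constituent $\d(\Delta) \otimes \pi'$, and (ii) $\mus(\s)$ contains no constituent $\nu^z \rho \otimes \pi_1$ with $\frac{x-1}{2} \leq z < \frac{a-1}{2}$. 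Granting (i) and (ii), Lemma \ref{lemaprva} yields $\s \hookrightarrow \d(\Delta) \rt \pi$; since $\frac{x-1}{2} > 0$ (note $x \geq a\_+4 \geq 5$), Proposition \ref{propjord}(1) then forces $(x-2, \rho) \in \Jord(\pi)$ and $\Jord_\rho(\pi) = (\Jord_\rho(\s) \setminus \{a\}) \cup \{x-2\}$, all other blocks being unchanged, once $\pi$ is known to be a discrete series.

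To set up the computation, I first note that $(a,\rho) \in \Jord(\s)$ puts $\nu^{\frac{a-1}{2}}\rho$ in the cuspidal support of $\s$, so $\nu^{\frac{a-1}{2}}\rho \otimes (\cdot)$ occurs in $\mus(\s)$; applying Lemma \ref{lemaprva} to the one-term segment (for which condition (ii) is vacuous) gives $\s \hookrightarrow \nu^{\frac{a-1}{2}}\rho \rt (\cdot)$, and minimizing the lower endpoint produces, by Lemma \ref{priprema1}, an embedding $\s \hookrightarrow \d([\nu^{y_{\min}}\rho, \nu^{\frac{a-1}{2}}\rho]) \rt \s_{ds}$ with $\s_{ds}$ a discrete series whose Jordan block is read off from Proposition \ref{propjord} and Corollary \ref{corembedjord}. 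Because Jacquet functors are exact, $\mus(\s) \leq \mus(\d([\nu^{y_{\min}}\rho, \nu^{\frac{a-1}{2}}\rho]) \rt \s_{ds})$, and I expand the right-hand side by the structural formula (Theorem \ref{osn}), isolating all terms whose $GL$-part equals $\d(\Delta)$ and all terms whose $GL$-part is a single $\nu^z\rho$ with $z$ in the forbidden range.

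In that expansion each $GL$-part is a product $\d([\nu^{-i}\rho, \nu^{-y_{\min}}\rho]) \t \d([\nu^{j+1}\rho, \nu^{\frac{a-1}{2}}\rho]) \t \tau$ with $\tau$ a $GL$-constituent of $\mus(\s_{ds})$. The top exponent $\frac{a-1}{2}$ of $\d(\Delta)$ can be supplied only by the middle factor, and matching the bottom exponent $\frac{x-1}{2}$ pins down $i$ and $j$ and forces $\tau$ to be a segment $\d([\nu^{\frac{x-1}{2}}\rho, \nu^{j}\rho])$ linking onto the middle factor. Since $\s_{ds}$ is a discrete series, the admissible $\tau$ are governed by Lemma \ref{remarkprva} and Lemma \ref{lemaequiv} through $\Jord_\rho(\s_{ds})$, and the gap forces exactly one admissible choice; this yields a single term $\d(\Delta) \otimes \pi$ with $\pi$ a discrete series (the one with Jordan block as above), of multiplicity one, which establishes (i). Because $\mus(\s) \leq \mus(\text{induced})$ and $\d(\Delta)\otimes\pi$ does occur in $\mus(\s)$ by the embedding just produced, the same count gives the \emph{Furthermore} assertion: any $\pi'$ with $\d(\Delta)\otimes\pi' \leq \mus(\s)$ equals $\pi$, with multiplicity one.

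The genuine obstacle is condition (ii) together with the passage from the ambient induced module to the subrepresentation $\s$. A single factor $\nu^z\rho$ with $2z+1$ in the gap can appear in $\mus$ of the induced module precisely when $\epsilon_\s((a\_,\rho),(a,\rho)) = -1$: in that case the minimal embedding has $y_{\min} > 0$, so by Proposition \ref{propjord} the representation $\s_{ds}$ acquires a Jordan block $2y_{\min}-1$ lying inside the gap, and this block feeds a constituent $\tau = \nu^{y_{\min}-1}\rho$ into the structural formula, producing a spurious $\nu^{y_{\min}-1}\rho \otimes (\cdot)$ with $2(y_{\min}-1)+1$ in the gap. One must show this term does not survive in $\mus(\s)$ itself. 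I expect to handle this by splitting on the value of $\epsilon_\s((a\_,\rho),(a,\rho))$ (using Lemma \ref{epsilonequality} when it equals $1$, where $y_{\min} < 0$ and no interior factor arises) and, in the case it equals $-1$, by using that $\s$ lies in the kernel of the relevant standard intertwining operator (square-integrability criterion), which kills the spurious constituent rather than merely bounding $\mus(\s)$ by $\mus$ of the induced representation. This intertwining analysis, localized by the gap, is the crux; the identification of $\pi$ and the multiplicity-one statement are then formal consequences of the structural formula and Proposition \ref{propjord}.
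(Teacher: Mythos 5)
Your argument has two genuine gaps, and the one you defer as ``the crux'' is not where the difficulty lies. First, the opening step is unjustified: $(a,\rho)\in\Jord(\s)$ does \emph{not} put $\nu^{\frac{a-1}{2}}\rho$ in the cuspidal support of $\s$ (membership in $\Jord(\s)$ is an irreducibility condition and holds, e.g., for many $\rho$ when $\s$ is cuspidal and no twist of $\rho$ occurs in its support), and even occurrence in the cuspidal support does not give a constituent $\nu^{\frac{a-1}{2}}\rho\ot(\cdot)$ of $\mus(\s)$ --- that requires the factor to be extractable in the \emph{first} position. For instance, for the strongly positive $\s\h\d([\nu\rho,\nu^2\rho])\rt\s_{cusp}$ the representation $\nu\rho$ lies in the cuspidal support, yet $\mus(\s)$ contains no constituent $\nu\rho\ot(\cdot)$. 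So the embedding $\s\h\nu^{\frac{a-1}{2}}\rho\rt(\cdot)$, on which everything else rests, is not established. Second, and more seriously: granting the minimal embedding $\s\h\d([\nu^{y_{\min}}\rho,\nu^{\frac{a-1}{2}}\rho])\rt\s_{ds}$, your verification of condition (i) of Lemma \ref{lemaprva} only works when $\frac{x-1}{2}\geq y_{\min}$, since only then does Frobenius reciprocity plus transitivity of Jacquet modules produce $\d(\Delta)\ot\pi'$ inside $\mus(\s)$; expanding $\mus$ of the \emph{induced} representation by the structural formula merely bounds $\mus(\s)$ from above and cannot create constituents of $\mus(\s)$. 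The lemma requires the embedding for every $x$ down to $a\_+4$, i.e.\ you need $y_{\min}\leq\frac{a\_+3}{2}$, and that inequality is essentially the hardest instance of the statement being proved; minimality of $y_{\min}$ gives no handle on it. (Your closing appeal to ``the embedding just produced'' to establish (i) is circular for exactly this reason.) The paper gets around this by descending via Theorem \ref{embed} to a strongly positive $\s_1$, locating which building block carries $a$ (three cases), and in the case $a=2x_i+1$ proving that $\d([\nu^{-\frac{a-1}{2}}\rho,\nu^{-\frac{x-1}{2}}\rho])\rt\s_1$ is irreducible so that the reflected segment can be commuted to the front; none of this construction is replaced by anything in your sketch.

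By contrast, the step you single out as the genuine obstacle --- condition (ii) --- is immediate: if $\mus(\s)\geq\nu^{z}\rho\ot\pi_1$ then $\s\h\nu^{z}\rho\rt\pi_1'$ for some irreducible $\pi_1'$ (pass to the cuspidal level and re-induce), so Lemma \ref{remarkprva} forces $2z+1\in\Jord_\rho(\s)$, which is impossible for $\frac{x-1}{2}\leq z<\frac{a-1}{2}$ because then $a\_<2z+1<a$ contradicts the definition of $a\_$. No intertwining-operator analysis is needed there, and in any case you leave that part as an expectation rather than a proof. The identification of $\pi$, its square-integrability, and the multiplicity-one claim are also not ``formal'': the paper proves square-integrability by a separate contradiction argument using Corollary \ref{corembedjord}, and multiplicity one by first establishing $\Jord_\rho(\pi)\cap[x-1,a]=\emptyset$ and invoking the argument of \cite[Theorem~2.1]{Mu3}.
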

\begin{proof}
We divide the proof in a series of claims.
\begin{itemize}
    \item Claim $1$: There is an irreducible representation $\pi$ such that $\s$ is a subrepresentation of $\d([\nu^{\frac{x-1}{2}} \rho, \nu^{\frac{a-1}{2}} \rho]) \rt \pi$.
\end{itemize}
Similarly as in Theorem \ref{embed}, letting $\s_{k+1} = \s$, there are irreducible essentially square-integrable representations $\d_1, \ldots, \d_k$, $\d_i = \d([\nu^{-x_i} \rho_i, \nu^{y_i} \rho_i]) \in R(GL)$, $x_i, y_i \geq 0$, for $i = 1, \ldots, k$, and discrete series $\s_1, \ldots, \s_k \in R(G)$, such that $\s_{i+1} \hookrightarrow \d_{i} \rt \s_{i}$ for $i = 1, \ldots, k$, $\s_1$ is strongly positive, in $\Jord_{\rho_k}(\s)$ we have $(2y_k + 1)\_ = 2x_k + 1$, and in $\Jord_{\rho_{i+1}}(\s_i)$ we have $(2y_{i} + 1)\_ = 2x_{i} + 1$.

Obviously, $\s$ is a subrepresentation of $\d_k \times \d_{k-1} \times \cdots \times \d_1 \rtimes \s_1$, and either $(a, \rho) \in \Jord(\s_1)$, or there is a unique $i \in \{ 1, \ldots, k \}$ such that $\rho_i \cong \rho$ and $a \in \{ 2x_i + 1, 2y_i + 1 \}$ since $(a, \rho) \in \Jord(\s)$.

We consider several possibilities.
\begin{enumerate}[(i)]
    \item If $(a, \rho) \in \Jord(\s_1)$, it follows from the classification of strongly positive discrete series \cite[Theorem~A]{Kim1} and \cite[Theorem~5.3]{Matic4} that there is an irreducible strongly positive representation $\s'$ such that $\s_1 \hookrightarrow \d([\nu^{\frac{x-1}{2}} \rho, \nu^{\frac{a-1}{2}} \rho]) \rt \s'$. Since $2x_i +1, 2y_i + 1 \in \Jord_{\rho_i}(\s)$, for $i = 1, \ldots, k$ we have
\begin{equation*}
\d_i \times \d([\nu^{\frac{x-1}{2}} \rho, \nu^{\frac{a-1}{2}} \rho]) \cong \d([\nu^{\frac{x-1}{2}} \rho, \nu^{\frac{a-1}{2}} \rho]) \times \d_i,
\end{equation*}
and it follows that $\s$ is a subrepresentation of
\begin{equation*}
\d([\nu^{\frac{x-1}{2}} \rho, \nu^{\frac{a-1}{2}} \rho]) \times \d_k \times \cdots \times \d_1 \rtimes \s'.
\end{equation*}
Now Lemma \ref{lemajantz} implies the Claim $1$ in this case.
   \item If $\rho_i \cong \rho$ and $a = 2y_i + 1$, for some $i \in \{ 1, 2, \ldots, k \}$, we have an embedding $\d_i \hookrightarrow \d([\nu^{\frac{x-1}{2}} \rho, \nu^{\frac{a-1}{2}} \rho]) \times \d([\nu^{-x_i} \rho, \nu^{\frac{x-3}{2}} \rho])$ and, since for $j = i+1, \ldots, k$ we have $\d_j \times \d([\nu^{\frac{x-1}{2}} \rho, \nu^{\frac{a-1}{2}} \rho]) \cong \d([\nu^{\frac{x-1}{2}} \rho, \nu^{\frac{a-1}{2}} \rho]) \times \d_j$, we obtain that $\s$ is a subrepresentation of
\begin{equation*}
\d([\nu^{\frac{x-1}{2}} \rho, \nu^{\frac{a-1}{2}} \rho]) \times \d_k \times \cdots \times \d_{i+1} \times \d([\nu^{-x_i} \rho, \nu^{\frac{x-3}{2}} \rho]) \times \d_{i-1} \times \cdots \times \d_1 \rtimes \s_1.
\end{equation*}
Again, Lemma \ref{lemajantz} implies the Claim $1$.
   \item Suppose that $\rho_i \cong \rho$ and $a = 2x_i + 1$, for some $i \in \{ 1, 2, \ldots, k \}$. Obviously, $(a, \rho) \notin \Jord(\sigma_1)$. Similarly as in the previous case we obtain that $\s$ is an irreducible subrepresentation of
\begin{equation*}
\d_k \times \cdots \times \d_{i+1} \times \d([\nu^{- \frac{x-3}{2}} \rho, \nu^{y_i} \rho]) \times \d_{i-1} \times \cdots \times \d_1 \times \d([\nu^{-\frac{a-1}{2}} \rho, \nu^{-\frac{x-1}{2}} \rho]) \rtimes \s_1.
\end{equation*}

Let us prove that the induced representation $\d([\nu^{-\frac{a-1}{2}} \rho, \nu^{-\frac{x-1}{2}} \rho]) \rtimes \s_1$ is irreducible. Following the same lines as in the proof of \cite[Theorem~3.4]{Kim1}, using the self-duality of $\rho$, we obtain that in $R(G)$ holds
\begin{equation*}
\d([\nu^{-\frac{a-1}{2}} \rho, \nu^{-\frac{x-1}{2}} \rho]) \rtimes \s_1 = \d([\nu^{\frac{x-1}{2}} \rho, \nu^{\frac{a-1}{2}} \rho]) \rtimes \s_1.
\end{equation*}
Since $x > 1$, from the cuspidal support of the strongly positive representation, which we have described before Remark \ref{remarkone}, follows that an irreducible tempered subquotient of $\d([\nu^{-\frac{a-1}{2}} \rho$, $\nu^{-\frac{x-1}{2}} \rho]) \rtimes \s_1$ would have to be strongly positive since $\mu^{\ast}(\d([\nu^{-\frac{a-1}{2}} \rho$, $\nu^{-\frac{x-1}{2}} \rho]) \rtimes \s_1)$ does not contain an irreducible constituent of the form $\d([\nu^{y_1} \rho, \nu^{y_2} \rho]) \otimes \pi$ for $y_1 \leq 0$ and $y_1 + y_2 \geq 0$. But, if $\d([\nu^{-\frac{a-1}{2}} \rho, \nu^{-\frac{x-1}{2}} \rho]) \rtimes \s_1$ contains a strongly positive discrete series, then it can be directly seen from the cuspidal support of such a representation that $x - 2 \in \Jord_{\rho}(\sigma_1)$, a contradiction. Thus, 
$\d([\nu^{-\frac{a-1}{2}} \rho, \nu^{-\frac{x-1}{2}} \rho]) \rtimes \s_1$ does not contain an irreducible tempered subquotient. 

We write a non-tempered irreducible subquotient of $\d([\nu^{\frac{x-1}{2}} \rho, \nu^{\frac{a-1}{2}} \rho]) \rtimes \s_1$ in the form $L(\d'_1, \ldots, \d'_l, \tau)$, where $\d'_1, \ldots, \d'_l$ are irreducible essentially square-integrable representations, $\d'_j \in \Irr(GL_{n'_j})$, such that $e(\d'_1) \leq \cdots \leq e(\d'_{l}) < 0$, and $\tau \in \Irr(G_{n''})$ is an irreducible tempered representation. Using Frobenius reciprocity, together with the transitivity of Jacquet modules, we deduce that $\mu^{\ast}(\d([\nu^{\frac{x-1}{2}} \rho, \nu^{\frac{a-1}{2}} \rho]) \rtimes \s_1) \geq \d'_1 \otimes \s'$ for some irreducible representation $\s'$ such that the Jacquet module of $\s'$ with respect to the appropriate parabolic subgroup
contains $\d'_2 \otimes \cdots \otimes \d'_l \otimes \tau$.

The structural formula implies that there are $\frac{x-3}{2} \leq i_1 \leq j_1 \leq \frac{a-1}{2}$ and an irreducible constituent $\pi_1 \otimes \s''$ of $\mu^{\ast}(\s_1)$ such that
\begin{align*}
\d'_1 & \leq \d([\nu^{- i_1} \rho, \nu^{-\frac{x-1}{2}} \rho]) \times \d([\nu^{j_1 + 1} \rho, \nu^{\frac{a-1}{2}} \rho]) \times \pi_1 \\
\intertext{and}
\s' & \leq \d([\nu^{i_1 + 1} \rho, \nu^{j_1} \rho]) \rtimes \s''.
\end{align*}
Since $e(\d'_1) < 0$ and $\s_1$ is strongly positive, it follows at once that $\d'_1 \cong \d([\nu^{- i_1} \rho, \nu^{-\frac{x-1}{2}} \rho])$ and $\s' \leq \d([\nu^{i_1 + 1} \rho, \nu^{\frac{a-1}{2}} \rho]) \rtimes \s_1$.

\begin{sloppypar}
If $l \geq 2$, in the same way we obtain that $\d'_2 \cong \d([\nu^{- i_2} \rho, \nu^{-i_1 - 1} \rho])$ for some $i_2 \geq i_1 + 1$, which is impossible since $e(\d'_1) \leq e(\d'_2)$. Thus, $l = 1$ and $\d([\nu^{i_1 + 1} \rho, \nu^{\frac{a-1}{2}} \rho]) \rtimes \s_1$ contains a tempered subquotient $\sigma'=\tau$. It can be easily seen that this happens only if $i_1 = \frac{a-1}{2}$, so every irreducible constituent of $\d([\nu^{\frac{x-1}{2}} \rho, \nu^{\frac{a-1}{2}} \rho]) \rtimes \s_1$ is isomorphic to its Langlands quotient, which appears with multiplicity one. Consequently, the induced representation $\d([\nu^{-\frac{a-1}{2}} \rho, \nu^{-\frac{x-1}{2}} \rho]) \rtimes \s_1$ is irreducible and isomorphic to $\d([\nu^{\frac{x-1}{2}} \rho, \nu^{\frac{a-1}{2}} \rho]) \rtimes \s_1$.
\end{sloppypar}

Since $\d_j \times \d([\nu^{\frac{x-1}{2}} \rho, \nu^{\frac{a-1}{2}} \rho]) \cong \d([\nu^{\frac{x-1}{2}} \rho, \nu^{\frac{a-1}{2}} \rho]) \times \d_j$ for $j = 1, \ldots, i-1, i+1, \ldots, k$ and 
\begin{equation*}
\d([\nu^{-\frac{x-3}{2}} \rho, \nu^{y_i} \rho]) \times \d([\nu^{\frac{x-1}{2}} \rho, \nu^{\frac{a-1}{2}} \rho]) \cong \d([\nu^{\frac{x-1}{2}} \rho, \nu^{\frac{a-1}{2}} \rho]) \times \d([\nu^{-\frac{x-3}{2}} \rho, \nu^{y_i} \rho]),     
\end{equation*}
we obtain that $\s$ is an irreducible subrepresentation of
\begin{equation*}
\d([\nu^{\frac{x-1}{2}} \rho, \nu^{\frac{a-1}{2}} \rho])  \times \d_k \times \cdots \times \d_{i+1} \times \d([\nu^{- \frac{x-3}{2}} \rho, \nu^{y_i} \rho]) \times \d_{i-1} \times \cdots \times \d_1 \rtimes \s_1.
\end{equation*}

Now Lemma \ref{lemajantz} implies that there is an irreducible representation $\pi \in R(G)$ such that $\s$ is a subrepresentation of $\d([\nu^{\frac{x-1}{2}} \rho, \nu^{\frac{a-1}{2}} \rho])  \rtimes \pi$.

\end{enumerate}

\begin{itemize}
    \item Claim $2$: The representation $\pi$ is a discrete series. 
\end{itemize}
Suppose, on the contrary, that $\pi$ is not a discrete series. By Corollary \ref{notds}, there are $x', y'$, $x' - y' \in \mathbb{Z}$ and $x' + y' \leq 0$, an irreducible cuspidal  representation $\rho' \in R(GL)$, and an irreducible representation $\pi_1 \in R(G)$, such that $\pi$ is a subrepresentation of
\begin{equation*}
\delta([\nu^{x'} \rho', \nu^{y'} \rho' ]) \rtimes \pi_1.
\end{equation*}
\begin{sloppypar}
\noindent Therefore,  a discrete series $\s$ is a subrepresentation of $\delta([\nu^{\frac{x-1}{2}} \rho, \nu^{\frac{a-1}{2}} \rho ]) \times \delta([\nu^{x'} \rho', \nu^{y'} \rho' ]) \rtimes \pi_1.$ In the same way as in the proof of Lemma \ref{priprema1}, we see that $\rho' \cong \rho$, $y' = \frac{x-3}{2}$, and $\s$ is a subrepresentation of $\d([\nu^{x'} \rho, \nu^{\frac{a-1}{2}} \rho]) \rtimes \pi_1$. Since $x' \leq -\frac{x-3}{2} < - \frac{a\_ - 1}{2}$, this contradicts Corollary \ref{corembedjord}.
\end{sloppypar}

\begin{itemize}
    \item Claim $3$: If some irreducible constituent of the form $\d([\nu^{\frac{x-1}{2}} \rho$, $\nu^{\frac{a-1}{2}} \rho]) \ot \pi'$ appears in $\mu^{\ast}(\s)$, then $\pi' \cong \pi$ and $\d([\nu^{\frac{x-1}{2}} \rho, \nu^{\frac{a-1}{2}} \rho]) \ot \pi'$ appears in $\mu^{\ast}(\s)$ with multiplicity one.
\end{itemize}
Since $\s$ is a subrepresentation of $\d([\nu^{\frac{x-1}{2}} \rho, \nu^{\frac{a-1}{2}} \rho]) \rtimes \pi$, it follows from Proposition \ref{propjord} that $\Jord_{\rho}(\s) = \Jord_{\rho}(\pi) \cup \{ a \} \setminus \{ x - 2 \}$ and, consequently, $\Jord_{\rho}(\pi) \cap [ x-1, a ] \subset \Jord_{\rho}(\sigma) \cap [ x-1, a ] \setminus \{ a \} = \emptyset$. 

Let us determine all irreducible constituents of $\mu^{\ast}(\d([\nu^{\frac{x-1}{2}} \rho, \nu^{\frac{a-1}{2}} \rho]) \rtimes \pi)$ of the form $\d([\nu^{\frac{x-1}{2}} \rho, \nu^{\frac{a-1}{2}} \rho]) \ot \pi'$. By the structural formula, there are $i, j$, $\frac{x-1}{2} \leq i \leq j \leq \frac{a-1}{2}$ and an irreducible constituent $\delta_1 \otimes \pi_1$ of $\mu^{\ast}(\pi)$ such that 
\begin{align*}
\d([\nu^{\frac{x-1}{2}} \rho, \nu^{\frac{a-1}{2}} \rho])& \leq \delta([\nu^{-i} \rho, \nu^{-\frac{x - 1}{2}} \rho]) \times \delta([\nu^{j+1} \rho, \nu^{\frac{a-1}{2}} \rho]) \times \delta_1 \\
\intertext{and}
\pi' & \leq \delta([\nu^{i+1} \rho, \nu^{j} \rho]) \rtimes \pi_1.
\end{align*}
\begin{sloppypar}
\noindent Considering cuspidal supports, we have $i = \frac{x-3}{2}$ and $\delta_1 = \delta([\nu^{\frac{x-1}{2}}\rho, \nu^{j}\rho])$. From Lemma \ref{remarkprva} and the description of $\Jord_{\rho}(\pi)$ we obtain that $\pi_1 \cong \pi$, so $j = \frac{x-3}{2}$. Thus, $\d([\nu^{\frac{x-1}{2}} \rho, \nu^{\frac{a-1}{2}} \rho]) \otimes \pi$ is a unique irreducible constituent of $\mu^{\ast}(\d([\nu^{\frac{x-1}{2}} \rho, \nu^{\frac{a-1}{2}} \rho]) \rtimes \pi)$ of the form $\d([\nu^{\frac{x-1}{2}} \rho, \nu^{\frac{a-1}{2}} \rho]) \ot \pi'$, and it appears there with multiplicity one. This finishes the proof.
\end{sloppypar}
\end{proof}

In a similar way as in the proof of the previous lemma, we also obtain:

\begin{lemma} \label{lemapripremazadnjajos}
Let $\s \in \Irr(G_n)$ denote a discrete series. Let $(a, \rho) \in \Jord(\s)$ such that $a\_$ is not defined. Also, suppose that $\mu^{\ast}(\s)$ contains an irreducible constituent of the form $\nu^{\frac{a-1}{2}} \rho \otimes \s'$. Then there exists a discrete series $\pi$ such that $\s$ is a subrepresentation of
\begin{equation*}
\d([\nu^{\frac{a-1}{2} - \lfloor \frac{a-1}{2} \rfloor + 1} \rho, \nu^{\frac{a-1}{2}} \rho]) \rt \pi,
\end{equation*}
where $\lfloor \frac{a-1}{2} \rfloor$ stands for the largest integer which is not greater than $\frac{a-1}{2}$. Furthermore, if an irreducible constituent of the form $\d([\nu^{\frac{a-1}{2} - \lfloor \frac{a-1}{2} \rfloor + 1} \rho$, $\nu^{\frac{a-1}{2}} \rho]) \ot \pi'$ appears in $\mu^{\ast}(\s)$, then $\pi' \cong \pi$ and such a constituent appears in $\mu^{\ast}(\s)$ with multiplicity one. Also, $\mu^{\ast}(\s)$ does not contain an irreducible constituent of the form $\d([\nu^{x} \rho, \nu^{\frac{a-1}{2}} \rho]) \ot \pi$ for $x < 0$.
\end{lemma}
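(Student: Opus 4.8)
The plan is to reproduce the proof of Lemma \ref{lemapripremazadnja} almost step by step, the only change being that the lower cut $\frac{x-1}{2}$ used there is replaced by the bottom exponent of the strongly positive tail attached to the \emph{minimal} block $(a,\rho)$. Write $e=\frac{a-1}{2}$ and $b_{0}=\frac{a-1}{2}-\lfloor\frac{a-1}{2}\rfloor+1$, so that $b_{0}=1$ if $a$ is odd and $b_{0}=\frac{3}{2}$ if $a$ is even, and set $\D_{0}=\d([\nu^{b_{0}}\rho,\nu^{e}\rho])$ for the target segment. The guiding idea is that the hypothesis, namely that the bare exponent $\nu^{e}\rho$ occurs as the $GL$-part of a constituent of $\mus(\s)$, is exactly the condition that exposes $\nu^{e}\rho$ and forces the descent from $\nu^{e}\rho$ to terminate at $b_{0}$; phrased dually it yields the final assertion that no constituent $\d([\nu^{x}\rho,\nu^{e}\rho])\ot\pi'$ with $x<0$ occurs in $\mus(\s)$.

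First I would construct the embedding $\s\h\D_{0}\rt\pi$. Using Theorem \ref{embed} I write $\s\h\d_{k}\times\cdots\times\d_{1}\rt\s_{1}$ with $\s_{1}$ strongly positive and $\d_{i}=\d([\nu^{-x_{i}}\rho_{i},\nu^{y_{i}}\rho_{i}])$, where $2x_{i}+1,2y_{i}+1\in\Jord_{\rho_{i}}(\s)$. Since $(a,\rho)$ is the minimal element of $\Jord_{\rho}(\s)$, it lies either in $\Jord(\s_{1})$ or equals $2x_{i}+1$ or $2y_{i}+1$ for a unique index with $\rho_{i}\cong\rho$. In the first case the structure of strongly positive representations (\cite{Kim1}, \cite[Section~4]{Matic3}) provides $\s_{1}\h\D_{0}\rt\s'$ with $\s'$ strongly positive, the bottom being forced to be $b_{0}$ precisely by the minimality of $a$; in the remaining cases one rewrites the relevant $\d_{i}$ as a product having $\D_{0}$ on the left. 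The commutations needed to pull $\D_{0}$ to the front are automatic: for the indices with $\rho_{i}\cong\rho$, minimality gives $2x_{i}+1\geq a$, hence $x_{i}\geq e$, so $-x_{i}\leq-e<b_{0}$ and $y_{i}\geq e$, whence $[b_{0},e]\subseteq[-x_{i},y_{i}]$ and the two segments are nested, thus unlinked, giving $\d_{i}\times\D_{0}\cong\D_{0}\times\d_{i}$. An application of \cite[Lemma~3.2]{MT1} then yields an irreducible $\pi$ with $\s\h\D_{0}\rt\pi$.

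Next I would establish simultaneously that $\pi$ is square-integrable and that no $\d([\nu^{x}\rho,\nu^{e}\rho])\ot\pi'$ with $x<0$ occurs in $\mus(\s)$; both amount to saying that $b_{0}$ is the minimal bottom among all embeddings $\s\h\d([\nu^{y}\rho,\nu^{e}\rho])\rt(\cdot)$. Arguing by contradiction as in Lemma \ref{priprema1}, suppose some such embedding had $y\leq 0$ (if $\pi$ were not square-integrable, the kernel-of-intertwining-operator argument produces one, after forcing $\rho_{1}\cong\rho$ and recombining). By Lemma \ref{priprema1} the minimal such embedding has a square-integrable base $\s_{\min}$, and Proposition \ref{propjord} forces $(1-2y_{\min},\rho)\in\Jord_{\rho}(\s)$ paired with $(a,\rho)$; minimality of $a$ gives $y_{\min}\leq-e$. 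The self-dual case $y_{\min}=-e$ is excluded by the square-integrability criterion, so $y_{\min}<-e$, that is, the descent would be strictly longer than the block $a$ itself — and this is exactly what the hypothesis forbids, since such a long descent would prevent the bare $\nu^{e}\rho$ from occurring in $\mus(\s)$. Hence $b_{0}$ is the minimal bottom (the degenerate case $a=1$, where $\D_{0}$ is trivial and $\s=\pi$, being immediate), and $\pi$ is a discrete series. The multiplicity-one part then follows from the structural formula applied to $\D_{0}\rt\pi$ exactly as in \cite[Theorem~2.1]{Mu3}: Proposition \ref{propjord} gives $\Jord_{\rho}(\pi)\cap[\,b_{0}-1,a\,]=\emptyset$, forcing any constituent $\D_{0}\ot\pi'$ of $\mus(\s)$ to satisfy $\pi'\cong\pi$ and to occur with multiplicity one.

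The step I expect to be the main obstacle is ruling out the long descent $y_{\min}<-e$. Because $a\_$ is undefined, the clean contradiction through Corollary \ref{corembedjord} available in Lemma \ref{lemapripremazadnja} is no longer at hand, and one must instead prove that the occurrence of the bare exponent $\nu^{e}\rho$ in $\mus(\s)$ is incompatible with the minimal descent reaching below $-e$. Making this incompatibility precise — tying the exposure of $\nu^{e}\rho$ to the length of the minimal descent, and thereby pinning the bottom at $b_{0}$ — is the crux. By comparison the construction of the embedding in the second step is routine once the nesting $[b_{0},e]\subseteq[-x_{i},y_{i}]$ is observed.
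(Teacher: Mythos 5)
Your overall plan --- rerun the proof of Lemma \ref{lemapripremazadnja} with the lower cut replaced by $b_{0}$ --- is exactly what the paper intends, but the step you yourself isolate as the crux is left unproven, and it is in fact resolved by a different mechanism than the one you propose. Once you have $\s\h\d([\nu^{y_{\min}}\rho,\nu^{e}\rho])\rt\s_{\min}$ with $y_{\min}\le 0$ and $\s_{\min}$ square-integrable, you correctly deduce $y_{\min}\le -e$ from Corollary \ref{corembedjord} and the minimality of $a$, and you correctly discard $y_{\min}=-e$ by the square-integrability criterion; but the remaining case $y_{\min}<-e$ is killed by the very same criterion, since Frobenius reciprocity puts $\d([\nu^{y_{\min}}\rho,\nu^{e}\rho])\ot(\cdot)$ into $\mus(\s)$ with $e(\d([\nu^{y_{\min}}\rho,\nu^{e}\rho]))=(y_{\min}+e)/2<0$, which is precisely the configuration excluded in the proof of Lemma \ref{remarkprva}. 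Equivalently: square-integrability forces $y_{\min}>-e$, hence $1-2y_{\min}<a$, and Corollary \ref{corembedjord} then contradicts the minimality of $a$ outright. The absence of $a\_$ makes this step \emph{easier} than in Lemma \ref{lemapripremazadnja}, not harder, and there is no ``long descent'' left for the hypothesis to forbid.

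This exposes the more serious issue: no step of your argument that you actually carry out uses the hypothesis $\nu^{\frac{a-1}{2}}\rho\ot\s'\le\mus(\s)$, yet it cannot be dispensed with, because the conclusion $\mus(\s)\ge\D_{0}\ot\pi$ implies it by transitivity of Jacquet modules. Its role is in your first, constructive step, which is where the real content lies: via Lemma \ref{lemaprva} it guarantees $\s\h\nu^{e}\rho\rt(\cdot)$, i.e.\ that a descent from $\nu^{e}\rho$ exists at all, and in the unwinding through Theorem \ref{embed} it is what forces the segment attached to the minimal block to be nonempty and to reach $b_{0}$. Your assertion that for $\s_{1}$ strongly positive ``the bottom is forced to be $b_{0}$ precisely by the minimality of $a$'' is not justified: that bottom is governed by the cuspidal reducibility point and the cardinality of $\Jord_{\rho}(\s_{1})$ relative to $\Jord_{\rho}(\s_{cusp})$, not by minimality alone. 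Moreover, in the sub-case $a=2x_{i}+1$ one cannot rewrite $\d_{i}$ as a product with $\D_{0}$ on the left; one must extract $\d([\nu^{-e}\rho,\nu^{-b_{0}}\rho])$ on the right and prove the irreducibility of $\d([\nu^{-e}\rho,\nu^{-b_{0}}\rho])\rt\s_{1}$, as in the middle portion of the proof of Lemma \ref{lemapripremazadnja}. (Two smaller points: the sub-case $a=2y_{i}+1$ is vacuous, since it would place $2x_{i}+1<a$ in $\Jord_{\rho}(\s)$; and the gap of Jordan blocks used for the multiplicity-one argument should read $\Jord_{\rho}(\pi)\cap[2b_{0},a]=\emptyset$, not $[b_{0}-1,a]$.)
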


\begin{thrm} \label{dissub}
Let $\s \in \Irr(G_n)$ denote a non-strongly positive discrete series. Let $(a, \rho) \in \Jord(\s)$ be such that $a\_$ is defined and $\epsilon_{\s}((a\_, \rho), (a, \rho)) = 1$. Also, let $\s'$ denote a discrete series such that $\s$ is a subrepresentation of the induced representation
\begin{equation} \label{repindosn}
\delta([\nu^{-\frac{a\_ - 1}{2}} \rho, \nu^{\frac{a-1}{2}} \rho]) \rtimes \s'.
\end{equation}
Then the induced representation $(\ref{repindosn})$ contains exactly two irreducible subrepresentations. Moreover, these representations are square-integrable and mutually non-isomorphic. 
\end{thrm}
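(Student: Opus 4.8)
The plan is to obtain the lower bound of two from Lemma \ref{comser}, the upper bound of two from a Jacquet-module multiplicity computation, and square-integrability from a reduction to an induction from a tempered representation. Throughout set $k=\frac{a\_-1}{2}$ and $l=\frac{a-1}{2}$, so that $(\ref{repindosn})$ equals $\delta([\nu^{-k}\rho,\nu^{l}\rho])\rtimes\s_{ds}$ and $a\_=2k+1$, $a=2l+1$.

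First I would record, via Lemma \ref{epsilonequality}, that $\Jord(\s_{ds})=\Jord(\s)\setminus\{(\rho,a\_),(\rho,a)\}$. Since $a\_=\max\{b\in\Jord_\rho(\s):b<a\}$, no element of $\Jord_\rho(\s)$ lies strictly between $a\_$ and $a$, so every $x\in\Jord_\rho(\s_{ds})$ satisfies $x<a\_$ or $x>a$, giving $x\notin\{a\_,a\_+1,\ldots,a\}$; moreover all elements of $\Jord_\rho(\s_{ds})$ share the parity of $a\_$ and $a$, whence $\frac{x-a\_}{2},\frac{x-a}{2}\in\Z$. Thus Lemma \ref{comser}, applied to $\s_{ds}$ with the integers $a\_<a$, shows that $(\ref{repindosn})$ has at least two mutually non-isomorphic irreducible subrepresentations.

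To bound their number above, I would compute by the structural formula the multiplicity of $\delta([\nu^{-k}\rho,\nu^{l}\rho])\otimes\s_{ds}$ in $\mu^{\ast}$ of $(\ref{repindosn})$. Exactly the index pairs $(i,j)=(-k-1,-k-1)$ and $(i,j)=(k,k)$ yield this constituent from the summand $1\otimes\s_{ds}$ of $\mu^{\ast}(\s_{ds})$, each with multiplicity one (in the second case because $\delta([\nu^{-k}\rho,\nu^{k}\rho])\times\delta([\nu^{k+1}\rho,\nu^{l}\rho])$ contains $\delta([\nu^{-k}\rho,\nu^{l}\rho])$ exactly once), while any summand $\tau\otimes\s'$ of $\mu^{\ast}(\s_{ds})$ with $\tau\neq1$ would, through Lemma \ref{remarkprva} and Proposition \ref{propjord}, force an element of $\{a\_,\ldots,a\}$ into $\Jord_\rho(\s_{ds})$, contradicting the gap above. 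Hence the multiplicity is two, and by Frobenius reciprocity the multiplicity $m_{\s'}$ of any irreducible subrepresentation $\s'$ in the socle is at most the multiplicity of $\delta([\nu^{-k}\rho,\nu^{l}\rho])\otimes\s_{ds}$ in $\mu^{\ast}(\s')$; since the socle embeds into $(\ref{repindosn})$ this yields $\sum_{\s'}m_{\s'}^2\leq2$. Together with the previous step, $(\ref{repindosn})$ has exactly two irreducible subrepresentations, each of socle-multiplicity one, and they are non-isomorphic.

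Finally, square-integrability --- which I expect to be the main obstacle --- I would obtain by reduction to a tempered inducing datum. The representation $\delta([\nu^{-k}\rho,\nu^{k}\rho])\rtimes\s_{ds}$ is tempered, and reducible because $a\_=2k+1\notin\Jord_\rho(\s_{ds})$; write it as $\pi_1\oplus\pi_2$ with $\pi_1,\pi_2$ tempered and $a\_\in\Jord_\rho(\pi_i)$, $a\notin\Jord_\rho(\pi_i)$. Since $\delta([\nu^{-k}\rho,\nu^{l}\rho])$ is a subrepresentation of the product $\delta([\nu^{k+1}\rho,\nu^{l}\rho])\times\delta([\nu^{-k}\rho,\nu^{k}\rho])$ of linked segments, $(\ref{repindosn})$ embeds into $\delta([\nu^{k+1}\rho,\nu^{l}\rho])\rtimes\pi_1\oplus\delta([\nu^{k+1}\rho,\nu^{l}\rho])\rtimes\pi_2$, so each irreducible subrepresentation of $(\ref{repindosn})$ is a subrepresentation of some $\delta([\nu^{k+1}\rho,\nu^{l}\rho])\rtimes\pi_i$. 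As $a\_=2(k+1)-1\in\Jord_\rho(\pi_i)$ while $a=2l+1\notin\Jord_\rho(\pi_i)$, the discrete series construction of \cite{Kim1} (following \cite{Matic3}, \cite{Matic4}) shows that the socle of $\delta([\nu^{k+1}\rho,\nu^{l}\rho])\rtimes\pi_i$ is square-integrable, so both subrepresentations of $(\ref{repindosn})$ are square-integrable. The genuine difficulty lies here, in the analysis of these positive-exponent inductions from the tempered pieces $\pi_i$ (and, correspondingly, in excluding the $\tau\neq1$ contributions in the multiplicity count); everything else is bookkeeping with the structural formula and Frobenius reciprocity.
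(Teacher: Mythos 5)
Your counting step departs from the paper, which reads Lemma \ref{comser} as giving the exact count of two and spends the entire proof on square-integrability; your extra upper bound via the multiplicity of $\delta([\nu^{-k}\rho,\nu^{l}\rho])\otimes\s_{ds}$ is a reasonable alternative, but the reason you give for discarding the summands $\tau\otimes\s'$ with $\tau\neq 1$ does not work. For such a summand to contribute, the cuspidal support of $\tau$ must fill in the part of $\{\nu^{-k}\rho,\dots,\nu^{l}\rho\}$ not covered by $\delta([\nu^{-i}\rho,\nu^{k}\rho])\times\delta([\nu^{j+1}\rho,\nu^{l}\rho])$, and in every admissible configuration that support sits at the \emph{bottom} of the segment (of the form $\{\nu^{-k}\rho,\dots,\nu^{j}\rho\}$ or $\{\nu^{-k}\rho,\dots,\nu^{-i-1}\rho\}$). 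The Jordan block that Lemma \ref{remarkprva} then forces into $\Jord_{\rho}(\s_{ds})$ is $2x+1$ for the maximal exponent $x$ of $\tau$, which satisfies $2x+1<a\_$, so it need not lie in $\{a\_,\dots,a\}$ and no contradiction with the gap arises. The correct exclusion is Casselman's square-integrability criterion applied to $\s_{ds}$: the sum of all exponents in the support of $\tau$ equals $\sum_{t=-k}^{j}t$ (resp.\ $\sum_{t=-k}^{-i-1}t$), which is non-positive in the allowed range of $i,j$. The step is repairable, but not as you wrote it.

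The genuine gap, as you anticipate, is square-integrability. Reducing to the assertion that the socle of $\delta([\nu^{k+1}\rho,\nu^{l}\rho])\rtimes\pi_i$ is square-integrable for the elliptic tempered constituents $\pi_i$ of $\delta([\nu^{-k}\rho,\nu^{k}\rho])\rtimes\s_{ds}$ is not a proof: that assertion is essentially equivalent to the theorem itself, and the sources you invoke (\cite{Kim1}, following \cite{Matic3}, \cite{Matic4}) construct strongly positive discrete series, i.e.\ inductions of positive segments from \emph{cuspidal} data, not from tempered representations such as $\pi_i$; moreover $\Jord_{\rho}(\pi_i)$ is not even defined in this paper for tempered non-square-integrable $\pi_i$. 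Knowing that a subrepresentation embeds in $\delta([\nu^{k+1}\rho,\nu^{l}\rho])\rtimes\pi_i$ controls only one Jacquet module term, whereas Casselman's criterion requires all of them. The paper closes this by analyzing the whole composition series of $(\ref{repindosn})$: any tempered non-square-integrable subquotient would, via the structural formula and Lemma \ref{remarkprva}, force a tempered subquotient of $\delta([\nu^{\frac{a\_+1}{2}}\rho,\nu^{\frac{a-1}{2}}\rho])\rtimes\s_{ds}$, impossible by the description of $\Jord(\s_{ds})$; and the only non-tempered irreducible subquotient is the Langlands quotient, with multiplicity one. Some argument of this kind must replace your citation before the proof is complete.
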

\begin{proof}
\begin{sloppypar}
By Lemmas \ref{epsilonequality} and \ref{comser}, the induced representation (\ref{repindosn}) contains exactly two irreducible subrepresentations which are mutually non-isomorphic. Let us show the square-integrability of the irreducible subrepresentations of (\ref{repindosn}). To achieve this, we follow an approach introduced in \cite{Mu3}. First we prove that there are no irreducible tempered subquotients of the induced representation (\ref{repindosn}) which are not square-integrable. On the contrary, suppose that there is some irreducible tempered but not square-integrable representation $\tau$ such that $\tau \leq \delta([\nu^{-\frac{a\_ - 1}{2}} \rho, \nu^{\frac{a-1}{2}} \rho]) \rtimes \s'$. Then there is a cuspidal unitarizable representation $\rho' \in \Irr(GL)$, a non-negative integer $b$ and a tempered representation $\tau' \in \Irr(G_{n'})$ such that
\end{sloppypar}
\begin{equation*}
\tau \hookrightarrow \delta([\nu^{-b} \rho', \nu^{b} \rho' ]) \rtimes \tau'.
\end{equation*}
Frobenius reciprocity shows that $\mu^{\ast}(\tau) \geq \delta([\nu^{-b} \rho', \nu^{b} \rho' ]) \otimes \tau'$. Thus,
\begin{equation*}
\mu^{\ast}(\delta([\nu^{-\frac{a\_ - 1}{2}} \rho, \nu^{\frac{a-1}{2}} \rho]) \rtimes \s') \geq \delta([\nu^{-b} \rho', \nu^{b} \rho' ]) \otimes \tau'.
\end{equation*}
Since $\s'$ is a discrete series, using the structural formula and Lemma \ref{remarkprva} one readily sees that $\rho' \cong \rho$ and $b = \frac{a\_ - 1}{2}$. Also, $\tau'$ is an irreducible subquotient of $\delta([\nu^{\frac{a\_ + 1}{2}} \rho, \nu^{\frac{a-1}{2}} \rho]) \rt \s'$. Similarly as in the previous sentence, it can be easily seen that $\mu^{\ast}(\delta([\nu^{\frac{a\_ + 1}{2}} \rho, \nu^{\frac{a-1}{2}} \rho]) \rt \s')$ does not contain an irreducible constituent of the form $\delta([\nu^{-b'} \rho'', \nu^{b'} \rho'' ]) \otimes \pi$, so $\tau'$ has to be a discrete series.

Using Theorem \ref{embed} we deduce that there are $(a_1, \rho_1)$, $(b_1, \rho_1)$, $(a_2, \rho_2)$, $(b_2, \rho_2), \ldots, (a_k, \rho_k)$, $(b_k, \rho_k) \in \Jord(\s')$ and a strongly positive discrete series $\s_1 \in R(G)$ such that the cuspidal support of $\sigma'$ equals
\begin{equation*}
    \bigcup_{i=1}^{k}[-\nu^{\frac{a_i - 1}{2}} \rho_i, \nu^{\frac{b_i - 1}{2}} \rho_i] \cup [\sigma_1],
\end{equation*}
where $[\sigma_1]$ stands for the cuspidal support of $\sigma_1$. Since an analogous result holds for a discrete series subquotient $\tau'$ of $\delta([\nu^{\frac{a\_ + 1}{2}} \rho, \nu^{\frac{a-1}{2}} \rho]) \rt \s'$, we deduce that $a\_ \in \Jord_{\rho}(\sigma')$. On the other hand, Proposition \ref{propjord} implies that $\Jord(\sigma')$ does not contain $(a\_, \rho)$ since $\s \hookrightarrow \delta([\nu^{-\frac{a\_ + 1}{2}} \rho, \nu^{\frac{a-1}{2}} \rho]) \rt \s'$. Therefore the induced representation $\delta([\nu^{-\frac{a\_ + 1}{2}} \rho, \nu^{\frac{a-1}{2}} \rho]) \rt \s'$ does not contain an irreducible tempered subquotient.

Let us now prove that the only irreducible non-tempered subquotient of the induced representation (\ref{repindosn}) is its Langlands quotient. Let us denote an irreducible non-tempered subquotient of (\ref{repindosn}) by $\pi$ and write $\pi \cong L(\d_1, \ldots, \d_k, \tau)$, where $\d_1, \ldots, \d_k$ are irreducible essentially square-integrable representations of some $GL_{n_1}, \ldots$, $GL_{n_k}$ such that $e(\d_1) \leq \cdots \leq e(\d_k) < 0$, and $\tau \in \Irr(G_{n'})$ is a tempered representation. Write $\d_i = \d([ \nu^{a_i} \rho_i, \nu^{b_i} \rho_i])$.

Using Frobenius reciprocity and the transitivity of Jacquet modules, we see that
\begin{equation*}
\mu^{\ast}(\delta([\nu^{-\frac{a\_ - 1}{2}} \rho, \nu^{\frac{a-1}{2}} \rho]) \rtimes \s') \geq \delta_1 \otimes \pi',
\end{equation*}
for some irreducible representation $\pi'$ such that its Jacquet module with respect to the appropriate parabolic subgroup contains $\d_2 \otimes \cdots \otimes \d_k \otimes \tau$.

It follows from the structural formula that there are $i, j$ such that $-\frac{a\_ + 1}{2} \leq i \leq j \leq \frac{a-1}{2}$ and an irreducible constituent $\d \otimes \pi''$ of $\mu^{\ast}(\s')$ such that
\begin{align*}
\d([ \nu^{a_1} \rho_1, \nu^{b_1} \rho_1]) (= \delta_1 )& \leq \delta([\nu^{-i} \rho, \nu^{\frac{a\_ - 1}{2}} \rho]) \times \delta([\nu^{j+1} \rho, \nu^{\frac{a-1}{2}} \rho]) \times \d \\
\intertext{and}
\pi' & \leq \delta([\nu^{i+1} \rho, \nu^{j} \rho]) \rtimes \pi''.
\end{align*}
Since $e(\d_1) = \frac{a_1 + b_1}{2} < 0$ and $\s'$ is square-integrable, it follows that $\rho_1 \cong \rho$. From the description of $\Jord_{\rho}(\s')$ and Lemma \ref{remarkprva}, we obtain that $m^{\ast}(\d)$ does not contain an irreducible constituent of the form $\nu^{x} \rho \otimes \d'$ for $\frac{a\_ + 1}{2} \leq x \leq \frac{a - 1}{2}$. Thus, $j = \frac{a-1}{2}$, $a_1 = -i$, $i > \frac{a\_ - 1}{2}$, $b_1 = \frac{a\_-1}{2}$, and
\begin{equation*}
\pi' \leq \delta([\nu^{i+1} \rho, \nu^{\frac{a-1}{2}} \rho]) \rtimes \s'.
\end{equation*}
Suppose that $i < \frac{a-1}{2}$. Since $i > \frac{a\_ - 1}{2}$, in the first part of the proof we have seen that the induced representation $\delta([\nu^{i+1} \rho, \nu^{\frac{a-1}{2}} \rho]) \rtimes \s'$ does not contain a tempered subquotient, so $k \geq 2$. Now, repeating the same procedure as for $\delta_1$, we deduce that $\d_2$ is of the form $\d([\nu^{i'} \rho, \nu^{-i-1} \rho])$, for some $i' \leq -i-1$, which is impossible since $e(\d_1) \leq e(\d_2)$. Thus, $i = \frac{a-1}{2}$ and the only non-tempered subquotient of the induced representation (\ref{repindosn}) is its Langlands quotient, which appears in the composition series of (\ref{repindosn}) with multiplicity one. This proves the theorem.
\end{proof}

Now we prove a result which happens to be crucial for our classification.

\begin{thrm} \label{prepairemain}
Let $\s \in \Irr(G_{n})$ denote a non-strongly positive discrete series. Let $(a, \rho) \in \Jord(\s)$ be such that $a\_$ is defined and $\epsilon_{\s}((a\_, \rho), (a, \rho)) = 1$. Also, let $\s'$ denote a discrete series such that
\begin{equation*}
\s \hookrightarrow \delta([\nu^{-\frac{a\_ - 1}{2}} \rho, \nu^{\frac{a-1}{2}} \rho]) \rtimes \s'.
\end{equation*}
Suppose that $(b, \rho') \in \Jord(\s')$ is such that $b\_$ is defined. If $\rho' \not\cong \rho$, we have $\epsilon_{\s'}((b\_, \rho'), (b, \rho')) = \epsilon_{\s}((b\_, \rho'), (b, \rho'))$. Also, if either $b < a\_$ or $b\_ > a$, then $\epsilon_{\s'}((b\_, \rho), (b, \rho)) = \epsilon_{\s}((b\_, \rho), (b, \rho))$. If $b\_ < a\_$ and $a < b$, we have
\begin{equation*}
\epsilon_{\s'}((b\_, \rho), (b, \rho)) = \epsilon_{\s}((b\_, \rho), (a\_, \rho)) \cdot \epsilon_{\s}((a, \rho), (b, \rho)).
\end{equation*}
\end{thrm}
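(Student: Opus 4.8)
The plan is to reduce the whole statement, via Lemma \ref{lemaequiv}, to a question about which constituents $\d([\nu^{p}\rho',\nu^{q}\rho'])\ot\sigma''$ occur in $\mu^{\ast}(\s)$ and $\mu^{\ast}(\s_{ds})$, and then to compare these using the structural formula applied to the embedding $\s\h\d([\nu^{-\frac{a\_-1}{2}}\rho,\nu^{\frac{a-1}{2}}\rho])\rt\s_{ds}$. I write $k=\frac{a\_-1}{2}$, $l=\frac{a-1}{2}$, so the inducing segment is $\d([\nu^{-k}\rho,\nu^{l}\rho])$; recall from Lemma \ref{epsilonequality} that $\Jord(\s_{ds})=\Jord(\s)\setminus\{(\rho,a\_),(\rho,a)\}$, so for $\rho'\not\cong\rho$ the sets $\Jord_{\rho'}(\s)$ and $\Jord_{\rho'}(\s_{ds})$ coincide, while for $\rho'\cong\rho$ the predecessor $x\_$ is unchanged precisely when $x<a\_$ or $x\_>a$. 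I would use two transport tools. First, if $\s_{ds}\h S\rt\pi$ for a segment $S=\d([\nu^{p}\rho',\nu^{q}\rho'])$ whose exponent interval either lies on a different cuspidal line ($\rho'\not\cong\rho$) or is nested inside, resp. disjoint and non-adjacent from, $[-k,l]$, then $S$ and $\d([\nu^{-k}\rho,\nu^{l}\rho])$ are unlinked and hence commute, so the embedding transports to $\s\h S\rt(\cdot)$ and, by Lemma \ref{lemaequiv}, $\epsilon_{\s}=1$. Second, conversely, if $\mu^{\ast}(\s)\geq S\ot\sigma'$, I expand $\mu^{\ast}(\d([\nu^{-k}\rho,\nu^{l}\rho])\rt\s_{ds})$ by the structural formula and match cuspidal supports.

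For the first family of cases ($\rho'\not\cong\rho$, or $\rho'\cong\rho$ with $x<a\_$ or $x\_>a$) the second tool is decisive: in the structural formula the general linear factor of a term is $\d([\nu^{-i}\rho,\nu^{k}\rho])\t\d([\nu^{j+1}\rho,\nu^{l}\rho])\t\tau$, and since the cuspidal support of $S$ contains neither the exponent $k$ nor $l$ on the $\rho$-line (because $S$ is either on a different line, or sits strictly below $k$ when $x<a\_$, or strictly above $l$ when $x\_>a$), both $\rho$-segments must be trivial, forcing $i=-k-1$, $j=l$ and $\tau=S$. Hence $\mu^{\ast}(\s_{ds})\geq S\ot\sigma''$ and $\epsilon_{\s_{ds}}=1$. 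Combined with the first tool this yields $\epsilon_{\s_{ds}}((x\_,\rho'),(x,\rho'))=\epsilon_{\s}((x\_,\rho'),(x,\rho'))$, the asserted equality.

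The remaining case is the crux. Here $\rho'\cong\rho$, $x>a$ and $x\_<a$; since $a\_$ and $a$ are consecutive in $\Jord_{\rho}(\s)$ and are the two elements removed, one checks that $x$ must be the successor $b$ of $a$ in $\Jord_{\rho}(\s)$ and that $x\_=(a\_)\_$ is the predecessor of $a\_$. The relevant segment $D=\d([\nu^{\frac{(a\_)\_+1}{2}}\rho,\nu^{\frac{b-1}{2}}\rho])$ is then exactly the concatenation of the three segments attached to the consecutive pairs $((a\_)\_,a\_)$, $(a\_,a)$ and $(a,b)$, and it is linked with $\d([\nu^{-k}\rho,\nu^{l}\rho])$ (their union is $[-k,\frac{b-1}{2}]$ and their intersection is $[\frac{(a\_)\_+1}{2},l]$). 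I would run the same Jacquet-module comparison: the criterion $\epsilon_{\s_{ds}}((x\_,\rho),(x,\rho))=1$ is equivalent to $\mu^{\ast}(\s_{ds})\geq D\ot\sigma''$, and I compare this with the occurrences in $\mu^{\ast}(\s)$ of the three pair-segments computing $\e_1:=\epsilon_{\s}(((a\_)\_,\rho),(a\_,\rho))$ and $\e_2:=\epsilon_{\s}((a,\rho),(b,\rho))$, using that $\epsilon_{\s}((a\_,\rho),(a,\rho))=1$. Because $D$ overlaps $\d([\nu^{-k}\rho,\nu^{l}\rho])$, cuspidal-support matching no longer kills the two $\rho$-segments; instead it leaves a short list of terms, and the reducibility of the linked product $\d([\nu^{-k}\rho,\nu^{l}\rho])\t D$ (length two, with factors governed by $[-k,\frac{b-1}{2}]$ and $[\frac{(a\_)\_+1}{2},l]$) dictates through which subquotient the square-integrable $\s$ may embed.

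The main obstacle is precisely this sign determination: showing that $\mu^{\ast}(\s_{ds})\geq D\ot\sigma''$ holds if and only if $\e_1=\e_2$, i.e. $\e_1\e_2=1$. I expect to pin it down by combining the multiplicity-one and uniqueness statements of Lemma \ref{lemapripremazadnja} (applied to the subsegments of $D$) with the fact, from Theorem \ref{dissub}, that $\d([\nu^{-k}\rho,\nu^{l}\rho])\rt\s_{ds}$ has exactly two square-integrable subrepresentations: the two admissible choices of sign for the removed pair should correspond to these two subrepresentations, while the concatenation $D=S_{1}S_{2}S_{3}$ forces the value on the merged pair $((a\_)\_,b)$ to be the product of the values on $((a\_)\_,a\_)$ and $(a,b)$. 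Carrying this out rigorously, keeping track of which linked subquotient carries $\s$ and matching it against the signs $\e_1,\e_2$, is the technical heart of the argument.
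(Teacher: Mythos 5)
Your treatment of the first family of cases ($\rho'\not\cong\rho$, or $\rho'\cong\rho$ with $x<a\_$ or $x\_>a$) is correct and is essentially what the paper does: transport one way by unlinked commutation, the other way by the structural formula plus cuspidal-support matching. The problem is the crux case, where what you have written is a plan rather than a proof, and the plan as stated does not contain the mechanism that actually produces the product formula. You correctly reduce, via Lemma \ref{lemaequiv}, to deciding whether $\mu^{\ast}(\s_{ds})\geq D\ot\sigma''$ for the concatenated segment $D$, but you then assert that ``the concatenation $D=S_1S_2S_3$ forces the value on the merged pair to be the product'' --- this is a restatement of the conclusion, not an argument. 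The difficulty is that $\mu^{\ast}(\s_{ds})\geq D\ot\sigma''$ is a statement about $\s_{ds}$, while $\e_1$ and $\e_2$ are statements about $\s$, and the embedding $\s\h\d([\nu^{-k}\rho,\nu^{l}\rho])\rt\s_{ds}$ by itself only lets information flow in one direction at a time. Reducibility of the linked product $\d([\nu^{-k}\rho,\nu^{l}\rho])\times D$ does not by itself select a sign, because a priori both linked subquotients can occur in the relevant induced representation and one needs multiplicity counts to separate them.

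The paper closes this gap by interposing the tempered representation $\tau_{temp}$: one writes $\s\h\d([\nu^{\frac{a+1}{2}}\rho,\nu^{\frac{b\_-1}{2}}\rho])\rt\tau_{temp}$ with $\tau_{temp}$ one of the two summands $\tau_1,\tau_2$ of $\d([\nu^{-\frac{a-1}{2}}\rho,\nu^{\frac{a-1}{2}}\rho])\rt\s_{ds}$, and then proves three separate equivalences: $\e_1=1$ iff $\tau_{temp}$ carries a specific doubled constituent in its Jacquet module, $\e_2=1$ iff $\tau_{temp}$ carries a constituent of the form $\d([\nu^{\frac{a+1}{2}}\rho,\nu^{\frac{b-1}{2}}\rho])\ot\pi$, and $\epsilon_{\s_{ds}}((x\_,\rho),(x,\rho))=1$ iff those two distinguished constituents sit in the \emph{same} summand $\tau_i$ (this last step uses the auxiliary discrete series $\s'_{ds}$ from Lemma \ref{lemapripremazadnja} and the decomposition of $\d([\nu^{-\frac{a-1}{2}}\rho,\nu^{\frac{a-1}{2}}\rho])\times\d([\nu^{\frac{a+1}{2}}\rho,\nu^{\frac{b-1}{2}}\rho])\rt\s'_{ds}$ into a full-segment piece and a Langlands piece, each catching exactly one $\tau_i$). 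The product formula is then automatic from these three indicator identities. Your sketch also contains a misconception you should correct: the two discrete series subrepresentations of $\d([\nu^{-k}\rho,\nu^{l}\rho])\rt\s_{ds}$ from Theorem \ref{dissub} do not correspond to ``the two admissible choices of sign for the removed pair'' --- both have $\epsilon=1$ on the removed pair by construction; they differ in the value of $\epsilon$ on one of the \emph{adjacent} pairs, which is exactly why the intermediary $\tau_{temp}$ is needed to tell them apart.
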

\begin{proof}
It follows from Lemma \ref{lemaequiv} and the structural formula that for $(b, \rho') \in \Jord(\s)$, $\rho' \not\cong \rho$, such that $b\_$ is defined, we have $\epsilon_{\s}((b\_, \rho'), (b, \rho')) = 1$ if and only if $\epsilon_{\s'}((b\_, \rho'), (b, \rho')) = 1$. Also, for $(b, \rho) \in \Jord(\s)$ such that $b\_$ is defined and either $b < a\_$ or $b\_ > a$, in the same way one can easily see that $\epsilon_{\s}((b\_, \rho), (b, \rho)) = 1$ if and only if $\epsilon_{\s'}((b\_, \rho), (b, \rho)) = 1$.

Let us describe the remaining case which is the most non-trivial case.

Let us change the notation for simplicity. In the remainder of the proof, underbar always means underbar in $\Jord(\s)$, not in  $\Jord(\s')$.
Let $(b, \rho) \in \Jord(\s)$ be such that $b\_$ is defined. Also, assume that $a \in \Jord_{\rho}(\s)$ is such that $a = (b\_)\_$ and $\epsilon_{\s}((a, \rho), (b\_, \rho)) = 1$. Suppose that $a\_$ is defined and let $\s'$ denote a discrete series such that $\s$ is a subrepresentation of
\begin{equation*}
\delta([\nu^{-\frac{a - 1}{2}} \rho, \nu^{\frac{b\_ - 1}{2}} \rho]) \rtimes \s'.
\end{equation*}

Note that we are in the case $a\_ < a< b\_ < b$. 

Then $a\_, b \in \Jord_{\rho}(\s')$ and $\Jord_{\rho}(\s') \cap [ a\_ + 1, b - 1] = \emptyset$.

Following exactly the same lines as in \cite[Section~4]{MT1}, which uses only the standard calculations of the Jacquet modules by the structural formula, we deduce that in $R(G)$ we have
\begin{equation*}
\delta([\nu^{-\frac{a - 1}{2}} \rho, \nu^{\frac{a - 1}{2}} \rho]) \rtimes \s' = \tau_1 + \tau_2,
\end{equation*}
for mutually non-isomorphic tempered representations $\tau_1$ and $\tau_2$. 

We split the rest of the proof in six claims.

\begin{itemize}
    \item Claim $1$: There exists a unique $\alpha \in \{ 1, 2 \}$ such that $\sigma$ is a subrepresentation of $\delta([\nu^{\frac{a + 1}{2}} \rho, \nu^{\frac{b\_ - 1}{2}} \rho]) \rtimes \tau_{\alpha}$.
\end{itemize}
Let us prove the Claim $1$. By Lemma \ref{epsilonequality}, there is an irreducible tempered subrepresentation $\tau$ of
\begin{equation*}
\delta([\nu^{-\frac{a - 1}{2}} \rho, \nu^{\frac{a - 1}{2}} \rho]) \rtimes \s'.
\end{equation*}
such that $\s$ is a subrepresentation of 
\begin{equation*}
\delta([\nu^{\frac{a + 1}{2}} \rho, \nu^{\frac{b\_ - 1}{2}} \rho]) \rtimes \tau.
\end{equation*}
By the Frobenius reciprocity, $\mu^{\ast}(\sigma)$ contains $\delta([\nu^{\frac{a + 1}{2}} \rho, \nu^{\frac{b\_ - 1}{2}} \rho]) \otimes \tau$. 

Theorem \ref{dissub} implies that there is another discrete series subrepresentation of $\delta([\nu^{-\frac{a - 1}{2}} \rho, \nu^{\frac{b\_ - 1}{2}} \rho]) \rtimes \s'$, which we denote by $\sigma_1$. In the same way as for $\sigma$ we conclude that there is an irreducible tempered representation $\tau'$ such that $\mu^{\ast}(\sigma_1)$ contains $\delta([\nu^{\frac{a + 1}{2}} \rho, \nu^{\frac{b\_ - 1}{2}} \rho]) \otimes \tau'$.

It can be seen in the same way as in the last part of the proof of Lemma \ref{lemapripremazadnja} that $\delta([\nu^{\frac{a + 1}{2}} \rho, \nu^{\frac{b\_ - 1}{2}} \rho]) \otimes \tau_{1}$ and $\delta([\nu^{\frac{a + 1}{2}} \rho, \nu^{\frac{b\_ - 1}{2}} \rho]) \otimes \tau_{2}$ are the only irreducible constituents of the form $\delta([\nu^{\frac{a + 1}{2}} \rho, \nu^{\frac{b\_ - 1}{2}} \rho]) \otimes \pi$ appearing in $\mu^{\ast}(\delta([\nu^{-\frac{a - 1}{2}} \rho, \nu^{\frac{b\_ - 1}{2}} \rho]) \rtimes \s')$, and both of them appear with multiplicity one. Thus, there is a unique $\alpha \in \{ 1, 2 \}$ such that $\tau \cong \tau_{\alpha}$.

\begin{itemize}
    \item Claim $2$: There exists a unique $\beta \in \{ 1, 2 \}$ such that $\mu^{\ast}(\tau_{\beta})$ contains 
    \begin{equation} \label{jmdrugi}
    \delta([\nu^{\frac{a\_ + 1}{2}} \rho, \nu^{\frac{a - 1}{2}} \rho]) \times \delta([\nu^{\frac{a\_ + 1}{2}} \rho, \nu^{\frac{a - 1}{2}} \rho])    \otimes \delta([\nu^{-\frac{a\_ - 1}{2}} \rho, \nu^{\frac{a\_ - 1}{2}} \rho]) \rtimes \sigma'.
    \end{equation}
    Furthermore, $\tau_{\beta} \hookrightarrow \delta([\nu^{\frac{a\_ + 1}{2}} \rho, \nu^{\frac{a - 1}{2}} \rho]) \times \delta([\nu^{\frac{a\_ + 1}{2}} \rho, \nu^{\frac{a - 1}{2}} \rho])  \times \delta([\nu^{-\frac{a\_ - 1}{2}} \rho,$ $ \nu^{\frac{a\_ - 1}{2}} \rho]) \rtimes \sigma'$.
\end{itemize}
Since $a\_ \in \Jord_{\rho}(\sigma')$, it follows from the definition of the Jordan block that $\delta([\nu^{-\frac{a\_ - 1}{2}} \rho, \nu^{\frac{a\_ - 1}{2}} \rho]) \rtimes \sigma'$ is irreducible. Now we prove the Claim $2$. Suppose that $\delta([\nu^{\frac{a\_ + 1}{2}} \rho, \nu^{\frac{a - 1}{2}} \rho]) \times \delta([\nu^{\frac{a\_ + 1}{2}} \rho$, $\nu^{\frac{a - 1}{2}} \rho]) \otimes \pi$ is an irreducible constituent appearing in $\mu^{\ast}(\delta([\nu^{-\frac{a - 1}{2}} \rho, \nu^{\frac{a - 1}{2}} \rho]) \rtimes \s')$. Then there are $i, j$ such that $-\frac{a + 1}{2} \leq i \leq j \leq \frac{a - 1}{2}$ and an irreducible constituent $\d \otimes \pi'$ of $\mu^{\ast}(\s')$ such that
\begin{align*}
\delta([\nu^{\frac{a\_ + 1}{2}} \rho, \nu^{\frac{a - 1}{2}} \rho]) \times \delta([\nu^{\frac{a\_ + 1}{2}} \rho, \nu^{\frac{a - 1}{2}} \rho]) & \leq \delta([\nu^{-i} \rho, \nu^{\frac{a - 1}{2}} \rho]) \times \delta([\nu^{j+1} \rho, \nu^{\frac{a - 1}{2}} \rho]) \times \d
\intertext{and}
\pi & \leq  \delta([\nu^{i+1} \rho, \nu^{j} \rho]) \rtimes \pi'.
\end{align*}
From the description of $\Jord_{\rho}(\s')$ and Lemma \ref{remarkprva} we obtain that $m^{\ast}(\d)$ does not contain an irreducible constituent of the form $\nu^{x} \rho \otimes \d'$ for $\frac{a\_ + 1}{2} \leq x \leq \frac{a - 1}{2}$. Thus, $i = - \frac{a\_ + 1}{2}$, $j = \frac{a\_ - 1}{2}$, and $\pi' \cong \sigma'$.

Thus, (\ref{jmdrugi}) is a unique irreducible constituent of $\mu^{\ast}(\delta([\nu^{-\frac{a - 1}{2}} \rho$, $\nu^{\frac{a - 1}{2}} \rho]) \rtimes \s')$ of the form $\delta([\nu^{\frac{a\_ + 1}{2}} \rho, \nu^{\frac{a - 1}{2}} \rho]) \times \delta([\nu^{\frac{a\_ + 1}{2}} \rho, \nu^{\frac{a - 1}{2}} \rho]) \otimes \pi$, and appears there with multiplicity one. This proves the first statement of Claim 2.

Let us also prove that $\mu^{\ast}(\tau_{\beta})$ contains (\ref{jmdrugi}) if and only if $\tau_{\beta}$ is a subrepresentation of 
\begin{equation*} 
    \delta([\nu^{\frac{a\_ + 1}{2}} \rho, \nu^{\frac{a - 1}{2}} \rho]) \times \delta([\nu^{\frac{a\_ + 1}{2}} \rho, \nu^{\frac{a - 1}{2}} \rho])    \times \delta([\nu^{-\frac{a\_ - 1}{2}} \rho, \nu^{\frac{a\_ - 1}{2}} \rho]) \rtimes \sigma'.
\end{equation*}
If $\mu^{\ast}(\tau_{\beta})$ contains (\ref{jmdrugi}), the transitivity of the Jacquet modules implies that the Jacquet module of $\tau_{\beta}$ with respect to the appropriate parabolic subgroup contains
\begin{gather*} 
    \nu^{\frac{a - 1}{2}} \rho \otimes \nu^{\frac{a - 3}{2}} \rho \otimes \cdots \otimes \nu^{\frac{a\_ + 1}{2}} \rho \otimes \\
    \nu^{\frac{a - 1}{2}} \rho \otimes \nu^{\frac{a - 3}{2}} \rho \otimes \cdots \otimes \nu^{\frac{a\_ + 1}{2}} \rho \otimes \delta([\nu^{-\frac{a\_ - 1}{2}} \rho, \nu^{\frac{a\_ - 1}{2}} \rho]) \rtimes \sigma'.
\end{gather*}
Now \cite[Corollary~6.2(3)]{Tad6}, which holds for reductive groups, implies that there is an irreducible representation $\pi_1$ such that 
$\tau_{\beta}$ is a subrepresentation of
\begin{equation*} 
    \nu^{\frac{a - 1}{2}} \rho \times \nu^{\frac{a - 3}{2}} \rho \times \cdots \times \nu^{\frac{a\_ + 1}{2}} \rho \times \nu^{\frac{a - 1}{2}} \rho \times \nu^{\frac{a - 3}{2}} \rho \times \cdots \times \nu^{\frac{a\_ + 1}{2}} \rho \rtimes \pi_1.
\end{equation*}
From Lemma \ref{lemajantz} we obtain that there is an irreducible subquotient $\pi_2$ of 
\begin{equation*} 
    \nu^{\frac{a - 1}{2}} \rho \times \nu^{\frac{a - 3}{2}} \rho \times \cdots \times \nu^{\frac{a\_ + 1}{2}} \rho \times \nu^{\frac{a - 1}{2}} \rho \times \nu^{\frac{a - 3}{2}} \rho \times \cdots \times \nu^{\frac{a\_ + 1}{2}} \rho
\end{equation*}
such that $\tau_{\beta}$ is a subrepresentation of $\pi_2 \rtimes \pi_1$. Since $\mu^{\ast}(\tau_{\beta})$ does not contain an irreducible constituent of the form $\nu^x \rho \otimes \pi$ for $\frac{a\_ + 1}{2} \leq x \leq \frac{a - 3}{2}$, we easily obtain that $\pi_2 \cong \delta([\nu^{\frac{a\_ + 1}{2}} \rho, \nu^{\frac{a - 1}{2}} \rho]) \times \delta([\nu^{\frac{a\_ + 1}{2}} \rho, \nu^{\frac{a - 1}{2}} \rho])$. Frobenius reciprocity gives
\begin{equation*} 
    \mu^{\ast}(\tau_{\beta}) \geq \delta([\nu^{\frac{a\_ + 1}{2}} \rho, \nu^{\frac{a - 1}{2}} \rho]) \times \delta([\nu^{\frac{a\_ + 1}{2}} \rho, \nu^{\frac{a - 1}{2}} \rho]) \otimes \pi_1,
\end{equation*}
so $\pi_1 \cong \delta([\nu^{-\frac{a\_ - 1}{2}} \rho, \nu^{\frac{a\_ - 1}{2}} \rho]) \rtimes \sigma'$.

The other direction is an immediate consequence of the Frobenius reciprocity.    
\begin{itemize}
    \item Claim $3$: There exists a unique $\gamma \in \{ 1, 2 \}$ such that $\mu^{\ast}(\tau_{\gamma})$ contains an irreducible constituent of the form
    $\delta([\nu^{\frac{a + 1}{2}} \rho, \nu^{\frac{b - 1}{2}} \rho]) \otimes \pi$.
    Furthermore, $\tau_{\gamma} \hookrightarrow \delta([\nu^{\frac{a + 1}{2}} \rho, \nu^{\frac{b - 1}{2}} \rho]) \rtimes \pi.$
\end{itemize}
Let us first determine all irreducible constituents of the form $\delta([\nu^{\frac{a + 1}{2}} \rho, \nu^{\frac{b - 1}{2}} \rho]) \otimes \pi$
appearing in $\mu^{\ast}(\delta([\nu^{-\frac{a - 1}{2}} \rho, \nu^{\frac{a - 1}{2}} \rho]) \rtimes \s')$. By the structural formula, there are $i, j$ such that $-\frac{a + 1}{2} \leq i \leq j \leq \frac{a - 1}{2}$ and an irreducible constituent $\d \otimes \pi'$ of $\mu^{\ast}(\s')$ such that
\begin{align*}
\delta([\nu^{\frac{a + 1}{2}} \rho, \nu^{\frac{b - 1}{2}} \rho]) & \leq \delta([\nu^{-i} \rho, \nu^{\frac{a - 1}{2}} \rho]) \times \delta([\nu^{j+1} \rho, \nu^{\frac{a - 1}{2}} \rho]) \times \delta
\intertext{and}
\pi & \leq  \delta([\nu^{i+1} \rho, \nu^{j} \rho]) \rtimes \pi'.
\end{align*}
\begin{sloppypar}
\noindent Considering cuspidal supports, it follows at once that $i = -\frac{a + 1}{2}$, $j = \frac{a - 1}{2}$ and  $\delta \cong \delta([\nu^{\frac{a + 1}{2}} \rho, \nu^{\frac{b - 1}{2}} \rho])$. Lemma \ref{lemapripremazadnja} implies that $\pi'$ is a discrete series and $\delta([\nu^{\frac{a + 1}{2}} \rho, \nu^{\frac{b - 1}{2}} \rho]) \otimes \pi'$ appears in $\mu^{\ast}(\sigma')$ with multiplicity one. Also, from Lemma \ref{lemapripremazadnja} follows that $\sigma'$ is a subrepresentation of $\delta([\nu^{\frac{a + 1}{2}} \rho, \nu^{\frac{b - 1}{2}} \rho]) \rtimes \pi'$, so Proposition \ref{propjord} implies that $a \in \Jord_{\rho}(\pi')$. Consequently, $\delta([\nu^{-\frac{a - 1}{2}} \rho, \nu^{\frac{a - 1}{2}} \rho]) \rtimes \pi'$ is irreducible and $\pi = \delta([\nu^{-\frac{a - 1}{2}} \rho, \nu^{\frac{a - 1}{2}} \rho]) \rtimes \pi'$. 
\end{sloppypar}

Thus, $\delta([\nu^{\frac{a + 1}{2}} \rho, \nu^{\frac{b - 1}{2}} \rho]) \otimes \delta([\nu^{-\frac{a - 1}{2}} \rho, \nu^{\frac{a - 1}{2}} \rho]) \rtimes \pi'$ is a unique irreducible constituent of $\mu^{\ast}(\delta([\nu^{-\frac{a - 1}{2}} \rho, \nu^{\frac{a - 1}{2}} \rho]) \rtimes \s')$ of the form $\delta([\nu^{\frac{a + 1}{2}} \rho, \nu^{\frac{b - 1}{2}} \rho]) \otimes \pi$, and it appears there with multiplicity one.

In the same way as in the proof of Claim $2$ one can prove that $\mu^{\ast}(\tau_{\gamma})$ contains $\delta([\nu^{\frac{a + 1}{2}} \rho, \nu^{\frac{b - 1}{2}} \rho]) \otimes \pi$ if and only if $\tau_{\gamma}$ is a subrepresentation of $\delta([\nu^{\frac{a + 1}{2}} \rho, \nu^{\frac{b - 1}{2}} \rho]) \rtimes \pi$.

\begin{itemize}
    \item Claim $4$: $\epsilon_{\sigma}((a\_, \rho), (a, \rho)) = 1$ if and only if $\alpha = \beta$.
\end{itemize}
Suppose that $\epsilon_{\sigma}((a\_,\rho),(a,\rho)) = 1$. Using Lemma \ref{lemajantz} and the description of $\Jord_{\rho}(\sigma)$, we obtain the following embeddings:
\begin{align} \label{eqntri}
\s & \hookrightarrow \delta([\nu^{\frac{a\_ + 1}{2}} \rho, \nu^{\frac{a - 1}{2}} \rho]) \rt \pi_1 \nonumber \\
\s & \hookrightarrow \delta([\nu^{\frac{a\_ + 1}{2}} \rho, \nu^{\frac{b\_ - 1}{2}} \rho]) \rt \pi_2 \nonumber \\
\s & \hookrightarrow \delta([\nu^{\frac{a + 1}{2}} \rho, \nu^{\frac{b\_ - 1}{2}} \rho]) \rtimes \tau_{\alpha},
\end{align}
for some irreducible representations $\pi_1$ and $\pi_2$ in $R(G)$.

Using Frobenius reciprocity and the structural formula, we conclude that $\mu^{\ast}(\pi_2)$ contains an irreducible constituent of the form $\delta([\nu^{\frac{a\_ + 1}{2}} \rho, \nu^{\frac{a - 1}{2}} \rho]) \otimes \pi'_2$. In the same way as in the last part of the proof of Claim $2$ we deduce that there is an irreducible representation $\pi \in R(G)$ such that $\pi_2$ is a subrepresentation of $\delta([\nu^{\frac{a\_ + 1}{2}} \rho, \nu^{\frac{a - 1}{2}} \rho]) \rtimes \pi$. 

Using the embedding $\s \hookrightarrow \delta([\nu^{\frac{a\_ + 1}{2}} \rho, \nu^{\frac{b\_ - 1}{2}} \rho]) \rt \pi_2$ and the Frobenius reciprocity, we get that $\mu^{\ast}(\s) \geq \delta([\nu^{\frac{a\_ + 1}{2}} \rho, \nu^{\frac{b\_ - 1}{2}} \rho]) \times \delta([\nu^{\frac{a\_ + 1}{2}} \rho, \nu^{\frac{a - 1}{2}} \rho]) \otimes \pi$. Note that the induced representation $\delta([\nu^{\frac{a\_ + 1}{2}} \rho, \nu^{\frac{b\_ - 1}{2}} \rho]) \times \delta([\nu^{\frac{a\_ + 1}{2}} \rho, \nu^{\frac{a - 1}{2}} \rho])$ is irreducible by \cite[Theorem~4.2]{Zel}. 

It follows from (\ref{eqntri}) and the structural formula that there are $i, j$ such that $\frac{a - 1}{2} \leq i \leq j \leq \frac{b\_ - 1}{2}$ and an irreducible constituent $\d' \otimes \pi'$ of $\mu^{\ast}(\tau_{\alpha})$ such that
\begin{multline*}
\delta([\nu^{\frac{a\_ + 1}{2}} \rho, \nu^{\frac{b\_ - 1}{2}} \rho]) \times \delta([\nu^{\frac{a\_ + 1}{2}} \rho, \nu^{\frac{a - 1}{2}} \rho]) \\ \leq \delta([\nu^{-i} \rho, \nu^{-\frac{a + 1}{2}} \rho]) \times \delta([\nu^{j+1} \rho, \nu^{\frac{b\_ - 1}{2}} \rho]) \times \d'.
\end{multline*}
Obviously, $i = \frac{a - 1}{2}$. Since $\tau_{\alpha}$ is a subrepresentation of $\delta([\nu^{-\frac{a - 1}{2}} \rho, \nu^{\frac{a - 1}{2}} \rho]) \rtimes \s'$, the fact that $a, a+1, \cdots, b\_ \notin \Jord_{\rho}(\s')$ and Lemma \ref{remarkprva} imply that $m^{\ast}(\d')$ does not contain an irreducible constituent of the form $\nu^{x} \rho \otimes \pi''$ for $\frac{a + 1}{2} \leq x \leq \frac{b\_ - 1}{2}$.

Consequently, $j = \frac{a - 1}{2}$ and $\delta' \cong \delta([\nu^{\frac{a\_ + 1}{2}} \rho, \nu^{\frac{a - 1}{2}} \rho]) \times \delta([\nu^{\frac{a\_ + 1}{2}} \rho, \nu^{\frac{a - 1}{2}} \rho])$. It follows that $\mu^{\ast}(\tau_{\alpha})$ contains an irreducible constituent of the form
\begin{equation*}
\delta([\nu^{\frac{a\_ + 1}{2}} \rho, \nu^{\frac{a - 1}{2}} \rho]) \times \delta([\nu^{\frac{a\_ + 1}{2}} \rho, \nu^{\frac{a - 1}{2}} \rho]) \otimes \pi    
\end{equation*}
and Claim $2$ implies $\alpha = \beta$.

Conversely, suppose that $\alpha = \beta$. Using Claims $1$ and $2$, we obtain that $\sigma$ is a subrepresentation of
\begin{gather*}
\delta([\nu^{\frac{a + 1}{2}} \rho, \nu^{\frac{b\_ - 1}{2}} \rho]) \times \delta([\nu^{\frac{a\_ + 1}{2}} \rho, \nu^{\frac{a - 1}{2}} \rho]) \times \delta([\nu^{\frac{a\_ + 1}{2}} \rho, \nu^{\frac{a - 1}{2}} \rho]) \times \\ \delta([\nu^{-\frac{a\_ - 1}{2}} \rho, \nu^{\frac{a\_ - 1}{2}} \rho]) \rtimes \s'.
\end{gather*}

By Lemma \ref{lemajantz}, there is an irreducible subquotient $\pi$ of 
\begin{equation}  \label{indglprvi}
    \delta([\nu^{\frac{a + 1}{2}} \rho, \nu^{\frac{b\_ - 1}{2}} \rho]) \times \delta([\nu^{\frac{a\_ + 1}{2}} \rho, \nu^{\frac{a - 1}{2}} \rho]) \times \delta([\nu^{\frac{a\_ + 1}{2}} \rho, \nu^{\frac{a - 1}{2}} \rho])
\end{equation}
such that $\sigma$ is a subrepresentation of $\pi \times \delta([\nu^{-\frac{a\_ - 1}{2}} \rho, \nu^{\frac{a\_ - 1}{2}} \rho]) \rtimes \s'$. It is an easy combinatorial exercise to see that the only irreducible subquotients of (\ref{indglprvi}) are 
\begin{gather*}
 \delta([\nu^{\frac{a\_ + 1}{2}} \rho, \nu^{\frac{a - 1}{2}} \rho]) \times \delta([\nu^{\frac{a\_ + 1}{2}} \rho, \nu^{\frac{b\_ - 1}{2}} \rho]) \\ 
    \intertext{and}
L(\delta([\nu^{\frac{a\_ + 1}{2}} \rho, \nu^{\frac{a - 1}{2}} \rho]), \delta([\nu^{\frac{a\_ + 1}{2}} \rho, \nu^{\frac{a - 1}{2}} \rho]), \delta([\nu^{\frac{a + 1}{2}} \rho, \nu^{\frac{b\_ - 1}{2}} \rho])).
\end{gather*}
In both cases we obtain that there is an irreducible representation $\pi_1 \in R(GL)$ such that $\pi$ is a subrepresentation of $\delta([\nu^{\frac{a\_ + 1}{2}} \rho, \nu^{\frac{a - 1}{2}} \rho]) \times \pi_1$. 

From 
\begin{equation*}
    \sigma \hookrightarrow \pi \times \delta([\nu^{-\frac{a\_ - 1}{2}} \rho, \nu^{\frac{a\_ - 1}{2}} \rho]) \rtimes \s'
\end{equation*}
and Lemma \ref{lemajantz} we get that there is also an irreducible representation $\pi_2 \in R(G)$ such that $\sigma$ is a subrepresentation of
$\delta([\nu^{\frac{a\_ + 1}{2}} \rho, \nu^{\frac{a - 1}{2}} \rho]) \times \pi_2$. Consequently, $\epsilon_{\sigma}((a\_, \rho), (a, \rho)) = 1$.

\begin{itemize}
    \item Claim $5$: $\epsilon_{\sigma}((b\_, \rho), (b, \rho)) = 1$ if and only if $\alpha = \gamma$.
\end{itemize}

\begin{sloppypar}
If $\epsilon_{\sigma}((b\_,\rho),(b,\rho)) = 1$, by Lemma \ref{epsilonequality}
there is a discrete series $\s''$ such that $\mu^{\ast}(\s) \geq \d([\nu^{-\frac{b\_ - 1}{2}} \rho, \nu^{\frac{b - 1}{2}} \rho]) \otimes \s''$. Since $\s$ is a subrepresentation of $\delta([\nu^{\frac{a + 1}{2}} \rho, \nu^{\frac{b\_ - 1}{2}} \rho]) \rtimes \tau_{\alpha}$, using the structural formula, together with the definition of $\tau_{\alpha}$, we conclude that $\mu^{\ast}(\tau_{\alpha})$ contains an irreducible constituent of the form $\delta([\nu^{x} \rho, \nu^{\frac{b - 1}{2}} \rho])$ $\otimes \pi_1$, for $x \leq 0$. The transitivity of Jacquet modules now implies that
$\mu^{\ast}(\tau_{\alpha})$ contains an irreducible constituent of the form $\delta([\nu^{\frac{a+1}{2}} \rho, \nu^{\frac{b-1}{2}} \rho]) \otimes \pi$, and the Claim $3$ implies $\alpha = \gamma$.
\end{sloppypar}

Conversely, suppose that $\alpha = \gamma$. Then the Jacquet module of $\sigma$ with respect to the appropriate parabolic subgroup contains an irreducible constituent of the form
\begin{equation*}
\delta([\nu^{\frac{a+1}{2}} \rho, \nu^{\frac{b\_-1}{2}} \rho]) \otimes \delta([\nu^{\frac{a+1}{2}} \rho, \nu^{\frac{b-1}{2}} \rho]) \otimes \pi.
\end{equation*}
By the transitivity of Jacquet modules, there is an irreducible constituent $\delta \otimes \pi$ in $\mu^{\ast}(\s)$ such that $m^{\ast}(\delta)$ contains
\begin{equation*}
\delta([\nu^{\frac{a+1}{2}} \rho, \nu^{\frac{b\_-1}{2}} \rho]) \otimes \delta([\nu^{\frac{a+1}{2}} \rho, \nu^{\frac{b-1}{2}} \rho]). 
\end{equation*}
Since $\s$ is a subrepresentation of $\delta([\nu^{-\frac{a-1}{2}} \rho, \nu^{\frac{b\_-1}{2}} \rho]) \rtimes \s'$, we obtain that there are $i, j$ such that $-\frac{a+1}{2} \leq i \leq j \leq \frac{b\_-1}{2}$ and an irreducible constituent $\d' \otimes \pi'$ of $\mu^{\ast}(\s')$ such that
\begin{equation*}
\d \leq \delta([\nu^{-i} \rho, \nu^{\frac{a-1}{2}} \rho]) \times \delta([\nu^{j+1} \rho, \nu^{\frac{b\_-1}{2}} \rho]) \times \d'.
\end{equation*}
Considering cuspidal supports of $\delta$, obviously, $i = -\frac{a+1}{2}$. Since $b\_ \not\in \Jord_{\rho}(\s')$, $a\_ < a$, and $b \in \Jord_{\rho}(\s')$, using Lemma \ref{remarkprva} we deduce that $j = \frac{a-1}{2}$ and $\d' \cong \delta([\nu^{\frac{a+1}{2}} \rho, \nu^{\frac{b-1}{2}} \rho])$. It follows that
\begin{equation*}
\delta \cong \delta([\nu^{\frac{a+1}{2}} \rho, \nu^{\frac{b\_-1}{2}} \rho]) \times \delta([\nu^{\frac{a+1}{2}} \rho, \nu^{\frac{b-1}{2}} \rho]) \cong \delta([\nu^{\frac{a+1}{2}} \rho, \nu^{\frac{b-1}{2}} \rho]) \times \delta([\nu^{\frac{a+1}{2}} \rho, \nu^{\frac{b\_-1}{2}} \rho]).    
\end{equation*}
Using Frobenius reciprocity and the transitivity of Jacquet modules, we get that $\mu^{\ast}(\s)$ contains an irreducible constituent of the form $\delta([\nu^{\frac{b\_ + 1}{2}} \rho, \nu^{\frac{b-1}{2}} \rho]) \otimes \pi'$, so $\epsilon_{\s}((b\_,\rho),(b,\rho)) = 1$.

\begin{itemize}
    \item Claim $6$: $\epsilon_{\sigma'}((a\_, \rho), (b, \rho)) = 1$ if and only if $\beta = \gamma$.
\end{itemize}
By Lemma \ref{lemapripremazadnja}, there is a unique discrete series $\s''$ such that $\s'$ is a subrepresentation of $\delta([\nu^{\frac{a+1}{2}} \rho, \nu^{\frac{b-1}{2}} \rho]) \rtimes \s''$. Thus, every irreducible subrepresentation of $\delta([\nu^{-\frac{a- 1}{2}} \rho, \nu^{\frac{a-1}{2}} \rho]) \rtimes \s'$ is also a subrepresentation of 
\begin{equation*}
    \delta([\nu^{-\frac{a- 1}{2}} \rho, \nu^{\frac{a-1}{2}} \rho]) \times \delta([\nu^{\frac{a+1}{2}} \rho, \nu^{\frac{b-1}{2}} \rho]) \rtimes \s'', 
\end{equation*}
and in $R(G)$ we have
\begin{gather*}
\delta([\nu^{-\frac{a- 1}{2}} \rho, \nu^{\frac{a-1}{2}} \rho]) \times \delta([\nu^{\frac{a+1}{2}} \rho, \nu^{\frac{b-1}{2}} \rho]) \rtimes \s'' = \\ \delta([\nu^{-\frac{a- 1}{2}} \rho, \nu^{\frac{b-1}{2}} \rho])\rtimes \s'' + L(\delta([\nu^{-\frac{a- 1}{2}} \rho, \nu^{\frac{a-1}{2}} \rho]), \delta([\nu^{\frac{a+1}{2}} \rho, \nu^{\frac{b-1}{2}} \rho])) \rtimes \s''.
\end{gather*}
We split the proof of Claim $6$ in three sub-claims.
\begin{itemize}
    \item Sub-claim $1$: Each of the induced representations 
\begin{gather*}
\delta([\nu^{-\frac{a- 1}{2}} \rho, \nu^{\frac{b-1}{2}} \rho])\rtimes 
\s'' \\
\intertext{and}
L(\delta([\nu^{-\frac{a- 1}{2}} \rho, \nu^{\frac{a-1}{2}} \rho]), \delta([\nu^{\frac{a+1}{2}} \rho, \nu^{\frac{b-1}{2}} \rho])) \rtimes \s''
\end{gather*}
contains exactly one irreducible tempered subrepresentation of 
\begin{equation*}
\delta([\nu^{-\frac{a- 1}{2}} \rho, \nu^{\frac{a-1}{2}} \rho]) \rtimes \s'.
\end{equation*}
\end{itemize}
\begin{sloppypar}
Using the structural formula and the square-integrability criterion for $\sigma''$, we deduce that if $\mu^{\ast}(\delta([\nu^{-\frac{a- 1}{2}} \rho$, $\nu^{\frac{a-1}{2}} \rho]) \times \delta([\nu^{\frac{a+1}{2}} \rho, \nu^{\frac{b-1}{2}} \rho]) \rtimes \s'')$ contains $\delta([\nu^{-\frac{a- 1}{2}} \rho, \nu^{\frac{a-1}{2}} \rho]) \otimes \s'$, then $\s'$ is an irreducible subquotient of $\delta([\nu^{\frac{a+1}{2}} \rho, \nu^{\frac{b-1}{2}} \rho]) \rtimes \s''$. In the same way we obtain that $\mu^{\ast}(\delta([\nu^{-\frac{a- 1}{2}} \rho$, $\nu^{\frac{a-1}{2}} \rho]) \times \delta([\nu^{\frac{a+1}{2}} \rho, \nu^{\frac{b-1}{2}} \rho]) \rtimes \s'')$ contains $\delta([\nu^{-\frac{a- 1}{2}} \rho, \nu^{\frac{a-1}{2}} \rho]) \otimes \delta([\nu^{\frac{a+1}{2}} \rho, \nu^{\frac{b-1}{2}} \rho]) \rtimes \s''$ with multiplicity two.

It follows from Lemma \ref{lemapripremazadnja} that $\s'$ appears with multiplicity one in the composition series of $\delta([\nu^{\frac{a+1}{2}} \rho, \nu^{\frac{b-1}{2}} \rho]) \rtimes \s''$. Thus, $\mu^{\ast}(\delta([\nu^{-\frac{a- 1}{2}} \rho$, $\nu^{\frac{a-1}{2}} \rho]) \times \delta([\nu^{\frac{a+1}{2}} \rho, \nu^{\frac{b-1}{2}} \rho]) \rtimes \s'')$ contains $\delta([\nu^{-\frac{a- 1}{2}} \rho, \nu^{\frac{a-1}{2}} \rho]) \otimes \s'$ with multiplicity two. 
\end{sloppypar}

Repeating the same arguments, we see that $\mu^{\ast}(\delta([\nu^{-\frac{a- 1}{2}} \rho, \nu^{\frac{b-1}{2}} \rho])\rtimes \s'')$ contains $\delta([\nu^{-\frac{a- 1}{2}} \rho, \nu^{\frac{a-1}{2}} \rho]) \otimes \s'$ with multiplicity one. 
This proves Sub-claim $1$.

\begin{itemize}
    \item Sub-claim $2$: $\tau_{\gamma}$ is contained in $\delta([\nu^{-\frac{a- 1}{2}} \rho, \nu^{\frac{b-1}{2}} \rho]) \rtimes 
\s''$, but $\tau_{3 - \gamma}$ is not.
\end{itemize}
\begin{sloppypar}
Lemma \ref{remarkprva} and the fact that $a \in Jord_{\rho}(\s'')$ and $a+2, \cdots, b \notin Jord_{\rho}(\s'')$ imply that the induced representation $\delta([\nu^{-\frac{a- 1}{2}} \rho, \nu^{\frac{a-1}{2}} \rho]) \rtimes \s''$ is irreducible and $\mu^{\ast}(\s'')$ does not contain irreducible constituents of the form $\nu^{x} \rho \otimes \pi$ for $\frac{a+1}{2} \leq x \leq \frac{b-1}{2}$. Therefore, it follows from the structural formula that $\mu^{\ast}(\delta([\nu^{-\frac{a- 1}{2}} \rho, \nu^{\frac{a-1}{2}} \rho]) \times \delta([\nu^{\frac{a+1}{2}} \rho, \nu^{\frac{b-1}{2}} \rho]) \rtimes \s'')$ contains a unique irreducible constituent of the form $\delta([\nu^{\frac{a+1}{2}} \rho, \nu^{\frac{b-1}{2}} \rho]) \otimes \pi$ , which appears there with multiplicity one, and such an irreducible constituent is obviously also contained in $\mu^{\ast}(\delta([\nu^{-\frac{a- 1}{2}} \rho, \nu^{\frac{b-1}{2}} \rho]) \rtimes \s'')$. We note that $\pi \cong \delta([\nu^{-\frac{a- 1}{2}} \rho, \nu^{\frac{a-1}{2}} \rho]) \rtimes \s''$.
\end{sloppypar}

Thus, an irreducible tempered subrepresentation $\tau$ of $\delta([\nu^{-\frac{a- 1}{2}} \rho, \nu^{\frac{a-1}{2}} \rho]) \rtimes \s'$ contains an irreducible representation of the form $\delta([\nu^{\frac{a+1}{2}} \rho, \nu^{\frac{b-1}{2}} \rho]) \otimes \pi$ in the Jacquet module with respect to the appropriate parabolic subgroup if and only if $\tau$ is contained in $\delta([\nu^{-\frac{a- 1}{2}} \rho, \nu^{\frac{b-1}{2}} \rho])\rtimes \s''$. This proves Sub-claim $2$.

\begin{sloppypar}
\begin{itemize}
    \item Sub-claim $3$: $\epsilon_{\s'}((a\_, \rho), (b, \rho)) = 1$ if and only if $\tau_{\beta}$ is contained in $\delta([\nu^{-\frac{a- 1}{2}} \rho, \nu^{\frac{b-1}{2}} \rho]) \rtimes \s''$.
\end{itemize}
Note that Sub-claim 2 and 3 imply Claim 6 since $\tau_{\beta}$ is contained in $\delta([\nu^{-\frac{a- 1}{2}} \rho,$ $\nu^{\frac{b-1}{2}} \rho]) \rtimes \s''$ if and only if $\beta=\tau$. We now prove Sub-claim 3. Suppose that $\epsilon_{\s'}((a\_, \rho), (b, \rho)) = 1$. Since $\delta([\nu^{-\frac{a\_ - 1}{2}} \rho, \nu^{\frac{a\_ - 1}{2}} \rho]) \rtimes \s'$ is irreducible and $\epsilon_{\s'}((a\_, \rho), (b, \rho)) = 1$, it follows that $\mu^{\ast}(\delta([\nu^{-\frac{a\_ - 1}{2}} \rho, \nu^{\frac{a\_ - 1}{2}} \rho]) \rtimes \s')$ contains an irreducible constituent of the form $\delta([\nu^{\frac{a\_ + 1}{2}} \rho, \nu^{\frac{b - 1}{2}} \rho]) \otimes \pi_1$. Thus, the Jacquet module of $\tau_{\beta}$ with respect to the appropriate parabolic subgroup contains
\end{sloppypar}
\begin{equation*}
\delta([\nu^{\frac{a\_ + 1}{2}} \rho, \nu^{\frac{a - 1}{2}} \rho]) \times \delta([\nu^{\frac{a\_ + 1}{2}} \rho, \nu^{\frac{a - 1}{2}} \rho]) \otimes \delta([\nu^{\frac{a\_ + 1}{2}} \rho, \nu^{\frac{b - 1}{2}} \rho]) \otimes \pi_1.
\end{equation*}
The transitivity of Jacquet modules implies that there is an irreducible constituent $\delta_1 \otimes \pi_1$ in $\mu^{\ast}(\tau_{\beta})$ such that $m^{\ast}(\delta_1)$ contains
\begin{equation} \label{jmsedmi}
\delta([\nu^{\frac{a\_ + 1}{2}} \rho, \nu^{\frac{a - 1}{2}} \rho]) \times \delta([\nu^{\frac{a\_ + 1}{2}} \rho, \nu^{\frac{a - 1}{2}} \rho]) \otimes \delta([\nu^{\frac{a\_ + 1}{2}} \rho, \nu^{\frac{b - 1}{2}} \rho]).
\end{equation}
Since $\tau_{\beta}$ is a subrepresentation of $\delta([\nu^{-\frac{a- 1}{2}} \rho, \nu^{\frac{a-1}{2}} \rho]) \rtimes \s'$, it follows from the structural formula that there are $i, j$ such that $-\frac{a + 1}{2} \leq i \leq j \leq \frac{a-1}{2}$ and an irreducible constituent $\delta_2 \otimes \pi_2$ of $\mu^{\ast}(\s')$ such that
\begin{equation*}
\delta_1 \leq \delta([\nu^{-i} \rho, \nu^{\frac{a - 1}{2}} \rho]) \times \delta([\nu^{j+1} \rho, \nu^{\frac{a - 1}{2}} \rho]) \times \delta_2.
\end{equation*}
By Lemma \ref{remarkprva} and the transitivity of Jacquet modules, we get that $m^{\ast}(\delta_2)$ does not contain an irreducible constituent of the form $\nu^{x} \rho \otimes \pi$ such that $\frac{a\_ + 1}{2} \leq x \leq \frac{a - 1}{2}$ since $a\_ +2, \cdots, a \notin Jord_{\rho}(\s')$. Since $m^{\ast}(\delta([\nu^{-i} \rho, \nu^{\frac{a - 1}{2}} \rho]) \times \delta([\nu^{j+1} \rho, \nu^{\frac{a - 1}{2}} \rho]) \times \delta_2)$ contains the irreducible constituent (\ref{jmsedmi}), we deduce $i = - \frac{a\_ + 1}{2}$ and $j = \frac{a\_ - 1}{2}$. It directly follows that $\delta_2 \cong \delta([\nu^{\frac{a\_ + 1}{2}} \rho, \nu^{\frac{b - 1}{2}} \rho])$ and, consequently,
\begin{align*}
\delta_1 &\cong \delta([\nu^{\frac{a\_ + 1}{2}} \rho, \nu^{\frac{a - 1}{2}} \rho]) \times \delta([\nu^{\frac{a\_ + 1}{2}} \rho, \nu^{\frac{a - 1}{2}} \rho]) \times \delta([\nu^{\frac{a\_ + 1}{2}} \rho, \nu^{\frac{b - 1}{2}} \rho]) \\
& \cong \delta([\nu^{\frac{a\_ + 1}{2}} \rho, \nu^{\frac{b - 1}{2}} \rho]) \times \delta([\nu^{\frac{a\_ + 1}{2}} \rho, \nu^{\frac{a - 1}{2}} \rho]) \times \delta([\nu^{\frac{a\_ + 1}{2}} \rho, \nu^{\frac{a - 1}{2}} \rho]).
\end{align*}
Using the Frobenius reciprocity and the transitivity of Jacquet modules again, we conclude that $\mu^{\ast}(\tau_{\beta})$ contains an irreducible constituent of the form
\begin{equation*} 
\delta([\nu^{\frac{a + 1}{2}} \rho, \nu^{\frac{b - 1}{2}} \rho]) \otimes \pi.
\end{equation*}
Consequently, $\beta = \gamma$ and $\tau_{\beta}$ is contained in $\delta([\nu^{-\frac{a- 1}{2}} \rho, \nu^{\frac{b-1}{2}} \rho]) \rtimes \s''$, by Sub-claim $2$.

Conversely, let us assume that $\tau_{\beta}$ is contained in $\delta([\nu^{-\frac{a- 1}{2}} \rho, \nu^{\frac{b-1}{2}} \rho]) \rtimes \s''$. Thus, $\mu^{\ast}(\delta([\nu^{-\frac{a- 1}{2}} \rho, \nu^{\frac{b-1}{2}} \rho]) \rtimes \s'')$ contains (\ref{jmdrugi}). The structural formula implies that there are $i, j$ such that $-\frac{a + 1}{2} \leq i \leq j \leq \frac{b-1}{2}$ and
an irreducible constituent $\delta_1 \otimes \pi_1$ of $\mu^{\ast}(\s'')$ such that
\begin{equation*}
\delta([\nu^{\frac{a\_ + 1}{2}} \rho, \nu^{\frac{a - 1}{2}} \rho]) \times \delta([\nu^{\frac{a\_ + 1}{2}} \rho, \nu^{\frac{a - 1}{2}} \rho]) \leq \delta([\nu^{- i} \rho, \nu^{\frac{a - 1}{2}} \rho]) \times \delta([\nu^{j+1} \rho, \nu^{\frac{b - 1}{2}} \rho]) \times \delta_1.
\end{equation*}
Clearly, $i=-\frac{a\_ + 1}{2}$, $j = \frac{b-1}{2}$ and $\mu^{\ast}(\s'')$ contains an irreducible constituent of the form $\delta([\nu^{\frac{a\_ + 1}{2}} \rho, \nu^{\frac{a - 1}{2}} \rho]) \otimes \pi_1$. 

\begin{sloppypar}
Since $\s'$ is a subrepresentation of $\delta([\nu^{\frac{a+1}{2}} \rho, \nu^{\frac{b-1}{2}} \rho]) \rtimes \s''$, using Frobenius reciprocity and the transitivity of Jacquet modules we deduce that the Jacquet module of $\s'$ with respect to the appropriate parabolic subgroup contains $\delta([\nu^{\frac{a+1}{2}} \rho, \nu^{\frac{b-1}{2}} \rho]) \otimes \delta([\nu^{\frac{a\_ + 1}{2}} \rho, \nu^{\frac{a - 1}{2}} \rho]) \otimes \pi_1$. 
\end{sloppypar}
By the transitivity of Jacquet modules, there is an irreducible constituent $\delta_2 \otimes \pi_2$ of $\mu^{\ast}(\sigma')$ such that $m^{\ast}(\delta_2)$ contains $\delta([\nu^{\frac{a+1}{2}} \rho, \nu^{\frac{b-1}{2}} \rho]) \otimes \delta([\nu^{\frac{a\_ + 1}{2}} \rho, \nu^{\frac{a - 1}{2}} \rho])$. Repeating the same arguments as in the proof of the Claim $5$, or as in the first part of the proof of this sub-claim, we obtain $\delta_2 \cong \delta([\nu^{\frac{a\_ + 1}{2}} \rho, \nu^{\frac{b - 1}{2}} \rho])$. Now Lemma \ref{lemaequiv} implies $\epsilon_{\s'}((a\_, \rho), (b, \rho)) = 1$. This ends the proof of Sub-claim 3.

Finally, Claims 4, 5, and 6 imply the theorem:
\begin{equation}
\epsilon_{\s'}((a\_, \rho), (b, \rho))=\epsilon_{\s}((a\_, \rho), (a, \rho))\epsilon_{\s}((b\_, \rho), (b, \rho)).
 \tag*{\qedhere} 
\end{equation}
\end{proof}

\section{Invariants of discrete series II: the $\epsilon$-function on a certain subset of $\Jord(\sigma)$}

In this section we define and study the $\epsilon$-function on a certain subset of the set of the Jordan blocks. Throughout this section we denote the partial cuspidal support of the irreducible representation $\s$ by $\s_{cusp}$.

In Definition \ref{defone}, we defined the values of $\epsilon_{\sigma}$ on the intersection of its domain with $\Jord(\sigma) \times \Jord(\sigma)$. Now we will define it on the intersection of its domain with $\Jord(\sigma)$; the restriction of $\epsilon_{\sigma}$ to this intersection will be referred to as the "$\epsilon$-function on single pairs". 

In the following lemma we gather some results on the embeddings of discrete series.

\begin{lemma} \label{lemaulaganje}
Let $\s \in R(G)$ denote a discrete series and suppose that $\rho \in R(GL)$ is an irreducible essentially self-dual cuspidal representation such that some twists of $\rho$ appear in the cuspidal support of $\s$. 
\begin{enumerate}[(1)]
    \item If there is an $a \in \Jord_{\rho}(\s)$ such that $a\_$ is defined and $\epsilon_{\s}((a\_, \rho),(a, \rho)) = 1$, then there is a discrete series $\s'$ such that $\s$ is a subrepresentation of $\d([\nu^{-\frac{a\_ - 1}{2}} \rho, \nu^{\frac{a-1}{2}} \rho]) \rt \s'$, and $\mu^{\ast}(\s)$ contains $\d([\nu^{-\frac{a\_ - 1}{2}} \rho, \nu^{\frac{a-1}{2}} \rho]) \otimes \s'$ with multiplicity one. If $\delta([\nu^{-\frac{a\_ - 1}{2}} \rho, \nu^{\frac{a-1}{2}} \rho]) \otimes \pi$ is an irreducible constituent of $\mu^{\ast}(\sigma)$, then $\pi \cong \sigma'$, and $\delta([\nu^{-\frac{a\_ - 1}{2}} \rho, \nu^{\frac{a-1}{2}} \rho]) \otimes \sigma'$ appears in $\mu^{\ast}(\d([\nu^{-\frac{a\_ - 1}{2}} \rho, \nu^{\frac{a-1}{2}} \rho]) \rt \s')$ with multiplicity two.
    \item If $\epsilon_{\s}((a\_, \rho),(a, \rho)) = -1$ for every $a \in \Jord_{\rho}(\s)$ such that $a\_$ is defined, then there is an irreducible representation $\delta \in R(GL)$ whose cuspidal support consists only of twists of $\rho$ and a discrete series $\s' \in R(G)$ without twists of $\rho$ in the cuspidal support such that $\s$ is a subrepresentation of $\d \rt \s'$. Also, $\d \otimes \s'$ appears in $\mu^{\ast}(\s)$ with multiplicity one and for $(b, \rho') \in \Jord(\s)$, such that $b\_$ is defined and $\rho' \not\cong \rho$, we have $\epsilon_{\s}((b\_, \rho'), (b, \rho')) = \epsilon_{\s'}((b\_, \rho'), (b, \rho'))$.
\end{enumerate}
\end{lemma}
\begin{proof}
If there is an $a \in \Jord_{\rho}(\s)$ such that $a\_$ is defined and $\epsilon_{\s}((a\_, \rho),(a, \rho)) = 1$ holds, the statement of the lemma can be proved in the exactly same way as \cite[Theorem~2.3]{Mu7}, proof of which is completely based on the structural formula, the square-integrability criterion, and the result analogous to the one given in Lemma \ref{remarkprva}.

Let us now assume that $\epsilon_{\s}((a\_, \rho),(a, \rho)) = -1$ for every $a \in \Jord_{\rho}(\s)$ such that $a\_$ is defined. By Theorem \ref{embed}, we deduce that there is an ordered $n$-tuple $(\s_1, \s_2, \ldots, \s_n)$ of discrete series $\s_i \in R(G)$ such that $\s_1$ is strongly positive, $\s_n \cong \s$, and, for every $i = 2, 3, \ldots, n$, there are $(a_i, \rho_i), (b_i, \rho_i) \in \Jord(\s)$ such that in $\Jord_{\rho_i}(\s_i)$ we have $a_i = (b_i)\_$ and $\s_i$ is a subrepresentation of $\d([\nu^{-\frac{a_i - 1}{2}} \rho_i, \nu^{\frac{b_i-1}{2}} \rho_i]) \rtimes \s_{i-1}$ and $\epsilon_{\s}((a_i, \rho_i),(b_i, \rho_i)) = 1$.

Obviously, $\rho \not\cong \rho_i$ for $i = 2, 3, \ldots, n$. By \cite[Theorem~5.3]{Matic4}, there is an irreducible representation $\delta_1$, which is the unique irreducible subrepresentation of an induced representation of the form
\begin{equation*}
\d([\nu^{x} \rho, \nu^{y_1} \rho]) \times \d([\nu^{x+1} \rho, \nu^{y_2} \rho]) \times \cdots \times \d([\nu^{x + k -1} \rho, \nu^{y_k} \rho]),
\end{equation*}
where $x + i - 1 \leq y_i$ and $y_i < y_{i+1}$ for $i = 1, 2, \ldots, k$, and $\nu^{x + k -1} \rho \rtimes \s_{cusp}$ reduces, such that $\s_1$ is a subrepresentation of $\delta_1 \rtimes \sigma_{sp}$, where $\sigma_{sp}$ is a strongly positive discrete series without twists of $\rho$ in the cuspidal support. Also, $\delta_1 \otimes \sigma_{sp}$ appears in $\mu^{\ast}(\s_1)$ with multiplicity one. Note that $\Jord_{\rho_i}(\s_1) = \Jord_{\rho_i}(\sigma_{sp})$ for $i = 2, \ldots, n$, and for $(b, \rho') \in \Jord(\s_1)$, such that $b\_$ is defined and $\rho' \not\cong \rho$, we have $\epsilon_{\s_1}((b\_, \rho'), (b, \rho')) = \epsilon_{\sigma_{sp}}((b\_, \rho'), (b, \rho'))$. It is a direct consequence of \cite[Subsection~6.3]{LapidMinguez1} that for $i = 2, 3, \ldots, n$ we have
\begin{equation*}
\d([\nu^{-\frac{a_i - 1}{2}} \rho_i, \nu^{\frac{b_i-1}{2}} \rho_i]) \times \delta_1 \cong \delta_1 \times \d([\nu^{-\frac{a_i - 1}{2}} \rho_i, \nu^{\frac{b_i-1}{2}} \rho_i]).
\end{equation*}
Thus, there is an embedding
\begin{equation*}
\s_2 \hookrightarrow \delta_1 \times \d([\nu^{-\frac{a_2 - 1}{2}} \rho_2, \nu^{\frac{b_2-1}{2}} \rho_2]) \rtimes \sigma_{sp}.
\end{equation*}
So, there is an irreducible representation $\pi_1$ such that $\s_2$ is a subrepresentation of $\delta_1 \rtimes \pi_1$. Frobenius reciprocity implies that $\mu^{\ast}(\s_2) \geq \delta_1 \otimes \pi_1$ and, since no twists of $\rho$ appear in the cuspidal support of $\d([\nu^{-\frac{a_2 - 1}{2}} \rho_2, \nu^{\frac{b_2-1}{2}} \rho_2]) \rtimes \sigma_{sp}$, it follows that $\pi_1$ is a subquotient of $\d([\nu^{-\frac{a_2 - 1}{2}} \rho_2, \nu^{\frac{b_2-1}{2}} \rho_2]) \rtimes \sigma_{sp}$.

Since $\mu^{\ast}(\delta_1 \rtimes \pi_1) \geq \mu^{\ast}(\s_2) \geq \d([\nu^{-\frac{a_2 - 1}{2}} \rho_2, \nu^{\frac{b_2-1}{2}} \rho_2]) \otimes \s_1$, it can be easily seen that $\mu^{\ast}(\pi_1)$ contains an irreducible constituent of the form $\d([\nu^{-\frac{a_2 - 1}{2}} \rho_2$, $\nu^{\frac{b_2-1}{2}} \rho_2]) \otimes \pi'_1$. Using $\Jord_{\rho_2}(\s_1) = \Jord_{\rho_2}(\sigma_{sp})$ and the first part of the lemma, we conclude that the only irreducible constituent of such a form, appearing in $\mu^{\ast}(\d([\nu^{-\frac{a_2 - 1}{2}} \rho_2$, $\nu^{\frac{b_2-1}{2}} \rho_2]) \rtimes \sigma_{sp})$, is $\d([\nu^{-\frac{a_2 - 1}{2}} \rho_2, \nu^{\frac{b_2-1}{2}} \rho_2]) \otimes \sigma_{sp}$, which appears there with multiplicity two. Since $\d([\nu^{-\frac{a_2 - 1}{2}} \rho_2, \nu^{\frac{b_2-1}{2}} \rho_2]) \otimes \sigma_{sp}$ is contained in the Jacquet modules of both irreducible subrepresentations of $\d([\nu^{-\frac{a_2 - 1}{2}} \rho_2, \nu^{\frac{b_2-1}{2}} \rho_2]) \rtimes \sigma_{sp}$, we conclude that $\pi_1$ is a subrepresentation of $\d([\nu^{-\frac{a_2 - 1}{2}} \rho_2, \nu^{\frac{b_2-1}{2}} \rho_2]) \rtimes \sigma_{sp}$. Theorem \ref{dissub} shows that $\pi_1$ is a discrete series. Furthermore, no twists of $\rho$ appear in the cuspidal support of $\pi_1$ and $\Jord_{\rho_i}(\pi_1) = \Jord_{\rho_i}(\s_2)$ for $i = 3, \ldots, n$.

If we denote by $\s'_2$ the irreducible subrepresentation of the induced representation $\d([\nu^{-\frac{a_2 - 1}{2}} \rho_2$, $\nu^{\frac{b_2-1}{2}} \rho_2]) \rtimes \s_1$ different than $\s_2$, applying the same arguments we can conclude that there is an irreducible subrepresentation $\pi'_1$ of $\d([\nu^{-\frac{a_2 - 1}{2}} \rho_2, \nu^{\frac{b_2-1}{2}} \rho_2]) \rtimes \sigma_{sp}$ such that $\s'_2$ is a subrepresentation of $\delta_1 \rtimes \pi'_1$. Since both $\pi_1$ and $\pi'_1$ appear in the composition series of $\d([\nu^{-\frac{a_2 - 1}{2}} \rho_2, \nu^{\frac{b_2-1}{2}} \rho_2]) \rtimes \sigma_{sp}$ with multiplicity one, we get that $\delta_1 \otimes \pi_1$ appears in $\mu^{\ast}(\s_2)$ with multiplicity one. Obviously, for $(b, \rho') \in \Jord(\s_2)$, such that $b\_$ is defined and $\rho' \not\cong \rho$, we have $\epsilon_{\s_2}((b\_, \rho'), (b, \rho')) = \epsilon_{\pi_1}((b\_, \rho'), (b, \rho'))$.

Thus, if $n=2$ we can take $\delta \cong \delta_1$ and $\sigma' \cong \pi_1$. Suppose that $n \geq 3$. Repeating the same arguments, we deduce that $\sigma_3$ is a subrepresentation of $\delta_1 \rtimes \pi_2$, for a discrete series $\pi_2$ with desired properties. If $n \geq 4$, repeating this procedure we obtain the claim of the second part of the lemma. Note that for $\delta$ as in the statement of the lemma we have $\delta \cong \delta_1$. 
\end{proof}


\begin{lemma} \label{lemabeforesolo}
Let $\s \in R(G)$ denote a discrete series, and suppose that $\rho \in \Irr(GL)$ is a cuspidal unitarizable essentially self-dual representation such that $\rho \rt \s_{cusp}$ reduces. Write $\rho \rt \s_{cusp} = \tau_1 + \tau_{-1}$, where representations $\tau_1$ and $\tau_{-1}$ are irreducible tempered and mutually non-isomorphic. Suppose that $\Jord_{\rho}(\s) = \{ a\_, a \}$. Also, suppose that if $\epsilon_{\s}((b\_,\rho'),(b,\rho')) = 1$ for some $(b, \rho') \in \Jord(\s)$, then $\Jord_{\rho'}(\s_{cusp}) = \emptyset$ and $\Jord_{\rho'}(\s) = \{ b\_, b \}$. Then there is a unique $i \in \{ 1, -1 \}$ such that the Jacquet module of $\s$ with respect to an appropriate parabolic subgroup contains an irreducible constituent of the form $\pi \otimes \d([ \nu \rho, \nu^{\frac{a - 1}{2}} \rho]) \otimes \tau_i$. Also, if $\mu^{\ast}(\s)$ contains an irreducible constituent of the form $\pi' \otimes \tau$, where $\tau$ is an irreducible subquotient of $\d([ \nu^{x} \rho, \nu^{\frac{a - 1}{2}} \rho]) \rt \s_{cusp}$, then $x \leq 0$.
\end{lemma}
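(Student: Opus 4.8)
The plan is to first pin down the combinatorial configuration, then extract the required Jacquet-module constituent, and finally settle the uniqueness of the sign $i$, which is the genuinely delicate point.

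First I would record that, since $\rho\rt\s_{cusp}$ reduces, the reducibility occurs at $s=0$, so every element of $\Jord_{\rho}(\s)$ is odd; in particular $\frac{a-1}{2}$ is a non-negative integer and $\d([\nu\rho,\nu^{\frac{a-1}{2}}\rho])$ is a genuine segment with integer exponents. The crucial observation is that a constituent $\pi\ot\d([\nu\rho,\nu^{\frac{a-1}{2}}\rho])\ot\tau_i$ can occur only if the exponent $0$ (that is, $\nu^{0}\rho=\rho$) lies in the cuspidal support of $\s$, because it is precisely the twist absorbed into the tempered factor $\tau_i\le\rho\rt\s_{cusp}$; this forces the block $a$ to descend all the way to the reducibility point. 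Using Lemma~\ref{priprema1} and Corollary~\ref{corembedjord} I would produce the minimal $y$ with $\s\h\d([\nu^{y}\rho,\nu^{\frac{a-1}{2}}\rho])\rt\pi$, observe via Proposition~\ref{propjord} that $y\le 0$ and $-2y+1\in\{a\_,a\}$, and conclude by Lemma~\ref{epsilonequality} that we are in the situation $\epsilon_{\s}((a\_,\rho),(a,\rho))=1$, i.e.\ $\s\h\d([\nu^{-\frac{a\_-1}{2}}\rho,\nu^{\frac{a-1}{2}}\rho])\rt\s_{ds}$ with $\Jord_{\rho}(\s_{ds})=\emptyset$.

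For existence I would apply $\mus$ to this embedding and read off, via the structural formula (Theorem~\ref{osn}), the constituents of the form $\pi\ot\d([\nu\rho,\nu^{\frac{a-1}{2}}\rho])\ot(\,\cdot\,)$. Since $\Jord_{\rho}(\s_{ds})=\emptyset$, the residual $G$-factor has $\rho$-cuspidal support concentrated at the reducibility point, hence is an irreducible subquotient of $\rho\rt\s_{cusp}=\tau_1+\tau_{-1}$ and equals some $\tau_i$; Frobenius reciprocity and transitivity of Jacquet modules then deliver the asserted constituent for at least one $i$. The hypothesis that every $\rho'$ with $\epsilon_{\s}((b\_,\rho'),(b,\rho'))=1$ satisfies $\Jord_{\rho'}(\s_{cusp})=\emptyset$ and $\Jord_{\rho'}(\s)=\{b\_,b\}$ is used here to guarantee that, when the segments attached to other cuspidal lines are commuted past $\d([\nu\rho,\nu^{\frac{a-1}{2}}\rho])$ during the unwinding, no additional reducibility is introduced, so that the residual $G$-factor is genuinely $\rho\rt\s_{cusp}$ and not a larger tempered representation.

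The main obstacle is the \emph{uniqueness} of $i$, since $\tau_1$ and $\tau_{-1}$ share the same cuspidal support and cannot be separated by support alone. My strategy is to exploit Theorem~\ref{dissub}: the induced representation $\d([\nu^{-\frac{a\_-1}{2}}\rho,\nu^{\frac{a-1}{2}}\rho])\rt\s_{ds}$ has exactly two irreducible square-integrable subrepresentations, one of which is $\s$. I would then carry out a multiplicity-one bookkeeping in the spirit of \cite{Tad6}: using the structural formula I would compute the total multiplicity of $\pi\ot\d([\nu\rho,\nu^{\frac{a-1}{2}}\rho])\ot\tau_i$ (summed over $i$) in $\mus$ of the whole induced representation, and show that this multiplicity is exactly matched by the two square-integrable subrepresentations, so that each of them accounts for precisely one value of $i$. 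The irreducibility of $\s$ then pins down a single sign. This is where the analysis of prominent members of Jacquet modules of tempered representations enters, and it is the step I expect to be most technical.

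Finally, for the bound $x\le 0$ I would argue from the square-integrability criterion. Suppose $\mus(\s)\ge\pi'\ot\tau$ with $\tau\le\d([\nu^{x}\rho,\nu^{\frac{a-1}{2}}\rho])\rt\s_{cusp}$ and $x\ge 1$. Then the non-positive $\rho$-exponents of $\s$, which are present precisely because the block-$a\_$ descent reaches below $0$ in the case $\epsilon_{\s}((a\_,\rho),(a,\rho))=1$, cannot lie in $\tau$ and are therefore forced into the $GL$-factor $\pi'$. Decomposing $\ms(\pi')$ further and placing these exponents leftmost, transitivity of Jacquet modules exhibits a full Jacquet-module term of $\s$ whose initial partial sum is non-positive, contradicting Casselman's square-integrability criterion (equivalently, it would produce an embedding contradicting Corollary~\ref{corembedjord} via Lemma~\ref{remarkprva}). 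Hence $x\le 0$, as claimed.
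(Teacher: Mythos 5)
Your overall strategy coincides with the paper's: reduce to the case $\epsilon_{\s}((a\_,\rho),(a,\rho))=1$ with $\s\h\d([\nu^{-\frac{a\_-1}{2}}\rho,\nu^{\frac{a-1}{2}}\rho])\rt\s_{ds}$ and $\Jord_{\rho}(\s_{ds})=\emptyset$, manufacture the constituent by unwinding down to $\s_{cusp}$, and get uniqueness of $i$ from a multiplicity count played off against the two square-integrable subrepresentations of Theorem \ref{dissub}. However, two steps are genuine gaps rather than omitted routine detail. The first is your argument for $x\le 0$: you cannot ``place the non-positive exponents leftmost,'' because the terms of $m^{\ast}(\pi')$ are dictated by $\pi'$ and cannot be reordered (for $\pi'=\d([\nu^{-1}\rho,\nu^{2}\rho])$ the negative exponent occurs \emph{last} in the minimal term $\nu^{2}\rho\ot\nu\rho\ot\rho\ot\nu^{-1}\rho$, and every partial sum is positive); moreover, for $x$ only moderately positive the exponent sum of $\pi'$ can still be positive, so no Casselman inequality is visibly violated on the $GL$ side. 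The paper's mechanism is different: for $x>0$ the representation $\d([\nu^{x}\rho,\nu^{\frac{a-1}{2}}\rho])\rt\s_{cusp}$ is irreducible, hence isomorphic to $\d([\nu^{-\frac{a-1}{2}}\rho,\nu^{-x}\rho])\rt\s_{cusp}$, and it is this flipped form, fed back into the Jacquet module of $\s$, that produces a term contradicting square-integrability.

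The second gap is in the uniqueness count, which you correctly identify as the delicate point but whose essential input you do not supply. The paper does not count directly in $\mu^{\ast}(\d([\nu^{-\frac{a\_-1}{2}}\rho,\nu^{\frac{a-1}{2}}\rho])\rt\s_{ds})$; it descends through the whole chain of Theorem \ref{embed} to the strongly positive bottom $\s_{1}$, attaches to it the ladder representation $\pi_{1}$ with $\pi_{1}\ot\s_{cusp}$ of multiplicity one in $\mu^{\ast}(\s_{1})$, shows (via Lapid--Minguez irreducibility) that the commuted product $\pi_{2}$ of all intermediate segments with $\pi_{1}$ is irreducible and that $\pi_{2}\ot\s_{cusp}$ occurs in $\mu^{\ast}(\s_{n-1})$ with multiplicity one, and only then identifies the multiplicity of $\pi\ot\d([\nu\rho,\nu^{\frac{a-1}{2}}\rho])\ot\tau_{i}$ with that number; the same descent, combined with the flip $\d([\nu^{x}\rho,\nu^{y}\rho])\rt\s_{cusp}\cong\d([\nu^{-y}\rho,\nu^{-x}\rho])\rt\s_{cusp}$, is what puts $\d([\nu\rho,\nu^{\frac{a-1}{2}}\rho])$ into the middle slot in the first place, and it is where the hypothesis on the auxiliary lines $\rho'$ is actually consumed. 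Without this multiplicity-one statement the count does not close. A smaller point: your derivation of $\epsilon_{\s}((a\_,\rho),(a,\rho))=1$ is circular as written, since Lemma \ref{priprema1} requires some embedding $\s\h\d([\nu^{y}\rho,\nu^{\frac{a-1}{2}}\rho])\rt\pi$ to exist before a minimal $y$ can be chosen; the paper obtains this sign immediately from the classification of strongly positive discrete series, because $\Jord_{\rho}(\s_{cusp})=\emptyset$ while $\Jord_{\rho}(\s)$ has two elements, which rules out the alternated configuration on the $\rho$-line.
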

\begin{proof}
It follows from the classification of strongly positive discrete series, given in \cite[Theorem~A]{Kim1}, that for a strongly positive discrete series $\sigma_{sp}$ with the partial cuspidal support $\s_{cusp}$ we have $\Jord_{\rho}(\s_{sp}) = \emptyset$. Note that the cuspidal support of the strongly positive discrete series does not contain twists of $\rho$.

Suppose that $\epsilon((a\_, \rho), (a, \rho)) = -1$. Since $\Jord_{\rho}(\s) = \{ a\_, a \}$, an inductive application of Theorem \ref{embed} implies that there is a strongly positive discrete series $\sigma_{sp}$ such that $\Jord_{\rho}(\sigma_{sp}) = \{ a\_, a \}$, which is impossible.

By Theorem \ref{embed}, there exists an ordered $n$-tuple of discrete series $(\s_1, \s_2$, $\ldots$, $\s_n)$, $\s_i \in \Irr(G_{n_i})$, such that $\s_1$ is strongly positive, $\s_n \cong \s$, and for every $i = 2, \ldots, n$, there are $(a_i, \rho_i), (b_i, \rho_i) \in \Jord(\s)$ such that in $\Jord_{\rho_i}(\s_i)$ we have $a_i = (b_i)\_$ and
\begin{equation*}
\s_i \hookrightarrow \d([\nu^{-\frac{a_i -1}{2}} \rho_i, \nu^{\frac{b_i -1}{2}} \rho_i]) \rt \s_{i-1}.
\end{equation*}
We can take $(a_n, b_n, \rho_n) = (a\_, a, \rho)$, and for $i = 2, 3, \ldots, n$ we have $\Jord_{\rho_i}(\s_{cusp}) = \emptyset$ and $\rho_{i_1} \not\cong \rho_{i_2}$ for $i_1, i_2 \in \{ 2, 3, \ldots, n \}$, $i_1 \neq i_2$. Also, there are no twists of $\rho_2, \rho_3, \ldots, \rho_{n}$ appearing in the cuspidal support of $\s_1$.

It follows from \cite[Theorem~4.6]{Matic4} or \cite[Section~7]{MatTad} that there is a unique irreducible representation $\pi_1$ such that $\s_1$ is a subrepresentation of $\pi_1 \rt \s_{cusp}$. Also, $\mu^{\ast}(\s_1)$ contains $\pi_1 \ot \s_{cusp}$ with multiplicity one, and there are no twists of $\rho, \rho_2, \ldots, \rho_{n-1}$ appearing in the cuspidal support of $\pi_1$ (the explicit form of $\pi_1$ can be deduced from \cite[Theorem~4.6]{Matic4} or \cite[Section~7]{MatTad}).
We have an embedding
\begin{align*}
\s \hookrightarrow & \d([\nu^{-\frac{a\_ -1}{2}} \rho, \nu^{\frac{a -1}{2}} \rho]) \times \d([\nu^{-\frac{a_{n-1} -1}{2}} \rho_{n-1}, \nu^{\frac{b_{n-1} -1}{2}} \rho_{n-1}]) \times \cdots \times  \\
& \times \d([\nu^{-\frac{a_2 -1}{2}} \rho_2, \nu^{\frac{b_2 -1}{2}} \rho_2]) \times \pi_1 \rt \s_{cusp}.
\end{align*}

We note that the induced representation $\d([\nu^{-\frac{a_{n-1} -1}{2}} \rho_{n-1}, \nu^{\frac{b_{n-1} -1}{2}} \rho_{n-1}]) \times \cdots \times \d([\nu^{-\frac{a_2 -1}{2}} \rho_2, \nu^{\frac{b_2 -1}{2}} \rho_2]) \times \pi_1$ is irreducible (this is proved in much bigger generality in \cite[Subsection~6.3]{LapidMinguez1}) and denote it by $\pi_2$.

\begin{itemize}
    \item Claim $1$: $\pi_2 \otimes \sigma_{cusp}$ appears in $\mu^{\ast}(\s_{n-1})$ with multiplicity one.
\end{itemize}

Let us prove the Claim $1$. The transitivity of Jacquet modules shows that such a multiplicity is less than or equal to the multiplicity of
\begin{equation} \label{multpl}
\d([\nu^{-\frac{a_{n-1} -1}{2}} \rho_{n-1}, \nu^{\frac{b_{n-1} -1}{2}} \rho_{n-1}]) \otimes \cdots \otimes \d([\nu^{-\frac{a_2 -1}{2}} \rho_2, \nu^{\frac{b_2 -1}{2}} \rho_2]) \otimes \pi_1 \otimes \s_{cusp}
\end{equation}
in the Jacquet module of $\s_{n-1}$ with respect to the appropriate parabolic subgroup.

Since we have that $\s_{n-1} \hookrightarrow \d([\nu^{-\frac{a_{n-1} -1}{2}} \rho_{n-1}, \nu^{\frac{b_{n-1} -1}{2}} \rho_{n-1}]) \rt \s_{n-2}$, using Lemma \ref{lemaulaganje}$(1)$  one can see that the multiplicity of (\ref{multpl}) in the Jacquet module of $\s_{n-1}$ with respect to the appropriate parabolic subgroup equals the multiplicity of
\begin{equation*}
\d([\nu^{-\frac{a_{n-2} -1}{2}} \rho_{n-2}, \nu^{\frac{b_{n-2} -1}{2}} \rho_{n-2}]) \otimes \cdots \otimes \d([\nu^{-\frac{a_2 -1}{2}} \rho_2, \nu^{\frac{b_2 -1}{2}} \rho_2]) \otimes \pi_1 \otimes \s_{cusp}
\end{equation*}
in the Jacquet module of $\s_{n-2}$ with respect to the appropriate parabolic subgroup. A repeated application of this procedure shows that the multiplicity of (\ref{multpl}) in the Jacquet module of $\s_{n-1}$ with respect to the appropriate parabolic subgroup equals the multiplicity of $\pi_1 \ot \s_{cusp}$ in $\mu^{\ast}(\s_1)$. Consequently, the multiplicity of $\pi_2 \ot \s_{cusp}$ in $\mu^{\ast}(\s_{n-1})$ equals one.

Note that $\d([\nu^{-\frac{a\_ -1}{2}} \rho, \nu^{\frac{a -1}{2}} \rho]) \times \pi_2$ is irreducible. 

\begin{itemize}
    \item Claim $2$: $\d([\nu^{x} \rho, \nu^{y} \rho]) \rt \s_{cusp}$ is irreducible for $x > 0$.
\end{itemize}

Since $x > 0$, an irreducible tempered subquotient of $\d([\nu^{x} \rho, \nu^{y} \rho]) \rt \s_{cusp}$ has to be strongly positive, since otherwise the Jacquet module of $\d([\nu^{x} \rho, \nu^{y} \rho]) \rt \s_{cusp}$ would contain an irreducible constituent of the form $\d([\nu^{z_1} \rho', \nu^{z_2} \rho']) \otimes \pi$ where $z_1 \leq 0$ and $z_1 + z_2 \geq 0$, which is impossible. But, since $\rho \rtimes \sigma_{cusp}$ reduces, it follows from \cite[Theorem~A]{Kim1} that there are no twists of $\rho$ appearing in the cuspidal support of strongly positive discrete series with the partial cuspidal support isomorphic to $\sigma_{cusp}$. Consequenty, there are no irreducible tempered subquotients of $\d([\nu^{x} \rho, \nu^{y} \rho]) \rt \s_{cusp}$. In the same way as in the proof of Theorem \ref{dissub} we deduce that every irreducible non-tempered subquotient of $\d([\nu^{x} \rho, \nu^{y} \rho]) \rt \s_{cusp}$ is isomorphic to $L(\d([\nu^{-y} \rho, \nu^{-x} \rho]), \s_{cusp})$. It follows that $\d([\nu^{x} \rho, \nu^{y} \rho]) \rt \s_{cusp}$ is irreducible and the Claim $2$ is proved.
 
Thus, we have $\d([\nu^{x} \rho, \nu^{y} \rho]) \rt \s_{cusp} \cong \d([\nu^{-y} \rho, \nu^{-x} \rho]) \rt \s_{cusp}$. In this way we obtain an embedding
\begin{dmath*}
\s \hookrightarrow \pi_2 \times \d([ \rho, \nu^{\frac{a-1}{2}} \rho]) \times \d([\nu \rho, \nu^{\frac{a\_ -1}{2}} \rho]) \rt \s_{cusp}
\cong \pi_2 \times \d([\nu \rho, \nu^{\frac{a\_ -1}{2}} \rho]) \times \d([ \rho, \nu^{\frac{a-1}{2}} \rho]) \rt \s_{cusp}
\hookrightarrow
\pi_2 \times \d([\nu \rho, \nu^{\frac{a\_ -1}{2}} \rho]) \times \d([\nu \rho, \nu^{\frac{a -1}{2}} \rho]) \times \rho \rt \s_{cusp}.
\end{dmath*}
The induced representation $\pi_2 \times \d([\nu \rho, \nu^{\frac{a\_ -1}{2}} \rho])$ is irreducible since no twists of $\rho$ appear in the cuspidal supports of $\pi_2$, and we denote it by $\pi$.
Frobenius reciprocity and Lemma \ref{lemajantz} imply that the Jacquet module of $\s$ with respect to the appropriate parabolic subgroup contains an irreducible representation of the form $\pi \otimes \d([\nu \rho, \nu^{\frac{a -1}{2}} \rho]) \otimes \tau$, where $\tau$ is an irreducible representation such that $\mu^{\ast}(\tau) \geq \rho \ot \s_{cusp}$. Thus, there is an $i \in \{ 1, -1 \}$ such that the Jacquet module of $\s$ with respect to the appropriate parabolic subgroup contains $\pi \otimes \d([\nu \rho, \nu^{\frac{a -1}{2}} \rho]) \otimes \tau_{i}$.

\begin{itemize}
\begin{sloppypar}
\item Claim $3$: $\pi \otimes \d([\nu \rho, \nu^{\frac{a -1}{2}} \rho]) \otimes \tau_{i}$ appears with multiplicity one in the Jacquet module of $\d([\nu^{-\frac{a\_ -1}{2}} \rho, \nu^{\frac{a -1}{2}} \rho]) \rt \s_{n-1}$ with respect to the appropriate    parabolic subgroup.
\end{sloppypar}
\end{itemize}

Let us prove the Claim $3$. The transitivity of Jacquet modules implies that there exists an irreducible constituent $\pi \otimes \tau'$ of $\mu^{\ast}(\d([\nu^{-\frac{a\_ -1}{2}} \rho, \nu^{\frac{a -1}{2}} \rho]) \rt \s_{n-1})$ such that $\mu^{\ast}(\tau') \geq \d([\nu \rho, \nu^{\frac{a -1}{2}} \rho]) \otimes \tau_{i}$.

The structural formula implies that there are $i, j$ such that $-\frac{a\_ + 1}{2} \leq i \leq j \leq \frac{a -1}{2}$ and an irreducible constituent $\pi' \otimes \tau''$ of $\mu^{\ast}(\s_{n-1})$ such that
\begin{align*}
\pi & \leq \d([\nu^{-i} \rho, \nu^{\frac{a\_ - 1}{2}} \rho]) \times \d([\nu^{j+1} \rho, \nu^{\frac{a - 1}{2}} \rho]) \times \pi' \\
\intertext{and}
\tau' & \leq \d([\nu^{i+1} \rho, \nu^{j} \rho]) \rtimes \tau''.
\end{align*}
Since $\pi \cong \pi_2 \times \d([\nu \rho, \nu^{\frac{a\_ - 1}{2}} \rho])$ and twists of $\rho$ do not appear in the cuspidal support of $\s_{n-1}$, it follows that $i = -1$, $j = \frac{a - 1}{2}$ and $\pi' \otimes \tau'' \cong \pi_2 \otimes \s_{cusp}$ by Claim 1. Also, since $\d([\nu \rho, \nu^{\frac{a -1}{2}} \rho]) \otimes \tau_{i}$ appears with multiplicity one in $\mu^{\ast}(\d([\rho, \nu^{\frac{a - 1}{2}} \rho]) \rt \s_{cusp})$, it follows that the multiplicity of $\pi \otimes \d([\nu \rho, \nu^{\frac{a -1}{2}} \rho]) \otimes \tau_{i}$ in the Jacquet module of $\d([\nu^{-\frac{a\_ -1}{2}} \rho, \nu^{\frac{a -1}{2}} \rho]) \rt \s_{n-1}$ with respect to the appropriate parabolic subgroup equals the multiplicity of $\pi_2 \otimes \s_{cusp}$ in $\mu^{\ast}(\s_{n-1})$, which equals one.

We denote by $\s'$ the irreducible subrepresentation of $\d([\nu^{-\frac{a\_ -1}{2}} \rho, \nu^{\frac{a -1}{2}} \rho]) \rt \s_{n-1}$ different than $\s$. Repeating the same arguments as before, one can deduce that there is an $i' \in \{ 1, -1 \}$ such that $\pi \otimes \d([\nu \rho, \nu^{\frac{a -1}{2}} \rho]) \otimes \tau_{i'}$ appears in the Jacquet module of $\s'$ with respect to the appropriate parabolic subgroup. Also, $\pi \otimes \d([\nu \rho, \nu^{\frac{a -1}{2}} \rho]) \otimes \tau_{i'}$ appears with multiplicity one in the Jacquet module of $\d([\nu^{-\frac{a\_ -1}{2}} \rho, \nu^{\frac{a -1}{2}} \rho]) \rt \s_{n-1}$ with respect to the appropriate parabolic subgroup. Thus, there is a unique $i \in \{ 1, -1 \}$ such that $\pi \otimes \d([\nu \rho, \nu^{\frac{a -1}{2}} \rho]) \otimes \tau_{i}$ appears in the Jacquet module of $\s$ with respect to the appropriate parabolic subgroup.

Suppose that $\mu^{\ast}(\s)$ contains an irreducible constituent of the form $\pi' \otimes \tau$, where $\tau$ is an irreducible subquotient of $\d([ \nu^{x} \rho, \nu^{\frac{a - 1}{2}} \rho]) \rt \s_{cusp}$ for $x > 0$. By the Claim $2$, the induced representation $\d([ \nu^{x} \rho, \nu^{\frac{a - 1}{2}} \rho]) \rt \s_{cusp}$ is then irreducible, so $\tau \cong \d([ \nu^{-\frac{a - 1}{2}} \rho, \nu^{-x} \rho]) \rt \s_{cusp}$. Thus, $\mu^{\ast}(\s) \geq \pi \otimes \d([ \nu^{-\frac{a - 1}{2}} \rho, \nu^{-x} \rho]) \rt \s_{cusp}$ and one can directly see that this contradicts the square-integrability of $\s$. This finishes the proof.
\end{proof}

\begin{lemma} \label{lemasolo}
Let $\s \in R(G)$ denote a discrete series and suppose that $\rho \in \Irr(GL)$ is a cuspidal unitarizable essentially self-dual representation such that $\rho \rt \s_{cusp}$ reduces and write $\rho \rt \s_{cusp} = \tau_1 + \tau_{-1}$, where representations $\tau_1$ and $\tau_{-1}$ are irreducible, tempered and mutually non-isomorphic. Suppose that $\Jord_{\rho}(\s) \neq \emptyset$, and denote the maximal element of $\Jord_{\rho}(\s)$ by $a_{\max}$. Then there is a unique $i \in \{ 1, -1 \}$ such that the Jacquet module of $\s$ with respect to an appropriate parabolic subgroup contains an irreducible constituent of the form $\pi \otimes \d([ \nu \rho, \nu^{\frac{a_{\max} - 1}{2}} \rho]) \otimes \tau_i$.
\end{lemma}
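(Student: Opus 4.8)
The plan is to argue by induction on the total size $\sum_{\rho'} |\Jord_{\rho'}(\s)|$ of the Jordan block, reducing $\s$ to a smaller discrete series by peeling off one pair while keeping track of which component $\tau_i$ of $\rho \rt \s_{cusp}$ is attached to the top segment $\d([\nu \rho, \nu^{\frac{a_{\max}-1}{2}}\rho])$. Since $\rho \rt \s_{cusp}$ reduces, every element of $\Jord_{\rho}(\s)$ is odd, so $\frac{a_{\max}-1}{2}$ is a non-negative integer and the segment in the statement is well defined. The base cases of the induction are: when $\s$ is strongly positive (equivalently, by Proposition \ref{propsp}, no pair has $\epsilon_\s$-value $1$), where both existence and uniqueness follow from the explicit description of strongly positive discrete series in \cite{Kim1} together with Lemma \ref{lemapripremazadnjajos}; and when $\Jord_{\rho}(\s) = \{(a_{\max})\_, a_{\max}\}$ and the side hypothesis on the other blocks holds, where the claim is exactly Lemma \ref{lemabeforesolo} with $a = a_{\max}$.

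For the inductive step I would assume $\s$ is not yet in a base case, so there is a pair $(c, \rho')$ with $c\_$ defined and $\epsilon_\s((c\_, \rho'), (c, \rho')) = 1$ that does not involve the top of the $\rho$-tower. Using Theorem \ref{embed} I obtain an embedding $\s \h \d([\nu^{-\frac{c\_-1}{2}} \rho', \nu^{\frac{c-1}{2}} \rho']) \rt \s_{ds}$ with $\s_{ds}$ a discrete series and $\Jord(\s_{ds}) = \Jord(\s) \setminus \{(c\_, \rho'), (c, \rho')\}$, chosen so that $a_{\max}$ remains the maximal element of $\Jord_{\rho}(\s_{ds})$. That such a pair can be selected away from $a_{\max}$ is guaranteed by the ordering conditions of Theorem \ref{embed} (all $\rho$-exponents here being odd, the pair containing $a_{\max}$ stays outermost) and by Theorem \ref{prepairemain}, which controls how the $\epsilon$-values of the remaining pairs behave after the peeling and so allows an admissible peeling order that never touches $a_{\max}$. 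Then $\s_{ds}$ again satisfies the hypotheses of the lemma with the same $\rho$ and the same $a_{\max}$, so by the induction hypothesis there is a unique $i$ attached to $\s_{ds}$.

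The crux is to show that this peeling preserves the invariant, i.e. that $\mu^{\ast}(\s)$ contains a constituent $\pi \ot \d([\nu \rho, \nu^{\frac{a_{\max}-1}{2}}\rho]) \ot \tau_i$ if and only if $\mu^{\ast}(\s_{ds})$ contains a constituent $\pi^{\flat} \ot \d([\nu \rho, \nu^{\frac{a_{\max}-1}{2}}\rho]) \ot \tau_i$ with the same $i$. I would compute $\mu^{\ast}(\s)$ from the embedding via the structural formula (Theorem \ref{osn}) applied to $\d([\nu^{-\frac{c\_-1}{2}} \rho', \nu^{\frac{c-1}{2}} \rho']) \rt \s_{ds}$, isolate the constituents whose $G$-factor is a subquotient of $\rho \rt \s_{cusp}$ and whose adjacent $GL$-factor is $\d([\nu \rho, \nu^{\frac{a_{\max}-1}{2}}\rho])$, and match them with the constituents of $\mu^{\ast}(\s_{ds})$. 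When $\rho' \not\cong \rho$ this is immediate: the peeled segment commutes past the entire $\rho$-tower and leaves the $\d([\nu \rho, \nu^{\frac{a_{\max}-1}{2}}\rho]) \ot \tau_i$ part untouched, and the multiplicity-one statements needed to see that no constituent of this shape is created or destroyed are of the same type as those proved in Lemma \ref{lemabeforesolo}, Lemma \ref{lemapripremazadnja} and Theorem \ref{prepairemain}.

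The main obstacle is the case $\rho' \cong \rho$, where the peeled segment and the top segment both lie in the $\rho$-tower and can recombine in the structural formula. Disentangling them requires using that $a_{\max}$ is kept maximal, so that no exponent strictly larger than $\frac{a_{\max}-1}{2}$ is ever available on the $\rho$-tower; that $\Jord_{\rho}(\s_{ds})$ omits the exponents interior to the peeled segment (by Proposition \ref{propjord}), which forces the decomposition of a given $\rho$-constituent produced by the structural formula to be the expected one; and the square-integrability criterion together with Lemma \ref{remarkprva} to rule out the spurious ways in which $\d([\nu \rho, \nu^{\frac{a_{\max}-1}{2}}\rho])$ might otherwise be assembled. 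Once the equivalence is established for a single step, the induction closes, transporting both existence and uniqueness of $i$ from $\s_{ds}$ to $\s$.
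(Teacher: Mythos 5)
Your proposal is correct and follows essentially the same route as the paper: both reduce to the two-element case handled by Lemma \ref{lemabeforesolo} by peeling off $\epsilon=1$ pairs chosen so as never to touch the pair containing $a_{\max}$, and then transport the distinguished $\tau_i$ back through the embeddings using the structural formula together with the observation that $\nu^{\frac{a_{\max}-1}{2}}\rho$ cannot occur in the peeled segments, plus the multiplicity-one statements. The only differences are organizational (you peel one pair at a time by induction, while the paper builds the whole chain down to $\s_1$ with $\Jord_\rho(\s_1)=\{a,a_{\max}\}$ at once) and that your first base case is vacuous, since reducibility of $\rho\rt\s_{cusp}$ forces $\Jord_\rho(\s)=\emptyset$ for strongly positive $\s$.
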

\begin{proof}
Since $\Jord_{\rho}(\s) \neq \emptyset$, in the beginning of the proof of Lemma \ref{lemabeforesolo} we have seen that $\sigma$ is non-strongly positive. Since for a strongly positive discrete series $\sigma_{sp}$ we have $\Jord_{\rho}(\sigma_{sp}) = \emptyset$, an inductive application of Theorem \ref{embed} implies that $\Jord_{\rho}(\s)$ consists of an even number of elements.

In the same way as in the proof of Theorem \ref{embed}, one can see that there is an ordered $n$-tuple of discrete series $(\s_1, \s_2, \ldots$, $\s_n)$, $\s_i \in \Irr(G_{n_i})$, such that $\s_n \cong \s$, $\s_1$ is as in the statement of the previous lemma and $\Jord_{\rho}(\s_1) = \{ a, a_{\max}\}$, and for every $i = 2, 3, \ldots, n$, there are $(a_i, \rho_i), (b_i, \rho_i) \in \Jord(\s)$ such that $a_i = (b_i)\_ \in \Jord_{\rho_i}(\s_i)$,
\begin{equation*}
\s_i \hookrightarrow \d([\nu^{-\frac{a_i -1}{2}} \rho_i, \nu^{\frac{b_i -1}{2}} \rho_i]) \rt \s_{i-1},
\end{equation*}
and
\begin{itemize}
\item if $\rho_i \cong \rho_j$ for some $j > i$ and $b_i$ is even, then $a_i > a_j$,
\item if $\rho_i \cong \rho_j$ for some $j > i$ and $b_i$ is odd, then $b_i < b_j$.
\end{itemize}
Thus, we have an embedding
\begin{equation*}
\s \hookrightarrow \d([\nu^{-\frac{a_n -1}{2}} \rho_n, \nu^{\frac{b_n -1}{2}} \rho_n]) \times \cdots \times \d([\nu^{-\frac{a_2 -1}{2}} \rho_2, \nu^{\frac{b_2 -1}{2}} \rho_2]) \rt \s_{1},
\end{equation*}
and
\begin{equation*}
\s_1 \hookrightarrow \d([\nu^{-\frac{a -1}{2}} \rho, \nu^{\frac{a_{\max} -1}{2}} \rho]) \rt \s',
\end{equation*}
where $\s'$ is a discrete series and $\Jord_{\rho}(\s') = \emptyset$.

By the previous lemma, there is a unique $j \in \{ 1, -1 \}$ such that the Jacquet module of $\s_1$ with respect to an appropriate parabolic subgroup contains an irreducible constituent of the form $\pi' \otimes \d([ \nu \rho, \nu^{\frac{a_{\max} - 1}{2}} \rho]) \otimes \tau_j$.

Similarly as in the proof of the previous lemma, we obtain that there is an $i \in \{ 1, -1 \}$ such that the Jacquet module of $\s$ with respect to an appropriate parabolic subgroup contains an irreducible constituent of the form $\pi \otimes \d([ \nu \rho, \nu^{\frac{a_{\max} - 1}{2}} \rho]) \otimes \tau_i$. Thus, there is an irreducible constituent $\pi \otimes \tau$ of $\mu^{\ast}(\s)$ such that $\mu^{\ast}(\tau) \geq \d([ \nu \rho, \nu^{\frac{a_{\max} - 1}{2}} \rho]) \otimes \tau_i$.

Using the structural formula, we obtain that there is an irreducible constituent $\pi' \otimes \tau'$ of $\mu^{\ast}(\s_1)$ such that $\tau \leq \pi'' \rt \tau'$, for some irreducible representation $\pi''$. Since $\nu^{\frac{a_{\max}-1}{2}} \rho$ appears in the cuspidal support of $\tau$ and it appears neither in the cuspidal support of $\d([\nu^{-\frac{a_k -1}{2}} \rho_k, \nu^{\frac{b_k -1}{2}} \rho_k])$, for $k = 2, \ldots, n$, nor in the cuspidal support of $\s'$ (since $\Jord_{\rho}(\sigma')=\emptyset$), we get that $\tau'$ is an irreducible subquotient of $\d([ \nu^{x} \rho, \nu^{\frac{a_{\min} - 1}{2}} \rho]) \rt \s_{cusp}$ for $x$ such that $0 \leq x \leq \frac{a_{\min} - 1}{2}$. Now the previous lemma implies that $x = 0$ and that $\tau \cong \tau'$ is an irreducible subquotient of $\d([ \rho, \nu^{\frac{a_{\min} - 1}{2}} \rho]) \rt \s_{cusp}$. The transitivity of Jacquet modules yields that the Jacquet module of $\s_1$ contains an irreducible constituent of the form $\pi' \otimes \d([ \nu \rho, \nu^{\frac{a_{\max} - 1}{2}} \rho]) \otimes \tau_i$. Now the previous lemma implies $i = j$, and the lemma is proved.
\end{proof}

\begin{definition} \label{deftwo}
Let $\s$ denote a discrete series. We additionally define $\epsilon_{\s}$ on certain elements of $\Jord(\s)$:
\begin{enumerate}[(1)]
\item Suppose that $\Jord_{\rho}(\s)$ consists of even numbers and let $a_{\min}$ denote the minimal element of $\Jord_{\rho}(\s)$. We define $\epsilon_{\s}(a_{\min}, \rho)$ so as to satisfy $\epsilon_{\s}(a_{\min}, \rho) = 1$ if $\mu^{\ast}(\s)$ contains an irreducible constituent of the form $\d([ \nu^{\frac{1}{2}} \rho$, $\nu^{\frac{a_{\min}-1}{2}} \rho]) \otimes \pi$, and let $\epsilon_{\sigma}(a_{\min}, \rho) = -1$ otherwise. Also, for $a \in \Jord_{\rho}(\s)$ such that $a\_$ is defined, let $\epsilon_{\s}(a\_, \rho) \cdot \epsilon_{\s}(a, \rho) = \epsilon_{\s}((a\_, \rho), (a, \rho))$.
\item Suppose that $\Jord_{\rho}(\s)$ consists of odd numbers and $\rho \rt \sigma_{cusp}$ reduces. We denote by $\tau_1$ and $\tau_{-1}$ the irreducible, tempered, mutually non-isomorphic subrepresentations of $\rho \rt \sigma_{cusp}$. Let $a_{\max}$ denote the maximal element of $\Jord_{\rho}$.
    Let $\epsilon_{\sigma}(a_{\max}, \rho) = i$, where $i \in \{ 1, -1 \}$ is such that the Jacquet module of $\s$ with respect to an appropriate parabolic subgroup contains an irreducible constituent of the form $\pi \otimes \d([ \nu \rho, \nu^{\frac{a_{\max} - 1}{2}} \rho]) \otimes \tau_i$. Also, for $a \in \Jord_{\rho}(\s)$ such that $a\_$ is defined, define $\epsilon_{\s}(a_{\min}, \rho)$ so as to satisfy $\epsilon_{\s}(a\_, \rho) \cdot \epsilon_{\s}(a, \rho) = \epsilon_{\s}((a\_, \rho), (a, \rho))$.
\end{enumerate}
\end{definition}
In this way, we inductively define $\epsilon_{\s}$-function on single pairs.

\begin{lemma}  \label{lemasolotreca}
Suppose that $\rho \in \Irr(GL_{n_{\rho}})$ is a cuspidal unitarizable essentially self-dual representation such that $\rho \rtimes \s_{cusp}$ reduces. Let $a_{\max}$ denote the maximal element of $\Jord_{\rho}(\s)$. Suppose that $\epsilon_{\s}(((a_{\max})\_,\rho), (a_{\max},\rho)) = 1$ and let $\s'$ stand for a discrete series such that $\s$ is a subrepresentation of
\begin{equation*}
\d([\nu^{-\frac{(a_{\max})\_-1}{2}} \rho, \nu^{\frac{a_{\max}-1}{2}} \rho]) \rt \s'.
\end{equation*}
\begin{sloppypar}
Suppose that $\Jord_{\rho}(\s') \neq \emptyset$ and let $b_{\max}$ stands for the maximal element of $\Jord_{\rho}(\s')$, then $\epsilon_{\s}(a_{\max},\rho) \cdot \epsilon_{\s}((b_{\max},\rho),((a_{\max})\_,\rho)) = \epsilon_{\s'}(b_{\max},\rho)$, i.e., $\epsilon_{\s}(b_{\max}, \rho) = \epsilon_{\s'}(b_{\max}, \rho)$.
\end{sloppypar}
\end{lemma}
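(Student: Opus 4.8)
The plan is to prove the equivalent form $\epsilon_{\s}(b_{\max}, \rho) = \epsilon_{\s_{ds}}(b_{\max}, \rho)$ of the asserted identity and to read off both solo values from Jacquet modules via Lemma \ref{lemasolo}, matching the tempered summand $\tau_i$ through the given embedding. First I would record why the two displayed identities coincide. Since $\rho \rt \s_{cusp}$ reduces, $\Jord_{\rho}(\s)$ consists of odd numbers and we are in the setting of Definition \ref{deftwo}(2); moreover $((a_{\max})\_)\_ = b_{\max}$ in $\Jord_{\rho}(\s)$, so the recursion of Definition \ref{deftwo}(2) gives
\begin{equation*}
\epsilon_{\s}(b_{\max}, \rho) = \epsilon_{\s}((a_{\max})\_, \rho) \cdot \epsilon_{\s}((b_{\max}, \rho), ((a_{\max})\_, \rho)),
\end{equation*}
while $\epsilon_{\s}((a_{\max})\_, \rho) = \epsilon_{\s}(a_{\max}, \rho)$ because $\epsilon_{\s}(((a_{\max})\_, \rho), (a_{\max}, \rho)) = 1$ by hypothesis. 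Hence $\epsilon_{\s}(a_{\max}, \rho) \cdot \epsilon_{\s}((b_{\max}, \rho), ((a_{\max})\_, \rho)) = \epsilon_{\s}(b_{\max}, \rho)$, and it suffices to establish $\epsilon_{\s}(b_{\max}, \rho) = \epsilon_{\s_{ds}}(b_{\max}, \rho)$.

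Next I would fix the common data. Passing from $\s$ to $\s_{ds}$ preserves the partial cuspidal support, so $\rho \rt (\s_{ds})_{cusp} = \rho \rt \s_{cusp} = \tau_1 + \tau_{-1}$ reduces with the same tempered summands, and $b_{\max}$ is by construction the maximal element of $\Jord_{\rho}(\s_{ds})$. Lemma \ref{lemasolo} then applies to $\s_{ds}$: there is a unique $i \in \{ 1, -1 \}$ such that the Jacquet module of $\s_{ds}$ with respect to the appropriate parabolic subgroup contains an irreducible constituent of the form $\pi_{ds} \ot \d([\nu \rho, \nu^{\frac{b_{\max} - 1}{2}} \rho]) \ot \tau_i$, and by Definition \ref{deftwo}(2) this $i$ equals $\epsilon_{\s_{ds}}(b_{\max}, \rho)$.

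For the $\s$-side I would unfold the recursion and detect its two factors. The factor $\epsilon_{\s}(a_{\max}, \rho)$ is detected by Lemma \ref{lemasolo} applied to $\s$ itself (here $a_{\max}$ really is the maximal element of $\Jord_{\rho}(\s)$), giving a tempered piece $\tau_{i_0}$ at the segment $\d([\nu \rho, \nu^{\frac{a_{\max} - 1}{2}} \rho])$; the pair factor $\epsilon_{\s}((b_{\max}, \rho), ((a_{\max})\_, \rho))$ is detected by Lemma \ref{lemaequiv}. Using the embedding $\s \hookrightarrow \d([\nu^{-\frac{(a_{\max})\_ - 1}{2}} \rho, \nu^{\frac{a_{\max} - 1}{2}} \rho]) \rt \s_{ds}$ together with Frobenius reciprocity, transitivity of Jacquet modules and the structural formula (Theorem \ref{osn}), I would transfer the constituent $\pi_{ds} \ot \d([\nu \rho, \nu^{\frac{b_{\max} - 1}{2}} \rho]) \ot \tau_i$ of $\s_{ds}$ to one of $\mu^{\ast}(\s)$ carrying the same tempered summand $\tau_i$, and then verify that the product of the two detected factors of the $\s$-side recursion reproduces exactly this $\tau_i$, so that $\epsilon_{\s}(b_{\max}, \rho) = i = \epsilon_{\s_{ds}}(b_{\max}, \rho)$.

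The main obstacle is the structural-formula bookkeeping that guarantees the invariance of the tempered core. One must show that inserting the segment $\d([\nu^{-\frac{(a_{\max})\_ - 1}{2}} \rho, \nu^{\frac{a_{\max} - 1}{2}} \rho])$, whose exponents only involve the blocks $(a_{\max})\_$ and $a_{\max}$ lying strictly above $b_{\max}$, contributes solely to the general linear part and leaves the attachment of $\tau_i$ at the level $\d([\nu \rho, \nu^{\frac{b_{\max} - 1}{2}} \rho])$ unchanged. Concretely I expect to isolate the relevant constituent, prove it occurs with multiplicity one, and rule out competing contributions: those coming from the larger blocks are excluded using Lemma \ref{remarkprva} and the description of $\Jord_{\rho}(\s_{ds})$, while the constituents built from segments $\d([\nu^{x} \rho, \nu^{\frac{a_{\max} - 1}{2}} \rho]) \rt \s_{cusp}$ with $x > 0$ are excluded by the last assertion of Lemma \ref{lemabeforesolo}. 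Establishing this invariance of $\tau_i$, rather than the accompanying index arithmetic, is the crux of the argument.
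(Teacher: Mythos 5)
Your opening reduction is correct: since $\Jord_{\rho}(\sigma)=\Jord_{\rho}(\sigma_{ds})\cup\{(a_{\max})\_,a_{\max}\}$, one indeed has $((a_{\max})\_)\_=b_{\max}$ in $\Jord_{\rho}(\sigma)$, and Definition \ref{deftwo}(2) together with the hypothesis $\epsilon_{\sigma}(((a_{\max})\_,\rho),(a_{\max},\rho))=1$ gives the equivalence of the two displayed identities. Reading off $\epsilon_{\sigma_{ds}}(b_{\max},\rho)$ from Lemma \ref{lemasolo} applied to $\sigma_{ds}$ is also fine, as is detecting $\epsilon_{\sigma}(a_{\max},\rho)$ at the level of the maximal element of $\Jord_{\rho}(\sigma)$. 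The gap is in your closing step. The value $\epsilon_{\sigma}(b_{\max},\rho)$ is \emph{defined} only through the recursion descending from $a_{\max}$; neither the paper nor your argument establishes a characterization of it as ``the index $i$ such that the Jacquet module of $\sigma$ contains $\pi\otimes\delta([\nu\rho,\nu^{\frac{b_{\max}-1}{2}}\rho])\otimes\tau_{i}$,'' and Lemma \ref{lemasolo} gives uniqueness of the tempered summand only at the maximal level. Hence transferring the constituent $\pi_{ds}\otimes\delta([\nu\rho,\nu^{\frac{b_{\max}-1}{2}}\rho])\otimes\tau_{i}$ from $\sigma_{ds}$ into a Jacquet module of $\sigma$, even with complete multiplicity-one bookkeeping, says nothing about $\epsilon_{\sigma}(b_{\max},\rho)$. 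Your phrase ``verify that the product of the two detected factors reproduces exactly this $\tau_{i}$'' is exactly the assertion to be proved, and the ``invariance of the tempered core'' you single out as the crux never involves the pair value $\epsilon_{\sigma}((b_{\max},\rho),((a_{\max})\_,\rho))$, so it cannot produce an identity that genuinely depends on it: when that pair value is $-1$ the tempered summands detected at levels $a_{\max}$ and $b_{\max}$ must be opposite, which a pure ``the top segment only touches the general linear part'' argument can never see.

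The missing idea is the mechanism by which the pair value enters, and in the paper this is a re-pairing of Jordan blocks. One descends to an intermediate discrete series $\sigma_{2}$ with $\Jord_{\rho}(\sigma_{2})=\{c,b_{\max},(a_{\max})\_,a_{\max}\}$, where $c=(b_{\max})\_$, and when $\epsilon((b_{\max},\rho),((a_{\max})\_,\rho))=1$ one exploits the resulting embedding of $\sigma_{2}$ into $\delta([\nu^{-\frac{b_{\max}-1}{2}}\rho,\nu^{\frac{(a_{\max})\_-1}{2}}\rho])\rtimes(\cdots)$, which re-pairs $c$ with $a_{\max}$. A multiplicity-one Jacquet module constituent of the form
\begin{equation*}
\cdots\otimes\delta([\nu^{-\frac{b_{\max}-1}{2}}\rho,\nu^{\frac{(a_{\max})\_-1}{2}}\rho])\otimes\delta([\nu\rho,\nu^{\frac{c-1}{2}}\rho])\otimes\delta([\nu\rho,\nu^{\frac{a_{\max}-1}{2}}\rho])\otimes\tau_{i}
\end{equation*}
then carries the tempered summand $\tau_{i}$ detected at level $a_{\max}$ down to the level $b_{\max}$ inside $\sigma_{ds}$, forcing $i=\epsilon_{\sigma_{ds}}(b_{\max},\rho)$; the opposite subrepresentation is handled by the uniqueness statements. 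Without this step, or an equivalent device linking the two levels through the pair $\epsilon$-value, your argument does not close.
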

\begin{proof}
In the proof of Lemma \ref{lemasolo} we have seen that $\Jord_{\rho}(\s)$ consists of an even number of odd positive integers. We note that we have fixed a choice of irreducible tempered mutually non-isomorphic representations $\tau_1$ and $\tau_{-1}$ such that $\rho \rt \s_{cusp} = \tau_1 + \tau_{-1}$.

Let $\pi$ denote a discrete series subrepresentation of the induced representation $\d([\nu^{-\frac{(a_{\max})\_-1}{2}} \rho$, $\nu^{\frac{a_{\max}-1}{2}} \rho]) \rt \s'$. First, using a repeated application of Lemma \ref{lemaulaganje} for irreducible self-dual cuspidal representations non-isomorphic to $\rho$ whose twists appear in the cuspidal support of $\pi$ we obtain an ordered $k$-tuple $(\s'_1, \ldots, \s'_k)$ of discrete series in $R(G)$ such that $\pi \cong \sigma'_k$ and for every $i = 2, \ldots, k$ there is an irreducible representation $\d'_i \in R(GL)$ such that $\s'_{i} \hookrightarrow \d'_i \rtimes \s'_{i-1}$, and $\mu^{\ast}(\s'_i)$ contains $\d'_i \otimes \s'_{i-1}$ with multiplicity one. Here, note that for an irreducible essentially self-dual cuspidal representation $\rho'$, $\rho' \not\cong \rho$, we first use Lemma \ref{lemaulaganje}(1) as many times as possible, and then we use Lemma \ref{lemaulaganje}(2). Consequently, in such a way we end with a discrete series $\s'_1$ whose cuspidal support contains only twists of $\rho$ and the partial cuspidal support $\s_{cusp}$.

We further apply Lemma \ref{lemaulaganje} several times, together with Theorem \ref{embed}, so that there is an ordered $m$-tuple $(\s_1, \ldots, \s_m)$ of discrete series in $R(G)$ such that $\pi \cong \sigma_m$ and for every $i = 2, \ldots, m$ there is an irreducible representation $\d_i \in R(GL)$ such that $\s_{i} \hookrightarrow \d_i \rtimes \s_{i-1}$, $\mu^{\ast}(\s_i)$ contains $\d_i \otimes \s_{i-1}$ with multiplicity one, $\s_1$ is a subrepresentation of $\d([\nu^{-\frac{(a_{\max})\_-1}{2}} \rho, \nu^{\frac{a_{\max}-1}{2}} \rho]) \rt \s_{cusp}$ and $\d_2 \cong \d([\nu^{-\frac{c-1}{2}} \rho, \nu^{\frac{b_{\max}-1}{2}} \rho])$, for $c$ such that in $\Jord_{\rho}(\s_2)$ we have $(b_{\max})\_ = c$. Note that we can take $\sigma_j \cong \sigma'_{j+k-m}$ and $\delta_j \cong \delta'_{j+k-m}$ for $j = m-k+1 \ldots, m$. Also, if $\nu^{x} \rho$ appears in the cuspidal support of some $\d_i$ for $i \in \{ 2, \ldots, m \}$, then $- \frac{(a_{\max})\_-3}{2} \leq x \leq \frac{(a_{\max})\_-3}{2}$. We note that this also implies that the Jacquet module of $\s'$ with respect to the appropriate parabolic subgroup contains
\begin{equation} \label{jmcetvrti}
\d_{m} \otimes \d_{m-1} \otimes \cdots \otimes \d_3 \otimes \s_2'',
\end{equation}
where $\s_2''$ is an irreducible subrepresentation of $\d_2 \rtimes \s_{cusp}$.

Lemma \ref{lemabeforesolo} and the structural formula imply that there is an $i \in \{ 1, -1 \}$ such  that the Jacquet module of $\sigma_1$ with respect to the appropriate parabolic subgroup contains $\d([\nu \rho, \nu^{\frac{(a_{\max})\_-1}{2}} \rho]) \otimes \d([\nu \rho, \nu^{\frac{a_{\max}-1}{2}} \rho]) \otimes \tau_i$. 

Thus, the Jacquet module of $\pi$ with respect to the appropriate parabolic subgroup contains
\begin{equation} \label{jmprvi}
\d_{m} \otimes \d_{m-1} \otimes \cdots \otimes \d_2 \otimes \d([\nu \rho, \nu^{\frac{(a_{\max})\_-1}{2}} \rho]) \otimes \d([\nu \rho, \nu^{\frac{a_{\max}-1}{2}} \rho]) \otimes \tau_i.
\end{equation}
It can be easily seen that the multiplicity of (\ref{jmprvi}) in the Jacquet module of $\d([\nu^{-\frac{(a_{\max})\_-1}{2}} \rho, \nu^{\frac{a_{\max}-1}{2}} \rho]) \rt \s'$ with respect to the appropriate parabolic subgroup equals the multiplicity of $\d([\nu \rho, \nu^{\frac{(a_{\max})\_-1}{2}} \rho]) \otimes \d([\nu \rho, \nu^{\frac{a_{\max}-1}{2}} \rho]) \otimes \tau_i$ in the Jacquet module of $\d([\nu^{-\frac{(a_{\max})\_-1}{2}} \rho, \nu^{\frac{a_{\max}-1}{2}} \rho]) \rt \s_{cusp}$ with respect to the appropriate parabolic subgroup, and it is a direct consequence of the structural formula that such a multiplicity equals one.

This proves the first claim of the proof:
\begin{sloppypar}
\begin{itemize}
    \item Claim $1$: For every discrete series subrepresentation of $\d([\nu^{-\frac{(a_{\max})\_-1}{2}} \rho$, $\nu^{\frac{a_{\max}-1}{2}} \rho]) \rt \s'$ there is a unique $i \in \{ 1, - 1 \}$ such that its Jacquet module with respect to the appropriate parabolic subgroup contains (\ref{jmprvi}).
\end{itemize}
\end{sloppypar}
We denote by $\pi_i$ a unique discrete series subrepresentation of $\d([\nu^{-\frac{(a_{\max})\_-1}{2}} \rho$, $\nu^{\frac{a_{\max}-1}{2}} \rho]) \rt \s'$ whose Jacquet module with respect to the appropriate parabolic subgroup contains (\ref{jmprvi}). Note that the transitivity of Jacquet modules implies $\epsilon_{\pi_i}(a_{\max}, \rho) = i$.

Let $i_1 \in \{ 1, -1 \}$ be such that $\pi_{i_1}$ is an irreducible subrepresentation of the induced representation $\d([\nu^{-\frac{(a_{\max})\_-1}{2}} \rho, \nu^{\frac{a_{\max}-1}{2}} \rho]) \rt \s'$ such that 
\begin{equation*}
\epsilon_{\pi_{i_1}}((b_{\max},\rho),((a_{\max})\_,\rho)) = 1.     
\end{equation*}
From Claims $1$, $2$ and $4$ from the proof of Theorem \ref{prepairemain} follows that 
\begin{equation*}
\epsilon_{\pi_{-i_1}}((b_{\max},\rho),((a_{\max})\_,\rho)) = -1.     
\end{equation*}

\begin{itemize}
    \item Claim $2$: $\epsilon_{\pi_{i_1}}(a_{\max}, \rho) = \epsilon_{\s'}(b_{\max}, \rho)$.
\end{itemize}
Let us prove the Claim $2$. For simplicity of the notation, let $j = \epsilon_{\s'}(b_{\max}, \rho)$.

To $\pi_{i_1}$ we attach an ordered $m$-tuple $(\sigma_1, \sigma_2, \ldots, \sigma_m)$, as in the first part of the proof. Using the construction provided in the proof of the previous lemma, we obtain $\epsilon_{\s_2}((b_{\max},\rho),((a_{\max})\_,\rho)) = 1$ and deduce that the Jacquet module of $\d([\nu^{-\frac{(a_{\max})\_-1}{2}} \rho, \nu^{\frac{a_{\max}-1}{2}} \rho]) \rt \s'$ with respect to the appropriate parabolic subgroup contains
\begin{equation*}
\d_{m} \otimes \cdots \otimes \d_3 \otimes \d([\nu^{- \frac{b_{\max} - 1}{2}} \rho, \nu^{\frac{(a_{\max})\_-1}{2}} \rho]) \otimes \d([\nu \rho, \nu^{\frac{c-1}{2}} \rho]) \otimes \d([\nu \rho, \nu^{\frac{a_{\max}-1}{2}} \rho]) \otimes \tau_{i_1}.
\end{equation*}
Using (\ref{jmcetvrti}) and the definition of the ordered $m$-tuple $(\s_1, \ldots, \s_m)$, we get that
\begin{equation*}
\d([\nu^{- \frac{b_{\max} - 1}{2}} \rho, \nu^{\frac{(a_{\max})\_-1}{2}} \rho]) \otimes \d([\nu \rho, \nu^{\frac{c-1}{2}} \rho]) \otimes \d([\nu \rho, \nu^{\frac{a_{\max}-1}{2}} \rho]) \otimes \tau_{i_1}
\end{equation*}
is contained in the Jacquet module of $\d([\nu^{-\frac{(a_{\max})\_-1}{2}} \rho, \nu^{\frac{a_{\max}-1}{2}} \rho]) \rt \s_{2}''$ with respect to the appropriate parabolic subgroup. Consequently, $\mu^{\ast}(\d([\nu^{-\frac{(a_{\max})\_-1}{2}} \rho$, $\nu^{\frac{a_{\max}-1}{2}} \rho]) \rt \s_{2}'')$ contains an irreducible constituent of the form $\d([\nu^{- \frac{b_{\max} - 1}{2}} \rho$, $\nu^{\frac{(a_{\max})\_-1}{2}} \rho]) \otimes \pi'$, for $\pi'$ such that the Jacquet module of $\pi'$ with respect to the appropriate parabolic subgroup contains $\d([\nu \rho, \nu^{\frac{c-1}{2}} \rho]) \otimes \d([\nu \rho, \nu^{\frac{a_{\max}-1}{2}} \rho]) \otimes \tau_{i_1}$. The structural formula implies that there are $l_1, l_2$, $-\frac{(a_{\max})\_+1}{2} \leq l_1 \leq l_2 \leq \frac{a_{\max}-1}{2}$, and an irreducible constituent $\d \otimes \pi''$ of $\mu^{\ast}(\s_2'')$ such that
\begin{align*}
\d([\nu^{- \frac{b_{\max} - 1}{2}} \rho, \nu^{\frac{(a_{\max})\_-1}{2}} \rho]) & \leq  \d([\nu^{-l_1} \rho, \nu^{\frac{(a_{\max})\_-1}{2}} \rho]) \times \d([\nu^{l_2 + 1} \rho, \nu^{\frac{a_{\max}-1}{2}} \rho]) \times \d
\intertext{and}
\pi' & \leq \d([\nu^{l_1 + 1} \rho, \nu^{l_2} \rho]) \rtimes \pi''.
\end{align*}
\begin{sloppypar}
Obviously, $l_2 = \frac{a_{\max}-1}{2}$. Since $\s_2''$ is a discrete series subrepresentation of $\d([\nu^{-\frac{c-1}{2}} \rho, \nu^{\frac{b_{\max}-1}{2}} \rho]) \rtimes \s_{cusp}$, we deduce at once that $\nu^{- \frac{b_{\max} - 1}{2}} \rho$ is not contained in the cuspidal support of $\d$. Consequently,
$l_1 = \frac{b_{\max} - 1}{2}$ and $\pi'' \cong \s_2''$. It follows that
\end{sloppypar}
\begin{equation*}
\d([\nu \rho, \nu^{\frac{c-1}{2}} \rho]) \otimes \d([\nu \rho, \nu^{\frac{a_{\max}-1}{2}} \rho]) \otimes \tau_{i_1}
\end{equation*}
is contained in the Jacquet module of $\d([\nu^{\frac{b_{\max} + 1}{2}} \rho, \nu^{\frac{a_{\max}-1}{2}} \rho]) \rt \s_{2}''$ with respect to the appropriate parabolic subgroup. From the cuspidal support of $\s_2''$ and (\ref{jmcetvrti}), we deduce that the Jacquet module of $\s'$ with respect to the appropriate parabolic subgroup contains
\begin{equation*}
\d_{m-1} \otimes \d_{m-2} \otimes \cdots \otimes \d_3 \otimes \d([\nu \rho, \nu^{\frac{c-1}{2}} \rho]) \otimes \d([\nu \rho, \nu^{\frac{b_{\max}-1}{2}} \rho]) \otimes \tau_{i_1}.
\end{equation*}
Now the transitivity of Jacquet modules yields $i_1 = j$, and the Claim $2$ is proven. 

Consequently, $\epsilon_{\pi_{-i_1}}(a_{\max}, \rho) = -\epsilon_{\s'}(b_{\max}, \rho)$, and for a discrete series subrepresentation $\s$ of $\d([\nu^{-\frac{(a_{\max})\_-1}{2}} \rho$, $\nu^{\frac{a_{\max}-1}{2}} \rho]) \rt \s'$ we have 
\begin{equation*}
\epsilon_{\s}(a_{\max}, \rho) \cdot \epsilon_{\s}((b_{\max},\rho),((a_{\max})\_,\rho)) = \epsilon_{\s'}(b_{\max}, \rho).     
\end{equation*}
This finishes the proof.
\end{proof}

From the proof of Lemma \ref{lemasolotreca}, one also obtains the following result:

\begin{corollary} \label{cornonhalf}
Suppose that $\rho \in \Irr(GL_{n_{\rho}})$ is a cuspidal unitarizable essentially self-dual representation such that $\rho \rt \s_{cusp}$ reduces and let us write $\rho \rt \s_{cusp} = \tau_1 + \tau_{-1}$, where representations $\tau_1$ and $\tau_{-1}$ are irreducible tempered and mutually non-isomorphic. Let $\s'$ denote a discrete series such that $\Jord_{\rho}(\s') = \emptyset$. For odd positive integers $a$ and $b$, such that $a < b$, and $i \in \{ 1, -1 \}$ there is a unique irreducible discrete series subrepresentation $\s$ of
\begin{equation*}
\d([\nu^{-\frac{a-1}{2}} \rho, \nu^{\frac{b-1}{2}} \rho]) \rt \s'
\end{equation*}
such that $\epsilon_{\s}(b, \rho) = i$.
\end{corollary}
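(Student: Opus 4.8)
The plan is to run the argument of Lemma~\ref{lemasolotreca} in the degenerate situation $\Jord_{\rho}(\s_{ds}) = \emptyset$, where the inductive tower collapses to the single inducing segment. Write $\rho \rt \s_{cusp} = \tau_1 + \tau_{-1}$ as in the statement, $\s_{cusp}$ being the common partial cuspidal support of $\s_{ds}$ and of any subrepresentation of $\d([\nu^{-\frac{a-1}{2}} \rho, \nu^{\frac{b-1}{2}} \rho]) \rt \s_{ds}$. Since $\Jord_{\rho}(\s_{ds}) = \emptyset$, the hypotheses of Lemma~\ref{comser} hold vacuously for $\s_{ds}$, so that induced representation contains exactly two mutually non-isomorphic irreducible subrepresentations. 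Arguing exactly as in the square-integrability part of the proof of Theorem~\ref{dissub} (no tempered subquotient fails to be square-integrable, and the only non-tempered subquotient is the Langlands quotient, which is not a subrepresentation), both subrepresentations are discrete series; denote them by $\s^{(1)}$ and $\s^{(2)}$. By Proposition~\ref{propjord} each satisfies $\Jord_{\rho}(\s^{(j)}) = \{ a, b \}$, so $b$ is the maximal element of $\Jord_{\rho}(\s^{(j)})$ and, by Lemma~\ref{lemasolo}, the invariant $\epsilon_{\s^{(j)}}(b, \rho) \in \{1, -1\}$ of Definition~\ref{deftwo}(2) is well defined. It therefore suffices to prove that $\s^{(1)}$ and $\s^{(2)}$ carry distinct values.

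Next I would produce the distinguishing Jacquet module constituent. As $\Jord_{\rho}(\s_{ds}) = \emptyset$, no twist of $\rho$ occurs in the cuspidal support of $\s_{ds}$, so $\s_{ds} \h \pi_2 \rt \s_{cusp}$ for an irreducible $\pi_2$ whose cuspidal support contains no twist of $\rho$; in particular $\pi_2$ can be moved past every segment in $\rho$. Pushing $\pi_2$ across the inducing segment and folding its negative part against $\s_{cusp}$ exactly as in the proof of Lemma~\ref{lemabeforesolo} (using $\d([\nu^{x}\rho,\nu^{y}\rho]) \rt \s_{cusp} \cong \d([\nu^{-y}\rho,\nu^{-x}\rho]) \rt \s_{cusp}$ for $x>0$), one obtains
\[
\s^{(j)} \h \pi_2 \ti \d([\nu \rho, \nu^{\frac{a-1}{2}} \rho]) \ti \d([\nu \rho, \nu^{\frac{b-1}{2}} \rho]) \ti \rho \rt \s_{cusp}.
\]
Setting $\pi = \pi_2 \ti \d([\nu \rho, \nu^{\frac{a-1}{2}} \rho])$ (irreducible), Frobenius reciprocity shows that the Jacquet module of $\s^{(j)}$ with respect to the appropriate parabolic subgroup contains $\pi \ot \d([\nu \rho, \nu^{\frac{b-1}{2}} \rho]) \ot \tau_{i_j}$, where $i_j = \epsilon_{\s^{(j)}}(b, \rho)$.

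The technical heart, and the step I expect to be the main obstacle, is to show that for each fixed $i \in \{1, -1\}$ the constituent $\pi \ot \d([\nu \rho, \nu^{\frac{b-1}{2}} \rho]) \ot \tau_i$ occurs in the Jacquet module of $\d([\nu^{-\frac{a-1}{2}} \rho, \nu^{\frac{b-1}{2}} \rho]) \rt \s_{ds}$ with multiplicity exactly one. This is carried out precisely as in the first part of the proof of Lemma~\ref{lemasolotreca}: one applies the structural formula together with transitivity of Jacquet modules, then uses that $\mus(\s_{ds})$ carries no twist of $\rho$ and that $\mus(\s_{cusp}) = 1 \ot \s_{cusp}$ to pin down the summation indices, so that the only way to realize $\d([\nu \rho, \nu^{\frac{b-1}{2}} \rho]) \ot \tau_i$ in the last two tensor slots forces the first slot to be $\pi$; combined with the fact that $\d([\rho, \nu^{\frac{b-1}{2}} \rho]) \ot \tau_i$ appears with multiplicity one in the Jacquet module of $\d([\nu \rho, \nu^{\frac{b-1}{2}} \rho]) \rt \s_{cusp}$, this yields multiplicity one. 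The bookkeeping with the folded negative segment and the commuting factor $\pi_2$ is where the care is required.

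Finally I would conclude by a counting argument. By Lemma~\ref{lemasolo} each $\s^{(j)}$ contains $\pi' \ot \d([\nu \rho, \nu^{\frac{b-1}{2}} \rho]) \ot \tau_{i}$ for a unique $i = i_j$, and by the multiplicity computation this forces $\pi' \cong \pi$. If we had $i_1 = i_2 = i$, then, $\s^{(1)}$ and $\s^{(2)}$ being distinct irreducible subrepresentations, the constituent $\pi \ot \d([\nu \rho, \nu^{\frac{b-1}{2}} \rho]) \ot \tau_i$ would occur in the Jacquet module of $\d([\nu^{-\frac{a-1}{2}} \rho, \nu^{\frac{b-1}{2}} \rho]) \rt \s_{ds}$ with multiplicity at least two, contradicting the previous step. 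Hence $\{ i_1, i_2 \} = \{ 1, -1 \}$, and since by Lemma~\ref{comser} these are the only irreducible (hence the only discrete series) subrepresentations, for each $i \in \{1, -1\}$ there is exactly one discrete series subrepresentation $\s$ of $\d([\nu^{-\frac{a-1}{2}} \rho, \nu^{\frac{b-1}{2}} \rho]) \rt \s_{ds}$ with $\epsilon_{\s}(b, \rho) = i$, as claimed.
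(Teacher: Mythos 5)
Your proposal is correct and follows essentially the same route as the paper, which obtains the corollary by running the first part of the proof of Lemma \ref{lemasolotreca} (itself resting on the embedding and multiplicity-one computations of Lemma \ref{lemabeforesolo}) in the degenerate case $\Jord_{\rho}(\s_{ds}) = \emptyset$: one shows that the distinguishing constituent ending in $\d([\nu \rho, \nu^{\frac{b-1}{2}} \rho]) \otimes \tau_i$ occurs with multiplicity one in the Jacquet module of the induced representation, so the two discrete series subrepresentations must carry opposite values of $\epsilon(b,\rho)$. Your reduction of the multiplicity count to $\pi_2 \otimes \s_{cusp}$ in $\mu^{\ast}(\s_{ds})$ is exactly the peeling argument the paper performs via the tower of Lemma \ref{lemaulaganje}, so no new ideas are needed.
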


In the same way as in the proof of Lemma \ref{lemaprva} we obtain the following result. 

\begin{lemma} \label{lemaembed}
Suppose that $\Jord_{\rho}(\s)$ consists of even numbers and let $a_{\min}$ denote the minimal element of $\Jord_{\rho}(\s)$. Then $\epsilon_{\s}(a_{\min}, \rho) = 1$ if and only if $\s$ is a subrepresentation of an induced representation of the form $\d([ \nu^{\frac{1}{2}} \rho, \nu^{\frac{a_{\min}-1}{2}} \rho]) \rtimes \pi$, for an irreducible representation $\pi$.
\end{lemma}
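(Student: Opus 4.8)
The plan is to prove the two implications separately, reducing the substantive direction to Lemma~\ref{lemaprva}. The reverse implication is immediate: if $\s$ is a subrepresentation of $\d([\nu^{\frac12}\rho,\nu^{\frac{a_{\min}-1}{2}}\rho])\rt\pi$, then by Frobenius reciprocity the Jacquet module of $\s$ surjects onto $\d([\nu^{\frac12}\rho,\nu^{\frac{a_{\min}-1}{2}}\rho])\ot\pi$, so $\mus(\s)$ contains this constituent and hence $\epsilon_{\s}(a_{\min},\rho)=1$ by Definition~\ref{deftwo}. For the forward implication, assume $\epsilon_{\s}(a_{\min},\rho)=1$, which by Definition~\ref{deftwo} means precisely that $\mus(\s)$ contains an irreducible constituent of the form $\d([\nu^{\frac12}\rho,\nu^{\frac{a_{\min}-1}{2}}\rho])\ot\pi$. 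I would then obtain the desired embedding from Lemma~\ref{lemaprva}, applied with $a=\frac12$ and $b=\frac{a_{\min}-1}{2}$; for this it remains only to verify the hypothesis of that lemma, namely that $\mus(\s)$ contains no irreducible constituent of the form $\nu^{x}\rho\ot\pi_1$ with $\frac12\le x<\frac{a_{\min}-1}{2}$.

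This non-appearance is the heart of the matter. Since $\Jord_{\rho}(\s)$ consists of even integers, the reducibility point of $\nu^{s}\rho\rt\s_{cusp}$ is half-integral, so every exponent of $\rho$ occurring in the cuspidal support of $\s$ lies in $\frac12+\mathbb{Z}$; in particular any $x$ in the forbidden range is a half-integer, whence $2x+1$ is an even integer with $2\le 2x+1\le a_{\min}-2$. Because $a_{\min}$ is by definition the minimal element of $\Jord_{\rho}(\s)$, none of these values belongs to $\Jord_{\rho}(\s)$. It therefore suffices to show that an occurrence of $\nu^{x}\rho\ot\pi_1$ in $\mus(\s)$ with $x>0$ forces $(2x+1,\rho)\in\Jord(\s)$: this contradiction eliminates every forbidden constituent and lets Lemma~\ref{lemaprva} finish the argument. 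To establish this implication I would argue as in Lemma~\ref{remarkprva}. For the maximal leftmost exponent a standard maximality argument shows that the corresponding constituent splits off as a quotient of the Jacquet module, so Frobenius reciprocity yields an embedding $\s\h\nu^{x}\rho\rt\s'$ and Lemma~\ref{remarkprva} then gives $(2x+1,\rho)\in\Jord(\s)$; the remaining, non-maximal exponents are handled by downward induction, peeling off the top factor and invoking the square-integrability criterion, exactly as in the proof of \cite[Theorem~3.5]{Matic5}, to keep the leftmost exponents under control.

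The main obstacle is precisely this last implication: upgrading the bare appearance of an intermediate constituent $\nu^{x}\rho\ot\pi_1$ in the semisimplified Jacquet module to the structural conclusion $(2x+1,\rho)\in\Jord(\s)$, since membership in the semisimplification is a priori weaker than being a genuine quotient, and so Lemma~\ref{remarkprva} cannot be applied to each exponent directly. Once the non-appearance of forbidden constituents is secured, everything else is routine, the two surrounding steps being Frobenius reciprocity in the easy direction and the final appeal to Lemma~\ref{lemaprva}.
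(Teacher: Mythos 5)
Your overall route is the right one and is essentially what the paper intends: the paper gives no written proof, only the remark that the lemma ``can be proved in the same way as \cite[Lemma~3.4]{Matic10}'', which is the same result its Lemma~\ref{lemaprva} is modelled on, and your forward implication is exactly a reduction to Lemma~\ref{lemaprva}. The backward implication is indeed just Frobenius reciprocity together with Definition~\ref{deftwo}, and for the forward implication you correctly identify that the only thing to check is the hypothesis of Lemma~\ref{lemaprva}, namely that no constituent $\nu^{x}\rho\otimes\pi_1$ with $\frac12\le x<\frac{a_{\min}-1}{2}$ occurs in $\mu^{\ast}(\s)$; deriving this from the minimality of $a_{\min}$ via the implication ``$\nu^{x}\rho\otimes\pi_1\le\mu^{\ast}(\s)$ with $x>0$ forces $(2x+1,\rho)\in\Jord(\s)$'' is the right idea. (Your parity discussion is superfluous: $2x+1<a_{\min}$ already contradicts minimality whatever the parity of $2x+1$.)

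The one step that is not sound as written is precisely the one you flag as the main obstacle: upgrading the occurrence of $\nu^{x}\rho\otimes\pi_1$ in the semisimplified Jacquet module to an embedding $\s\h\nu^{x}\rho\rt\pi'$ so that Lemma~\ref{remarkprva} applies. Your proposed mechanism --- a ``maximality argument'' for the largest exponent followed by a ``downward induction peeling off the top factor'' --- is not the right tool, and it is unclear how that induction would run. No induction is needed: since $\nu^{x}\rho$ is cuspidal, the constituents of $r_{(n_{\rho})}(\s)$ whose $GL_{n_{\rho}}$-factor has cuspidal support $\{\nu^{x}\rho\}$ form a direct summand (decompose by the central character of the $GL_{n_{\rho}}$-factor, or by its cuspidal support), so if this summand is nonzero it admits an irreducible quotient of the form $\nu^{x}\rho\otimes\pi'$, and Frobenius reciprocity yields the embedding; equivalently, pass to the cuspidal Jacquet module and invoke \cite[Lemma~3.1]{MT1}, exactly as is done in the proof of Lemma~\ref{lemaprva}. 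With that substitution in place of your maximality/induction step, the proof is complete and agrees with the paper's intended argument.
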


\begin{lemma} \label{lemahalf}
Suppose that $\rho \in \Irr(GL_{n_{\rho}})$ is a cuspidal unitarizable essentially self-dual representation such that $\nu^{\frac{1}{2}} \rho \rt \s_{cusp}$ reduces. Let $\s'$ denote a discrete series such that $\Jord_{\rho}(\s') = \emptyset$. For positive half-integers $a$ and $b$, such that $a < b$, there is a unique irreducible subrepresentation of
\begin{equation*}
\d([\nu^{-a} \rho, \nu^{b} \rho]) \rt \s'
\end{equation*}
which contains an irreducible constituent of the form $\d([\nu^{\frac{1}{2}} \rho, \nu^{a} \rho]) \otimes \pi$ in its Jacquet module with respect to the appropriate parabolic subgroup.
\end{lemma}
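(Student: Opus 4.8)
The plan is to reduce the statement to a multiplicity-one computation in $\mu^{\ast}$ of the ambient induced representation, carried out with the structural formula, and then to separate the ``good'' Jacquet constituent from the ``bad'' one by the square-integrability criterion. Write $M = \d([\nu^{-a}\rho, \nu^{b}\rho]) \rt \s_{ds}$ and $N = \d([\nu^{\frac{1}{2}}\rho, \nu^{b}\rho]) \rt \s_{ds}$. First I would record the bookkeeping: for any irreducible subrepresentation $\s$ of $M$, Proposition \ref{propjord} gives $\Jord_\rho(\s) = \{2a+1, 2b+1\}$ (even integers) and $\epsilon_\s((2a+1,\rho),(2b+1,\rho)) = 1$, while Definition \ref{deftwo}(1) together with Lemma \ref{lemaembed} identifies the asserted property (that $\mu^{\ast}(\s)$ contains a constituent $\d([\nu^{\frac{1}{2}}\rho,\nu^{a}\rho]) \ot \pi$) with the condition $\epsilon_\s(2a+1, \rho) = 1$. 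Since $\Jord_\rho(\s_{ds}) = \emptyset$, the argument of Theorem \ref{dissub} applies to $M$, so $M$ has exactly two irreducible subrepresentations, both square-integrable and non-isomorphic, and every square-integrable subquotient of $M$ is one of them.

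Next I would compute, via the structural formula (Theorem \ref{osn}), all constituents of $\mu^{\ast}(M)$ of the form $\d([\nu^{\frac{1}{2}}\rho,\nu^{a}\rho]) \ot \pi$. Because $\nu^{b}\rho$ must not appear in the $GL$-part and no twist of $\rho$ occurs in the cuspidal support of $\s_{ds}$, only the summand with $i = -\frac{1}{2}$, $j = b$ survives, giving exactly $\d([\nu^{\frac{1}{2}}\rho,\nu^{a}\rho]) \ot \bigl(\d([\nu^{\frac{1}{2}}\rho,\nu^{b}\rho]) \rt \s_{ds}\bigr)$; thus such constituents are precisely $\d([\nu^{\frac{1}{2}}\rho,\nu^{a}\rho]) \ot \pi$ with $\pi$ a subquotient of $N$, each with the multiplicity of $\pi$ in $N$. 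The same computation shows $\mu^{\ast}(M)$ contains no constituent $\nu^{x}\rho \ot (\cdot)$ with $\frac{1}{2} \le x < a$. By the classification of strongly positive discrete series (\cite{Kim1}, \cite{Matic4}), $N$ has a unique irreducible subrepresentation $\pi_0$, a strongly positive discrete series with $\Jord_\rho(\pi_0) = \{2b+1\}$, appearing with multiplicity one and being the unique square-integrable subquotient of $N$; hence $\d([\nu^{\frac{1}{2}}\rho,\nu^{a}\rho]) \ot \pi_0$ occurs in $\mu^{\ast}(M)$ with multiplicity one.

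The heart of the proof is to show that only the constituent with $\pi = \pi_0$ can lie in the Jacquet module of a square-integrable subquotient. Given an irreducible subrepresentation $\s$ of $M$ with the property, the absence of $\nu^{x}\rho \ot(\cdot)$ for $\frac{1}{2} \le x < a$ lets me invoke Lemma \ref{lemaprva} to obtain $\s \h \d([\nu^{\frac{1}{2}}\rho,\nu^{a}\rho]) \rt \pi$ with $\pi$ a subquotient of $N$. If $\pi$ were not $\pi_0$, then $\pi$ would be a non-tempered subquotient of $N$, so $\mu^{\ast}(\pi)$ would contain $\d([\nu^{-b}\rho,\nu^{-\frac{1}{2}}\rho]) \ot \s_{ds}$; by transitivity of Jacquet modules $\mu^{\ast}(\s)$ would then contain $\d([\nu^{\frac{1}{2}}\rho,\nu^{a}\rho]) \ot \d([\nu^{-b}\rho,\nu^{-\frac{1}{2}}\rho]) \ot \s_{ds}$, whose two leading essentially square-integrable blocks satisfy $e(\d([\nu^{\frac{1}{2}}\rho,\nu^{a}\rho])) + e(\d([\nu^{-b}\rho,\nu^{-\frac{1}{2}}\rho])) = \frac{a-b}{2} < 0$, contradicting the square-integrability criterion for $\s$. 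Hence $\pi = \pi_0$, so $\mu^{\ast}(\s)$ contains the multiplicity-one constituent $\d([\nu^{\frac{1}{2}}\rho,\nu^{a}\rho]) \ot \pi_0$; as this lies in the Jacquet module of at most one composition factor of $M$, at most one subrepresentation has the property.

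Finally, for existence I would consider the unique composition factor $\s_0$ of $M$ whose Jacquet module contains $\d([\nu^{\frac{1}{2}}\rho,\nu^{a}\rho]) \ot \pi_0$. Again by Lemma \ref{lemaprva}, $\s_0 \h \d([\nu^{\frac{1}{2}}\rho,\nu^{a}\rho]) \rt \pi_0$ with $\pi_0$ strongly positive; since every term of $\mu^{\ast}(\pi_0)$ has strictly positive cumulative exponents and $e(\d([\nu^{\frac{1}{2}}\rho,\nu^{a}\rho])) > 0$, the square-integrability criterion shows $\s_0$ is square-integrable, hence one of the two subrepresentations of $M$, and it has the desired property. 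The hard part will be exactly this separation step: showing that the constituent with the non-tempered $\pi$ cannot sit in a square-integrable subquotient. Everything there hinges on feeding the elementary inequality $\frac{a-b}{2} < 0$ into the square-integrability criterion, precisely as in the proof of Theorem \ref{dissub} and the first part of Lemma \ref{lemasolotreca}; the remaining points (the multiplicity count and the structure of $N$) become routine once the structural formula is applied.
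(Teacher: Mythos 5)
Your overall strategy is genuinely different from the paper's (which pushes everything through the two tempered subrepresentations $\tau_1,\tau_2$ of $\d([\nu^{-a}\rho,\nu^{a}\rho])\rt\s_{ds}$ and the doubled constituent $\d([\nu^{\frac12}\rho,\nu^{a}\rho])\times\d([\nu^{\frac12}\rho,\nu^{a}\rho])\ot\s_{ds}$), and your opening computation is correct: the constituents of $\mu^{\ast}(M)$ of the form $\d([\nu^{\frac12}\rho,\nu^{a}\rho])\ot\pi$ are exactly those with $\pi$ a subquotient of $N=\d([\nu^{\frac12}\rho,\nu^{b}\rho])\rt\s_{ds}$, and there is no constituent $\nu^{x}\rho\ot(\cdot)$ with $\frac12\le x<a$, so Lemma \ref{lemaprva} applies. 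The gap is in the step you yourself flag as the hard part. Everything hinges on a structural description of the composition series of $N$ that you assert rather than prove: (i) that $N$ has a unique square-integrable (indeed unique tempered) subquotient $\pi_0$, and (ii) that every other irreducible subquotient of $N$ has $\d([\nu^{-b}\rho,\nu^{-\frac12}\rho])\ot\s_{ds}$ in its $\mu^{\ast}$. Claim (i) as you state it is wrong: $\s_{ds}$ is an arbitrary discrete series with $\Jord_{\rho}(\s_{ds})=\emptyset$, not cuspidal or strongly positive, so $\pi_0$ is not a strongly positive discrete series and the classification in \cite{Kim1}, \cite{Matic4} does not apply to it; what is true cheaply is only that $N$ has a unique irreducible subrepresentation occurring with multiplicity one (from the multiplicity of $\d([\nu^{\frac12}\rho,\nu^{b}\rho])\ot\s_{ds}$ in $\mu^{\ast}(N)$). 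Claim (ii) is precisely the kind of statement the paper proves elsewhere by a full Langlands-quotient analysis (Theorem \ref{dissub}, the irreducibility argument inside Lemma \ref{lemapripremazadnja}); a priori $N$ could have tempered non-square-integrable subquotients, or non-tempered subquotients whose leading Langlands segment is short enough that your inequality $\frac{a-b}{2}<0$ never enters. Without (i) and (ii) your dichotomy ``$\pi\ne\pi_0$ forces a Casselman violation'' does not close, and if $N$ had two square-integrable subquotients the multiplicity-one count would not yield uniqueness either.

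The existence step also does not work as written. Lemma \ref{lemaprva} produces an embedding $\s_0\h\d([\nu^{\frac12}\rho,\nu^{a}\rho])\rt\pi'$ for \emph{some} irreducible $\pi'$, not necessarily $\pi'\cong\pi_0$; and even granting $\pi'\cong\pi_0$, the embedding alone cannot verify the square-integrability criterion for $\s_0$, because $\d([\nu^{\frac12}\rho,\nu^{a}\rho])\rt\pi_0$ also contains its non-square-integrable Langlands quotient $L(\d([\nu^{-a}\rho,\nu^{-\frac12}\rho]),\pi_0)$ as a subquotient, so ``positive cumulative exponents'' fails for part of $\mu^{\ast}(\d([\nu^{\frac12}\rho,\nu^{a}\rho])\rt\pi_0)$. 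The paper avoids both problems by never analyzing $N$: it shows that exactly one of $\tau_1,\tau_2$ carries $\d([\nu^{\frac12}\rho,\nu^{a}\rho])\times\d([\nu^{\frac12}\rho,\nu^{a}\rho])\ot\s_{ds}$, deduces existence from that, and for uniqueness derives from the hypothetical second subrepresentation the constituent $\d([\nu^{\frac12}\rho,\nu^{b}\rho])\times\d([\nu^{\frac12}\rho,\nu^{a}\rho])\ot\s_{ds}$ and transports it back to $\tau_2$, contradicting the multiplicity-one statement already established. To salvage your route you would need to redo the Theorem \ref{dissub}/Lemma \ref{lemapripremazadnja} analysis for $N$ itself, which is roughly as much work as the paper's argument.
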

\begin{proof}
In Theorem \ref{dissub} we have seen that the induced representation $\d([\nu^{-a} \rho$, $\nu^{b} \rho]) \rt \s'$ contains two irreducible subrepresentations which are mutually non-isomorphic and square-integrable, let us denote them by $\s_1$ and $\s_2$. For $i = 1, 2$, there is a unique irreducible tempered subrepresentation $\tau_i$ of $\d([\nu^{-a} \rho, \nu^{a} \rho]) \rt \s'$ such that $\s_i$ is a subrepresentation of $\d([\nu^{a+1} \rho, \nu^{b} \rho]) \rt \tau_i$.

It follows from the structural formula and the description of $\Jord_{\rho}(\s')$ that the only irreducible constituent of the form $\d([\nu^{\frac{1}{2}} \rho, \nu^{a} \rho]) \times \d([\nu^{\frac{1}{2}} \rho, \nu^{a} \rho]) \otimes \pi$ appearing in $\mu^{\ast}(\d([\nu^{-a} \rho, \nu^{a} \rho]) \rt \s')$ is
\begin{equation*}
\d([\nu^{\frac{1}{2}} \rho, \nu^{a} \rho]) \times \d([\nu^{\frac{1}{2}} \rho, \nu^{a} \rho]) \otimes \s',
\end{equation*}
which appears there with multiplicity one. Thus, there is exactly one $i \in \{ 1, 2 \}$ such that $\mu^{\ast}(\tau_i) \geq \d([\nu^{\frac{1}{2}} \rho, \nu^{a} \rho]) \times \d([\nu^{\frac{1}{2}} \rho, \nu^{a} \rho]) \otimes \s'$, and we denote it by $i_2$. 

Following the same lines as in the proof of the Claim $4$ from Theorem \ref{prepairemain}, we deduce that then $\mu^{\ast}(\s_{i_1}) \geq \d([\nu^{\frac{1}{2}} \rho, \nu^{a} \rho]) \otimes \pi'$, for some irreducible representation $\pi'$.

Let $i_2 \in \{ 1, 2 \}$ such that $i_1 \neq i_2$. Suppose that $\mu^{\ast}(\s_{i_2})$ also contains an irreducible constituent of the form $\d([\nu^{\frac{1}{2}} \rho, \nu^{a} \rho]) \otimes \pi_1$, for some irreducible representation $\pi_1$.

Since $\s_{i_2}$ is a subrepresentation of $\d([\nu^{\frac{1}{2}} \rho, \nu^{b} \rho]) \times \d([\nu^{-a} \rho, \nu^{-\frac{1}{2}} \rho]) \rt \s'$, by Lemma \ref{lemajantz} there is an irreducible subquotient $\pi_2$ of $\d([\nu^{-a} \rho, \nu^{-\frac{1}{2}} \rho]) \rt \s'$ such that $\s_{i_2}$ is a subrepresentation of $\d([\nu^{\frac{1}{2}} \rho, \nu^{b} \rho]) \rt \pi_2$. Obviously, $\mu^{\ast}(\s_{i_2}) \geq \d([\nu^{\frac{1}{2}} \rho, \nu^{b} \rho]) \otimes \pi_2$. Since $\mu^{\ast}(\s_{i_2}) \geq \d([\nu^{\frac{1}{2}} \rho, \nu^{a} \rho]) \otimes \pi_1$ and $a < b$, it follows from the structural formula that $\mu^{\ast}(\pi_2)$ contains an irreducible constituent of the form $\d([\nu^{\frac{1}{2}} \rho, \nu^{a} \rho]) \otimes \pi_3$. Thus, the Jacquet module of $\s_{i_2}$ with respect to the appropriate parabolic subgroup contains $\d([\nu^{\frac{1}{2}} \rho, \nu^{b} \rho]) \otimes \d([\nu^{\frac{1}{2}} \rho, \nu^{a} \rho]) \otimes \pi_3$.

So, there is an irreducible representation $\d$ such that $\mu^{\ast}(\s_{i_2}) \geq \d \otimes \pi_3$ and $m^{\ast}(\d) \geq \d([\nu^{\frac{1}{2}} \rho, \nu^{b} \rho]) \otimes \d([\nu^{\frac{1}{2}} \rho, \nu^{a} \rho])$. Since $\s_{i_2}$ is a subrepresentation of $\d([\nu^{-a} \rho, \nu^{b} \rho]) \rt \s'$, there are $i, j$ such that $-a-1 \leq i \leq j \leq b$ and an irreducible constituent $\d' \otimes \pi'$ of $\mu^{\ast}(\s')$ such that
\begin{align*}
\d & \leq \d([\nu^{-i} \rho, \nu^{a} \rho]) \times \d([\nu^{j+1} \rho, \nu^{b} \rho]) \times \d' \\
\intertext{and}
\pi_3 & \leq \d([\nu^{i+1} \rho, \nu^{j} \rho]) \rt \pi'.
\end{align*}
From $\Jord_{\rho}(\s') = \emptyset$ we obtain that $i = j = - \frac{1}{2}$ and $\pi_3 \cong \s'$. Thus, $\mu^{\ast}(\s_{i_2}) \geq \d([\nu^{\frac{1}{2}} \rho, \nu^{b} \rho]) \times \d([\nu^{\frac{1}{2}} \rho, \nu^{a} \rho]) \otimes \s'$. This leads to 
\begin{equation*}
\mu^{\ast}(\d([\nu^{a+1} \rho, \nu^{b} \rho]) \rt \tau_{i_2}) \geq \d([\nu^{\frac{1}{2}} \rho, \nu^{b} \rho]) \times \d([\nu^{\frac{1}{2}} \rho, \nu^{a} \rho]) \otimes \s',
\end{equation*}
and one readily sees that this gives $\mu^{\ast}(\tau_{i_2}) \geq \d([\nu^{\frac{1}{2}} \rho, \nu^{a} \rho]) \times \d([\nu^{\frac{1}{2}} \rho, \nu^{a} \rho]) \otimes \s'$, which is impossible. This proves the lemma.
\end{proof}

\begin{lemma}  \label{lemasolodruga}
Suppose that $\rho \in \Irr(GL_{n_{\rho}})$ is a cuspidal unitarizable essentially self-dual representation such that $\Jord_{\rho}(\s)$ consists of even integers and let $a_{\min}$ denote the minimal element of $\Jord_{\rho}(\s)$. Suppose that $\epsilon_{\s}((a_{\min},\rho), (a,\rho)) = 1$, where $a_{\min} = a\_$ in $\Jord_{\rho}(\s)$. Let $\s'$ stand for a discrete series such that $\s$ is a subrepresentation of
\begin{equation*}
\d([\nu^{-\frac{a_{\min}-1}{2}} \rho, \nu^{\frac{a-1}{2}} \rho]) \rt \s'.
\end{equation*}
If $\Jord_{\rho}(\s') \neq \emptyset$ and $b_{\min}$ stands for the minimal element of $\Jord_{\rho}(\s')$, then $\epsilon_{\s}(a_{\min},\rho) \cdot \epsilon_{\s}((a,\rho),(b_{\min},\rho)) = \epsilon_{\s'}(b_{\min},\rho)$, i.e., $\epsilon_{\s}(b_{\min}, \rho) = \epsilon_{\s'}(b_{\min}, \rho)$.
\end{lemma}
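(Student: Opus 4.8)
The plan is to follow the proof of Lemma~\ref{lemasolotreca}, transposing it from the maximal element and the tempered anchors to the minimal element and the half-integral segments that govern the even case. Write $p = \frac{a_{\min}-1}{2}$, $q = \frac{a-1}{2}$ and $r = \frac{b_{\min}-1}{2}$, positive half-integers with $p < q < r$ and $q \geq p+1$. Since $\epsilon_{\s}((a_{\min},\rho),(a,\rho)) = 1$, Proposition~\ref{propjord} and Lemma~\ref{epsilonequality} give $\Jord_{\rho}(\s_{ds}) = \Jord_{\rho}(\s) \setminus \{a_{\min}, a\}$, so $b_{\min}$ is the third smallest element of $\Jord_{\rho}(\s)$ and $(b_{\min})\_ = a$ there. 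Because $\epsilon_{\s}((a_{\min},\rho),(a,\rho)) = 1$ means $\epsilon_{\s}(a_{\min},\rho) = \epsilon_{\s}(a,\rho)$, the defining relation $\epsilon_{\s}(b_{\min},\rho) = \epsilon_{\s}(a,\rho) \cdot \epsilon_{\s}((a,\rho),(b_{\min},\rho))$ shows that the asserted identity is equivalent to $\epsilon_{\s}(b_{\min},\rho) = \epsilon_{\s_{ds}}(b_{\min},\rho)$, which is what I would prove.

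First I would record the tempered intermediary. As in Theorem~\ref{prepairemain}, since $a_{\min}$ is even, $a_{\min} \notin \Jord_{\rho}(\s_{ds})$ and $\nu^{\frac{1}{2}}\rho \rt \s_{cusp}$ reduces, we have $\d([\nu^{-p}\rho, \nu^{p}\rho]) \rt \s_{ds} = \tau_1 + \tau_2$ with $\tau_1, \tau_2$ irreducible tempered and mutually non-isomorphic, and $\s \h \d([\nu^{p+1}\rho, \nu^{q}\rho]) \rt \tau_{temp}$ for a unique $\tau_{temp} \in \{\tau_1, \tau_2\}$. The even-case analogue of the doubled constituent from Theorem~\ref{prepairemain} is
\begin{equation*}
\d([\nu^{\frac{1}{2}}\rho, \nu^{p}\rho]) \t \d([\nu^{\frac{1}{2}}\rho, \nu^{p}\rho]) \ot \s_{ds};
\end{equation*}
using the structural formula one checks that it occurs in $\mus(\d([\nu^{-p}\rho, \nu^{p}\rho]) \rt \s_{ds})$ only from the index $i = j = -\frac{1}{2}$ together with $1 \ot \s_{ds}$, hence with multiplicity one, and that it lies in $\mus(\tau_{i_0})$ for exactly one $i_0$. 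Arguing as in the first half of Theorem~\ref{prepairemain} (now with Lemma~\ref{lemaembed} in place of the definition via the tempered pieces), I would show that $\epsilon_{\s}(a_{\min},\rho) = 1$ precisely when $\tau_{temp} = \tau_{i_0}$.

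The core step is to transfer this to $b_{\min}$. As in Lemma~\ref{lemasolotreca}, I would use Theorem~\ref{embed} and repeated application of Lemma~\ref{lemaulaganje} to realize $\s$ inside an induced representation in which the Jordan blocks for $\rho' \not\cong \rho$ (whose $\epsilon$-values transport unchanged) and the $\rho$-blocks above $b_{\min}$ are peeled off, reducing the bookkeeping to the three blocks $a_{\min}, a, b_{\min}$ lying over $\s_{cusp}$; here $\s_{cusp}$ plays the role of the base with empty $\rho$-Jordan block, so that Lemma~\ref{lemahalf} becomes available. Tracking a single Jacquet-module constituent carrying the half-segment $\d([\nu^{\frac{1}{2}}\rho, \nu^{p}\rho])$ down this tower, and comparing it against the constituent $\d([\nu^{\frac{1}{2}}\rho, \nu^{r}\rho]) \ot \pi$ that detects $\epsilon_{\s_{ds}}(b_{\min},\rho)$, I would, via the structural formula and the uniqueness and multiplicity-one statements of Lemma~\ref{lemapripremazadnja} and Lemma~\ref{lemahalf}, identify which of the two discrete series subrepresentations of $\d([\nu^{-p}\rho, \nu^{q}\rho]) \rt \s_{ds}$ realizes each value of $\epsilon_{\s}(b_{\min},\rho)$, and conclude $\epsilon_{\s}(b_{\min},\rho) = \epsilon_{\s_{ds}}(b_{\min},\rho)$.

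The step I expect to be hardest is the reassembly across the gap between $p$ and $q+1$. The half-segment $\d([\nu^{\frac{1}{2}}\rho, \nu^{r}\rho])$ that detects $\epsilon_{\s_{ds}}(b_{\min},\rho)$ must be recovered on the $\s$-side out of the anchor half-segment $\d([\nu^{\frac{1}{2}}\rho, \nu^{p}\rho])$, the peeled segment $\d([\nu^{-p}\rho, \nu^{q}\rho])$, and the pair segment $\d([\nu^{q+1}\rho, \nu^{r}\rho])$ that carries $\epsilon_{\s}((a,\rho),(b_{\min},\rho))$, with the middle block $\d([\nu^{p+1}\rho, \nu^{q}\rho])$ absorbed into the peeled segment. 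Controlling the structural formula so that no spurious constituents intervene and every relevant multiplicity is one --- which is exactly what makes the two sign contributions $\epsilon_{\s}(a_{\min},\rho)$ and $\epsilon_{\s}((a,\rho),(b_{\min},\rho))$ combine into a single product equal to $\epsilon_{\s_{ds}}(b_{\min},\rho)$ --- is the delicate part, and it is where the description of $\Jord_{\rho}(\s_{ds})$ together with Lemmas~\ref{lemapripremazadnja} and~\ref{lemahalf} must be invoked with care.
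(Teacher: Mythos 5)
Your reduction to $\epsilon_{\s}(b_{\min},\rho)=\epsilon_{\s_{ds}}(b_{\min},\rho)$ and the first half of your argument coincide with the paper's: the tempered intermediary $\tau_{temp}\subset\d([\nu^{-\frac{a_{\min}-1}{2}}\rho,\nu^{\frac{a_{\min}-1}{2}}\rho])\rt\s_{ds}$, the multiplicity-one occurrence of the doubled constituent $\d([\nu^{\frac{1}{2}}\rho,\nu^{\frac{a_{\min}-1}{2}}\rho])\times\d([\nu^{\frac{1}{2}}\rho,\nu^{\frac{a_{\min}-1}{2}}\rho])\ot\s_{ds}$, and the characterization of $\epsilon_{\s}(a_{\min},\rho)=1$ by its appearance in $\mu^{\ast}(\tau_{temp})$ are exactly the paper's opening steps. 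Note, however, that the paper models the whole proof on Theorem \ref{prepairemain}, not on Lemma \ref{lemasolotreca}, and the divergence in your plan occurs precisely where the real work begins.

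The gap is in your ``core step''. What must be proved is that the tempered subrepresentation carrying the doubled half-segment and the tempered subrepresentation carrying a constituent of the form $\d([\nu^{\frac{a_{\min}+1}{2}}\rho,\nu^{\frac{b_{\min}-1}{2}}\rho])\ot\pi$ coincide if and only if $\epsilon_{\s_{ds}}(b_{\min},\rho)=1$; you only assert that you would ``identify which of the two discrete series subrepresentations realizes each value and conclude.'' The paper's mechanism for this, absent from your plan, is to introduce $\s'_{ds}$ with $\s_{ds}\h\d([\nu^{\frac{a_{\min}+1}{2}}\rho,\nu^{\frac{b_{\min}-1}{2}}\rho])\rt\s'_{ds}$ and to decompose $\d([\nu^{-\frac{a_{\min}-1}{2}}\rho,\nu^{\frac{a_{\min}-1}{2}}\rho])\times\d([\nu^{\frac{a_{\min}+1}{2}}\rho,\nu^{\frac{b_{\min}-1}{2}}\rho])\rt\s'_{ds}$ in $R(G)$ as $\d([\nu^{-\frac{a_{\min}-1}{2}}\rho,\nu^{\frac{b_{\min}-1}{2}}\rho])\rt\s'_{ds}$ plus a Langlands-induced summand; each summand captures exactly one of $\tau_1,\tau_2$, and two further Jacquet-module computations tie membership in the first summand both to the detector of $\epsilon_{\s}((a,\rho),(b_{\min},\rho))$ and to the detector $\d([\nu^{\frac{1}{2}}\rho,\nu^{\frac{b_{\min}-1}{2}}\rho])\ot\pi$ of $\epsilon_{\s_{ds}}(b_{\min},\rho)$. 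Your proposed substitute --- peeling all the way down to $\s_{cusp}$ as in Lemma \ref{lemasolotreca} and invoking Lemma \ref{lemahalf} --- does not obviously work: Lemma \ref{lemahalf} requires a base with empty $\rho$-Jordan block, whereas $\s_{ds}$ still has $b_{\min}$ (and possibly more) in $\Jord_{\rho}(\s_{ds})$, so you would have to strip those blocks as well and then relate $\epsilon_{\s_{ds}}(b_{\min},\rho)$ to the bottom of the resulting tower --- which is essentially an instance of the statement being proved, so the sketch is circular unless you set up a separate induction. Moreover, the tower bookkeeping of Lemma \ref{lemasolotreca} is tailored to the odd case, where the invariant is read off a tempered constituent $\tau_i$ sitting over $\s_{cusp}$ at the far right of the Jacquet module; in the even case the invariant is a left-end condition on $\mu^{\ast}$, and the transposition you describe is not automatic.
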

\begin{proof}
The proof is similar to the one of Theorem \ref{prepairemain}, and we divide it in a series of claims. First we note that in $R(G)$ we have
\begin{equation*}
\d([\nu^{-\frac{a_{\min}-1}{2}} \rho, \nu^{\frac{a_{\min}-1}{2}} \rho]) \rt \s' = \tau_1 + \tau_2,    
\end{equation*}
for mutually non-isomorphic irreducible tempered representations $\tau_1$ and $\tau_2$. The first claim is analogous to the Claim $1$ in the proof of Theorem \ref{prepairemain}.
\begin{itemize}
    \item Claim $1$: There exists a unique $\alpha \in \{ 1, 2 \}$ such that $\sigma$ is a subrepresentation of $\d([\nu^{\frac{a_{\min} + 1}{2}} \rho, \nu^{\frac{a-1}{2}} \rho]) \rtimes \tau_{\alpha}$.
\end{itemize}

\begin{itemize}
    \item Claim $2$: There exists a unique $\beta \in \{ 1, 2 \}$ such that $\mu^{\ast}(\tau_{\beta})$ contains 
\begin{equation} \label{constdva}
\d([\nu^{\frac{1}{2}} \rho, \nu^{\frac{a_{\min}-1}{2}} \rho]) \times \d([\nu^{\frac{1}{2}} \rho, \nu^{\frac{a_{\min}-1}{2}} \rho]) \otimes \s'.
\end{equation}
\end{itemize}
The Claim $2$ can be proved in the same way as the Claim $2$ in the proof of Theorem \ref{prepairemain}. Also, in the same way we obtain that (\ref{constdva}) is a unique irreducible constituent of $\mu^{\ast}(\d([\nu^{-\frac{a_{\min}-1}{2}} \rho, \nu^{\frac{a_{\min}-1}{2}} \rho]) \rt \s')$ of the form $\d([\nu^{\frac{1}{2}} \rho, \nu^{\frac{a_{\min}-1}{2}} \rho]) \times \d([\nu^{\frac{1}{2}} \rho, \nu^{\frac{a_{\min}-1}{2}} \rho]) \otimes \pi$, appears there with multiplicity one, and that $\tau_{\beta}$ is a subrepresentation of 
\begin{equation*} 
\d([\nu^{\frac{1}{2}} \rho, \nu^{\frac{a_{\min}-1}{2}} \rho]) \times \d([\nu^{\frac{1}{2}} \rho, \nu^{\frac{a_{\min}-1}{2}} \rho]) \rtimes \s'.
\end{equation*}

\begin{itemize}
    \item Claim $3$: There exists a unique $\gamma \in \{ 1, 2 \}$ such that $\mu^{\ast}(\tau_{\gamma})$ contains an irreducible constituent of the form
    \begin{equation*}  
    \delta([\nu^{\frac{a_{\min} + 1}{2}} \rho, \nu^{\frac{b_{\min} - 1}{2}} \rho]) \otimes \pi.
    \end{equation*}
\end{itemize}
The Claim $3$ can be proved in the same way as the Claim $3$ in the proof of Theorem \ref{prepairemain}. Also, $\pi \cong \d([\nu^{-\frac{a_{\min}-1}{2}} \rho, \nu^{\frac{a_{\min}-1}{2}} \rho]) \rtimes \sigma''$, for a discrete series $\sigma''$ such that $\s'$ is a subrepresentation of $\d([\nu^{\frac{a_{\min} + 1}{2}} \rho, \nu^{\frac{b_{\min}-1}{2}} \rho]) \rt \s''$. Furthermore, $\tau_{\gamma}$ is a subrepresentation of $\delta([\nu^{\frac{a_{\min} + 1}{2}} \rho, \nu^{\frac{b_{\min} - 1}{2}} \rho]) \rtimes \pi$.

\begin{itemize}
    \item Claim $4$: $\epsilon_{\sigma}(a_{\min}, \rho)=1$ if and only if $\alpha = \beta$.
\end{itemize}
Suppose that $\epsilon_{\s}(a_{\min}, \rho) = 1$. Using the definition of the $\epsilon$-function and Lemma \ref{lemajantz}, we obtain that there are irreducible representations $\pi_1$ and $\pi_2$ such that we have the following embeddings:
\begin{align*}
\s & \hookrightarrow \d([\nu^{\frac{1}{2}} \rho, \nu^{\frac{a-1}{2}} \rho]) \rt \pi_1 \\
\s & \hookrightarrow  \d([\nu^{\frac{1}{2}} \rho, \nu^{\frac{a_{\min}-1}{2}} \rho]) \rt \pi_2.
\end{align*}
Frobenius reciprocity gives $\mu^{\ast}(\s) \geq \d([\nu^{\frac{1}{2}} \rho, \nu^{\frac{a_{\min}-1}{2}} \rho]) \otimes \pi_2$ and it follows directly from the structural formula that $\mu^{\ast}(\pi_1)$ contains an irreducible constituent of the form $\d([\nu^{\frac{1}{2}} \rho, \nu^{\frac{a_{\min}-1}{2}} \rho]) \otimes \pi$. Since $\mu^{\ast}(\s)$ does not contain an irreducible constituent of the form $\nu^{x} \rho \otimes \pi'$ for $x < \frac{a_{\min}-1}{2}$, it follows that $\mu^{\ast}(\pi_1)$ also does not contain an irreducible constituent of such a form $\nu^{x} \rho \otimes \pi'$, since otherwise there would be an irreducible representation $\pi'_1$ such that $\pi_1$ is a subrepresentation of $\nu^{x} \rho \rt \pi'_1$, and we would have
\begin{align*}
\s & \hookrightarrow \d([\nu^{\frac{1}{2}} \rho, \nu^{\frac{a-1}{2}} \rho]) \rt \pi_1 \hookrightarrow \d([\nu^{\frac{1}{2}} \rho, \nu^{\frac{a-1}{2}} \rho]) \times \nu^{x} \rho \rt \pi'_1 \\
& \cong \nu^{x} \rho \times \d([\nu^{\frac{1}{2}} \rho, \nu^{\frac{a-1}{2}} \rho]) \rt \pi'_1,
\end{align*}
a contradiction.

Using Lemma \ref{lemaprva} we deduce that there is an irreducible representation $\pi''$ such that $\pi_1$ is a subrepresentation of $\d([\nu^{\frac{1}{2}} \rho, \nu^{\frac{a_{\min}-1}{2}} \rho]) \rt \pi''$. The irreducibility of $\d([\nu^{\frac{1}{2}} \rho, \nu^{\frac{a-1}{2}} \rho]) \times \d([\nu^{\frac{1}{2}} \rho, \nu^{\frac{a_{\min}-1}{2}} \rho])$, together with Frobenius reciprocity, shows that $\mu^{\ast}(\s)$ contains
\begin{equation} \label{constjedan}
\d([\nu^{\frac{1}{2}} \rho, \nu^{\frac{a-1}{2}} \rho]) \times \d([\nu^{\frac{1}{2}} \rho, \nu^{\frac{a_{\min}-1}{2}} \rho]) \otimes \pi''.
\end{equation}
Since the irreducible constituent (\ref{constjedan}) also appears in $\mu^{\ast}(\d([\nu^{\frac{a_{\min} + 1}{2}} \rho, \nu^{\frac{a-1}{2}} \rho]) \rt \tau_{\alpha})$, there are $i, j$ such that $\frac{a_{\min} - 1}{2} \leq i \leq j \leq \frac{a-1}{2}$ and an irreducible constituent $\d \otimes \pi''_2$ of $\mu^{\ast}(\tau_{\alpha})$ such that
\begin{equation*}
\d([\nu^{\frac{1}{2}} \rho, \nu^{\frac{a-1}{2}} \rho]) \times \d([\nu^{\frac{1}{2}} \rho, \nu^{\frac{a_{\min}-1}{2}} \rho]) \leq \d([\nu^{-i} \rho, \nu^{-\frac{a_{\min} + 1}{2}} \rho]) \times \d([\nu^{j+1} \rho, \nu^{\frac{a-1}{2}} \rho]) \times \d.
\end{equation*}
\begin{sloppypar}
\noindent It follows that $i = \frac{a_{\min} - 1}{2}$. Since $\tau_{\alpha}$ is a subrepresentation of $\d([\nu^{-\frac{a_{\min}-1}{2}} \rho$, $\nu^{\frac{a_{\min}-1}{2}} \rho]) \rtimes \s'$ and $a \not\in \Jord_{\rho}(\s')$, $\mu^{\ast}(\tau_{\alpha})$ does not contain an irreducible constituent of the form $\nu^{x} \rho \otimes \pi''_3$ such that $\frac{a_{\min}+1}{2} \leq x \leq \frac{a-1}{2}$. It follows that $j = \frac{a-1}{2}$ and, consequently,
\end{sloppypar}
\begin{equation*}
\d \cong \d([\nu^{\frac{1}{2}} \rho, \nu^{\frac{a_{\min}-1}{2}} \rho]) \times \d([\nu^{\frac{1}{2}} \rho, \nu^{\frac{a_{\min}-1}{2}} \rho]).
\end{equation*}
Thus, $\alpha = \beta$.

Conversely, if $\alpha = \beta$ in the same way as in the proof of the Claim $4$ of Theorem \ref{prepairemain} we obtain that $\epsilon_{\s}(a_{\min}, \rho) = 1$.

The following claim is analogous to the Claim $5$ in the proof of Theorem \ref{prepairemain}:
\begin{itemize}
    \item Claim $5$: $\epsilon_{\sigma}((a, \rho), (b_{\min}, \rho))=1$ if and only if $\alpha = \gamma$.
\end{itemize}

\begin{itemize}
    \item Claim $6$: $\epsilon_{\sigma'}((b_{\min}, \rho))=1$ if and only if $\beta = \gamma$.
\end{itemize}
Let $\sigma''$ denote a discrete series such that $\sigma'$ is a subrepresentation of $\d([\nu^{\frac{a_{\min}+1}{2}} \rho$, $\nu^{\frac{b_{\min}-1}{2}} \rho]) \rt \s''$, given by Lemma \ref{lemapripremazadnja}.

The induced representation $\d([\nu^{-\frac{a_{\min}-1}{2}} \rho, \nu^{\frac{a_{\min}-1}{2}} \rho]) \rt \s'$ is a subrepresentation of
\begin{equation*}
\d([\nu^{-\frac{a_{\min}-1}{2}} \rho, \nu^{\frac{a_{\min}-1}{2}} \rho]) \times \d([\nu^{\frac{a_{\min}+1}{2}} \rho, \nu^{\frac{b_{\min}-1}{2}} \rho]) \rt \s'',
\end{equation*}
and in $R(G)$ we have
\begin{gather*}
\d([\nu^{-\frac{a_{\min}-1}{2}} \rho, \nu^{\frac{a_{\min}-1}{2}} \rho]) \times \d([\nu^{\frac{a_{\min}+1}{2}} \rho, \nu^{\frac{b_{\min}-1}{2}} \rho]) \rt \s'' = \\
\d([\nu^{-\frac{a_{\min}-1}{2}} \rho, \nu^{\frac{b_{\min}-1}{2}} \rho]) \rt \s'' + \\
+ L(\d([\nu^{-\frac{a_{\min}-1}{2}} \rho, \nu^{\frac{a_{\min}-1}{2}} \rho]), \d([\nu^{\frac{a_{\min}+1}{2}} \rho, \nu^{\frac{b_{\min}-1}{2}} \rho])) \rt \s''.
\end{gather*}
We split the rest of the proof in three sub-claims. First of them can be proved in the same way as Sub-claim $1$ in the proof of Theorem \ref{prepairemain}.
\begin{itemize}
    \item Sub-claim $1$: Each of the induced representations 
\begin{gather*}
\d([\nu^{-\frac{a_{\min}-1}{2}} \rho, \nu^{\frac{b_{\min}-1}{2}} \rho]) \rt \s'' \\
\intertext{and}
L(\d([\nu^{-\frac{a_{\min}-1}{2}} \rho, \nu^{\frac{a_{\min}-1}{2}} \rho]), \d([\nu^{\frac{a_{\min}+1}{2}} \rho, \nu^{\frac{b_{\min}-1}{2}} \rho])) \rt \s''
\end{gather*}
contains exactly one irreducible tempered subrepresentation of 
\begin{equation*}
\delta([\nu^{-\frac{a- 1}{2}} \rho, \nu^{\frac{a-1}{2}} \rho]) \rtimes \s'.
\end{equation*}
\end{itemize}

The second sub-claim is analogous to Sub-claim $2$ in the proof of Theorem \ref{prepairemain}.
\begin{itemize}
    \item Sub-claim $2$: $\tau_{\gamma}$ is contained in $\d([\nu^{-\frac{a_{\min}-1}{2}} \rho, \nu^{\frac{b_{\min}-1}{2}} \rho]) \rt \s''$, but $\tau_{3-\gamma}$ is not.
\end{itemize}

\begin{itemize}
    \item Sub-claim $3$: $\epsilon_{\sigma'}(b_{\min}, \rho)=1$ if and only if $\tau_{\beta}$ is contained in $\d([\nu^{-\frac{a_{\min}-1}{2}} \rho$, $\nu^{\frac{b_{\min}-1}{2}} \rho]) \rt \s''$.
\end{itemize}
Assume that $\epsilon_{\s'}(b_{\min}, \rho) = 1$. It follows that there is an irreducible representation $\pi$ such that $\mu^{\ast}(\s') \geq \d([\nu^{\frac{1}{2}} \rho, \nu^{\frac{b_{\min}-1}{2}} \rho]) \otimes \pi$. Thus, the Jacquet module of $\tau_{\beta}$ with respect to the appropriate parabolic subgroup contains
\begin{equation*}
\d([\nu^{\frac{1}{2}} \rho, \nu^{\frac{a_{\min}-1}{2}} \rho]) \times \d([\nu^{\frac{1}{2}} \rho, \nu^{\frac{a_{\min}-1}{2}} \rho]) \otimes \d([\nu^{\frac{1}{2}} \rho, \nu^{\frac{b_{\min}-1}{2}} \rho]) \otimes \pi.
\end{equation*}
The transitivity of Jacquet modules now implies that there is an irreducible constituent $\delta_1 \otimes \pi_1$ of $\mu^{\ast}(\tau_{\beta})$ such that $m^{\ast}(\delta_1)$ contains
\begin{equation} \label{jmsesti}
\d([\nu^{\frac{1}{2}} \rho, \nu^{\frac{a_{\min}-1}{2}} \rho]) \times \d([\nu^{\frac{1}{2}} \rho, \nu^{\frac{a_{\min}-1}{2}} \rho]) \otimes \d([\nu^{\frac{1}{2}} \rho, \nu^{\frac{b_{\min}-1}{2}} \rho]).
\end{equation}
Since $\tau_{\beta}$ is a subrepresentation of $\d([\nu^{-\frac{a_{\min}-1}{2}} \rho, \nu^{\frac{a_{\min}-1}{2}} \rho]) \rt \s'$, using the structural formula we deduce that there are $i, j$ such that $-\frac{a_{\min}+1}{2} \leq i \leq j \leq \frac{a_{\min}-1}{2}$ and an irreducible constituent $\delta_2 \otimes \pi_2$ of $\mu^{\ast}(\s')$ such that
\begin{equation*}
\delta_1 \leq \d([\nu^{-i} \rho, \nu^{\frac{a_{\min}-1}{2}} \rho]) \times \d([\nu^{j} \rho, \nu^{\frac{a_{\min}-1}{2}} \rho]) \times \delta_2.
\end{equation*}
From the description of $\Jord(\s')$ we obtain that $m^{\ast}(\delta')$ does not contain an irreducible representation of the form $\nu^{x} \rho \otimes \pi''$ such that $x < \frac{b_{\min}-1}{2}$. Since $\frac{a_{\min}-1}{2} < \frac{b_{\min}-1}{2}$, we get $i = -\frac{1}{2}$ and $j = \frac{1}{2}$. Using (\ref{jmsesti}) we deduce that $\delta_2 \cong \d([\nu^{\frac{1}{2}} \rho, \nu^{\frac{b_{\min}-1}{2}} \rho])$.

Consequently, $\delta_1$ is isomorphic to
\begin{gather*}
\d([\nu^{\frac{1}{2}} \rho, \nu^{\frac{a_{\min}-1}{2}} \rho]) \times \d([\nu^{\frac{1}{2}} \rho, \nu^{\frac{a_{\min}-1}{2}} \rho]) \times \d([\nu^{\frac{1}{2}} \rho, \nu^{\frac{b_{\min}-1}{2}} \rho]) \cong \\
\cong \d([\nu^{\frac{1}{2}} \rho, \nu^{\frac{b_{\min}-1}{2}} \rho]) \times \d([\nu^{\frac{1}{2}} \rho, \nu^{\frac{a_{\min}-1}{2}} \rho]) \times \d([\nu^{\frac{1}{2}} \rho, \nu^{\frac{a_{\min}-1}{2}} \rho]).
\end{gather*}
It follows that $\mu^{\ast}(\tau_{\beta})$ contains an irreducible constituent of the form $\d([\nu^{\frac{a_{\min}+1}{2}} \rho$, $\nu^{\frac{b_{\min}-1}{2}} \rho]) \otimes \pi$. Claim $3$ implies $\beta = \gamma$, and by the previous sub-claim $\tau_{\beta}$ is contained in $\d([\nu^{-\frac{a_{\min}-1}{2}} \rho$, $\nu^{\frac{b_{\min}-1}{2}} \rho]) \rt \s''$.

Conversely, let us assume that $\tau_{\beta}$ is contained in $\d([\nu^{-\frac{a_{\min}-1}{2}} \rho$, $\nu^{\frac{b_{\min}-1}{2}} \rho]) \rt \s''$. By the structural formula, there are $i, j$ such that $-\frac{a_{\min}+1}{2} \leq i \leq j \leq \frac{b_{\min}-1}{2}$ and an irreducible constituent $\d_1 \otimes \pi_1$ of $\mu^{\ast}(\s'')$ such that
\begin{gather*}
\d([\nu^{\frac{1}{2}} \rho, \nu^{\frac{a_{\min}-1}{2}} \rho]) \times \d([\nu^{\frac{1}{2}} \rho, \nu^{\frac{a_{\min}-1}{2}} \rho]) \leq \\
\leq \d([\nu^{-i} \rho, \nu^{\frac{a_{\min}-1}{2}} \rho]) \times \d([\nu^{j+1} \rho, \nu^{\frac{b_{\min}-1}{2}} \rho]) \times \d_1.
\end{gather*}
Since $a_{\min} < b_{\min}$, it follows at once that $j = \frac{b_{\min}-1}{2}$ and that $\mu^{\ast}(\s'')$ contains an irreducible constituent of the form $\d([\nu^{\frac{1}{2}} \rho, \nu^{\frac{a_{\min}-1}{2}} \rho]) \otimes \pi_1$ and, consequently, that the Jacquet module of $\s'$ with respect to the appropriate parabolic subgroup contains
\begin{equation*}
\d([\nu^{\frac{a_{\min} + 1}{2}} \rho, \nu^{\frac{b_{\min}-1}{2}} \rho]) \otimes \d([\nu^{\frac{1}{2}} \rho, \nu^{\frac{a_{\min}-1}{2}} \rho]) \otimes \pi. 
\end{equation*}
Now in the same way as in the proof of the Claim $5$ from the proof of Theorem \ref{prepairemain} one can see that there is an irreducible representation $\pi_2$ such that $\s'$ is a subrepresentation of $\d([\nu^{\frac{1}{2}} \rho, \nu^{\frac{b_{\min}-1}{2}} \rho]) \rt \pi_2$. Thus, $\epsilon_{\s'}(b_{\min}, \rho) = 1$. This ends the proof.
\end{proof}

We now prove the main result of this section.

\begin{thrm} \label{propprva}
Let $\s \in R(G)$ denote a non-strongly positive discrete series. Let $(a, \rho) \in \Jord(\s)$ be such that $a\_$ is defined, $\epsilon_{\s}((a\_, \rho), (a, \rho)) = 1$ and let $\s'$ stand for a discrete series such that $\s$ is a subrepresentation of $\delta([\nu^{-\frac{a\_ - 1}{2}} \rho, \nu^{\frac{a-1}{2}} \rho]) \rtimes \s'$. If $\epsilon_{\s'}(b, \rho')$ is defined for some $(b, \rho') \in \Jord(\s')$, then $\epsilon_{\s'}(b, \rho') = \epsilon_{\s}(b, \rho')$.
\end{thrm}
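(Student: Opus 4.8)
The plan is to reduce the whole statement to the preservation of a single ``anchor'' value of $\epsilon$ on each $\Jord_{\rho'}$. By Definition \ref{deftwo}, the value $\epsilon_{\s}(b,\rho')$ on an arbitrary element is the product of the anchor value (namely $\epsilon_{\s}(a_{\min},\rho')$ when $\Jord_{\rho'}(\s)$ consists of even numbers, or $\epsilon_{\s}(a_{\max},\rho')$ when it consists of odd numbers) with the consecutive pairwise values $\epsilon_{\s}((c\_,\rho'),(c,\rho'))$ lying between the anchor and $b$. First I would record, from Lemma \ref{epsilonequality}, that $\Jord(\s_{ds})=\Jord(\s)\setminus\{(\rho,a\_),(\rho,a)\}$ and that $\s$ and $\s_{ds}$ share the same partial cuspidal support, so the reducibility data $\rho'\rt\s_{cusp}=\tau_1+\tau_{-1}$ used to define the odd anchors coincide. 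Then I would invoke Theorem \ref{prepairemain} to transport all pairwise values: those not straddling the removed pair are literally equal, while the unique straddling pair in $\Jord_{\rho}(\s_{ds})$ acquires the product $\epsilon_{\s}((x\_,\rho),(a\_,\rho))\cdot\epsilon_{\s}((a,\rho),(x,\rho))$. A direct bookkeeping, using $\epsilon_{\s}((a\_,\rho),(a,\rho))=1$ to absorb the one missing factor $\epsilon_{\s}((a\_,\rho),(a,\rho))$, then shows that $\epsilon_{\s_{ds}}(b,\rho')=\epsilon_{\s}(b,\rho')$ for every remaining $b$ as soon as the anchor value matches. Thus the entire theorem collapses to the single claim that the anchor value is preserved.

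For $\rho'\not\cong\rho$ the block $\Jord_{\rho'}$ and its anchor are unchanged. In the even case I would use Lemma \ref{lemaembed}: if $\s_{ds}\hookrightarrow\d([\nu^{\frac12}\rho',\nu^{\frac{a_{\min}-1}{2}}\rho'])\rt\pi_0$, then, since the segments in $\rho$ and in $\rho'$ commute, I obtain $\s\hookrightarrow\d([\nu^{\frac12}\rho',\nu^{\frac{a_{\min}-1}{2}}\rho'])\rt\pi$; conversely the structural formula forces any constituent $\d([\nu^{\frac12}\rho',\nu^{\frac{a_{\min}-1}{2}}\rho'])\ot\pi$ of $\mu^{\ast}(\s)$ to originate from $\mu^{\ast}(\s_{ds})$, because the $\rho$-segment supplies no $\rho'$-twists. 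The odd case for $\rho'\not\cong\rho$ is analogous: the sign-carrying constituent $\pi\ot\d([\nu\rho',\nu^{\frac{a_{\max}-1}{2}}\rho'])\ot\tau_i$ of Definition \ref{deftwo}(2) is transported across the commuting $\rho$-segment, so the distinguished index $i$ is the same for $\s$ and $\s_{ds}$.

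For $\rho'\cong\rho$ the anchor element can move. When the removed pair is adjacent to the anchor, i.e.\ $a\_=a_{\min}$ in the even case or $a=a_{\max}$ in the odd case, the new anchor is the next element of $\Jord_{\rho}(\s_{ds})$, and the matching is precisely Lemma \ref{lemasolodruga} and Lemma \ref{lemasolotreca} respectively. When the even anchor $a_{\min}$ survives ($a_{\min}<a\_$), I would argue directly: the half-integer segment $B=[\nu^{\frac12}\rho,\nu^{\frac{a_{\min}-1}{2}}\rho]$ is nested inside the removed segment $A=[\nu^{-\frac{a\_-1}{2}}\rho,\nu^{\frac{a-1}{2}}\rho]$ (indeed $B\subseteq A$), so $\d(B)$ and $\d(A)$ commute; together with Lemma \ref{lemaembed} this gives one implication, while for the converse the structural formula applied to $\s\hookrightarrow\d(A)\rt\s_{ds}$ forces both boundary segments it produces to be trivial, since their top exponents $\frac{a\_-1}{2}$ and $\frac{a-1}{2}$ exceed $\frac{a_{\min}-1}{2}$ and hence cannot appear in the cuspidal support of $\d(B)$. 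This pins the relevant constituent of $\mu^{\ast}(\s_{ds})$ to be exactly $\d(B)\ot(\cdots)$, giving $\epsilon_{\s_{ds}}(a_{\min},\rho)=\epsilon_{\s}(a_{\min},\rho)$.

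The main obstacle is the remaining case: $\rho'\cong\rho$, $\Jord_{\rho}$ odd, and the removed pair lying strictly below the top ($a<a_{\max}$). Here the anchor segment $[\nu\rho,\nu^{\frac{a_{\max}-1}{2}}\rho]$ and the removed segment $A$ overlap without nesting, so they are linked and do not commute, and the sign $i=\epsilon_{\s}(a_{\max},\rho)$ is read off from a deep Jacquet module terminating in $\tau_i$. I would handle this by repeating the chain construction and multiplicity-one analysis from the proof of Lemma \ref{lemasolotreca}: realize $\s$ through an ordered tuple of discrete series ending in $\s$, track the unique sign-carrying constituent $\cdots\ot\d([\nu\rho,\nu^{\frac{a_{\max}-1}{2}}\rho])\ot\tau_i$, and observe that inserting the pair $(a\_,a)$, whose support is bounded above by $\frac{a-1}{2}<\frac{a_{\max}-1}{2}$, neither creates nor destroys this constituent at the top nor alters the $\tau_i$ at the bottom. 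The delicate point, exactly as in Lemma \ref{lemasolotreca}, is to show via the structural formula and Lemma \ref{remarkprva} that the overlapping part of $A$ cannot contribute the exponent $\nu^{\frac{a_{\max}-1}{2}}\rho$, so the distinguished index $i$ is forced to coincide for $\s$ and $\s_{ds}$; combined with the reductions above, this completes the proof.
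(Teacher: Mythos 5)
Your proposal is correct and follows essentially the same route as the paper: reduce to the preservation of the anchor value ($a_{\min}$ in the even case, $a_{\max}$ in the odd case) via Theorem \ref{prepairemain} and the multiplicativity built into Definition \ref{deftwo}, invoke Lemma \ref{lemasolodruga} resp.\ Lemma \ref{lemasolotreca} when the removed pair contains the anchor, and otherwise transport the anchor directly by commuting the nested segment (even case) or by rerunning the chain/multiplicity-one argument of Lemmas \ref{lemabeforesolo}--\ref{lemasolo} (odd case with $a<a_{\max}$). The paper organizes the casework by the parity of $b$ and by whether the anchor survives in $\Jord(\s_{ds})$ rather than by $\rho'\cong\rho$ or not, but the tools and the key support-bound observation $\frac{a-1}{2}<\frac{a_{\max}-1}{2}$ are the same.
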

\begin{proof}
Let $(b, \rho')$ denote an element of $\Jord(\s')$ such that $\epsilon_{\s'}(b, \rho')$ is defined. There are two possibilities to consider:
\begin{enumerate}[(1)]
\item Suppose that $b$ is odd. Let us write $\rho' \rt \s_{cusp} = \tau_{1} + \tau_{-1}$, where $\tau_{1}$ and $\tau_{-1}$ are irreducible tempered mutually non-isomorphic representations. Also, we denote by $b_{\max}$ the maximal element of $\Jord_{\rho'}(\s)$. If $(b_{\max}, \rho')$ does not appear in $\Jord(\s')$, the claim of the theorem follows from Lemma \ref{lemasolotreca}, Theorem \ref{prepairemain} and Definition \ref{deftwo}. Suppose that $(b_{\max}, \rho') \in \Jord(\s')$. By Lemma \ref{lemasolo}, there are unique $i, j \in \{ 1, -1 \}$ such that the Jacquet module of $\s$ with respect to the appropriate parabolic subgroup contains an irreducible constituent of the form $\pi_1 \otimes \d([ \nu \rho', \nu^{\frac{b_{\max} - 1}{2}} \rho']) \otimes \tau_i$ and such that the Jacquet module of $\s'$ with respect to an appropriate parabolic subgroup contains an irreducible constituent of the form $\pi_2 \otimes \d([ \nu \rho', \nu^{\frac{b_{\max} - 1}{2}} \rho']) \otimes \tau_j$. Following the same lines as in the proof of Lemma \ref{lemasolo} we deduce that $i = j$ and, consequently, $\epsilon_{\s}(b_{\max}, \rho') = \epsilon_{\s'}(b_{\max}, \rho')$. From Theorem \ref{prepairemain} and Definition \ref{deftwo}, we obtain that $\epsilon_{\s'}(b, \rho') = \epsilon_{\s}(b, \rho')$.
\item Suppose that $b$ is even. We denote by $b_{\min}$ the minimal element of $\Jord_{\rho'}(\s)$. If $(b_{\min}, \rho')$ does not appear in $\Jord(\s')$, the claim of the theorem follows from the previous lemma, Theorem \ref{prepairemain} and Definition \ref{deftwo}. Suppose that $(b_{\min}, \rho') \in \Jord(\s')$. If $\epsilon_{\s'}(b_{\min}, \rho') = 1$, by Lemma \ref{lemaembed} there is an irreducible representation $\pi$ such that $\s'$ is a subrepresentation of $\d([\nu^{\frac{1}{2}} \rho', \nu^{\frac{b_{\min} -1}{2}} \rho']) \rt \pi$. Since $\s$ is a subrepresentation of $\d([\nu^{-\frac{a\_ -1}{2}} \rho, \nu^{\frac{a -1}{2}} \rho]) \rt \s'$, the fact that $a > b_{\min}$ in the case $\rho \cong \rho'$ implies that there is an embedding
    \begin{equation*}
    \s \hookrightarrow \d([\nu^{\frac{1}{2}} \rho', \nu^{\frac{b_{\min} -1}{2}} \rho']) \times \d([\nu^{-\frac{a\_ -1}{2}} \rho, \nu^{\frac{a -1}{2}} \rho]) \rt \pi,
    \end{equation*}
    which implies that $\epsilon_{\s}(b_{\min}, \rho') = 1$.

    On the other hand, if $\epsilon_{\s}(b_{\min}, \rho') = 1$, there is an irreducible representation $\pi$ such that $\mu^{\ast}(\s) \geq \d([\nu^{\frac{1}{2}} \rho', \nu^{\frac{b_{\min} -1}{2}} \rho']) \ot \pi$. Using the embedding $\s \hookrightarrow \d([\nu^{-\frac{a\_ -1}{2}} \rho, \nu^{\frac{a -1}{2}} \rho]) \rt \s'$ and the fact that $a\_ > b_{\min}$ in the case $\rho \cong \rho'$, the structural formula implies that $\mu^{\ast}(\s')$ contains an irreducible constituent of the form $\d([\nu^{\frac{1}{2}} \rho', \nu^{\frac{b_{\min} -1}{2}} \rho']) \ot \pi'$. Thus, $\epsilon_{\s'}(b_{\min}, \rho') = 1$. It follows that $\epsilon_{\s}(b_{\min}, \rho') = \epsilon_{\s'}(b_{\min}, \rho')$. From Theorem \ref{prepairemain} and Definition \ref{deftwo}, we obtain that $\epsilon_{\s'}(b, \rho') = \epsilon_{\s}(b, \rho')$ and theorem is proved. {\qedhere}
\end{enumerate}
\end{proof}

\section{Classification of discrete series}

We start this section with the definition of the Jordan triples. 

These are the triples of the form $(\Jord, \sigma_c, \epsilon)$ where
\begin{enumerate}[(1)]
  \item $\sigma_c$ is an irreducible cuspidal representation of some $G_{n}$.
  \item $\Jord$ is the finite set (possibly empty) of pairs $(a, \rho)$, where $\rho \in R(GL)$ is an irreducible essentially self-dual cuspidal unitarizable representation, and $a$ is a positive integer such that $a$ is odd if and only if $L(s, \rho, r)$ does not have a pole at $s=0$, for the local $L$-function $L(s, \rho, r)$ as in the beginning of Section $3$. For such a representation $\rho$, let $\Jord_{\rho}$ stand for the set of all $a$ such that $(a, \rho) \in \Jord$. For $a \in \Jord_{\rho}$, let $a\_ = \max \{ b \in \Jord_{\rho} : b < a \}$, if this set is non-empty.
  \item $\epsilon$ is a function defined on a subset of $\Jord \cup (\Jord \times \Jord)$ and attains values 1 and -1. Furthermore, $\epsilon$ is defined on a pair $((a, \rho), (a', \rho')) \in \Jord \times \Jord$ if and only if $\rho \cong \rho'$ and $a = a'\_$. Also, $\epsilon$ is defined on an ordered pair $(a, \rho)$ if and only if $a$ is even or $\Jord_{\rho}(\s') = \emptyset$. Note that if for some $(a, \rho) \in \Jord$ both $a\_$ and $\epsilon(a, \rho)$ are defined, then $\epsilon(a\_, \rho)$ is also defined.
  \item For $(a, \rho) \in \Jord$ such that both $a\_$ and $\epsilon(a, \rho)$ are defined, we have the following compatibility condition: $$\epsilon(a\_, \rho)  \cdot \epsilon(a, \rho) = \epsilon((a\_, \rho), (a, \rho)).$$ 
\end{enumerate}

We say that the Jordan triple $(\Jord', \sigma_c', \epsilon')$ is subordinated to the Jordan triple $(\Jord, \sigma_c, \epsilon)$ if there exists $(a, \rho) \in \Jord$ such that $a\_ \in \Jord_{\rho}$ is defined, $\epsilon((a\_, \rho), (a, \rho)) = 1$, $\sigma_c' \cong \s_c$, $\Jord' = \Jord \setminus \{ (a\_, \rho), (a, \rho) \} $, and we have:
\begin{enumerate}[(1)]
\item  $\epsilon((b\_, \rho'), (b, \rho')) = \epsilon'((b\_, \rho'), (b, \rho'))$, for all $\rho' \not\cong \rho$ and all $b \in \Jord_{\rho'}$ such that $b\_$ is defined.
\item $\epsilon((b\_, \rho), (b, \rho)) = \epsilon'((b\_, \rho), (b, \rho))$, for all $b \in \Jord_{\rho}$ such that $b\_$  is defined and either $b < a\_$ or $b\_ > a$.
\item If in $\Jord_{\rho}$ we have $a = b\_$ and $(a\_)\_ = c$, then
\begin{equation*}
\epsilon'((c, \rho), (b, \rho)) = \epsilon((c, \rho), (a\_, \rho)) \cdot \epsilon((a, \rho), (b, \rho)).
\end{equation*}
\item If $\epsilon'(b, \rho')$ is defined for some $(b, \rho') \in \Jord$, then $\epsilon'(b, \rho') = \epsilon(b, \rho')$.
\end{enumerate}

\begin{definition} \label{deftripl}
We say that the Jordan triple $(\Jord, \s_c, \epsilon)$ is a triple of alternated type if for every $\rho$ such that $\Jord_{\rho} \neq \emptyset$ we have 
\begin{itemize}
    \item $\epsilon((a\_, \rho),(a, \rho)) = -1$ whenever $a\_$ is defined,
    \item there is an increasing bijection $\phi_{\rho}: \Jord_{\rho} \rightarrow \Jord'_{\rho}(\s_c)$, where
\end{itemize}
\begin{equation*}
\mbox{Jord}'_{\rho}(\s_c)=\begin{cases}
       \Jord_{\rho}(\s_c) \cup \{ 0 \} & \mbox{if $\min (\Jord_{\rho})$ is even and $\epsilon(\min (\Jord_{\rho}), \rho) = 1$;}\\
      \Jord_{\rho}(\s_c) & \mbox{otherwise}.
    \end{cases}  
\end{equation*}
\begin{sloppypar}
\noindent We say that the Jordan triple $(\Jord, \s_c, \epsilon)$  dominates the Jordan triple $(\Jord', \s_c, \epsilon')$ if there is a sequence of Jordan triples $(\Jord_{i}, \s_c, \epsilon_{i})$, $1 \leq i \leq k$, such that $(\Jord_{k}, \s_c,$ $\epsilon_{k}) = (\Jord, \s_c, \epsilon), (\Jord_{1}, \s_c, \epsilon_{1}) = (\Jord', \s_c, \epsilon')$, and $(\Jord_{i-1}, \s_c,$ $\epsilon_{i-1})$ is subordinated to $(\Jord_{i}, \s_c, \epsilon_{i})$ for $i \in \{ 2, 3, \ldots, k \}$. Jordan triple $(\Jord, \s_c, $ $\epsilon)$ is called an admissible triple if it dominates a triple of alternated type.
\end{sloppypar}
\end{definition}

For the precise connection between the triples of alternated type and the strongly positive discrete series, we refer the reader to beginning of the proof of Theorem \ref{tmsurj}.

The proof of the following lemma is straightforward:

\begin{lemma}  \label{lemapros}
Let $(\Jord, \s_c, \epsilon)$ denote an admissible triple. Let $\rho \in R(GL)$ denote an irreducible essentially self-dual cuspidal unitarizable representation, and let $a, b$ denote positive integers which are odd if and only if $L(s, \rho, r)$ does not have a pole at $s=0$. Suppose that $a < b$ and that there is no $x \in \Jord_{\rho}$ such that $a \leq x \leq b$. Then there are exactly two admissible triples of the form $(\Jord \cup \{ (a, \rho), (b, \rho) \}, \s_c, \epsilon')$ such that $\epsilon'((a, \rho), (b, \rho)) = 1$, which dominate the admissible triple $(\Jord, \s_c, \epsilon)$.
\end{lemma}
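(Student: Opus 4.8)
The plan is to reduce the statement to a short count of sign choices, and then to carry out that count. First I would observe that the enlarged Jordan set $\Jord \cup \{(a,\rho),(b,\rho)\}$ exceeds $\Jord$ by exactly two elements, and that $a,b$ are adjacent in it: since there is no $x \in \Jord_{\rho}$ with $a \leq x \leq b$, in the enlarged set we have $b\_ = a$. A single subordination step removes exactly one pair of adjacent elements, so a triple with Jordan set $\Jord \cup \{(a,\rho),(b,\rho)\}$ can dominate $(\Jord,\sigma,\epsilon)$ only through one subordination step, which must delete $\{(a,\rho),(b,\rho)\}$. Thus such a triple dominates $(\Jord,\sigma,\epsilon)$ if and only if $(\Jord,\sigma,\epsilon)$ is subordinated to it, i.e.\ $\epsilon'((a,\rho),(b,\rho)) = 1$ together with the conditions (1)--(4) of subordination. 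Conversely, whenever $(\Jord,\sigma,\epsilon)$ is subordinated to a triple $T$, the triple $T$ is automatically admissible, since it dominates $(\Jord,\sigma,\epsilon)$, which dominates a triple of alternated type, and domination is transitive. Hence the lemma reduces to counting the extensions $\epsilon'$ of $\epsilon$ to the enlarged Jordan set that satisfy (1)--(4) and $\epsilon'((a,\rho),(b,\rho)) = 1$.

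Next I would pin down exactly which values of $\epsilon'$ are free. Recall that $a$ and $b$ have the same parity, fixed by $\rho$ through the pole of $L(s,\rho,r)$. Write $C = a\_$ and let $B$ denote the successor of $b$ in the enlarged set, whenever these exist; in $\Jord_{\rho}$ the elements $C$ and $B$ were adjacent. Conditions (1), (2) and (4) force $\epsilon'$ to agree with $\epsilon$ on every pair and on every single element not involving $a$ or $b$, so the only genuinely new data are the values of $\epsilon'$ on the pairs $((C,\rho),(a,\rho))$ and $((b,\rho),(B,\rho))$ and, when they are defined, on the single elements $(a,\rho)$ and $(b,\rho)$. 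These are constrained by $\epsilon'((a,\rho),(b,\rho)) = 1$, by the compatibility $\epsilon'(c\_,\rho)\,\epsilon'(c,\rho) = \epsilon'((c\_,\rho),(c,\rho))$ from Definition \ref{deftwo}, and, when both $C$ and $B$ exist, by condition (3), which reads $\epsilon'((C,\rho),(a,\rho)) \cdot \epsilon'((b,\rho),(B,\rho)) = \epsilon((C,\rho),(B,\rho))$.

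I would then split into two cases. If $a$ is even, or $a$ is odd and $\rho \rt \sigma$ reduces, then $\epsilon'$ is defined on the single elements of $\Jord_{\rho} \cup \{a,b\}$ and, by the compatibility relation, is completely determined by its single-element values, the pair values being the corresponding products. Condition (4) fixes these single values on the old elements, the relation $\epsilon'((a,\rho),(b,\rho)) = 1$ forces $\epsilon'(b,\rho) = \epsilon'(a,\rho)$, and one checks that conditions (2) and (3) then hold automatically because the relevant products telescope. Thus the single free parameter is the value $\epsilon'(a,\rho) \in \{1,-1\}$, giving exactly two extensions. If instead $a$ is odd and $\rho \rt \sigma$ is irreducible, the single-element function at $\rho$ is undefined and the data is the pair function alone; here admissibility of $(\Jord,\sigma,\epsilon)$ forces $\Jord_{\rho} \neq \emptyset$, because irreducibility of $\rho \rt \sigma$ means $\Jord_{\rho}(\sigma) \neq \emptyset$, so the alternated triple dominated by $(\Jord,\sigma,\epsilon)$ already has nonempty $\rho$-part (the increasing bijection $\phi_{\rho}$ maps onto $\Jord_{\rho}(\sigma)$) and domination only enlarges $\Jord_{\rho}$. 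Consequently at least one of $C, B$ exists; if both do, condition (3) leaves exactly one undetermined sign, and if only one does, the corresponding pair value is free. Either way there are exactly two extensions.

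Finally I would record that the two extensions produced in each case are distinct and, by the first paragraph, admissible, and that no other admissible triple of the required form can dominate $(\Jord,\sigma,\epsilon)$; this yields exactly two such triples. The one point demanding care---the main obstacle---is the odd, $\rho \rt \sigma$-irreducible case: there the absence of a single-element function means the naive count could drop to one, and it is precisely the admissibility hypothesis, traced through the bijection $\phi_{\rho}$ back to the nonempty $\Jord_{\rho}(\sigma)$, that guarantees a free pair value survives and keeps the count equal to two.
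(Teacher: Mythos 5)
The paper offers no argument for this lemma (it is dismissed as ``straightforward''), so there is nothing to compare against; your write-up is a correct and complete filling-in of the omitted proof. The reduction in your first paragraph is exactly right: a subordination step removes precisely one adjacent pair, so any domination of $(\Jord,\sigma,\epsilon)$ by a triple whose Jordan set is $\Jord\cup\{(a,\rho),(b,\rho)\}$ must be a single step deleting that pair, and conversely one-step subordination plus transitivity of domination gives admissibility for free. The subsequent sign count is also correct in both cases, including the delicate point that when $a$ is odd and $\rho\rtimes\sigma$ is irreducible the hypothesis of admissibility forces $\Jord_{\rho}\neq\emptyset$ (via the bijection $\phi_{\rho}$ onto the nonempty $\Jord_{\rho}(\sigma)$ at the alternated level), so that at least one of the neighbouring pair values survives as a free parameter. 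The one thing worth flagging is that your count relies on the multiplicative compatibility $\epsilon'(c_-,\rho)\,\epsilon'(c,\rho)=\epsilon'((c_-,\rho),(c,\rho))$ holding for abstract admissible triples; the paper's definition of a Jordan triple never states this explicitly, but it is clearly intended (it is how Definition \ref{deftwo} produces the single-element values for discrete series, and without it the count in the even case would be four rather than two), so your appeal to it is the right reading and should simply be recorded as an assumption on the triples.
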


It follows from the results obtained in the previous section that to a discrete series $\s$ one can attach an admissible triple $(\Jord(\s), \s_{cusp}, \epsilon_{\s})$, where $\Jord(\s)$ is the set of the Jordan blocks of $\s$, $\s_{cusp}$ is the partial cuspidal support of $\s$, and $\epsilon_{\s}$ is given by Definitions \ref{defone} and \ref{deftwo}. We show that in this way we obtain a bijection between the set of all isomorphism classes of discrete series in $R(G)$ and the set of all admissible triples.

Let us first show that in a described way we obtain an injection.

\begin{thrm}  \label{tminj}
Suppose that $\s$ and $\s'$ are discrete series such that 
\begin{equation*}
(\Jord(\s), \s_{cusp}, \epsilon_{\s}) = (\Jord(\s'), \s_{cusp}', \epsilon_{\s'}). 
\end{equation*}
Then $\s \cong \s'$.
\end{thrm}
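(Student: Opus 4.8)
The plan is to argue by strong induction on the semisimple rank $n$, using the reduction furnished by Theorem \ref{embed} together with the reconstruction of the $\epsilon$-function provided by Theorem \ref{prepairemain} and Theorem \ref{propprva}. Fix discrete series $\s, \s' \in \Irr(G_n)$ sharing the admissible triple $(\Jord, \s_{cusp}, \epsilon)$. If $\epsilon((a\_, \rho),(a,\rho)) = -1$ for every $(a,\rho) \in \Jord$ with $a\_$ defined, then by Proposition \ref{propsp} both $\s$ and $\s'$ are strongly positive, and the classification of strongly positive discrete series obtained in \cite{Kim1}, which realizes precisely the bijection onto triples of alternated type, forces $\s \cong \s'$. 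This is the base of the induction and requires no reduction in rank.

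Otherwise $\s$ is not strongly positive, and the selection rule in Theorem \ref{embed} singles out a pair $(a, \rho) \in \Jord$ with $a\_$ defined and $\epsilon((a\_, \rho),(a,\rho)) = 1$ in a manner depending only on $\Jord$ and $\epsilon$; since these data agree for $\s$ and $\s'$, the \emph{same} pair $(a,\rho)$ is selected for both. Thus I obtain discrete series $\s_{ds}, \s'_{ds}$ with
\[
\s \h \d([\nu^{-\frac{a\_ - 1}{2}} \rho, \nu^{\frac{a-1}{2}} \rho]) \rt \s_{ds}, \qquad \s' \h \d([\nu^{-\frac{a\_ - 1}{2}} \rho, \nu^{\frac{a-1}{2}} \rho]) \rt \s'_{ds}.
\]
By Lemma \ref{epsilonequality} together with Proposition \ref{propjord} one has $\Jord(\s_{ds}) = \Jord \setminus \{(a\_,\rho),(a,\rho)\} = \Jord(\s'_{ds})$, both with partial cuspidal support $\s_{cusp}$, and by Theorem \ref{prepairemain} and Theorem \ref{propprva} the functions $\epsilon_{\s_{ds}}$ and $\epsilon_{\s'_{ds}}$ are each completely determined by $\epsilon = \epsilon_{\s} = \epsilon_{\s'}$ through the subordination relations. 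Hence $\s_{ds}$ and $\s'_{ds}$ carry the same admissible triple, and since the split-off segment has positive $GL$-rank they lie in $G_m$ with $m < n$, so the inductive hypothesis yields $\s_{ds} \cong \s'_{ds}$.

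At this point both $\s$ and $\s'$ are irreducible subrepresentations of the single induced representation $\d([\nu^{-\frac{a\_ - 1}{2}} \rho, \nu^{\frac{a-1}{2}} \rho]) \rt \s_{ds}$, which by Theorem \ref{dissub} has exactly two irreducible subrepresentations, both square-integrable and mutually non-isomorphic. It remains to show that these two subrepresentations are separated by the $\epsilon$-function, so that $\epsilon_{\s} = \epsilon_{\s'}$ forces $\s$ and $\s'$ to be the same subrepresentation; this last step is the main obstacle. The two subrepresentations share the Jordan block and partial cuspidal support and agree with value $1$ on the pair $((a\_,\rho),(a,\rho))$, so what must distinguish them is the value of $\epsilon$ on the data adjacent to $a\_$ and $a$ that is merged in passing to $\s_{ds}$. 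Concretely, writing the neighbours of the removed pair in $\Jord_\rho$ as $(a\_)\_ = c$ and $a = b\_$ when they exist, Theorem \ref{prepairemain} shows that $\epsilon_{\s_{ds}}((c,\rho),(b,\rho))$ equals the product $\epsilon_{\s}((c,\rho),(a\_,\rho)) \cdot \epsilon_{\s}((a,\rho),(b,\rho))$ while the individual factors are free; the Jacquet-module analysis in its proof, carried out through the tempered constituents $\tau_1, \tau_2$ of $\d([\nu^{-\frac{a\_-1}{2}}\rho,\nu^{\frac{a\_-1}{2}}\rho]) \rt \s_{ds}$, exhibits the two subrepresentations as realizing the two admissible choices of these factors, which by Lemma \ref{lemapros} are genuinely distinct admissible triples. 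The degenerate configurations in which $c$ or $b$ fails to exist are handled identically using the single-element invariants of Definition \ref{deftwo}: the separating value is then $\epsilon_{\s}(a\_,\rho)$, via Lemma \ref{lemasolodruga} and Lemma \ref{lemaembed}, in the even case, and $\epsilon_{\s}(a_{\max},\rho)$, via Lemma \ref{lemasolotreca} and Lemma \ref{lemasolo}, in the odd case with $\rho \rt \s_{cusp}$ reducible. In every case the two subrepresentations receive distinct admissible triples, so the one whose triple is $(\Jord, \s_{cusp}, \epsilon)$ is unique; as both $\s$ and $\s'$ have this triple, $\s \cong \s'$.
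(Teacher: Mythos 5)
Your proposal is correct and follows essentially the same route as the paper: reduce to the strongly positive case, peel off the segment $\d([\nu^{-\frac{a\_-1}{2}}\rho,\nu^{\frac{a-1}{2}}\rho])$ furnished by Theorem \ref{embed}, transfer the admissible triple to $\s_{ds}$ and $\s'_{ds}$ via Theorems \ref{prepairemain} and \ref{propprva}, apply induction, and then separate the two discrete series subrepresentations of Theorem \ref{dissub} by their $\epsilon$-values on the merged neighbours (or by the single-element invariants in the degenerate cases). The only cosmetic difference is that you induct on the rank rather than on the length of the chain of subordinated triples, which changes nothing of substance.
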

\begin{proof}
If the Jordan triple $(\Jord(\s), \s_{cusp}, \epsilon_{\s})$ is an admissible triple of alternated type, then the claim follows directly from known results for the strongly positive discrete series (\cite[Theorem~5.14]{Kim1} and \cite[Lemma~3.5]{Matic4}).

\begin{sloppypar}
Now suppose that the Jordan triple $(\Jord(\s), \s_{cusp}, \epsilon_{\s})$ is not an admissible triple of alternated type, i.e., that $\s$ is not a strongly positive discrete series. Let $(\s_1, \s_2, \ldots, \s_n)$, $\s_i \in \Irr(G_{n_i})$, denote an ordered $n$-tuple of discrete series such that $\s_1$ is strongly positive, $\s_n \cong \s$ and, for every $i = 2, 3, \ldots, n$, there are $(a_i, \rho_i), (b_i, \rho_i) \in \Jord(\s)$ such that in $\Jord_{\rho_i}(\s_i)$ hold $a_i = (b_i)\_ $, $\epsilon_{\s_i}((a_i, \rho_i), (b_i, \rho_i)) = 1$ and
\end{sloppypar}
\begin{equation*}
\s_i \hookrightarrow \d([\nu^{-\frac{a_i -1}{2}} \rho_i, \nu^{\frac{b_i -1}{2}} \rho_i]) \rt \s_{i-1}.
\end{equation*}

The rest of the proof goes by induction over $n$, and we have seen that our claim holds for $n = 1$.

Let us assume that $n \geq 2$ and that the claim holds for all $k < n$. We prove it for $n$.

Since $(\Jord(\s), \s_{cusp}, \epsilon_{\s}) = (\Jord(\s'), \s_{cusp}', \epsilon_{\s'})$, it follows that there is a discrete series $\s'_{n-1}$ such that $\s'$ is a subrepresentation of $\d([\nu^{-\frac{a_n -1}{2}} \rho_n$, $\nu^{\frac{b_n -1}{2}} \rho_n]) \rt \s'_{n-1}$. Using Proposition \ref{propjord}, Theorem \ref{prepairemain} and Theorem \ref{propprva}, we obtain that
\begin{equation*}
(\Jord(\s_{n-1}), \s_{cusp}, \epsilon_{\s_{n-1}}) = (\Jord(\s'_{n-1}), \s_{cusp}', \epsilon_{\s'_{n-1}}).
\end{equation*}

The inductive assumption now implies that $\s_{n-1} \cong \s'_{n-1}$, so $\s$ and $\s'$ are irreducible subrepresentations of $\d([\nu^{-\frac{a_n -1}{2}} \rho_n, \nu^{\frac{b_n -1}{2}} \rho_n]) \rt \s_{n-1}$. Theorem \ref{dissub} implies that such an induced representation contains two irreducible subrepresentations, which are mutually non-isomorphic discrete series. We denote them by $\s^{(1)}$ and $\s^{(2)}$. Let us now prove that the admissible triples attached to $\s^{(1)}$ and $\s^{(2)}$ are different.

If $\Jord_{\rho_n}(\s_{n-1}) = \emptyset$, it can be concluded from Proposition \ref{propjord} and Theorem \ref{embed} that  $\Jord_{\rho_n}(\s_{cusp}) = \emptyset$. By the Basic Assumption, either $\rho_n \rtimes \sigma_{cusp}$ reduces or $\nu^{\frac{1}{2}} \rho_n \rtimes \sigma_{cusp}$ reduces. Using Corollary \ref{cornonhalf} and Lemma \ref{lemahalf}, we deduce that there are $i_1, i_2 \in \{ 1, 2 \}$ such that $i_1 \neq i_2$ and $\epsilon_{\s^{(i_1)}}(a_n, \rho_n) = \epsilon_{\s^{(i_1)}}(b_n, \rho_n) = 1$ and $\epsilon_{\s^{(i_2)}}(a_n, \rho_n) = \epsilon_{\s^{(i_2)}}(b_n, \rho_n) = -1$.

If $\Jord_{\rho_n}(\s_{n-1}) \neq \emptyset$ and $(a_n)\_ \in \Jord_{\rho_n}(\s)$ is defined, Claims $1$ and $2$ in the proof of Theorem \ref{prepairemain} imply that there is a unique $i_1 \in \{ 1, 2 \}$ such that $\s^{(i_1)}$ is a subrepresentation of $\d([\nu^{\frac{a_n + 1}{2}} \rho_n, \nu^{\frac{b_n -1}{2}} \rho_n]) \rt \tau$ for a tempered representation $\tau$ such that
\begin{gather*}
\mu^{\ast}(\tau) \geq \d([\nu^{\frac{(a_n)\_ + 1}{2}} \rho_n, \nu^{\frac{a_n -1}{2}} \rho_n]) \times \d([\nu^{\frac{(a_n)\_ + 1}{2}} \rho_n, \nu^{\frac{a_n -1}{2}} \rho_n]) \otimes \\
\d([\nu^{-\frac{(a_n)\_ + 1}{2}} \rho_n, \nu^{\frac{(a_n)\_ -1}{2}} \rho_n]) \rt \s_{n-1}.
\end{gather*}

Using Claims $1$, $2$ and $4$ in the proof of Theorem \ref{prepairemain} one can deduce that
\begin{equation*}
\epsilon_{\s^{(i_1)}}(((a_n)\_,\rho_{n}),(a_n,\rho_{n})) = 1
\end{equation*}
and
\begin{equation*}
\epsilon_{\s^{(i_2)}}(((a_n)\_,\rho_{n}),(a_n,\rho_{n})) = -1
\end{equation*}
for $i_2 \in \{ 1, 2 \}$, $i_1 \neq i_2$.

If $\Jord_{\rho_n}(\s_{n-1}) \neq \emptyset$ and $a_n$ is the minimal element of  $\Jord_{\rho_n}(\s)$, we denote by $c_n$ an element of $\Jord_{\rho_n}(\s)$ such that $(c_n)\_ = b_n$. Using Claims $1$, $3$ and $5$ in the proof of Theorem \ref{prepairemain} one can see that there is a unique $i_1 \in \{ 1, 2 \}$ such that $\s^{(i_1)}$ is a subrepresentation of $\d([\nu^{\frac{a_n + 1}{2}} \rho_n, \nu^{\frac{b_n -1}{2}} \rho_n]) \rt \tau$ for a tempered representation $\tau$ such that $\mu^{\ast}(\tau)$ contains an irreducible constituent of the form $\d([\nu^{\frac{a_n + 1}{2}} \rho_n, \nu^{\frac{c_n -1}{2}} \rho_n]) \ot \pi_1$, and it follows that
\begin{equation*}
\epsilon_{\s^{(i_1)}}((b_n,\rho_{n}),(c_n,\rho_{n})) = 1
\end{equation*}
and
\begin{equation*}
\epsilon_{\s^{(i_2)}}((b_n,\rho_{n}),(c_n,\rho_{n})) = -1
\end{equation*}
for $i_2 \in \{ 1, 2 \}$, $i_1 \neq i_2$.

Consequently, the admissible triples attached to $\s^{(1)}$ and $\s^{(2)}$ are different, and it follows that $\s \cong \s'$.
\end{proof}

It remains to show  that to every admissible triple corresponds a discrete series.

\begin{thrm} \label{tmsurj}
Let $(\Jord, \s_c, \epsilon)$ denote an admissible triple. Then there is a discrete series $\s \in R(G)$ such that $(\Jord(\s), \s_{cusp}, \epsilon_{\s}) = (\Jord, \s_c, \epsilon)$.
\end{thrm}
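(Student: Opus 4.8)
The plan is to prove surjectivity by induction on the length $k$ of a chain of subordinations witnessing that $(\Jord, \s, \epsilon)$ dominates a triple of alternated type, constructing the required discrete series by inducing from progressively longer segments and, at each stage, selecting the correct irreducible subrepresentation. Throughout I would use that every discrete series already gives rise to an admissible triple (as recorded just before the theorem), together with the injectivity statement just proved, which guarantees that non-isomorphic discrete series carry distinct triples.

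For the base case $k = 1$, the triple $(\Jord, \s, \epsilon)$ is itself of alternated type, so by Proposition \ref{propsp} the representation sought is strongly positive; its existence and the fact that it realizes the prescribed triple follow from the classification of strongly positive discrete series in \cite{Kim1} together with \cite[Section~5]{Matic4}. For the inductive step I would assume $k \geq 2$ and write the chain so that its top step exhibits an admissible triple $(\Jord', \s, \epsilon')$ which is subordinated to $(\Jord, \s, \epsilon)$ and which dominates a triple of alternated type in $k-1$ steps. Then $\Jord = \Jord' \cup \{ (a\_, \rho), (a, \rho) \}$ for some $(a, \rho) \in \Jord$ with $a\_$ defined and $\epsilon((a\_, \rho), (a, \rho)) = 1$, and the relations $(1)$--$(4)$ defining subordination tie $\epsilon$ to $\epsilon'$. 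By the inductive hypothesis there is a discrete series $\s_{ds}$ with $(\Jord(\s_{ds}), (\s_{ds})_{cusp}, \epsilon_{\s_{ds}}) = (\Jord', \s, \epsilon')$, and I would form the induced representation $\d([\nu^{-\frac{a\_ - 1}{2}} \rho, \nu^{\frac{a-1}{2}} \rho]) \rt \s_{ds}$. By Theorem \ref{dissub} this has exactly two irreducible subrepresentations $\s^{(1)}, \s^{(2)}$, which are square-integrable and mutually non-isomorphic.

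The next step is to identify the admissible triples $T^{(j)} := (\Jord(\s^{(j)}), \s, \epsilon_{\s^{(j)}})$. Since each $\s^{(j)}$ embeds as a subrepresentation of the chosen induced representation, Proposition \ref{propjord} yields $\Jord(\s^{(j)}) = \Jord' \cup \{ (a\_, \rho), (a, \rho) \} = \Jord$ and $(\s^{(j)})_{cusp} = \s$, while Lemma \ref{epsilonequality} gives $\epsilon_{\s^{(j)}}((a\_, \rho), (a, \rho)) = 1$. Applying Theorem \ref{prepairemain} and Theorem \ref{propprva} to each $\s^{(j)}$ with the same $\s_{ds}$, I would check that $\epsilon_{\s^{(j)}}$ and $\epsilon_{\s_{ds}} = \epsilon'$ satisfy precisely the conditions $(1)$--$(4)$; hence $(\Jord', \s, \epsilon')$ is subordinated to each $T^{(j)}$, so each $T^{(j)}$ dominates $(\Jord', \s, \epsilon')$ and is therefore admissible. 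By the injectivity statement already established, $\s^{(1)} \not\cong \s^{(2)}$ forces $T^{(1)} \neq T^{(2)}$.

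To finish, I would invoke the counting in Lemma \ref{lemapros}: since in $\Jord'_{\rho}$ there is no element between $a\_$ and $a$, that lemma produces exactly two admissible triples of the form $(\Jord' \cup \{ (a\_, \rho), (a, \rho) \}, \s, \cdot\,)$ carrying $\epsilon$-value $1$ on the pair $((a\_, \rho), (a, \rho))$ and dominating $(\Jord', \s, \epsilon')$. Both $T^{(1)}$ and $T^{(2)}$ are of this form, so they exhaust the two; and $(\Jord, \s, \epsilon)$ is also of this form, whence $(\Jord, \s, \epsilon) \in \{ T^{(1)}, T^{(2)} \}$, and the corresponding subrepresentation is the desired $\s'$. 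The step I expect to be the main obstacle is the matching of the analytic relations delivered by Theorem \ref{prepairemain}, Theorem \ref{propprva} and Proposition \ref{propjord} with the combinatorial conditions $(1)$--$(4)$ for \emph{both} subrepresentations simultaneously; the device that makes this tractable is the exact count in Lemma \ref{lemapros}, which together with injectivity lets me conclude that the two subrepresentations realize the two admissible extensions without re-deriving the distinguishing $\epsilon$-values case by case as in the injectivity argument.
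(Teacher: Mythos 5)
Your proposal is correct and follows essentially the same route as the paper: induction on the length of the subordination chain, with the base case handled by the strongly positive classification, the inductive step producing two non-isomorphic discrete series subrepresentations via Theorem \ref{dissub}, distinguishing their triples via the injectivity theorem, and concluding with the exact count of Lemma \ref{lemapros}. The only difference is that you spell out the verification of the subordination conditions (via Proposition \ref{propjord}, Lemma \ref{epsilonequality}, Theorem \ref{prepairemain} and Theorem \ref{propprva}) in more detail than the paper, which leaves this implicit.
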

\begin{proof}
Let us first assume that $(\Jord, \s_c, \epsilon)$ is an admissible triple of alternated type. For each $\rho$ such that $\Jord_{\rho} \neq \emptyset$, we write the elements of $\Jord_{\rho}$ in increasing order $a^{\rho}_{1} < a^{\rho}_{2} < \cdots < a^{\rho}_{k_{\rho}}$. Using \cite[Theorem~5.15]{Kim1} we deduce that the induced representation 
\begin{equation*}
    \big ( \prod_{\rho} \prod_{i=1}^{k_{\rho}} \delta([\nu^{\frac{\phi_{\rho}(a^{\rho}_i)+1}{2}} \rho, \nu^{\frac{a^{\rho}_i - 1}{2}} \rho]) \big ) \rtimes \s_c
\end{equation*}
has a unique irreducible subrepresentation, which is a strongly positive discrete series and we denote it by $\s$. From \cite[Theorem~5.3]{Matic4} or \cite[Section~7]{MatTad} follows at once that $(\Jord(\s), \s_{cusp}, \epsilon_{\s}) = (\Jord, \s_c, \epsilon)$.

Now suppose that $(\Jord, \s_c, \epsilon)$ is not an admissible triple of alternated type. Let $(\Jord_{i}, \s_c, \epsilon_{i})$, $1 \leq i \leq n$, denote a sequence of Jordan triples such that $(\Jord_{n}, \s_c, \epsilon_{n}) = (\Jord, \s_c, \epsilon)$, $(\Jord_{1}, \s_c, \epsilon_{1})$ is an admissible triple of alternated type, and $(\Jord_{i-1}, \s_c$, $\epsilon_{i-1})$ is subordinated to $(\Jord_{i}, \s_c, \epsilon_{i})$ for $i \in \{ 2, 3, \ldots, n \}$.

The rest of the proof goes by induction over $n$, and we have seen that our claim holds for $n = 1$.

Let us assume that $n \geq 2$ and that the claim holds for all $k < n$. We prove it for $n$.

\begin{sloppypar}
Suppose that $\Jord = \Jord_{n-1} \cup \{ (a\_, \rho), (a, \rho) \}$. By the inductive assumption, there is a discrete series $\s' \in R(G)$ such that $(\Jord(\s'), \s_{cusp}, \epsilon_{\s'}) = (\Jord_{n-1}, \s_c, \epsilon_{n-1})$.
\end{sloppypar}
By Theorem \ref{dissub}, there are two mutually non-isomorphic discrete series subrepresentations of $\d([\nu^{-\frac{a\_ - 1}{2}} \rho, \nu^{\frac{a-1}{2}} \rho]) \rt \s'$, which we denote by $\s_1$ and $\s_2$. Note that both the  admissible triples $(\Jord(\s_{1}), \s_{cusp}, \epsilon_{\s_{1}})$ and $(\Jord(\s_{2}), \s_{cusp}$, $\epsilon_{\s_{2}})$ dominate the admissible triple $(\Jord_{n-1}, \s_c, \epsilon_{n-1})$ and, by the previous theorem, $(\Jord(\s_{1}), \s_{cusp}, \epsilon_{\s_{1}}) \neq (\Jord(\s_{2}), \s_{cusp}, \epsilon_{\s_{2}})$. By Lemma \ref{lemapros}, there is an $i \in \{ 1, 2 \}$ such that $(\Jord(\s_{i}), \s_{cusp}, \epsilon_{\s_{i}}) = (\Jord, \s_c, \epsilon)$ and the theorem is proved.
\end{proof}

\bibliographystyle{siam}
\bibliography{Literatura}

\begin{flushleft}
{Yeansu Kim \\
Department of Mathematics education, Chonnam National University \\
77 Yongbong-ro, Buk-gu, Gwangju city, South Korea\\
E-mail: ykim@jnu.ac.kr}
\end{flushleft}

\begin{flushleft}
{Ivan Mati\'{c} \\
Department of Mathematics, University of Osijek \\ Trg Ljudevita
Gaja 6, Osijek, Croatia\\ E-mail: imatic@mathos.hr}
\end{flushleft}

\end{document}